\newtheorem{thm}{Theorem}[section]
\newtheorem{corollary}[thm]{Corollary}
\newtheorem{lemma}[thm]{Lemma}
\newtheorem{proposition}[thm]{Proposition}
\newtheorem{prop}[thm]{Proposition}
\newtheorem{conjecture}[thm]{Conjecture}
\newtheorem{thm-dfn}[thm]{Theorem-Definition}
\theoremstyle{definition}
\newtheorem{definition}[thm]{Definition}
\numberwithin{equation}{section}
\theoremstyle{remark}
\newtheorem{remark}{Remark}[section]
\newtheorem{example}[remark]{Example}
\newcommand{\fg}{{\mathfrak g}}
\newcommand{\ft}{{\mathfrak t}}
\newcommand{\fb}{{\mathfrak b}}
\newcommand{\fn}{{\mathfrak n}}
\newcommand{\frakM}{{\mathfrak M}}
\newcommand{\rW}{{\mathrm W}}
\newcommand{\rU}{{\mathrm U}}
\newcommand{\bC}{{\mathbb C}}
\newcommand{\bX}{{\mathbb X}}
\newcommand{\bG}{{\mathbb G}}
\newcommand{\bZ}{{\mathbb Z}}
\newcommand{\bQ}{{\mathbb Q}}
\newcommand{\mD}{\mathcal{D}}
\newcommand{\mS}{\mathcal{S}}
\newcommand{\mE}{\mathcal{E}}
\newcommand{\mF}{\mathcal{F}}
\newcommand{\mA}{\mathcal{A}}
\newcommand{\mM}{\mathcal{M}}
\newcommand{\mT}{\mathcal{T}}
\newcommand{\mO}{\mathcal{O}}
\newcommand{\mL}{\mathcal{L}}
\newcommand{\mH}{\mathcal{H}}
\newcommand{\mG}{\mathcal{G}}
\newcommand{\mN}{\mathcal{N}}
\newcommand{\mB}{\mathcal{B}}
\newcommand{\calC}{{\mathcal C}}
\newcommand{\calF}{{\mathcal F}}
\newcommand{\calG}{{\mathcal G}}
\newcommand{\calL}{{\mathcal L}}
\newcommand{\calM}{{\mathcal M}}
\newcommand{\calN}{{\mathcal N}}
\newcommand{\calR}{{\mathcal R}}
\newcommand{\calS}{{\mathcal S}}
\newcommand{\calT}{{\mathcal T}}
\newcommand{\calX}{{\mathcal X}}
\newcommand{\calY}{{\mathcal Y}}
\newcommand{\calZ}{{\mathcal Z}}
\newcommand{\cO}{{\mathcal O}}
\newcommand{\cA}{{\mathcal A}}
\newcommand{\cF}{{\mathcal F}}
\newcommand{\cN}{{\mathcal N}}
\newcommand{\cE}{{\mathcal E}}
\newcommand{\cZ}{{\mathcal Z}}
\newcommand{\cS}{{\mathcal{S}}}
\newcommand{\cM}{{\mathcal{M}}}
\newcommand{\cG}{{\mathcal{G}}}
\newcommand{\sM}{\mathscr{M}}
\newcommand{\sH}{\mathscr{H}}
\newcommand{\sD}{\mathscr{D}}
\newcommand{\on}{\operatorname}
\newcommand{\tU}{\widetilde{\on{U}}}
\newcommand{\ra}{\rightarrow}
\newcommand{\la}{\leftarrow}
\newcommand{\is}{\simeq}
\newcommand{\oH}{{\mathrm H}}
\newcommand{\Loc}{\on{LocSys}}
\newcommand{\nc}{\newcommand}
\nc{\al}{{\alpha}} \nc{\be}{{\beta}} \nc{\ga}{{\gamma}}
\nc{\ve}{{\varepsilon}} \nc{\Ga}{{\Gamma}} 
\nc{\La}{{\Lambda}}
\nc{\ad }{{\on{ad }}}
\nc{\aff}{{\on{aff}}} \nc{\Aff}{{\mathbf{Aff}}}
\nc{\der}{{\on{der}}}
\nc{\diag}{{\on{diag}}}
\newcommand{\End}{{\on{End}}}
\nc{\Fl}{{\calF\ell}}
\nc{\Hg}{{\on{Higgs}}}
\newcommand{\Hom}{{\on{Hom}}}
\newcommand{\id}{{\on{id}}}
\nc{\Id}{{\on{Id}}}
\nc{\Ind}{{\on{Ind}}}
\nc{\Op}{{\on{Op}}}
\newcommand{\pr}{{\on{pr}}}
\newcommand{\Res}{{\on{Res}}}
\nc{\res}{{\on{res}}}
\newcommand{\Spec}{{\on{Spec}}}
\nc{\tr}{{\on{tr}}}
\newcommand{\Tr}{{\on{Tr}}}
\newcommand{\GL}{{\on{GL}}}
\nc{\GSp}{{\on{GSp}}} \nc{\GU}{{\on{GU}}} \nc{\SL}{{\on{SL}}}
\nc{\SU}{{\on{SU}}} \nc{\SO}{{\on{SO}}}
\newcommand{\Ad}{{\on{Ad}}}
\nc{\nh}{{\Loc_{J^p}(\tau')}}
\nc{\bnh}{{\Loc_{\breve J^p}(\tau')}}
\nc{\bU}{{\overline{U}}} 
\nc{\IC}{{\on{IC}}}
\newcommand{\br}{\begin{rouge}}
\newcommand{\er}{\end{rouge}}
\newcommand{\bb}{\begin{bluet}}
\newcommand{\eb}{\end{bluet}}
\newcommand{\U}{\on{U}}
\newcommand{\lra}{\longrightarrow}
\newcommand{\ul}{{\underline\lambda}}
\newcommand{\rs}{\on{rs}}
\newcommand{\barQ}{\overline{\mathbb Q}_\ell}
\newcommand{\Av}{\on{Av}}
\newcommand{\prolim}{\textup{}\varprojlim\textup{}}
\nc{\ot}{\otimes}
\nc{\oh}{{\operatorname{H}}}
\nc{\gr}{{\operatorname{gr}}}
\nc{\rk}{{\operatorname{rank}}}
\nc{\codim}{{\operatorname{codim}}}
\nc{\img}{{\operatorname{Im}}}
\nc{\Span}{{\operatorname{Span}}}
\nc{\Img}{\operatorname{Im}}
\newcommand{\beqn}{\begin{equation*}}
\newcommand{\eeqn}{\end{equation*}}
\newcommand{\beq}{\begin{equation}}
\newcommand{\eeq}{\end{equation}}
\newcommand{\bern}{\begin{eqnarray*}}
\newcommand{\eern}{\end{eqnarray*}}
\nc{\Fano}[1]{\on{Fano}_{#1}}
\newcommand{\lL}{\lambda+\Lambda}
\newcommand{\hlL}{\widehat{\lambda+\Lambda}}
\newcommand{\HC}{\mathcal{HC}}
\newcommand{\sign}{\on{sign}}
\newcommand{\quash}[1]{}  
\newenvironment{rouge}
{\relax\color{red}}
{\hspace*{.3ex}\relax}
\newenvironment{bluet}
{\relax\color{blue}}
{\hspace*{.3ex}\relax}
\begin{document}
\title{On the conjectures of  
Braverman-Kazhdan}
        \author{Tsao-Hsien Chen}
        \address{School of Mathematics,
      University of Minnesota, Twin Cities
          }
        \email{chenth@umn.edu}

\begin{abstract}
In this article we prove a conjecture of Braverman and Kazhdan 
in \cite{BK1} on acyclicity of $\rho$-Bessel sheaves on reductive groups
in both $\ell$-adic and de Rham settings. We do so by 
establishing a vanishing conjecture 
proposed in \cite{C1}.
As a corollary, we obtain a geometric construction of the 
non-linear Fourier kernels for finite reductive groups as conjectured by 
Braverman and Kazhdan.
The proof of the vanishing conjecture 
relies on the techniques developed in \cite{BFO} on Drinfeld center of Harish-Chandra bimodules and character D-modules, and 
a construction of a class of character sheaves in 
mixed-characteristic.

\quash{

In the previous work \cite{C}, we proposed a vanishing conjecture 
for a certain class of $\ell$-adic sheaves on a reductive group,
which contains  
the Braverman-Kazhdan conjecture in \cite{BK} on acyclicity
of 
cohomology vanishing of gamma sheaves, and we established the 
vanishing conjecture for $\GL_n$ using mirabolic subgroups.
In this article, we prove the vanishing conjecture for general reductive groups and also its
de Rham counterpart. In particular, we prove the 
Braverman-Kazhdan conjecture.

The proof 
relies on the techniques developed in \cite{BFO} on
Drinfeld center of Harish-Chandra bimodules and character $D$-modules. 
As an application, we
show that the functors of convolution with gamma $D$-modules, which can be 
viewed as a version of 
\emph{non-linear Fourier transforms}, 
commute with induction functors and 
are exact on the category of 
admissible $D$-modules on a reductive group.}
\end{abstract}

\maketitle

\setcounter{tocdepth}{1} 
\tableofcontents

\section{Introduction}
This paper is a sequel to \cite{C1}. In \emph{loc.cit.} it was shown that 
the (generalized) Braverman-Kazhdan conjecture on
acyclicity of $\rho$-Bessel sheaves on reductive groups
follows from a certain 
vanishing conjecture.
The goal of this paper is to give a proof of this vanishing conjecture.

In the introduction, we would like to recall the statement of the vanishing conjecture 
in both $\ell$-adic and de Rham settings, explain its applications to the
Braverman-Kazhdan conjectures, and outline a proof of the vanishing conjecture.

\subsection{The vanishing conjecture}
Let $k$ be an algebraically closure of a finite field 
$\mathbb F_q$ with $q$-element of characteristic $p>0$
or $k=\bC$. 
We fix a prime number $\ell$ different from $p$. 
We set $F=\barQ$ in the case $\on{char} k=p$ and $F=\bC$ in the case $k=\bC$. 
We will consider the following two geometric/sheaf-theoretic contexts:
(1)  $\ell$-adic sheaves on schemes over $k$ of characteristic $p$ (2) holonomic $D$-modules on
schemes over $k=\bC$.
We refer context (1) as the $\ell$-adic setting and context (2) as the de Rham setting.
We will fix a non-trivial character $\psi:\mathbb F_q\to\barQ^\times$. 
Depending on the setting, let $\mL_\psi$ be the 
Artin-Schreier sheaf on the additive group $\bG_a$
corresponding to $\psi$ in the $\ell$-adic setting or the exponential $D$-module 
in the de Rham setting.

The vanishing conjecture proposed in \cite{C1}
is a generalization of the well-known acyclicity 
 \beq\label{AS}
\oH_c^*(\bG_a,\mL_\psi)=0.
\eeq
of $\mL_\psi$ to general reductive groups.
The starting point is the observation that~\eqref{AS} can be restated as  
acyclicity of a certain local system on $\SL_2$ over certain affine subspaces.
Namely, let $\on{tr}:\SL_2\to\bG_a$ be the trace map and
let $U$ be the unipotent radical of the 
the standard Borel subgroup $B$ of $\SL_2$.
Then
~\eqref{AS} is equivalent to
the following acyclicity of the local system $\Phi=\tr^*\mL_\psi$ over $U$-orbits on the open Bruhat cell: for any 
$x\in \SL_2\setminus B$ we have 
\beq\label{reformulation}
\oH_c^*(xU,i^*\Phi)=0.
\eeq
Here $i:xU\to\SL_2$ is the embedding. 
Indeed, it follows from the fact that 
for any $x\in \SL_2\setminus B$, the trace map restricts to an isomorphism 
$\on{tr}:xU\is\bG_a$ between the $U$-orbit through $x$ and $\bG_a$.

To state a
generalization of \eqref{reformulation} to
general reductive groups, let me first recall some 
notations and definitions.
Let $G$ be a connected reductive group over 
$k$.
Let $T$ be a maximal tour of $G$ and 
$B$ be a Borel subgroup containing $T$ with 
unipotent radical $U$. 
Denote by
$\rW=N_G(T)/T$ the Weyl group, where $N_G(T)$
is the normalizer of $T$ in $G$.
Depending on the setting, we denote by
$\pi_1^t(T)$ the tame \'etale fundamental group of $T$ if $\on{char}k>0$, or
the topological fundamental group of $T$ if $k=\bC$. 
We denote by
$\calC(T)(F)$ the set of continuous $F$-valued characters of $\pi_1^t(T)$.
 
For any character $\chi\in\calC(T)(F)$,
we write 
$\mL_\chi$ for the corresponding rank one $\ell$-adic/de Rham local system on
$T$.
The Weyl group $\rW$ acts naturally on $\calC(T)(F)$ and for any $\chi\in\calC(T)(F)$, 
we denote by $\rW_\chi'$ the stabilizer of 
$\chi$ in $\rW$ and 
$\rW_\chi\subset\rW'_\chi$, the subgroup of $\rW_\chi'$ generated by those reflections 
$s_\alpha$ such that the pull-back 
$(\check\alpha)^*\mL_\chi$ is isomorphic to the trivial local system, where
$\check\alpha:\bG_m\to T$ is the coroot 
associated 
to $\alpha$\footnote{
The group $\rW_\chi$ plays an important role in the study of 
representations of finite reductive groups and character sheaves (see, e.g., \cite{Lu}).}.

Denote by
$\sD_\rW(T)$ 
the $\rW$-equivariant bounded derived category of sheaves on $T$.
For any $\mF\in\sD_\rW(T)$
and $\chi\in\calC(T)(F)$, the 
$\rW$-equivariant structure on $\mF$
together with
the natural $\rW_\chi'$-equivivariant structure on $\mL_\chi$
give rise to an
action of 
$\rW'_\chi$ on the
cohomology groups
$\oH_c^*(T,\mF\otimes\mL_\chi)$ (resp. $\oH^*(T,\mF\otimes\mL_\chi)$).
In particular, we get an action of the subgroup 
$\rW_\chi\subset\rW_\chi'$ on the cohomology groups above.
Denote by $\sign_\rW:\rW\to\{\pm1\}$ the sign character of 
$\rW$.

The key in formulating the generalization of~\eqref{reformulation} to general $G$ is the 
following definition of central complexes on $T$
introduced in \cite[Definition 1.1]{C1}:
\begin{definition}\label{def of central}
A $\rW$-equivariant complex $\mF\in\sD_\rW(T)$ 
is called central (resp. $*$-central) if for any $\chi\in\calC(T)(F)$, the group
$\rW_\chi$ acts on 
\[\oH_c^*(T,\mF\otimes\mL_\chi)\ \ \ \ \ 
(resp.\ \ \oH^*(T,\mF\otimes\mL_\chi))\]
via the sign character $\sign_\rW$.
It is called strongly central (resp. strongly $*$-central) if the stabilizer $\rW_\chi'$
acts on the cohomology groups above by the sign character.
\end{definition}

\begin{remark}\label{connected center}
If
the center of $G$ is connected, then it is known that 
$\rW_\chi=\rW_\chi'$ for all $\chi\in\calC(T)(F)$ (see, for example, \cite[Theorem 5.13]{DL}), thus
the notions of central complexes (resp. $*$-central complezes) and strongly central complexes (resp. strongly $*$-central complexes) are the same. In general, the two notions are different (see Example \ref{SL_2 example}). 
\end{remark}

\begin{example}\label{SL_2 example}
Consider the case $G=\SL_2$.
Let $\chi\in\calC(T)(F)$ be the 
 quadratic character associated to the covering $T\to T, x\to x^2$.
 We have $\rW_\chi=e$, $\rW'_\chi=\rW$, and 
 it is easy to see 
 that the $\rW$-equivariant local system $\mF=\mL_\chi$ corresponding to $\chi$ is central but not strongly central.

\end{example}

Consider the induction functor $\Ind_{T\subset B}^G:\sD(T)\to \sD(G)$
between bounded derived category of sheaves on $T$ and $G$.
For $\mF\in\sD_\rW(T)$, the $\rW$-equivariant 
structure on $\mF$ defines a $\rW$-action on
$\Ind_{T\subset B}^G(\mF)$ and we denote by
\[\Phi_\mF:=\Ind_{T\subset B}^G(\mF)^\rW\]
the $\rW$-invariant factor in $\Ind_{T\subset B}^G(\mF)$. 
In \cite[Conjecture 1.2]{C1}, we proposed the the following conjecture on
acyclicity of 
$\Phi_{\mF}$ over certain affine subspaces in $G$, called the vanishing conjecture:
\begin{conjecture}\label{vanishing conj}
Assume $\mF\in\sD_\rW(T)$ is central (resp. $*$-central).
For any $x\in G\setminus B$, 
we have the following cohomology vanishing
\beq\label{statement}
\oH_c^*(xU,i^*\Phi_{\mF})=0\ \ \ \ \ \ (\text{resp.}\ \ \oH^*(xU,i^!\Phi_{\mF})=0)
\eeq
where $i:xU\to G$ is the natural inclusion map.
Equivalently, the derived push-forward
$\pi_!(\Phi_\mF)$ (resp. $\pi_*(\Phi_\mF)$) is supported on the closed subset 
$T=B/U\subset G/U$. Here $\pi:G\to G/U$ is the quotient map.

\end{conjecture}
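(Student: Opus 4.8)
The two formulations in the conjecture are interchangeable: for any $\mathcal M\in\sD(G)$ and $x\in G$ the stalk $(\pi_!\mathcal M)_{xU}$ is canonically $\oH_c^*(xU,i^*\mathcal M)$ (and $(\pi_*\mathcal M)_{xU}\cong\oH^*(xU,i^!\mathcal M)$), so I will work with the support statement for $\pi_!(\Phi_\mF)$. Verdier duality reduces the $*$-central case to the central one: $\mathbb D$ exchanges central with $*$-central complexes (using $\rW_\chi=\rW_{\chi^{-1}}$, $\mL_\chi^\vee\cong\mL_{\chi^{-1}}$, and self-duality of $\sign_\rW$), commutes with $\Ind_{T\subset B}^G$ and with $(-)^\rW=(-)_\rW$ up to shift, and swaps $\pi_!$ with $\pi_*$; so from now on $\mF$ is central. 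The plan is to transport the problem to the Hecke category. Since $\Phi_\mF$ is a summand of a parabolic induction it is a character sheaf, and by the main theorem of \cite{BFO} the category $\mathcal{CS}(G)$ of character sheaves is identified with the Drinfeld center $\calZ(\frakH)$ of (a completion of) the monodromic Hecke category $\frakH=\sD(U\backslash G/U)$, with the Harish--Chandra (horocycle) functor $\mathsf{hc}\colon\sD(G)\to\frakH$ corresponding to the forgetful functor $\calZ(\frakH)\to\frakH$. The pushforward $G\to U\backslash G/U$ factors through $\pi$, and $B/U$ is precisely the preimage in $G/U$ of the identity ($w=e$) cell of $U\backslash G/U$; using this together with a d\'evissage along the Bruhat stratification of $G$ --- which I expect to be routine rather than the crux --- the conjecture becomes the assertion that the underlying Hecke-category object $\mathsf{hc}(\Phi_\mF)\in\frakH$ of the center object $\Phi_\mF$ is supported on the identity cell $U\backslash B/U\cong T$.

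To prove that assertion I would argue one block at a time. Both $\mathcal{CS}(G)$ and $\frakH$ decompose over $\rW$-orbits $[\chi]\subset\calC(T)(F)$ according to monodromy along the adjoint $T$-action; fix one, with block $\frakH_\chi$. By the reduction to the integral Weyl group (Lusztig; Bezrukavnikov--Yun; \ldots), $\frakH_\chi$ is governed --- up to the action of $\rW'_\chi/\rW_\chi$ by diagram automorphisms --- by the Coxeter group $\rW_\chi$, so that its cells beyond the identity are indexed through $\rW_\chi\setminus\{e\}$. In this model $\Phi_\mF$ in the block $[\chi]$ is built by Hecke-theoretic parabolic induction from the local system attached to $\mF$ followed by $\rW$-invariants, and the coefficient of a cell $w\neq e$ in $\mathsf{hc}(\Phi_\mF)$ is obtained by applying a fixed functor to the $\rW'_\chi$-module $\oH_c^*(T,\mF\otimes\mL_\chi)$ --- roughly, a summand of $\Ind_{\rW'_\chi}^{\rW}\oH_c^*(T,\mF\otimes\mL_\chi)$ singled out by $w$ and then made $\rW$-invariant. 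Centrality forces $\rW_\chi$ to act on $\oH_c^*(T,\mF\otimes\mL_\chi)$ through $\sign_\rW$, and I claim the sign-isotypic part of $\frakH_\chi$ is concentrated on the identity cell, so every coefficient with $w\neq e$ vanishes. This "$\sign$-isotypic $\Rightarrow$ identity cell" statement categorifies the fact that the antispherical module of the Hecke algebra is spanned by the identity coset; I expect to prove it by induction on $\ell(w)$ with the simple-reflection wall-crossing functors, after which the $\rW'_\chi/\rW_\chi$-bookkeeping --- precisely what separates "central" from "strongly central", cf.\ Remark~\ref{connected center} --- is routine. En route it should help to reduce, using the description of central complexes in \cite{C1}, to the case where $\mF$ is a single local system $\mL_\chi$ with a $\rW$-equivariant structure on which $\rW_\chi$ acts by $\sign_\rW$; then $\Phi_\mF$ is identified outright with the "identity-cell" (antispherical-idempotent) character sheaf of $[\chi]$ and the support statement is immediate.

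The arguments of \cite{BFO} take place over $\bC$ --- they rest on category $\mathcal O$ and Soergel theory for $\mathfrak g$ --- so the above settles the de Rham case. The main obstacle is the $\ell$-adic case in characteristic $p$, where \cite{BFO} is not available as stated. To bridge it I would spread the data over a complete discrete valuation ring with fraction field of characteristic $0$ and residue field a finite extension of $\mathbb F_q$, construct over this mixed-characteristic base the character sheaves entering $\Phi_\mF$ together with the portion of the center-of-Hecke-category picture actually used (the cell filtration of $\mathsf{hc}(\Phi_\mF)$, the monodromic local systems $\mL_\chi$, and the $\rW$- and $\rW'_\chi/\rW_\chi$-actions), and then specialize to the special fibre. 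The delicate point --- and, I expect, the genuine heart of the proof --- is that character sheaves are cut out by perversity together with cuspidality / nilpotent-singular-support conditions that do not obviously survive specialization, that the monodromic structures must be propagated coherently across characteristics, and that the Weyl-group actions must be matched on the nose; assembling such a class of character sheaves in mixed characteristic, with all of that structure intact, is where the real work lies.
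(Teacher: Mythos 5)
The decisive gap is your very first move: the assertion that ``since $\Phi_\mF$ is a summand of a parabolic induction it is a character sheaf'' is false for a general central complex, and the whole transport to the Drinfeld center of the Hecke category collapses there. Character $D$-modules (resp.\ $\ell$-adic character sheaves) require the inducing data on $T$ to be tame/monodromic, equivalently locally finite under $Z(\U(\fg))$; a central complex $\mF$ is an arbitrary holonomic (possibly irregular) or wildly ramified object. Already for $G=\SL_2$ and $\mF=\tr_T^*\mL_\psi$ (the case of the Braverman--Kazhdan Bessel sheaves, which is the point of the conjecture) one has $\Phi_\mF\cong\tr^*\mL_\psi$, which has irregular singularities, is not $Z(\U(\fg))$-locally finite, and in the $\ell$-adic setting is wild; so \cite{BFO} does not apply to $\Phi_\mF$, and likewise your proposed d\'evissage ``reduce to $\mF=\mL_\chi$ a single Kummer local system'' is impossible --- central complexes are not built from tame local systems. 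This is precisely the difficulty the paper's argument is designed to circumvent: for each orbit $\theta$ it introduces the tame local system $\mE_\theta$ and the honest character sheaf $\cM_\theta=\Ind_{T\subset B}^G(\mE_\theta)^\rW$, applies the Harish-Chandra--bimodule/Drinfeld-center machinery only to $\cM_\theta$ (Proposition \ref{E=Z}, giving $\Av_*^U(\cM_\theta)\cong\mE_\theta$, Theorem \ref{Key}), and then deduces the vanishing for an arbitrary strongly $*$-central $\mF$ indirectly: one convolves the canonical map $\Res_{T\subset B}^G(\Phi_\mF)\to\Av_*^U(\Phi_\mF)$ with $\mE_\theta$, uses $\Phi_\mF*\cM_\theta\cong\oH^*(T,\mF\otimes\mL_{\chi^{-1}})\otimes\cM_\theta$ (Proposition \ref{conv with Phi}), and concludes that the cone dies by the Gabber--Laumon conservativity of the Mellin transform (Lemma \ref{vanishing 1}). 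Your proposal contains no substitute for this reduction from wild $\mF$ to the tame objects $\cM_\theta$, and without it the Hecke-category/``sign-isotypic implies identity cell'' mechanism never gets to see the objects the conjecture is about.

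Two further points. First, your mixed-characteristic plan proposes to spread out $\Phi_\mF$ (or its image under the horocycle functor) together with its cell filtration; but for the relevant $\mF$ this cannot be done at all, since Artin--Schreier/exponential factors have no characteristic-zero counterpart. In the paper only the tame object $\cM_\theta$ crosses characteristics (its lift $\cM_{\theta,A}$ exists via the specialization isomorphism for tame fundamental groups, and one proves a ULA statement for $\pi_{A,*}\cM_{\theta,A}$ before specializing); the wild part of $\mF$ stays in the fixed characteristic thanks to the convolution/Mellin reduction above. Second, your claim to treat merely central complexes with ``routine'' $\rW_\chi'/\rW_\chi$ bookkeeping overreaches: the compatibility of the key convolution isomorphism with the $\rW$-equivariant structures genuinely requires strong centrality (for merely $*$-central $\mF$ the isomorphism of Proposition \ref{conv with E_theta sign} is only $\rW_\chi$-equivariant, as the paper notes), which is why Theorem \ref{main result} is stated for strongly ($*$-)central complexes, i.e.\ for connected center, with the disconnected-center case deferred.
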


\begin{example}
Let $G=\SL_2$ and 
let $\tr_T:T\is\bG_m\to\bG_a, t\to t+t^{-1}$ be the trace map. Then it 
is easy to show 
that
the pull-back
$\mF=\tr_T^*\mL_\psi$ together with the canonical $\rW$-equivaraint structure
is central, moreover, we have  
\[\Phi_\mF\is\tr^*\mL_\psi\] where $\tr:\SL_2\to\bG_a$ is the trace map
(see, e.g., \cite[Example 1.2 and 1.7]{C1}). 
It follows that
Conjecture \ref{vanishing conj} (in this case) is equivalent to~\eqref{reformulation}, and hence is also equivalent to the 
acyclicity 
of Artin-Schreier sheaf~\eqref{AS}.

\end{example}

\quash{
\subsection{The $\SL_2$ case}

\begin{example}
(1)
Let $\tr_T:T\is\bG_m\to\bG_a, t\to t+t^{-1}$ and 
consider $\mF=\tr_T^*\mL_\psi$ with the canonical $\rW$-equivaraint structure.
 We have $\oH_c^*(T,\mF)^\sigma\is\oH_c^*(\bG_a,\mL_\psi\otimes(\tr_{T,!}\mathbb Q_\ell)^\sigma)\is
\oH_c^*(\bG_a,\mL_\psi)=0$
and it implies $\mF$ is central.
(2) Let $\mF=\mL$ be the unique non-trivial local system on $T$ with
$\mL\is\mL^{-1}$.
Since 
$\oH_c^*(T,\mL)=0$, we see that $\mL$ equipped with any $\rW$-equivarant structure is central.

\end{example}

We verify the vanishing conjecture for $\SL_2$:

\begin{proposition}
Let $\mF\in\sD_\rW(T)$ be a $\rW$-equivariant complex on 
$T$ and let $\Phi_\mF=\Ind_{T\subset B }^G(\mF)^\rW$.
The following statements are equivalent:
\begin{enumerate}
\item
$\mF$ is central, equivalently, 
$\rW$ acts on $\oH_c^*(T,\mF)$ via the sign character.
\item
$\oH_c^*(Ux,\Phi_\mF)=0$ for any $x\in G-B$.

\end{enumerate}
\end{proposition}

\begin{proof}
We have $Ux\subset G^{reg}$ for $x\in G-B$, and the Grothendieck-Springer resolution is Cartesian over $G^{reg}$:
\[
\xymatrix{
\widetilde G^{reg}\is G^{reg}\times_{T//W}T\ar[d]^{\tilde q}\ar[r]^{\ \ \ \ \ \ \ \ \ \tilde c}&T\ar[d]^q&\\
G^{reg}\ar[r]^c& T//\rW}
.\]
Thus we have $\Ind_{T\subset B}^G\mF|_{Ux}\is (\tilde q_!\tilde c^*\mF)|_{Ux}\is
(c^*q_!\mF)|_{Ux}$.
Note that $c=\tr_G$ is the trace map and 
a simple calculation show that $c$ restricts to an isomorphism 
$c|_{Ux}:Ux\is T//\rW\is\bG_a$. Thus we have 
\beq\label{iso}
\oH_c^*(Ux,\Ind_{T\subset B}^G\mF)
\is \oH_c^*(Ux,c^*q_!\mF)\is
\oH_c^*(T//W,q_!\mF)\is\oH_c^*(T,\mF).
\eeq
Taking $\rW$-invariant on both sides, we obtain
\[\oH_c^*(Ux,\Phi_\mF)\is
\oH_c^*(Ux,(\Ind_{T\subset B}^G\mF))^\rW\is\oH_c^*(T,\mF)^\rW.\]
The proposition follows.

\end{proof}
}

\subsection{Braverman-Kazhdan conjectures} 
Assume $k=\overline{\mathbb F}_q$ and 
$G$ is defined over $\mathbb F_q$.
In \cite{BK1,BK2}, Braverman and Kazhdan associated to each representation 
$\rho:\check G\to\GL(V_\rho)$ of the complex dual group, 
a $\barQ$-valued function 
\beq
\gamma_{G,\rho,\psi}:\on{Irr}(G(\mathbb F_q))\to\barQ
\eeq
on the set of irreducible representation of the finite group $G(\mathbb F_q)$, 
satisfying the following remarkable properties: 
\begin{enumerate}
\item
it is constant on Deligne-Lusztig packets, that is, we have
$\gamma_{G,\rho,\psi}(\pi)=\gamma_{G,\rho,\psi}(\pi')$
if $\pi$ and $\pi'$ appear in the same Deligne-Lusztig representation 
$\on{R}_{T,\theta}$, 
\item if $\pi$ appears in $\on{R}_{T,\theta}$, 
then the value $\gamma_{G,\rho,\psi}(\pi)$
is given by a certain explicit Gauss-type sum
associated to the character $\theta$.
\end{enumerate}
They called $\gamma_{G,\rho,\psi}$
the $\gamma$-function associated to $\rho$. 

The function $\gamma_{G,\rho,\psi}$ on $\on{Irr}(G(\mathbb F_q))$
gives rise to a $\barQ$-valued class function 
$\phi_{G,\rho,\psi}$ on $G(\mathbb F_q)$ 
characterized by the property that 
the operator 
\[\mathscr F_\rho:\on{Func}(G(\mathbb F_q))\to\on{Func}(G(\mathbb F_q))\]
on the space of functions on $G(\mathbb F_q)$
given by convolution with $\phi_{G,\rho,\psi}$ satisfies 
\[\mathscr F_\rho(\chi_\pi)=\gamma_{G,\rho,\psi}(\pi)\chi_\pi,\]
where $\chi_\pi$ is the character of $\pi\in\on{Irr}(G(\mathbb F_q))$.
In the case $G=\GL_n$ and $\rho=\on{std}$ is the standard representation of 
$\check G=\GL_n(\bC)$, the function 
$\phi_{G,\rho,\psi}$ is given by $\psi\circ\tr$ (up to some power of $q$) and
the operator $\mathscr F_{\rho}$ is 
the linear Fourier transform on the space of functions on $\GL_n(\mathbb F_q)$ (or rather, 
the restriction of the linear Fourier transform on $\on{Func}(\mathfrak{gl}_n(\mathbb F_q))$ to the subspace  
$\on{Func}(\GL_n(\mathbb F_q)$). Thus, one can view $\mathscr F_\rho$ as a kind of 
non-linear Fourier transform and $\phi_{G,\rho,\psi}$ as the corresponding Fourier kernel.

In \emph{loc. cit.}
Braverman and Kazhdan proposed a geometric construction of 
$\phi_{G,\rho,\psi}$ using the theory of $\ell$-adic sheaves.
To explain their construction, let us fix  
a $\on{F}$-stable maximal torus $T\subset G$
where $\on{F}:G\to G$ is the geometric Frobenius 
morphism and 
consider the restriction of $\rho$ to the dual maximal torus $\check T\subset\check G$.
Then there exists 
a collection of weights 
\[\underline\lambda=\{\lambda_1,...,\lambda_r\}\subset\bX^\bullet(\check T):=\Hom(\check T,\bC^\times)\]
such that there is an eigenspace decomposition 
$V_\rho=\bigoplus_{i=1}^r V_{\lambda_i}$
of $V_\rho$, where $\check T$ acts on $V_{\lambda_i}$ via the character $\lambda_i$.
One can regard $\underline\lambda$ as collection of 
co-characters of $T$ using the the canonical isomorphism
$\bX^\bullet(\check T)\is\bX_\bullet(T)$
and define
\[
\Phi_{T,\rho,\psi}=\pr_{\ul,!}\tr^*\mL_\psi[r] \ \ \ \ \ \ \ \ \ \ \Phi^*_{T,\rho,\psi}=\pr_{\ul,*}\tr^*\mL_\psi[r],
\]
where 
\[\pr_\ul:=\prod_{i=1}^r\lambda_i:\bG_m^r\longrightarrow T,\ \ \ \ \tr:\bG_m^r\longrightarrow\bG_a, (x_1,...,x_r)\ra\sum_{i=1}^r x_i.\]
It is shown in \cite{BK2} that both $\Phi_{T,\rho,\psi}$ and $\Phi^*_{T,\rho,\psi}$
carry natural 
$\rW$-equivariant structures and the resulting objects in 
$\sD_\rW(T)$, denote again by $\Phi_{T,\rho,\psi}$
and $\Phi^*_{T,\rho,\psi}$, are called the $\rho$-Bessel sheaves\footnote{In \cite{BK1,BK2}, the authors called
$\Phi_{T,\rho,\psi}$  $\gamma$-sheaves on $T$. However, based on the fact that 
the classical $\gamma$-function is the Mellin transform of the Bessel function, we follow \cite{Ng} and use the 
term $\rho$-Bessel sheaves instead of $\gamma$-sheaves}.
The $\rho$-Bessel sheaves on $G$, denoted by $\Phi_{G,\rho,\psi}$ and $\Phi^*_{G,\rho,\psi}$, are defined as 
\[\Phi_{G,\rho,\psi}=\Ind_{T\subset B}^G(\Phi_{T,\rho,\psi})^{\rW},\ \ \ \ \ \ \Phi^*_{G,\rho,\psi}=\Ind_{T\subset B}^G(\Phi^*_{T,\rho,\psi})^{\rW}.\]
It is shown in \cite[Theorem 4.2]{BK2} and \cite[Appendix B]{CN} that,
if $\rho$ satisfies certain positivity assumption (see \cite[Section 1.4]{BK2}),
then the $\rho$-Bessel sheaves $\Phi_{T,\rho,\psi}$ and $\Phi_{T,\rho,\psi}^*$ on $T$ are 
in fact local systems on the image of $\pr_\ul$, moreover, we have $\Phi_{T,\rho,\psi}\is\Phi^*_{T,\rho,\psi}$. 
This is a generalization of Deligne's theorem on Kloosterman sheaves \cite{De1}.

Braverman and Kazhdan showed that one can endow 
the $\rho$-Bessel sheaf $\Phi_{G,\phi,\psi}$ with a 
Weil structure 
$\on{F}^*\Phi_{G,\phi,\psi}\is\Phi_{G,\phi,\psi}$ and they proposed the following conjecture:
\begin{conjecture}\label{kernel}
Let $\Tr(\Phi_{G,\rho,\psi}):G(\mathbb F_q)\to\barQ$ is the function corresponding to
$\Phi_{G,\rho}$ via the functions-sheavers correspondence. We have
\[\Tr(\Phi_{G,\rho,\psi})=\phi_{G,\rho,\psi}\]
\end{conjecture}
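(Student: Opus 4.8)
\emph{Proof strategy.} Since $\Phi_{G,\rho,\psi}$ is equivariant for the conjugation action of $G$, the function $\Tr(\Phi_{G,\rho,\psi})$ is a class function on $G(\mathbb F_q)$. Convolution by a class function is diagonal in the basis $\{\chi_\pi\}$ of irreducible characters, and the class function is recovered from the eigenvalues of that operator; since $\phi_{G,\rho,\psi}$ is \emph{defined} by the requirement that convolution with it act on $\chi_\pi$ by $\gamma_{G,\rho,\psi}(\pi)$, it suffices to show that convolution with $\Tr(\Phi_{G,\rho,\psi})$ has the same eigenvalues. By the Deligne--Lusztig theorem \cite{DL} that every $\pi\in\on{Irr}(G(\mathbb F_q))$ occurs in some $\on{R}_{T,\theta}$, together with property~(1) of the $\gamma$-function, it is then enough to prove, for every $\on{F}$-stable maximal torus $T$ and character $\theta$ of $T(\mathbb F_q)$, that
\[
\Tr(\Phi_{G,\rho,\psi})\ast \on{R}_{T,\theta}=\gamma_{T,\rho,\psi}(\theta)\cdot\on{R}_{T,\theta},
\]
with $\gamma_{T,\rho,\psi}(\theta)$ the common value of $\gamma_{G,\rho,\psi}$ on the constituents of $\on{R}_{T,\theta}$: reading off the eigenvalue on each constituent then gives the required statement. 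By property~(2) and the definition $\Phi_{T,\rho,\psi}=\pr_{\ul,!}\tr^*\mL_\psi[r]$, the Grothendieck trace formula identifies $\gamma_{T,\rho,\psi}(\theta)$ with $\sum_{t\in T(\mathbb F_q)}\Tr(\Phi_{T,\rho,\psi})(t)\,\theta(t)$, which unwinds to a product of $r$ (suitably Frobenius-twisted) one-variable Gauss sums in $\psi$ and the characters $\theta\circ\lambda_i$ --- precisely the Gauss-type sum of property~(2).

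I would prove the displayed identity sheaf-theoretically. Let $\chi\in\calC(T)(F)$ correspond to $\theta$, and let $K_{T,\theta}:=\Ind_{T\subset B}^G(\mL_\chi)$ be the cohomologically induced Kummer complex, whose trace function is $\on{R}_{T,\theta}$ up to a sign and a Tate twist (Lusztig's geometric realization of Deligne--Lusztig characters; cf. \cite{BK2}, \cite{Lu}). Since the trace function of a convolution of complexes is the convolution of the trace functions, it is enough to establish the isomorphism on $G$
\[
\Phi_{G,\rho,\psi}\ast K_{T,\theta}\ \is\ V_\theta\otimes K_{T,\theta},
\]
where $V_\theta$ is a graded $\barQ$-vector space whose alternating trace of Frobenius equals $\gamma_{T,\rho,\psi}(\theta)$. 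The natural candidate is the torus-level cohomology $V_\theta=\oH_c^*(T,\Phi_{T,\rho,\psi}\otimes\mL_\chi)$, whose trace of Frobenius is precisely the Gauss sum of the first step (for a suitable normalization of the convolution); and since $\Phi_{T,\rho,\psi}$ is central in the sense of Definition~\ref{def of central} (established in \cite{C1}), the group $\rW_\chi$ acts on $V_\theta$ through $\sign_\rW$.

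To prove the isomorphism I would use $\Phi_{G,\rho,\psi}=\Ind_{T\subset B}^G(\Phi_{T,\rho,\psi})^{\rW}$ and compute $\Phi_{G,\rho,\psi}\ast K_{T,\theta}$ by base change along the Grothendieck--Springer correspondence, reducing it to the torus convolution $\Phi_{T,\rho,\psi}\ast_T\mL_\chi\is\oH_c^*(T,\Phi_{T,\rho,\psi}\otimes\mL_\chi)\otimes\mL_\chi$ --- that is, to the statement that convolution with the $\rho$-Bessel sheaf is \emph{compatible with parabolic} (\emph{Deligne--Lusztig}) \emph{induction}. This is exactly where the main theorem of the present paper, the vanishing Conjecture~\ref{vanishing conj}, enters, applied to the central complex $\mF=\Phi_{T,\rho,\psi}$: the acyclicity $\oH_c^*(xU,i^*\Phi_{G,\rho,\psi})=0$ for $x\in G\setminus B$ --- equivalently, $\pi_!\Phi_{G,\rho,\psi}$ being supported on $T=B/U\subset G/U$ --- is precisely the ``cleanness away from the big Bruhat cell'' that kills the contributions of the non-closed Bruhat strata to the convolution of the two parabolic inductions, forcing the geometric Mackey-type decomposition to hold on the nose and pinning $\Phi_{G,\rho,\psi}\ast K_{T,\theta}$ down to the asserted twist of $K_{T,\theta}$.

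This compatibility with Deligne--Lusztig induction is the main obstacle, and it is the whole reason the Braverman--Kazhdan kernel conjecture reduces to the vanishing conjecture: absent the acyclicity, $\Phi_{G,\rho,\psi}\ast K_{T,\theta}$ could acquire extra components supported off the closed stratum and the eigenvalue statement would fail. A second delicate point arising here is the distinction between $\rW_\chi$ and $\rW_\chi'$ (Definition~\ref{def of central}, Example~\ref{SL_2 example}): for a non-regular $\theta$ one needs the $\rW_\chi$-action on $V_\theta$ to be through the sign character for the induction-compatibility to yield a genuine scalar eigenvalue, and this is guaranteed by exactly the ordinary (as opposed to strong) centrality hypothesis of Conjecture~\ref{vanishing conj}. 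Finally, since Conjecture~\ref{kernel} is over $k=\overline{\mathbb F}_q$, it is the $\ell$-adic form of the vanishing conjecture that is invoked here, its de Rham counterpart being proved in parallel; when $\rho$ is positive one may in addition use that $\Phi_{T,\rho,\psi}$ is an honest $\rW$-equivariant local system (\cite[Theorem 4.2]{BK2}, \cite[Appendix B]{CN}), which simplifies the bookkeeping, but this is not needed for the trace-function identity itself.
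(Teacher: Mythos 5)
Your strategy is sound, but note how the paper itself disposes of this statement: Corollary \ref{BK conj functions} is a pure citation chain --- \cite[Corollary 6.7]{BK2} supplies the implication ``Conjecture \ref{BK conj} $\Rightarrow$ Conjecture \ref{kernel}'', and Corollary \ref{generalized BK conj} supplies Conjecture \ref{BK conj} itself, via Theorem \ref{main result} applied to $\Phi_{T,\rho,\psi}$, which is strongly central by \cite[Theorem 1.4]{C1}. What you have written is in effect a reconstruction of the content of \cite[Corollary 6.7]{BK2}: reduce to eigenvalues of convolution on Deligne--Lusztig characters, realize $\on{R}_{T,\theta}$ as the trace function of $K_{T,\theta}=\Ind_{T\subset B}^G(\mL_\chi)$ (with a $w$-twisted Weil structure for twisted tori), and prove $\Phi_{G,\rho,\psi}\ast K_{T,\theta}\is V_\theta\otimes K_{T,\theta}$ from the acyclicity through the induction-compatibility of convolution --- which is exactly the mechanism of Proposition \ref{prop of ind} (namely $\Ind_{T\subset B}^G(\mF)\ast\mM\is\Ind_{T\subset B}^G(\mF\ast\Res_{T\subset B}^G\mM)$ once $\Av_*^U(\mM)$ is supported on $T$) that the paper itself exploits in Proposition \ref{conv with Phi}, only with $\mL_\chi$ in place of a strongly $*$-central $\mF$. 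So both routes rest on the same pillar, the vanishing theorem for the strongly central Bessel sheaf; yours buys self-containedness at the price of redoing bookkeeping that \cite{BK2} already carried out, while the paper's buys brevity at the price of outsourcing the function-theoretic deduction.

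If you flesh this out, three points need more care than your sketch gives them. First, the sheaf--function dictionary converts $!$-pushforwards, whereas the paper's convolution is defined with $m_{G,*}$; you must either work with the $!$-convolution and $\Av^U_!$/$i^*$ acyclicity throughout, or justify the comparison. Second, the trace identity requires every isomorphism you use --- the Mackey identification giving $\Res_{T\subset B}^G(\Phi_{G,\rho,\psi})\is\Phi_{T,\rho,\psi}$, the Kummer eigenproperty $\mL_\chi\ast\Phi_{T,\rho,\psi}\is V_\theta\otimes\mL_\chi$, and the induction-compatibility --- to be compatible with the Weil structures, including the $w$-twisted ones for non-split tori; this is where the $\rW$-equivariant structure of $\Phi_{T,\rho,\psi}$ genuinely enters, and it is the delicate part of \cite[Corollary 6.7]{BK2}. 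Third, your remark that ordinary (as opposed to strong) centrality is what secures the scalar eigenvalue is misplaced: once the acyclicity of $\Phi_{G,\rho,\psi}$ is in hand, no further centrality is invoked at that step, since $V_\theta$ appears as a plain tensor factor; what does require (strong) centrality is Theorem \ref{main result} itself, and for general $G$ the paper proves the vanishing only for strongly central complexes --- fortunately $\Phi_{T,\rho,\psi}$ is strongly central by \cite[Theorem 1.4]{C1}, so nothing is lost.
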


Conjecture \ref{kernel} gives a geometric construction of the 
non-linear Fourier kernel $\phi_{G,\rho,\psi}$.
They also showed that Conjecture \ref{kernel} follows from the following conjecture on
acyclicity of $\rho$-Bessel sheaves:
\begin{conjecture}\cite[Conjecture 9.12]{BK1}\label{BK conj}
For any $x\in G\setminus B$, 
we have the following cohomology vanishing
\[\oH_c^*(xU,i^*\Phi_{G,\rho,\psi})=0\ \ \ \ (\text{resp.}\ \ 
\oH^*(xU,i^!\Phi^*_{G,\rho,\psi})=0)
\]
where $i:xU\to G$ is the natural inclusion map.
Equivalently, the derived push-forward
$\pi_!(\Phi_{G,\rho,\psi})$ (resp. $\pi_*(\Phi_{G,\rho,\psi})$) is supported on the closed subset 
$T=B/U\subset G/U$. Here $\pi:G\to G/U$ is the quotient map.
\end{conjecture}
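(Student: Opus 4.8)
The plan is to deduce Conjecture \ref{BK conj} from the vanishing conjecture, Conjecture \ref{vanishing conj}, and then to prove the latter. By \cite{C1} the first step reduces to checking that the $\rho$-Bessel sheaf $\Phi_{T,\rho,\psi}$ is central and $\Phi^*_{T,\rho,\psi}$ is $*$-central as objects of $\sD_\rW(T)$: granted this, Conjecture \ref{vanishing conj} applied to $\mF=\Phi_{T,\rho,\psi}$ (resp.\ $\Phi^*_{T,\rho,\psi}$) is exactly the acyclicity of $\Phi_{G,\rho,\psi}$ (resp.\ $\Phi^*_{G,\rho,\psi}$) along the orbits $xU$, $x\in G\setminus B$, asserted in Conjecture \ref{BK conj}. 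Centrality of the Bessel sheaf is a computation on the torus. From $\Phi_{T,\rho,\psi}=\pr_{\ul,!}\tr^*\mL_\psi[r]$ and the projection formula, $\oH_c^*(T,\Phi_{T,\rho,\psi}\otimes\mL_\chi)\cong\oH_c^*(\bG_m^r,\tr^*\mL_\psi\otimes\pr_\ul^*\mL_\chi)[r]$; and since $\pr_\ul=\prod_i\lambda_i$ is a product of cocharacters and $\mL_\chi$ is multiplicative, $\pr_\ul^*\mL_\chi$ is the external product of the rank-one local systems $\lambda_i^*\mL_\chi$, so K\"unneth gives
\[
\oH_c^*(T,\Phi_{T,\rho,\psi}\otimes\mL_\chi)\cong\bigotimes_{i=1}^{r}\oH_c^*\bigl(\bG_m,\mL_\psi|_{\bG_m}\otimes\lambda_i^*\mL_\chi\bigr)[r].
\]
Each factor is one-dimensional in degree $1$: the integrand has generic rank one and is wildly ramified at infinity, so $\oH_c^0=\oH_c^2=0$, while the Euler--Poincar\'e formula gives $\chi_c=-1$. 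Hence the left side is a line in degree $0$, and tracking the canonical $\rW$-equivariant structure on $\tr^*\mL_\psi$ through the identification shows $\rW_\chi$ --- in fact $\rW'_\chi$ --- acts on it by $\sign_\rW$; the $*$-central case is dual. (This is the verification in \cite{C1}.) So everything reduces to Conjecture \ref{vanishing conj}.

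To prove Conjecture \ref{vanishing conj}, note first that its conclusion --- $\pi_!\Phi_\mF$ (resp.\ $\pi_*\Phi_\mF$) supported on $B/U\subset G/U$ --- is, via base change along $G\to G/B$ and the Bruhat stratification, equivalent to the vanishing of the $!$- (resp.\ $*$-) restriction of $\Phi_\mF$ to each $U$-orbit lying in a non-identity cell $BwB$. Over the part of such an orbit contained in $G^{\mathrm{reg}}$ this follows exactly as in the $\SL_2$ proposition above: there $\Ind_{T\subset B}^G\mF$ restricts to the pullback, along the Chevalley map $G^{\mathrm{reg}}\to T/\!/\rW$, of $q_!\mF$, and since the map carries the orbit isomorphically onto $T/\!/\rW$, taking $\rW$-invariants converts the vanishing into the sign-isotypic vanishing of $\oH_c^*(T,\mF\otimes\mL_\chi)$, i.e.\ into centrality. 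The real difficulty is that for general $G$ a $U$-orbit in $G\setminus B$ need not lie in $G^{\mathrm{reg}}$ --- already for $G=\GL_3$ and $w$ a transposition the orbit $Uw$ contains the non-regular element $w$ --- so one must control $\Phi_\mF$ near non-regular points, where it is not pulled back from $T/\!/\rW$.

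To handle the general case I would bring in character sheaves through \cite{BFO}. For central $\mF$ the object $\Ind_{T\subset B}^G\mF$, and hence $\Phi_\mF$, is assembled from the sheaf-theoretic Harish-Chandra induction of $\mF$; \cite{BFO} identifies the category of character $D$-modules on $G$ with the Drinfeld center of the monoidal category $\HC$ of Harish-Chandra bimodules, which carries a spectral decomposition over $T$ (with its $\rW$-action) whose block at $\chi\in\calC(T)(F)$ is governed by the small Weyl group $\rW_\chi$ and its overgroup $\rW'_\chi$. In that block one gets a formula of the shape $\Phi_\mF\simeq\bigoplus_E K_{\chi,E}\otimes\Hom_{\rW'_\chi}\bigl(\oH^*(T,\mF\otimes\mL_\chi),E\bigr)$, the sum running over the irreducible modules $E$ over a twisted group algebra of $\rW'_\chi$ and $K_{\chi,E}$ the associated principal-series character sheaves. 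Centrality forces $\rW_\chi$ to act on the Mellin fibre $\oH^*(T,\mF\otimes\mL_\chi)$ by $\sign_\rW$, so only those $E$ whose restriction to $\rW_\chi$ contains $\sign_{\rW_\chi}$ survive; for the corresponding $K_{\chi,E}$ the acyclicity along $U$-orbits in $BwB$, $w\neq e$, reduces to a Grothendieck--Springer computation on a regular locus, the sign-isotypy being exactly what makes the non-regular strata contribute nothing. The disconnected-center case $\rW_\chi\subsetneq\rW'_\chi$ (cf.\ Example \ref{SL_2 example}) is precisely why the definition of centrality is phrased with $\rW_\chi$.

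All of this takes place in characteristic $0$, so it gives the de Rham case of Conjecture \ref{vanishing conj} directly. For the $\ell$-adic case ($\cha k=p$) I would construct $\Phi_\mF$ together with the spectral data of the \cite{BFO} description over a mixed-characteristic base --- a complete discrete valuation ring with fraction field of characteristic $0$ and residue field of characteristic $p$ --- establish the support condition on the generic fibre by Riemann--Hilbert from the de Rham result, and specialize, using that $\pi_!$, Harish-Chandra induction, and the Bruhat stratification all commute with nearby cycles. I expect the two hardest steps to be: (a) upgrading the \cite{BFO} Drinfeld-center equivalence to precisely the form needed to match the geometric support condition on $G/U$, rather than an equivalence internal to character sheaves, together with the careful handling of $\rW_\chi$ versus $\rW'_\chi$; and (b) the mixed-characteristic construction of the relevant character sheaves and the verification that all functors involved are compatible with specialization.
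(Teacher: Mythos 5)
Your reduction of Conjecture \ref{BK conj} to the vanishing conjecture via (strong) centrality of $\Phi_{T,\rho,\psi}$ matches the paper, and your three-part architecture (de Rham via \cite{BFO}, then mixed characteristic for the $\ell$-adic case) is superficially the same as the paper's. But the core step is carried out by a mechanism that does not exist. You propose to decompose $\Phi_\mF$ itself block-by-block as $\bigoplus_E K_{\chi,E}\otimes\Hom_{\rW'_\chi}(\oH^*(T,\mF\otimes\mL_\chi),E)$ into principal-series character sheaves. For the objects at hand this fails at the start: a strongly central $\mF$ (e.g.\ the $\rho$-Bessel sheaf $\pr_{\ul,!}\tr^*\mL_\psi[r]$) is wildly ramified in the $\ell$-adic setting and irregular in the de Rham setting, so $\Phi_\mF$ is not monodromic, the center of $\U(\fg)$ does not act locally finitely on it, and it has no spectral decomposition over $\calC(T)$ and no expression as a sum of character sheaves; the \cite{BFO} Drinfeld-center machinery simply does not apply to it. The paper's actual key idea is to avoid this entirely: for each $\rW$-orbit $\theta$ of tame characters it introduces the auxiliary \emph{tame} local system $\mE_\theta$ and the genuine character sheaf $\cM_\theta=\Ind_{T\subset B}^G(\mE_\theta)^\rW$, proves $\Av_*^U(\cM_\theta)\is\mE_\theta$ (Theorem \ref{Key}) by identifying the corresponding Harish-Chandra bimodule with the canonical central element $\cZ_\lambda$ (Proposition \ref{E=Z}), and then deduces the statement for arbitrary strongly $*$-central $\mF$ from the convolution identity $\Phi_\mF*\cM_\theta\is\oH^*(T,\mF\otimes\mL_{\chi^{-1}})\otimes\cM_\theta$ (Propositions \ref{prop of ind}, \ref{conv with E_theta sign}, \ref{conv with Phi}) combined with Gabber--Loeser conservativity of the Mellin transform (Lemmas \ref{vanishing 2}, \ref{vanishing 1}). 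Your sketch contains no substitute for this reduction, and your claim that each sign-containing $K_{\chi,E}$ is acyclic along $U$-orbits ``by a Grothendieck--Springer computation on a regular locus'' is unsubstantiated; the acyclicity is special to the particular extension $\cM_\theta$, not to individual principal-series character sheaves.

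Two further points would also break down as written. First, your mixed-characteristic step proposes to lift $\Phi_\mF$ ``together with its spectral data'' and compare generic and special fibres via Riemann--Hilbert and nearby cycles; but $\Phi_\mF$ involves the Artin--Schreier sheaf, which has no mixed-characteristic lift and whose de Rham analogue (the exponential D-module) is irregular, so neither the BBD-type spreading-out equivalence nor Riemann--Hilbert applies to it. The paper only lifts the tame objects $\mE_\theta$ and $\cM_\theta$ (Lemmas \ref{lifting of E_theta}, \ref{lifting of W action}, Proposition \ref{lifting of M_theta}) and transfers the acyclicity of $\cM_\theta$ using a ULA argument for $\Av_*^U$ (Proposition \ref{ULA for Av}); the general $\mF$ in characteristic $p$ is then handled by the convolution argument directly in characteristic $p$. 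Second, your ``regular locus'' step asserts that the Chevalley map carries a $U$-orbit in $G\setminus B$ isomorphically onto $T//\rW$; this is special to semisimple rank one ($\dim xU=\dim U$ exceeds $\dim T//\rW$ in general), so even the regular part of your stratified argument is not correct as stated for general $G$.
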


The goal of this paper is to give a proof of 
Conjecture \ref{BK conj}, and hence Conjecture \ref{kernel}.

Note that the construction of $\rho$-Bessel sheaves 
and Conjecture \ref{BK conj} are entirely geometric and 
have obvious counterparts in the de Rham setting. 
Moreover, it is shown in \cite{C1} that 
the $\rho$-Bessel sheaves on $T$ are in fact strongly central
(see Definition \ref{def of central}). Thus
the vanishing conjecture contains Conjecture \ref{BK conj} as 
a special case 
and what we actually 
prove here is the 
vanishing conjecture (or rather, Conjecture \ref{vanishing conj}
for strongly central complexes). 


\begin{remark}
Conjecture \ref{kernel} and Conjecture \ref{BK conj} here are (slightly) generalized versions
of the original conjectures of Braverman and Kazhdan. The original conjectures require that the representation 
$\rho$ satisfies the positivity assumption mentioned earlier. 
In Corollary \ref{generalized BK conj} and Corollary \ref{BK conj functions} below, we will prove that 
their conjecture holds
without any assumption on $\rho$.

\end{remark}
\quash{
\begin{remark}
The $\gamma$-sheaves $\Phi_{G,\rho}$ are the 
$\ell$-adic sheaves or $D$-modules incarnations of certain invariant distributions 
on reductive groups over local fields investigated in \cite{BK1} and
Conjecture \ref{BK conj} is 
motivated by Godement-Jacquet's work on functional equation of 
principal $L$-function of general linear groups \cite{GJ} and 
local Langlands conjecture.
In more details.
The local Langlands conjecture predicts that for each reductive group $G$ over a local field $F_v$ and a 
representation $\rho$ of the dual group (satisfying some mild technical conditions),  
there should exist a non-linear version of the Fourier transform, which is essentially given by
convolution with an invariant distribution $\Phi_{G(F_v),\rho}$ on $G(F_v)$, satisfying the property that 
for any matrix coefficient $f$ of an irreducible representation $\pi_v$ of $G(F_v)$
we have 
\beq\label{gamma factor}
\Phi_{G(F_v),\rho}*f=\gamma_\rho(\pi_v)f
\eeq where $\gamma_\rho(-)$
is the (conjectural) $\gamma$-function on the set of irreducible representations of $G(F_v)$. 
In the case $G=\GL_n$ and $\rho=\on{std}$ is the standard representation of 
$\check G=\GL_n(\bC)$, the left hand side of~\eqref{gamma factor} specializes to
the standard Fourier transform of $f$ and $\gamma_{\on{std}}(-)$ is the 
$\gamma$-function appearing in the local functional equation established in \cite{GJ}.
The $\gamma$-sheaf $\Phi_{G,\rho}$
is the $\ell$-adic sheaf analogy of $\Phi_{G(F_v),\rho}$ in the finite field setting and 
property~\eqref{gamma factor} above in the finite field setting is a consequence of 
Conjecture \ref{BK conj} (see Section \ref{application}).
\end{remark}
}

\subsection{The main result}
The following theorem is the main result of the paper which 
establishes acyclicity of 
$\Phi_\mF$ 
when $\mF$ is a 
strongly central complex (resp. strongly $*$-central complex), and hence 
the vanishing conjecture for reductive groups with connected center (see Remark \ref{connected center}), 
for almost all characteristics:

\begin{thm}(Theorem \ref{main})\label{main result}
There exists a positive integer $N$ depending only on the type of the group $G$ such that 
the following holds. 
Assume $k=\bC$ or $\on{char}k=p$ is not dividing
$N\ell$.
Let $\mF\in\sD_\rW(T)$ be a strongly central complex (resp. strongly $*$-central complex) on $T$
and let $\Phi_\mF=\Ind_{T\subset B}^G(\mF)^\rW\in\sD(G)$.
Then for any $x\in G\setminus B$,
we have the following cohomology vanishing 
\beq\label{desired vanishing}
\oH_c^*(xU,i^*\Phi_\mF)=0\ \ \ \ \ \ \ \ (\text{resp.}\ \  \oH^*(xU,i^!\Phi_\mF)=0).
\eeq
Here $i:xU\to G$ is the embedding.
Equivalently, the derived push-forward
$\pi_!(\Phi_\mF)$ (resp. $\pi_*(\Phi_\mF)$) is supported on the closed subset 
$T=B/U\subset G/U$. Here $\pi:G\to G/U$ is the quotient map.

In particular,
the vanishing conjecture (Conjecture \ref{vanishing conj}) holds for reductive groups with
connected center. 
\end{thm}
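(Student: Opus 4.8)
The plan is to reduce the vanishing statement to a statement about the Drinfeld center of the category of character sheaves (equivalently character $D$-modules) on $G$, where the techniques of \cite{BFO} on the Drinfeld center of Harish-Chandra bimodules become available. The first step is to reformulate the desired acyclicity \eqref{desired vanishing} in a form adapted to the Grothendieck--Springer picture: for $x\in G\setminus B$ the orbit $xU$ lies in the locus where the Grothendieck--Springer map $\tilde q\colon\tilde G\to G$ is small/Cartesian over $T$, so $i^*\Phi_\mF=i^*\Ind_{T\subset B}^G(\mF)^\rW$ is computed by base change from a complex on $T$. The precise content of the conjecture is that $\pi_!\Phi_\mF$ (resp. $\pi_*\Phi_\mF$) is supported on $T=B/U\subset G/U$; equivalently, that the ``off-diagonal'' part of the induced complex vanishes. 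So I would first set up the Harish-Chandra/horocycle functors: the diagram
\[
G/U \xleftarrow{\ \pi\ } G \xrightarrow{\ q\ } G/U,
\]
and the associated functor sending a complex on $G$ to a complex on $G/U\times_{T}G/U$ (a ``matrix''), whose off-diagonal vanishing is exactly what we want. This recasts the problem as a support statement for the image of $\mF$ under induction, which is the natural home for the center-of-Hecke-category machinery.

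The second step is to upgrade $\mF$ from an object of $\sD_\rW(T)$ to an object of the appropriate monoidal center. The hypothesis that $\mF$ is strongly central (resp. strongly $*$-central) is designed precisely so that the cohomology $\oH_c^*(T,\mF\otimes\mL_\chi)$ carries a $\rW_\chi'$-action through $\sign_\rW$ for every $\chi$; by the Mellin transform / spectral decomposition over $\calC(T)(F)$ this is equivalent to saying that the $\rW$-equivariant complex $\mF$, after induction, defines an object of the Drinfeld center $\mathcal Z(\mathcal{HC})$ of Harish-Chandra bimodules (or of the center of the monoidal category of character sheaves) which is supported on the ``Steinberg'' or sign-isotypic part. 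Concretely, I would show $\Phi_\mF=\Ind_{T\subset B}^G(\mF)^\rW$ lies in the full subcategory of character sheaves on $G$ generated by the image of strongly central complexes, and that on this subcategory the horocycle transform $\pi_!$ lands in the subcategory of complexes on $G/U$ set-theoretically supported on $B/U$. This is where \cite{BFO} enters: the equivalence between the Drinfeld center of Harish-Chandra bimodules and character $D$-modules (and its $\ell$-adic, mixed-characteristic analogue, which is the ``construction of a class of character sheaves in mixed characteristic'' advertised in the abstract) translates the sign-character condition into a geometric support condition.

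The third step is descent in the characteristic: the statement over $\bC$ (de Rham) and over $\overline{\bbF}_q$ for $\cha k=p\nmid N\ell$ should be linked by a standard spreading-out and specialization argument — choose a model over a finitely generated subring of $\bC$, invoke generic base change for the constructible/holonomic complexes involved, and note that the finitely many ``bad'' primes (those dividing the order of $\rW$, the torsion in $\bX_\bullet(T)$, and related numerical data) can be absorbed into the single integer $N$. Finally, the last clause — that the vanishing conjecture holds for $G$ with connected center — is immediate from Remark \ref{connected center}: for such $G$ one has $\rW_\chi=\rW_\chi'$ for all $\chi$, so central $=$ strongly central and $*$-central $=$ strongly $*$-central, and the main statement applies verbatim. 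I expect the main obstacle to be the second step: identifying $\Phi_\mF$ as an object of the center and proving that the off-diagonal block of its horocycle transform vanishes — that is, genuinely importing the \cite{BFO} description of the center of character sheaves and checking that the strongly-central hypothesis is exactly the condition that makes the relevant monodromy/Weyl-group action trivial on the off-diagonal, so that the corresponding complex on $G/U\times G/U$ is supported on the diagonal $T$. The characteristic-$p$ construction of the required character sheaves (rather than $D$-modules) is the technical heart, since \cite{BFO} is written in the de Rham setting and its $\ell$-adic counterpart must be built by hand.
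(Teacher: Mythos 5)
Your overall instinct---that the BFO description of the Drinfeld center of Harish-Chandra bimodules is the engine, and that a spreading-out argument handles the characteristic-$p$ case---matches the paper, and your treatment of the last clause via Remark \ref{connected center} is exactly right. But the central step of your plan has a genuine gap: you propose to identify $\Phi_\mF$ itself (for an arbitrary strongly central complex $\mF$) as an object of the center of character sheaves and then read off the support of its horocycle transform. This cannot work as stated, because for general strongly central $\mF$ the complex $\Phi_\mF$ is \emph{not} a character sheaf / character $D$-module at all: $\mF$ need not be tame or monodromic (the motivating example, the $\rho$-Bessel sheaf, involves the Artin--Schreier sheaf resp.\ the exponential $D$-module, so the center $Z\subset U(\fg)$ does not act locally finitely on $\Gamma(\Phi_\mF)$ and the BFO equivalence $CS_\theta\simeq Z(\HC_{\hat\chi})$ simply does not see it). The strongly central condition is a Weyl-group condition on $\oH^*_c(T,\mF\otimes\mL_\chi)$, not a tameness condition, so it does not place $\Ind_{T\subset B}^G(\mF)^\rW$ inside the category where the Drinfeld-center machinery applies. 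Relatedly, your spreading-out step cannot be applied to $\Phi_\mF$ either: an arbitrary strongly central $\ell$-adic complex in characteristic $p$ (again, anything with an Artin--Schreier factor) has no model over a subring of $\bC$, so generic base change for ``the constructible/holonomic complexes involved'' is unavailable for the object you actually need to control.

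What the paper does instead, and what is missing from your proposal, is an intermediary: for each $\rW$-orbit $\theta$ of tame characters it constructs a specific $*$-central \emph{tame} local system $\mE_\theta$ on $T$ and the associated object $\cM_\theta=\Ind_{T\subset B}^G(\mE_\theta)^\rW$, which \emph{is} a character $D$-module; BFO (via the explicit central Harish-Chandra bimodule $\cZ_\lambda$, Proposition \ref{E=Z}) is applied only to $\cM_\theta$ to prove $\Av_*^U(\cM_\theta)\cong\mE_\theta$, i.e.\ acyclicity of $\cM_\theta$ (Theorem \ref{Key}); the mixed-characteristic lifting and ULA argument transfer this single statement about $\cM_\theta$ (liftable because $\mE_\theta$ is tame) from the de Rham to the $\ell$-adic setting, which is where the integer $N$ arises. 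The general case is then deduced by a convolution identity $\Phi_\mF*\cM_\theta\cong\oH^*(T,\mF\otimes\mL_{\chi^{-1}})\otimes\cM_\theta$, valid precisely because $\mF$ is strongly $*$-central (proved by a Mellin-transform descent argument, Lemma \ref{key descent} and Proposition \ref{conv with E_theta sign}, after reducing to perverse $\mF$), together with the conservativity of the Mellin transform (Laumon; Lemmas \ref{vanishing 2} and \ref{vanishing 1}) applied to the cone of $\Res_{T\subset B}^G(\Phi_\mF)\to\Av_*^U(\Phi_\mF)$. Without this reduction-to-$\cM_\theta$ mechanism, your step two has no way to get off the ground for the non-tame complexes that the theorem is really about.
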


\begin{remark}
The assumption on the characteristic of 
$k$ comes from a spreading out argument used in the proof (see Section \ref{sketch}).
\end{remark}
\begin{remark}
In \cite{C1}, we proved the vanishing conjecture in the case 
$G=\GL_n$ 
using mirabolic subgroups (note that $\GL_n$ has connected center). The argument in \emph{loc. cit.} was inspired by the 
work of Cheng and Ng\^o \cite{CN} on Braverman-Kazhdan conjectures for $G=\GL_n$.
The proof of Theorem \ref{main result} for general 
$G$ uses different methods (see Section \ref{sketch}).
\end{remark}

\subsection{Applications}\label{application}
In this subsection
we assume the characteristic of $k$ is either zero or not dividing $N\ell$, where 
$N$ is the positive integer in Theorem \ref{main result}.

\begin{corollary}\label{generalized BK conj}
Conjecture \ref{BK conj} holds.
\end{corollary}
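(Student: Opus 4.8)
Corollary \ref{generalized BK conj} asserts Conjecture \ref{BK conj}, the acyclicity of the $\rho$-Bessel sheaves $\Phi_{G,\rho,\psi}$ and $\Phi^*_{G,\rho,\psi}$ over the affine subspaces $xU$ for $x\in G\setminus B$. The strategy is to deduce this directly from Theorem \ref{main result} by checking that the $\rho$-Bessel sheaves fit into the framework of that theorem. First I would recall, as noted in the paragraph following Conjecture \ref{BK conj}, that $\Phi_{G,\rho,\psi}=\Ind_{T\subset B}^G(\Phi_{T,\rho,\psi})^{\rW}$ and $\Phi^*_{G,\rho,\psi}=\Ind_{T\subset B}^G(\Phi^*_{T,\rho,\psi})^{\rW}$; that is, they are precisely of the form $\Phi_{\mF}$ for $\mF=\Phi_{T,\rho,\psi}$ (resp.\ $\mF=\Phi^*_{T,\rho,\psi}$) in $\sD_\rW(T)$. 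Second, I would invoke the fact, proved in \cite{C1} and recalled in the excerpt, that the $\rho$-Bessel sheaves on $T$ are \emph{strongly central} (resp.\ strongly $*$-central) — this is exactly the input hypothesis of Theorem \ref{main result}. With these two observations in hand, Theorem \ref{main result} applies verbatim: for any $x\in G\setminus B$ we get $\oH_c^*(xU,i^*\Phi_{G,\rho,\psi})=0$ and $\oH^*(xU,i^!\Phi^*_{G,\rho,\psi})=0$, which is the content of Conjecture \ref{BK conj}.

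The only point requiring genuine care is the characteristic hypothesis. Theorem \ref{main result} requires $k=\bC$ or $\on{char}k=p\nmid N\ell$, where $N$ depends only on the type of $G$; but Conjecture \ref{BK conj} and Conjecture \ref{kernel} (via the functions--sheaves correspondence) are naturally posed over $k=\overline{\mathbb F}_q$. Since we are working under the standing assumption of Section \ref{application} — namely that $\cha k$ is zero or prime to $N\ell$ — this is not an additional restriction here, and I would simply note that the corollary is stated within that range of characteristics. One should also remark that the de Rham ($k=\bC$) version of Conjecture \ref{BK conj} is covered simultaneously, since both the $\rho$-Bessel sheaf construction and the strong centrality statement are geometric and make sense in the de Rham setting.

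In summary the proof is short: it is a matter of (i) identifying $\Phi_{G,\rho,\psi}$ (resp.\ $\Phi^*_{G,\rho,\psi}$) with $\Phi_\mF$ for the central (resp.\ $*$-central) complex $\mF=\Phi_{T,\rho,\psi}$ (resp.\ $\Phi^*_{T,\rho,\psi}$) on $T$, (ii) citing the strong centrality of the $\rho$-Bessel sheaves on $T$ from \cite{C1}, and (iii) applying Theorem \ref{main result}. There is no real obstacle at this stage — all the work has been pushed into Theorem \ref{main result}, whose own proof (the spreading-out and character-sheaf arguments outlined in Section \ref{sketch}) is where the difficulty lies. The role of this corollary is purely to record that the Braverman--Kazhdan acyclicity conjecture is the special case of the vanishing conjecture attached to the $\rho$-Bessel sheaves.
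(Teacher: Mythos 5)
Your proposal is correct and matches the paper's own proof: the paper likewise cites \cite[Theorem 1.4]{C1} for the strong centrality (resp. strong $*$-centrality) of $\Phi_{T,\rho,\psi}$ (resp. $\Phi^*_{T,\rho,\psi}$) and then applies Theorem \ref{main result} directly, under the same standing characteristic assumption of Section \ref{application}. Nothing further is needed.
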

\begin{proof}
It is shown in \cite[Theorem 1.4]{C1} that the Braverman-Kazhdan's $\rho$-Bessel sheaf  
$\Phi_{T,\rho,\psi}$ (resp. $\Phi_{T,\rho,\psi}^*$) on $T$ is strongly central (resp. strongly $*$-central). Thus Theorem \ref{main result} immediately implies the corollary.
\end{proof}

\begin{corollary}\label{BK conj functions}
Conjecture \ref{kernel} holds.
\end{corollary}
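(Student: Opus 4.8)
The plan is to deduce Conjecture \ref{kernel} from Corollary \ref{generalized BK conj} (the acyclicity of $\rho$-Bessel sheaves) by combining the cohomological vanishing with the sheaf-function dictionary, essentially following the argument already sketched by Braverman and Kazhdan in \cite{BK1}. The key point is that acyclicity forces $\Phi_{G,\rho,\psi}$ to behave, as a kernel, exactly like a convolution operator that is diagonalized by Deligne--Lusztig characters, and tracking Frobenius traces through this identifies $\Tr(\Phi_{G,\rho,\psi})$ with $\phi_{G,\rho,\psi}$.

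First I would recall that the function $\phi_{G,\rho,\psi}$ is characterized by the property that convolution with it acts on the character $\chi_\pi$ of each $\pi\in\on{Irr}(G(\mathbb F_q))$ by the scalar $\gamma_{G,\rho,\psi}(\pi)$, and that this scalar depends only on the Deligne--Lusztig datum $(T,\theta)$ through an explicit Gauss sum. So it suffices to check that the class function $f:=\Tr(\Phi_{G,\rho,\psi})$ satisfies the same eigenvalue identity, i.e.\ that convolution by $f$ sends $\chi_\pi\mapsto \gamma_{G,\rho,\psi}(\pi)\chi_\pi$ for every $\pi$. Since the irreducible characters span the space of class functions and the Deligne--Lusztig virtual characters $R_{T,\theta}$ span (over $\bQ$) the same space, it is enough to compute the convolution $f * R_{T,\theta}$ for each $F$-stable maximal torus $T$ and character $\theta$ of $T(\mathbb F_q)$, and to show it equals the appropriate Gauss-type scalar times $R_{T,\theta}$.

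Next I would geometrize this computation. On the sheaf level, $R_{T,\theta}$ is the trace of Frobenius on a Deligne--Lusztig induction of the rank-one local system $\mL_\theta$ on $T$; by the construction $\Phi_{G,\rho,\psi}=\Ind_{T\subset B}^G(\Phi_{T,\rho,\psi})^{\rW}$ and the compatibility of parabolic/Deligne--Lusztig induction with convolution, the convolution $\Phi_{G,\rho,\psi}\ast (\text{Deligne--Lusztig sheaf for }(T,\theta))$ reduces, via the projection $\pi:G\to G/U$ and the acyclicity statement of Corollary \ref{generalized BK conj} (which says $\pi_!\Phi_{G,\rho,\psi}$ is supported on $T=B/U$), to a convolution computation on the torus $T$. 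There it becomes the Mellin-transform pairing $\oH_c^*(T,\Phi_{T,\rho,\psi}\otimes\mL_\theta)$, which by the definition of $\Phi_{T,\rho,\psi}=\pr_{\ul,!}\tr^*\mL_\psi[r]$ is a product of Artin--Schreier--Gauss sums $\prod_i \oH_c^*(\bG_m,\lambda_i^*\mL_\theta\otimes\mL_\psi)$ — precisely the Gauss-type sum that \cite{BK1} attaches to $\theta$. Taking traces of Frobenius throughout, using the Grothendieck trace formula and proper base change, yields $f * R_{T,\theta}=\gamma_{G,\rho,\psi}(\text{class of }(T,\theta))\cdot R_{T,\theta}$, which is the required identity.

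The main obstacle I anticipate is bookkeeping rather than a conceptual gap: one must set up carefully the Weil structure on $\Phi_{G,\rho,\psi}$ (chosen so that its trace is a \emph{class} function, with the correct normalization and power of $q$), verify that the $\rW$-invariants and the $\rW$-equivariant structure on $\Phi_{T,\rho,\psi}$ are compatible with the Deligne--Lusztig induction used to realize $R_{T,\theta}$ geometrically, and confirm that the support statement of Corollary \ref{generalized BK conj} is exactly what is needed to collapse the convolution on $G$ to a computation on $T$ (this is where acyclicity enters, ensuring there are no extra contributions from $G\setminus B$). Once these compatibilities are in place, matching the resulting Gauss sums with the explicit formula for $\gamma_{G,\rho,\psi}$ from \cite{BK1,BK2} is a direct, if lengthy, calculation, and the corollary follows.
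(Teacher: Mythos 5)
Your proposal is correct and follows essentially the same route as the paper: the paper's proof is simply the observation that \cite[Corollary 6.7]{BK2} establishes the implication from Conjecture \ref{BK conj} to Conjecture \ref{kernel}, which is then combined with Corollary \ref{generalized BK conj}. What you sketch (acyclicity collapsing the convolution with Deligne--Lusztig characters to Gauss-sum computations on the torus via the sheaf--function dictionary) is precisely the content of that cited result of Braverman--Kazhdan, so your argument unpacks the citation rather than taking a different path.
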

\begin{proof}
It is shown in \cite[Corollary 6.7]{BK2} that 
Conjecture \ref{BK conj} implies
Conjecture \ref{kernel}.
Thus Corollary \ref{generalized BK conj} implies Corollary \ref{BK conj functions}.
\end{proof}

Conjecture \ref{BK conj} was proved by 
Braverman and Kazhdan \cite[Theorem 6.9]{BK2}
in the case when the semi-simple rank of 
$G$ is less or equal to one, and by
Cheng and Ng\^o \cite[Theorem 2.4]{CN}
in the case $G=\GL_n$.

Conjecture \ref{kernel} was proved by Braverman and Kazhdan \cite[Theorem 1.6]{BK2}
when the semi-simple rank of 
$G$ is less or equal to one or $G=\GL_n$
under some assumption on $\rho$.
In a recent work, G. Laumon and E. Letellier established Conjecture \ref{kernel} via a different method \cite[Theorem 1.0.2]{LL}. It is interesting to note that in \emph{loc.cit.}
they also 
made no assumption on the representation $\rho$.

\begin{remark}
In \cite[Theorem 1.0.1]{LL}, Laumon-Letellier also proved a formula for the non-linear Fourier kernel $\phi_{G,\rho,\psi}$ in terms of Deligne-Lusztig inductions.
It will be interesting to prove a similar result in the de Rham setting, that is, 
write down an explicit formula
for the 
$\rho$-Bessel $D$-module  
$\Phi_{G,\rho,\psi}$, or rather, the corresponding 
system of differential equations on $G$. 
We expect applications of such formula
to the
Braverman-Kazhdan-Ng\^o's approach to 
functional equation of automorphic $L$-functions
\cite{BK1,Ng}.

\end{remark}

\subsection{Whittaker sheaves on $G$}
In the course of proving Theorem \ref{main result},
we establish several characterizations of $*$-central $D$-modules on $T$ using the Mellin transform, see Theorem \ref{characterization of central}.
On the other hand, in their works on 
Whittaker $D$-modules, nil Hecke algebras, and quantum Toda lattices, 
Ginzburg \cite{Gi2} and Lonergan \cite{L} 
prove that the 
category of Whittaker $D$-modules on $G$ is equivalent to
a certain full subcategory of the category of $\rW$-equivariant $D$-modules on $T$.
It turns out that, as an immediate corollary of Theorem \ref{characterization of central}, 
the latter full subcategory is equivalent to the category 
of central $D$-modules on $T$. 
Thus, combining the results in \emph{loc. cit.} we obtain:

\begin{thm}(Theorem \ref{Whit})\label{whit}
The category of Whittaker $D$-modules on $G$ is equivalent to
the abelian category of central $D$-modules on $T$. 
 
\end{thm}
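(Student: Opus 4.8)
The plan is to deduce Theorem~\ref{whit} formally from the Ginzburg--Lonergan description of Whittaker $D$-modules on $G$ together with the Mellin-transform characterization of $*$-central $D$-modules in Theorem~\ref{characterization of central}; there is essentially no new geometry involved, only a dictionary to set up carefully.

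First I would recall from \cite{Gi2,L} the precise statement that the category $\on{Wh}(G)$ of Whittaker $D$-modules on $G$ is equivalent, through a functor of Kostant--Whittaker reduction type (restriction of the relevant Whittaker-equivariant $D$-module to the Kostant section, up to a cohomological shift), to a full subcategory $\mathcal{C}_{\mathrm{Wh}}\subset\sD_\rW(T)$ of the category of $\rW$-equivariant $D$-modules on $T$. In \emph{loc. cit.} the subcategory $\mathcal{C}_{\mathrm{Wh}}$ is cut out by a condition phrased in terms of the nil-Hecke algebra action, equivalently a "freeness after a $\sign_\rW$-twist" condition on the $\rW$-equivariant structure. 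The first task is to translate this condition, via the Mellin transform on $T$, into a statement about the fibres of the Mellin transform over the character variety $\calC(T)(F)$: at each $\chi$, the stabilizer $\rW'_\chi$ (resp.\ the reflection subgroup $\rW_\chi$) acts through $\sign_\rW$.

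Second, I would invoke Theorem~\ref{characterization of central}, which identifies exactly this fibrewise sign condition with strong $*$-centrality (resp.\ $*$-centrality) of a $D$-module on $T$. Comparing, one gets that $\mathcal{C}_{\mathrm{Wh}}$ is precisely the category of central $D$-modules on $T$; the passage between the $*$-central and the central versions (between the $\rW'_\chi$ and $\rW_\chi$ conditions on one side, and between $*$- and $!$-cohomology in Definition~\ref{def of central} on the other) is handled using Verdier duality on $T$, which interchanges $\oH_c^*(T,\mF\otimes\mL_\chi)$ with $\oH^*(T,\mathbb{D}\mF\otimes\mL_{\chi^{-1}})$ $\rW$-equivariantly, together with the fact that $\on{Wh}(G)$ is stable under duality and $\mathbb{D}$ is intertwined with it under the equivalence. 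Chaining the two equivalences gives $\on{Wh}(G)\simeq\{\text{central }D\text{-modules on }T\}$ as abelian categories: the Kostant--Whittaker reduction functor is exact and $t$-exact for the natural $t$-structures, so comparing hearts suffices.

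The main obstacle is the bookkeeping in the first step: extracting from \cite{Gi2,L} the description of $\mathcal{C}_{\mathrm{Wh}}$ in a form that visibly matches Definition~\ref{def of central} after Mellin transform, and in particular getting the $\sign_\rW$-twist and the distinction between $\rW_\chi$ and $\rW'_\chi$ exactly right. Once that identification is pinned down, Theorem~\ref{characterization of central} does all the work, and the remaining points --- abelian versus derived categories, the central/$*$-central comparison via duality, and naturality of the equivalence --- are routine.
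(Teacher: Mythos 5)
Your proposal is correct and follows essentially the same route as the paper: quote the Ginzburg--Lonergan equivalence between Whittaker $D$-modules on $G$ and a subcategory of $\rW$-equivariant $D$-modules on $T$, identify that subcategory with the $*$-central $D$-modules via Theorem~\ref{characterization of central}, and pass from $*$-central to central by Verdier duality. The only difference is that the paper takes the Ginzburg--Lonergan condition to be literally condition (3) of Theorem~\ref{characterization of central} (the descent of $\frak M(\mF\otimes\sign)$ to $\check\ft//\rW_{a,\lambda}$), so the translation step you flag as the main obstacle is exactly what is cited from \cite{Gi2,L}.
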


It was mentioned in the introduction of \cite{Gi2} that
Drinfeld asked the question of finding a description of an 
$\ell$-adic counterpart of the category of Whittaker $D$-modules on $G$ in terms of 
$\rW$-equivariant sheaves on $T$. Theorem \ref{whit}
suggests that the category of 
$\ell$-adic
central perverse sheaves on
$T$ might provide an answer 
to Drinfeld's question (see Section \ref{Whittaker}).

\begin{remark}
In a recent work \cite{BZG}, Ben-Zvi and Gunningham 
constructed a remarkable functor from the 
category of Whittaker $D$-modules on $G$ to the category of 
$G$-conjugation equivaraint $D$-module on $G$, they called it 
Ng\^o functor, 
and they conjectured that the objects in the essential image of the Ng\^o functor satisfy the 
cohomology vanishing properties in~\eqref{statement}, see \cite[Conjecture 2.9 and 2.14]{BZG}.
Our Theorem \ref{main result} and Theorem \ref{whit} might be useful for
studying their conjectures.

\end{remark}


\subsection{Outline of the proof}\label{sketch}
The proof of Theorem \ref{main result} consists of three steps:

Step 1.
We construct for each $\rW$-orbit $\theta$ in $\calC(T)(F)$
a remarkable 
$\rW$-equivariant local system $\mE_\theta$ on $T$\footnote{The author learned the existence of $\mE_\theta$ from R. Bezruakvnikov} and 
consider the equivariant perverse sheaf or $D$-module $\cM_\theta=\Ind_{T\subset B}^G(\mE_\theta)^\rW$ on $G$.
A key observation is that to prove Theorem \ref{main result} it suffices 
to prove the acyclicity of $\cM_{\theta}$, that is, the cohomology vanishing properties~\eqref{desired vanishing} for
$\cM_{\theta}$.
This follows from a computation of the convolution of $\cM_\theta$
with $\Phi_\mF$ (Proposition \ref{conv with Phi}), where $\mF\in\sD_\rW(T)$ is a strongly $*$-central complex, 
and a result of Laumon on the conservativity of the Mellin transform (Lemma \ref{vanishing 1}).

Step 2. We use the techniques developed in \cite{BFO} on 
character $D$-modules and Drinfeld center of Harish-Chandra bimodules to prove
acyclicity of 
$\cM_{\theta}$ in the de Rham setting. 
An important point here is that 
$\mE_\theta$ is a tame local system on $T$, and hence the associated $D$-module 
$\cM_{\theta}$ on $G$ is a character $D$-module and the results in 
\emph{loc. cit.} are applicable.
A key step in the proof is to show that 
the Harish-Chandra bimodule corresponding to
$\mE_\theta$, under the Beilinson-Bernstein localization theorem, has a 
canonical central structure (Proposition \ref{E=Z}).

Step 3.
We construct a mixed characteristic lifting $\cM_{\theta,A}$ of $\cM_\theta$ over a strictly Henselian ring $A$ with residue field $k$ of characteristic not 
dividing $N\ell$, where $N$ is a positive integer depending only on the type 
of $G$.
We prove that $\cM_{\theta,A}$ is
universally locally acyclic with respect to the quotient map $\pi_A:G_A\to G_A/U_A$ (here $G_A$ and $U_A$ are 
models of $G$ and $U$ over $A$).
This allows us to deduce acyclicity of $\cM_\theta$ in the $\ell$-adic setting from the 
de Rham setting.
This completes the proof of Theorem \ref{main result}.

The results established in this paper reveal interesting connections between the
Braverman-Kazhdan conjectures, 
Whittaker $D$-modules,
character sheaves, and categorical center of Hecke categories. 
We plan to 
explore those connections in a future work \cite{C2}.

\begin{remark}
The proof of acyclicity of $\cM_\theta$ in the de Rham setting makes use of Harish-Chandra bimodules, and hence is algebraic. It would be interesting to have a geometric proof which treats the cases of various ground fields and
sheaf theories uniformly.
Presumably, such a proof will provide an explicit bound of the integer $N$ in 
Theorem \ref{main result}. 
 
\end{remark}

\begin{remark}
Using a similar but more involved argument, one can show that 
the vanishing conjecture holds for reductive groups with disconnected center.
Such a generalization is not needed for the proof of Braverman-Kazhdan conjectures, 
so the details will be worked out in a forthcoming paper \cite{C2}.

\end{remark}

\subsection{Organization}
We briefly summarize here the main goals of each section.
In Section 2 we collect standard notation in algebraic groups, $\ell$-adic 
sheaves, and $D$-modules.
In Section 3 we study induction and restriction functors.
In Section 4, we study characterizations of central and $*$-central 
complexes using Mellin transfroms.
In Section 5 we introduce the $*$-central local systems $\mE_\theta$ and 
establish some basic properties of them.
In Section 6 we prove acyclicity of character sheaves
$\cM_\theta$ (Theorem \ref{Key}). 
In Section 7 we prove Theorem \ref{main result}.

{\bf Acknowledgement.}
I especially like to thank
G\'erard Laumon for useful discussions. The argument 
using character sheaves in 
mixed-characteristic to prove the 
Braverman-Kazhdan conjectures in the $\ell$-adic setting, follows a suggestion of his.
I also would like to thank 
Roman Bezrukavnikov, Ng\^o Bao Ch\^au, Victor Ginzburg, and Zhiwei Yun for useful discussions. 
I am grateful for the support of NSF grant DMS-1702337.

\section{Notations}

\subsection{}
We denote by 
$\mB=G/B$ the flag variety. 
We denote by $\fg$, $\fb$, $\ft$, $\fn$ the Lie algebras of $G,B,T,U$.
We denote by $G^{\on{rs}}$ (resp. $T^{\on{rs}}$) the open subset consisting of regular semi-simple elements
in $G$ (resp. $T$). 
We denote by $G^{\on{reg}}$ the open subset consisting of regular elements in
$G$.
We denote by $\bG_a$ the additive group
and $\bG_m$ the multiplicative group.
We denote by $\check G$ the complex dual group of $G$ and 
$\check T$ the dual maximal torus.
We denote by 
$\rW_a^{\on{ex}}=\rW\ltimes\Lambda$ the extended affine Weyl group and 
$\rW_a=\rW\ltimes\rm R$ the affine Weyl group.
Here $\Lambda=\bX_\bullet(\check T)=\Hom(\bG_m,\check T)$ is the co-character lattice 
and $\rm R\subset\Lambda$
is the set of co-roots of $\check G$.
\subsection{}
For an algebraic stack $\calX$ over $k$, we denote by
$\sD(\calX)$ the bounded derived category of $\ell$-adic sheaves on 
$\calX$ in the $\ell$-adic setting or the bounded derived category of 
holonomic $D$-modules on $\calX$ in the de Rham setting.

For a smooth scheme $X$, we will write $\mathrm 1_X\in\sD(X)^\heartsuit$ for the constant perverse sheaf $\barQ[\dim X]$ on $X$ in the $\ell$-adic setting or the structure sheaf 
$\mO_X$ in the de Rham setting.

For a representable morphism $f:\calX\to\calY$,
the six functors $f^*,f_*,f_!,g^!,\otimes,\underline\Hom$ 
are understood in the derived sense. 
For a smooth map $f:\calX\to\calY$ of relative dimension $d$ we 
write $f^\circ=f^*[d]=f^![-d]$.

For an algebraic group $H$ over $k$ acting on a 
$k$-scheme $X$, we denote by $X/H$, the corresponding quotient stack
and $X//H$ the geometric invariant quotient (if exists).
We will write $H/_\ad H$ for the quotient stack of $H$ with respect to the 
adjoint action.
Consider the case when $H$ is a finite group.
Then the pull-back along the quotient map $X\to X/H$ induces an equivalence 
between 
$\sD(X/H)$ and the (naive) $H$-equivariant derived category 
on $X$, denoted by $\sD_H(X)$, whose objects consist of pair 
$(\mF,\phi)$, where $\mF\in\sD(X)$ and $\phi:a^*\mF\is\pr^*\mF$ is an isomorphism 
satisfying the usual compatibility conditions (here $a$ and $\pr$ are the action and projection map from $H\times X$ to $X$ respectively).\footnote{This holds in a more general situation when the neutral component of $H$ is unipotent.} 
We will call an object $(\mF,\phi)$ in $\sD_H(X)$ a $H$-equivariant complex and 
$\phi$ a $H$-equivariant structure on $\mF$. 
For simplicity, 
we will write $\mF=(\mF,\phi)$ for an object in $\sD_H(X)$.

We denote by
$\tau_{\leq n},\tau_{\geq n}$ the truncation functors corresponding to the 
standard $t$-structure on
$\sD(\calX)$. For any $\mF\in\sD(\calX)$, we denote by 
$\sH^n(\mF)$ the $n$-th cohomology sheaf.
In the $\ell$-adic setting,
we denote by 
$^p\tau_{\leq n},^p\tau_{\geq n}$ the truncation functors
corresponding to the perverse $t$-structure. 
For any $\mF\in \sD(\calX)$, the $n$-th perverse cohomology sheaf is defined as 
$^p\mathscr H^n(\mF)={^p}\tau_{\geq n}{^p}\tau_{\leq n}(\mF)[n]$.

Depending on the setting, we write 
$\sD(\calX)^\heartsuit$ for the heart corresponding to the 
perverse $t$-structure 
in the $\ell$-adic setting 
and the heart corresponding to the 
standard $t$-structure in the de Rham setting.

For any stack $\calX$ over $k$, we denote by
$\on{Coh}(\calX)$ and $\on{QCoh}(\calX)$ the categories of 
coherent and quasi-coherent sheaves on $\calX$, and 
$D_{coh}^b(\calX)$ and $D_{qcoh}^b(\calX)$ the corresponding 
bounded derived categories.

Let $\mF$ be an
quasi-coherent sheaf or 
$D$-module on a scheme. We will write 
$\Gamma(\mF)$ and $R\Gamma(\mF)$ for the global section and derived global section
of $\mF$ as an quasi-coherent sheaf.
For any scheme $X$
we will write $\mO_X$ for the structure sheaf of $X$ and 
$\mO(X)=\Gamma(\mO_X)$ the ring of global functions on $X$.

Assume $k=\bC$. For any smooth scheme $X$ we denote by 
$\mD_X$ the sheaf of differential operators on $X$.
Let
$f:X\ra Y$ be a principal $T$-bundle over a smooth scheme $Y$.
A $D$-module $\cF$ on $X$ is called $T$-monodromic
if it is weakly $T$-equivariant (see \cite[Section 2.5]{BB}). 
A object $\cF\in\sD(X)$ is called $T$-monodromic if 
$\sH^i(\cF)$ is $T$-monodromic for all $i$. Let $\cF\in\sD(X)$
be a $T$-monodromic object.
 For any $\mu\in\check\ft\is\ft^*$, we denote $\Gamma^{\hat\mu}(\cF)$ 
(resp. $R\Gamma^{\hat\mu}(\cF)$) the maximal summand 
of $\Gamma(\cF)$ (resp. $R\Gamma(\cF)$)
where $\ft$, acting as infinitesimal
translations along the action of $T$, acts with the generalized eigenvalue $\mu$.

\quash{
\subsection{$D$-modules}
For any smooth variety $X$ over $\bC$ we denote by $\mM(X)$ the abelian category of holonomic
$D$-modules on $X$.
We write 
$D(X)$ for the bounded derived category of holonomic $D$-modules on $X$

We denote by $\mO_X$ and 
$\mD_X$ the sheaf of functors on $X$ and 
the sheaf of differential operators on $X$ respectively.
For a $\cF\in D(X)$, we denote by $\mH^i(\cF)\in\cM(X)$ its $i$-th cohomology $D$-module.

Let $f:X\ra Y$ be a map between smooth varieties. 
Then we have
functors $f^*,f^!,f_*,f_!$ between $D(X)$ and $D(Y)$.
Note all functors above are understood in the derived sense. 
We denote by 
$\mathbb D$ the duality functor on 
$D(X)$.
We define $f^0:=f^![\dim Y-\dim X]$.
When $Y=\on{Spec}(\bC)$ is a point 
we sometimes write $R\Gamma_{\text{dR}}(\cM):=f_*(\cM)$ and 
$H^i_{\text{dR}}(\cM):=\mH^i(f_*(\cM))$.

For $\cM,\cM'\in D(X)$, we define 
$\cM\otimes\cM=\Delta^*(\cM\boxtimes\cM')$
and $\cM\otimes^!\cM=\Delta^!(\cM\boxtimes\cM')$
where $\Delta:X\ra X\times X$ is the diagonal embedding.

For a $D$-module $\cM$ on $X$ we denote by
$\Gamma(\cM)$ (resp. $R\Gamma(\cM)$) the global sections (resp.
derived global sections)
of $\cM$ regrading 
as quasi-coherent $\mO_X$-module.

By a local system on $X$ we mean 
a $\cO_X$-coherent $D$-module on $X$, a.k.a. 
a vector bundle on $X$ with a flat connection. 

Assume $f:X\ra Y$ is a principal $T$-bundle. 
A $D$-module $\cF$ on $X$ is called $T$-monodromic
if it is weakly $T$-equivariant (see \cite[Section 2.5]{BB1}). 
We denote by $\cM(X)_{mon}$ the category consisting of 
$T$-monodromic $D$-modules on $X$.
A object $\cF\in D(X)$ is called $T$-monodromic if 
$\mH^i(\cF)\in\cM(X)_{mon}$ for all $i$. We denote by
$D(X)_{mon}$ the full subcategory consisting of 
$T$-monodromic objects.
Let $\cF\in\cM(X)_{mon}$.
For any $\mu\in\breve\ft\ (\is\ft^*)$, we denote $\Gamma^{\hat\mu}(\cF)$ 
(resp. $R\Gamma^{\hat\mu}(\cF)$) the maximal summand 
of $\Gamma(X,\cF)$ (resp. $R\Gamma(X,\cF)$)
where $U(\ft)$ (acting as infinitesimal
translations along the action of $T$) acts with the generalized eigenvalue $\mu$.

\quash{
The action induces a
map $U(\ft)\ra\Gamma(X,\mD_X)$, hence for any 
$\cF\in\cM(X)$ and $U\subset X$ open, the section 
$\Gamma(U,\cF)$ is 
naturally a $U(\ft)$-module.
A $D$-module $\cF$ on $X$ is called $T$-monodromic if the action of 
$U(\ft)$ on $\Gamma(U,\cF)$ is locally finite for all open $U\subset X$.}
}

\section{Induction and restriction functors}
In this section we collect some known facts about 
induction and restriction functors.

\subsection{}
Recall the Grothendieck-Springer simultaneous resolution 
of the Steinberg map $c:G\to T//W$:
\beq\label{G-S resolution}
\xymatrix{\widetilde G\ar[r]^{\tilde q}\ar[d]^{\tilde c\ \ }&T\ar[d]^q\\
G\ar[r]^c&T//\rW}
\eeq
where $\widetilde G$ is the closed subvariety of $G\times G/B$ 
consisting of 
pairs $(g,xB)$ such that $x^{-1}gx\in B$.
The map $\tilde c$ is given by $(g,xB)\to g$, and the map 
$\tilde q$ is given by $(g,xB)\to x^{-1}gx\on{\ mod} U\in B/U=T$.  
The induction functor $\Ind_{T\subset B}^G:\sD(T)\to\sD(G)$
is given by \[\Ind_{T\subset B}^G(\mF)=
\tilde c_*\tilde q^\circ(\mF).\]
We have the following equivalent constructions of 
$\Ind_{T\subset B}^G$. Consider the fiber product 
\[Z=G\times_{T//\rW}T.\]
It is known that the map $h:\tilde G\to Z$ induced from~\eqref{G-S resolution} is a small map and it follows that 
$\IC(Z)=h_!\mathrm 1_S\in\sD(Z)^\heartsuit$ is the IC-complex of $Z$.
We have 
\beq\label{Ind IC}
\Ind^G_{T\subset B}(\mF)\is (p_G)_*(p_T^*(\mF)\otimes\IC(Z))[\dim G-\dim T]\eeq 
where $p_T:Z\to T$ and $p_G:Z\to G$ are the natural projection map.
Let $X$ be a scheme acted on by an algebraic group $H$ and let
$H'\subset H$ be a subgroup.  
There is an averaging functor
$\Av_{*}^{H/H'}=\pi_*:\sD(X/H')\to\sD(X/H)$ (resp. $\Av_{!}^{H/H'}=\pi_!:\sD(X/H')\to\sD(X/H)$)
which is the right adjoint (resp. left adjoint) of the 
forgetful functor $\on{oblv}^{H/H'}:\sD(X/H)\to\sD(X/H')$. 
Here $\pi:X/H'\to X/H$ is the natural map.
If $H'=e$ is trivial we will omit $H'$ and 
simply write 
$\on{Av}_*^H=\on{Av}_*^{H/e}$ (resp. $\on{Av}_!^H=\on{Av}_*^{H/e}$) 
and $\on{oblv}^H=\on{oblv}^{H/e}$.

Note that, for any $\mF\in\sD(T)$, its $*$-pull back along  $B\to T=B/U$, denoted by
$\mF_B$, can be regarded as an object in $\sD(G/_\ad B)$ and there is a canonical isomorphism 
\[\on{oblv}^{G/e}\circ\Av_*^{G/B}(\mF_B)\is\Ind_{T\subset B}^G(\mF).\]

The functor $\Ind_{T\subset B}^G$ admits a righ adjoint
$\Res_{T\subset B}^G:\sD(G)\to \sD(T)$, called the restriction functor, and is given by
\[\Res_{T\subset B}^G(\mF)= (q_B)_*i_B^!(\mF) \]
where $i_B:B\to G$ is the natural inclusion and 
$q_B:B\to T=B/U$ is the quotient map.
More generally, one could define 
$\Res_{L\subset P}^G: \sD(G)\to \sD(L)$, for any 
pair $(L,P)$ where $L$ is a Levi subgroup of a parabolic subgroup $P$ of $G$.

We have the following exactness properties of 
induction and restriction functors:
\begin{proposition}\label{exactness}
(1) The functor $\Ind_{T\subset B}^G$ maps perverse sheaves on $T$ to 
perverse sheaves on $G$.
(2) The functor 
$\Res_{L\subset P}^G$ maps 
$G$-conjugation equivariant perverse sheaves on $G$ to 
$L$-conjugation equivariant perverse sheaves on $L$.
\end{proposition}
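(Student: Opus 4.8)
The plan is to deduce both statements from the description of $\Ind$ via the IC-complex of $Z = G\times_{T//\rW} T$ given in \eqref{Ind IC}, together with the fact that $h:\widetilde G\to Z$ is small. For part (1), I would argue as follows. The projection $p_T:Z\to T$ is finite (it is the base change of the finite map $c:G\to T//\rW$ along $q:T\to T//\rW$), so $p_T^*\mF[\dim Z - \dim T] = p_T^\circ\mF$ is perverse-exact is the wrong phrasing — rather, since $p_T$ is finite, $p_T^*$ differs from $p_T^!$ only by the relative dualizing complex, which is concentrated in degree zero as $p_T$ is finite; more cleanly, I would use that $p_T$ is finite and $\IC(Z)$ restricted to fibers is controlled, but the cleanest route is: $\IC(Z)$ is a perverse sheaf on $Z$, $p_T$ is finite hence affine, so $p_{T*}=p_{T!}$ is perverse $t$-exact in both directions, and therefore $p_T^*(\mF)\otimes\IC(Z)$ — here I need that tensoring a perverse sheaf with the pullback of a perverse sheaf along a finite map, up to the appropriate shift, lands in perverse degrees $\le 0$ and $\ge 0$. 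An alternative and perhaps safer approach: factor $\Ind$ as $\widetilde G \xrightarrow{\tilde q} T$ and $\widetilde G \xrightarrow{\tilde c} G$, note $\tilde q$ is smooth of relative dimension $\dim G - \dim T$ (it is a fibration with fibers isomorphic to $G/B\times U$ — actually $\widetilde G\to T$ is smooth since $\widetilde G\to \mathcal B$ is a vector-bundle-like fibration), hence $\tilde q^\circ$ is perverse $t$-exact; then $\tilde c$ is proper, and by the smallness of $h$ together with properness of $Z\to G$, the composite $\tilde c_* = (p_G)_*\circ h_*$ sends $\IC$-type sheaves correctly. I would invoke the decomposition theorem / smallness to conclude $\tilde c_* \tilde q^\circ$ preserves perversity; this is a standard computation (cf. Lusztig, Ginzburg) and I expect to cite it rather than reprove it.

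For part (2), I would use that $\Res_{L\subset P}^G(\mF) = (q_P)_* i_P^!(\mF)$ where $i_P:P\hookrightarrow G$ and $q_P:P\to L$. The key geometric input is that for $\mF$ a $G$-conjugation equivariant perverse sheaf, the restriction to $P$ and averaging down to $L/_{\ad}L$ is controlled by a diagram relating $G/_{\ad}G$, $P/_{\ad}P$, and $L/_{\ad}L$. Concretely, the map $P/_{\ad}P \to G/_{\ad}G$ and the map $P/_{\ad}P\to L/_{\ad}L$ fit into the Lusztig induction/restriction picture, and the statement that parabolic restriction is perverse $t$-exact on character sheaves (indeed on all conjugation-equivariant perverse sheaves) is exactly a theorem I would cite — this is due to Lusztig in the character sheaf setting, with the general equivariant statement appearing in work of Mirković, Ginzburg, or in the hyperbolic-localization formulation (the restriction functor is a hyperbolic localization functor along the $\bG_m$-action coming from a cocharacter defining $P$, and hyperbolic localization is $t$-exact for the perverse $t$-structure on the conjugation-equivariant category by Braden's theorem combined with the contraction property). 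So the structure of the proof of (2) is: identify $\Res_{L\subset P}^G$ with hyperbolic localization (or Lusztig's restriction) on $G/_{\ad}G$, then invoke $t$-exactness of that operation.

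The main obstacle, I expect, is not the ideas but the bookkeeping: making sure the shifts in \eqref{Ind IC} are exactly right so that "small map" translates into "perverse $t$-exact", and, for (2), pinning down the precise reference or argument that parabolic restriction is $t$-exact on the \emph{full} conjugation-equivariant perverse category (not merely on semisimple character sheaves). If a clean citation is unavailable in the equivariant generality needed, the fallback is to prove $t$-exactness directly via Braden's hyperbolic localization theorem: the key points to check are that the relevant $\bG_m$-fixed locus is $L/_{\ad}L$, that the attracting set is $P/_{\ad}P$, and that on a $G$-equivariant object the $!$- and $*$-restrictions to the fixed locus agree (the "contraction" hypothesis), which holds because conjugation-equivariance forces the relevant monodromy to be trivial along the contracting directions. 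I would present (1) in full and (2) by reduction to this cited $t$-exactness, flagging the hyperbolic-localization proof as the self-contained alternative.
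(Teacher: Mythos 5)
The paper's entire proof is the citation ``This is \cite[Theorem 5.4]{BY}'', so the honest comparison is between your sketch and the argument in that reference. For part (2) your plan is essentially the one carried out there: identify $\Res_{L\subset P}^G$ with hyperbolic localization for the $\bG_m$-action on $G/_\ad G$ given by a cocharacter central in $L$ (fixed locus $L$, attracting set $P$), and combine Braden's theorem with the adjunction structure. One caution: Braden's theorem by itself does not give $t$-exactness; the argument in \emph{loc.\ cit.}\ is that $\pi_!i^*$ is right $t$-exact ($i$ closed, $\pi$ affine), $\pi_*i^!$ is left $t$-exact, and on conjugation-equivariant objects the two functors (for opposite parabolics) are canonically identified, whence exactness. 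Your sketch gestures at this but attributes the exactness to Braden plus a ``contraction property'', which is not quite where the content lies.

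For part (1) there are two genuine soft spots. First, a factual slip: $c:G\to T//\rW$ is not finite (its fibers have dimension $\dim G-\dim T$); it is $q:T\to T//\rW$ that is finite, hence $p_G:Z\to G$ is finite while $p_T:Z\to T$ is not --- you abandon this route mid-argument, but as written it is wrong. Second, and more substantively, smallness of $h:\widetilde G\to Z$ only controls $h_*\barQ$ (i.e.\ $\IC(Z)$); for an \emph{arbitrary} perverse sheaf $\mF$ on $T$ the object you must control is $p_T^\circ\mF\otimes\IC(Z)$ (equivalently $h_*$ of a pullback), and its perversity is not a formal consequence of smallness or of the decomposition theorem. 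The classical references you propose to cite (Lusztig, Ginzburg) prove exactness of induction for character-sheaf-type inputs (shifted local systems / admissible complexes), not for all of $\sD(T)^\heartsuit$, which is the generality the paper uses (e.g.\ in Proposition \ref{Mackey formula} and Proposition \ref{prop of ind}). The statement in full generality is exactly \cite[Theorem 5.4]{BY}, where it is deduced from the exactness of restriction via the two adjunctions (ordinary and second adjointness with the opposite Borel), not from smallness. So your part (2) strategy is sound and matches the cited source, but your part (1) argument as sketched has a gap that is most efficiently closed by citing \cite{BY} (or by reproducing its adjunction argument) rather than by the smallness/decomposition-theorem route.
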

\begin{proof}
This is 
\cite[Theorem 5.4]{BY}.
\end{proof}
\subsection{$\rW$-action}
Let $\mF\in\sD_\rW(T)$.
Since the map $p_T:S\to T$ and the 
$\IC$-complex $\IC(S)$ are
 $\rW$-equivariant, it follows from~\eqref{Ind IC} that the $\rW$-equivariant structure on
 $\mF$ gives rise to a $\rW$-action on 
 $\Ind_{T\subset B}^G(\mF)$.
 We denote by 
 \beq\label{inv factor} 
 \Phi_\mF:=\Ind_{T\subset B}^G(\mF)^\rW
 \eeq
 the $\rW$-invariant factor of $\Ind_{T\subset B}^G(\mF)$.

In the case when $\mF$ is a $\rW$-equivariant perverse local system on $T$,
we have the following description of $\Phi_\mF$:
Let $q^{\rs}:T^{\rs}\to T^{\rs}//\rW$
and $c^{\rs}:G\to T^{\rs}//\rW$ be the restriction of 
the maps in~\eqref{G-S resolution} to the regular semi-simple locus.
As $q^{\rs}$ is an \'etale covering, 
the restriction of 
$\mF$ to $T^{\rs}$ descends to a perverse local system $\mF'$ on 
$T^{\rs}//\rW$ and we have 
\[\Phi_\mF=\Ind_{T\subset B}^G(\mF)^\rW\is j_{!*}(c^{\rs})^*\mF'[\dim G-\dim T].\]

Let $\mF\in\sD_\rW(T)^\heartsuit$
and let $F[\rW]\to\End_{\sD(G)^\heartsuit}(\Ind_{T\subset B}^G(\mF))$ be the map coming from
the $\rW$-action.
By adjunction, we get a map
\[F[\rW]\to\End_{\sD(G)^\heartsuit}(\Ind_{T\subset B}^G(\mF))\is\Hom_{\sD(G)^\heartsuit}(\mF,\Res_{T\subset B}^G\circ\Ind_{T\subset B}^G(\mF)),\]
which gives rise to
\beq\label{W-action}
F[\rW]\otimes\mF\to\Res_{T\subset B}^G\circ\Ind_{T\subset B}^G(\mF).
\eeq

We have the following generalization of 
\cite[Theorem 4.6]{Gu} to the group setting:

\begin{proposition}\label{Mackey formula}
Let $\mF\in\sD(T)^\heartsuit$. 
(1) There is a canonical isomorphism 
$\bigoplus_{w\in\rW} w^*\mF\is\Res_{T\subset B}^G\circ\Ind_{T\subset B}^G(\mF)$.
(2) Assume further that $\mF\in\sD_\rW(T)^\heartsuit$.
Then the composition 
\beq\label{W-action 2}
F[\rW]\otimes\mF\stackrel{\phi}\is\bigoplus_{w\in\rW} w^*\mF\stackrel{}\is\Res_{T\subset B}^G\circ\Ind_{T\subset B}^G(\mF),
\eeq
is equal to~\eqref{W-action}.
Here $\phi$ is the $\rW$-equivariant structure of $\mF$.
\end{proposition}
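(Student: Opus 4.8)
The plan is to prove part (1) first, then deduce part (2) by tracking the natural transformations carefully. For part (1), I would compute $\Res_{T\subset B}^G \circ \Ind_{T\subset B}^G$ directly via base change along the Grothendieck--Springer picture~\eqref{G-S resolution}. Writing $\Ind_{T\subset B}^G(\mF) = \tilde c_* \tilde q^\circ(\mF)$ and $\Res_{T\subset B}^G(\mG) = (q_B)_* i_B^!(\mG)$, the composite is governed by the fiber product of $\tilde c: \widetilde G \to G$ with $i_B: B \to G$. The key geometric input is the classical Bruhat-type decomposition: the preimage $\tilde c^{-1}(B)$ inside $\widetilde G$ decomposes, after pushing forward to $T = B/U$, into $|\rW|$ pieces indexed by $w \in \rW$, each contributing $w^*\mF$. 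Concretely, $\{(b, xB): x^{-1}bx \in B\}$ fibers over $B$, and the relative position of the two Borels $B$ and $xBx^{-1}$ (both containing a common maximal torus after conjugation) is what produces the Weyl group indexing; on each stratum the map $\tilde q$ recovers $w^*$ applied to the projection $B \to T$. I would set this up so that the smallness/cleanness of the relevant maps makes the $*$- and $!$-pushforwards agree up to the shifts already built into $\tilde q^\circ$ and $i_B^!$, yielding the clean direct sum $\bigoplus_{w \in \rW} w^*\mF$ with no higher cohomology correction. This is exactly the group-analogue of \cite[Theorem 4.6]{Gu}, so I would follow that argument, replacing the Lie algebra $\fg$ and the adjoint quotient $\ft \to \ft//\rW$ by $G$ and $c: G \to T//\rW$, and checking that the transversality statements used there survive verbatim in the multiplicative setting (they do, because everything takes place over $G$ and the relevant squares are Cartesian).

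For part (2), I would argue that both maps $F[\rW] \otimes \mF \to \Res \circ \Ind(\mF)$ — the one in~\eqref{W-action} coming from adjunction applied to the $\rW$-action on $\Ind(\mF)$, and the one in~\eqref{W-action 2} built from the isomorphism of part (1) together with the equivariant structure $\phi$ — are natural in $\mF \in \sD_\rW(T)^\heartsuit$ and agree after restricting to each summand. The cleanest way is to reduce to a universal situation: it suffices to check agreement componentwise, i.e. for each $w \in \rW$ the two resulting maps $\mF \to w^*\mF$ (the $w$-component of $\Res \circ \Ind$) coincide. On the right-hand construction this is by definition $\phi_w$, the $w$-component of the equivariant structure. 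On the left-hand side, I would unwind the adjunction: the element $w \in F[\rW]$ acts on $\Ind(\mF)$ via the $\rW$-structure on $\IC(Z)$ and on the projection $p_T: Z \to T$ (using the description~\eqref{Ind IC}), and chasing this through the unit/counit of the $(\Ind, \Res)$-adjunction identifies the induced self-map of $\Ind(\mF)$ with the one built from $\phi_w$ via functoriality of $\Ind$. The compatibility then follows from the fact that the isomorphism of part (1) is itself constructed $\rW$-equivariantly from the geometry, so it intertwines the two $\rW$-actions tautologically.

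I expect the main obstacle to be part (1): specifically, verifying that the relevant fiber products are of the expected dimension and that the maps involved are clean enough that the spectral sequences degenerate, giving an honest direct sum decomposition rather than merely a filtration with the $w^*\mF$ as subquotients. This is where the passage from the Lie algebra setting of \cite{Gu} to the group setting could introduce subtleties — for instance, the Steinberg map $c: G \to T//\rW$ is not as homogeneous as the adjoint quotient of $\fg$, and one must be careful about the behavior over non-regular elements, although after restricting to the slice $B \subset G$ the situation should again be controlled by the Bruhat stratification. A secondary, more bookkeeping-type obstacle in part (2) is making the two natural transformations land in literally the same hom-space and comparing them without sign or shift discrepancies; I would handle this by working at the level of the explicit kernels ($\IC(Z)$ and its $\rW$-structure) rather than abstract adjunctions wherever possible, since the kernel description makes the $\rW$-action manifest and compatible with all six-functor manipulations used in part (1).
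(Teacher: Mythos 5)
Your proposal is correct and follows essentially the same route as the paper: part (1) is obtained by identifying $\Res_{T\subset B}^G\circ\Ind_{T\subset B}^G$ with the kernel supported on $\widetilde G\times_G B$ and running Gunningham's argument with $\ft$ replaced by $T$ to get the decomposition $\bigoplus_{w\in\rW}\mO_{\Gamma_w}$, hence $\bigoplus_w w^*\mF$. Part (2) is likewise handled as in the paper, by observing that the isomorphism of (1) is built $\rW$-equivariantly from the correspondence, so it intertwines the adjunction-induced action with the action coming from the equivariant structure $\phi$.
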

\begin{proof}
We follow the argument in \emph{loc. cit.}. We shall give a proof in the de Rham setting. 
The same proof works for the $\ell$-adic setting. 
Consider the product $S_G=\widetilde G\times_GB$. 
There are two natural maps $q_S:S_G\to T$ and $c_S:S_G\to T$
coming from $\tilde q:\tilde G\to T$ and $\tilde c:\tilde G\to T$ in~\eqref{G-S resolution}.
Let $\mS_G:=\sH^0(q_S\times c_S)_*(\omega_{S_G})$.
Using base changes formulas, it is easy to see that the functor 
 $\Res_{T\subset B}^G\circ\Ind_{T\subset B}^G(-)$ is given by the 
 kernel $\mS_G$, that is, we have,
$\Res_{T\subset B}^G\circ\Ind_{T\subset B}^G(\mF)\is \pr_{l,*}(\pr_r^\circ\mF\otimes\mS_G)$, here 
$\pr_l:\pr_r:T\times T\to T$ are the left and right projection maps.
Now the same proof as in \cite[Section 4.3]{Gu}, replacing 
$\ft$ by $T$, shows that there is a canonical isomorphism of monads
$\mS_G\is\bigoplus_{w\in\rW}\mO_{\Gamma_w}$, here 
$\Gamma_w=\{x,wx|x\in T\}\subset T\times T$. 
It follows that 
$\Res_{T\subset B}^G\circ\Ind_{T\subset B}^G(\mF)\is \pr_{l,*}(\pr_r^\circ\mF\otimes\mS_G)\is\bigoplus_{w\in\rW} w^*\mF$.
This completes the proof of (1).
Assume $\mF$ is $\rW$-equivariant. It follows from the construction that the isomorphism in (1) intertwines the $\rW$-action on 
$\Res_{T\subset B}^G\circ\Ind_{T\subset B}^G(\mF)$ with 
the one on $\bigoplus_{w\in\rW} w^*\mF$ given by the map 
\[a_{r}:\bigoplus_{w\in\rW} w^*\mF\stackrel{w^*(\phi_r)}\to \bigoplus_{w\in\rW} w^*r^*\mF\is\bigoplus_{w\in\rW} (rw)^*\mF=\bigoplus_{w\in\rW} w^*\mF,\] 
for any $r\in\rW$.
Part (2) follows.
\end{proof}

We will need the following properties of induction functors.
Let $m_G:G\times G\to G$ and $m_T:T\times T\to T$
be the multiplication maps.
For any $\mM,\mM'\in\sD(G)$, we define 
$\mM*\mM':=m_{G,*}(\mM\boxtimes\mM')\in\sD(G)$.
Similarly, for any $\mF,\mF'\in\sD(T)$, we define
$\mF*\mF'=m_{T,*}(\mF\boxtimes\mF')$.

\begin{prop}\label{prop of ind}
Let $\mM\in\sD(G/_\ad G)^\heartsuit$ and $\mF\in\sD(T)$. 
Assume
$\on{Av}^U_*(\mM)$ is supported on $T=B/U\subset G/U$.
Then we have a natural isomorphism in $\sD(G)$
\[\Ind_{T\subset B}^G(\mF)*\mM\is\Ind_{T\subset B}^G(\mF*\Res_{T\subset B}^G\mM)\]
which is 
functorial with respect to $\mF$.

\end{prop}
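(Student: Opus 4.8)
The plan is to exploit the description of $\Ind_{T\subset B}^G$ as convolution with a kernel, together with the hypothesis that $\Av^U_*(\mM)$ is supported on $T=B/U$, which is precisely the condition needed to make a base-change argument collapse. First I would reduce everything to the adjoint quotient picture: using the canonical isomorphism $\on{oblv}^{G/e}\circ\Av_*^{G/B}(\mF_B)\is\Ind_{T\subset B}^G(\mF)$ recalled in Section 3, the left-hand side $\Ind_{T\subset B}^G(\mF)*\mM$ becomes the convolution of the averaged object $\Av_*^{G/B}(\mF_B)$ with $\mM\in\sD(G/_\ad G)^\heartsuit$. Since $\mM$ is conjugation-equivariant, this convolution can be computed on the stack $G/_\ad B$ before averaging up to $G/_\ad G$; concretely, I would write $\Ind_{T\subset B}^G(\mF)*\mM\is \Av_*^{G/B}\bigl(\mF_B *_B \on{oblv}^{B}\mM\bigr)$, where $*_B$ denotes convolution of objects on $G/_\ad B$ — this is the standard projection-formula-type manipulation for induction and convolution.

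The heart of the argument is then to identify $\mF_B *_B \on{oblv}^{B}\mM$ with $(\mF * \Res_{T\subset B}^G\mM)_B$. Here is where the support hypothesis enters. Recall $\Res_{T\subset B}^G\mM = (q_B)_* i_B^!(\mM)$, the $!$-restriction to $B$ followed by pushforward along $q_B:B\to T=B/U$. The key point is that convolving on $G/_\ad B$ with $\mF_B$ (an object pulled back from $T=B/U$) only sees the quotient $B\to T$, so the $U$-directions in $G$ get integrated out; the hypothesis that $\Av^U_*(\mM)$ is supported on $B/U$ guarantees that this integration produces exactly $\Res_{T\subset B}^G\mM$ on $T$ rather than a genuinely $G/U$-spread-out object, and in particular that the relevant $!$- and $*$-pushforwards along the $U$-fibers agree (no correction terms from the non-proper directions). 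I would make this precise by a base-change diagram built from $\widetilde G$, $B$, and $B\times_T B$ (or, equivalently, using the kernel $\mS_G=\bigoplus_{w\in\rW}\mO_{\Gamma_w}$ from Proposition \ref{Mackey formula} to bookkeep the Weyl-group combinatorics), checking that on the open regular-semisimple locus both sides restrict to the evident local system and that the support condition forces the extension over the complement to match.

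Finally, applying $\Av_*^{G/B}$ and using once more the identification of the averaged object with $\Ind_{T\subset B}^G$, I would obtain
\[
\Ind_{T\subset B}^G(\mF)*\mM \is \Av_*^{G/B}\bigl((\mF*\Res_{T\subset B}^G\mM)_B\bigr)\is \Ind_{T\subset B}^G\bigl(\mF*\Res_{T\subset B}^G\mM\bigr),
\]
and functoriality in $\mF$ is automatic since every step is a composition of functors applied to $\mF$. The main obstacle I anticipate is the middle step: verifying rigorously that the support hypothesis on $\Av^U_*(\mM)$ is exactly what is needed to commute the $U$-averaging past the convolution with $\mF_B$ — i.e. that $i_B^!$ and the fiberwise integration along $U$ interact correctly — rather than merely believing it heuristically. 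This should follow from a careful application of proper and smooth base change around the square relating $G$, $B$, $G/U$ and $B/U = T$, using that the fibers of $\pi:G\to G/U$ are affine spaces and that the support condition kills the potential discrepancy between $\pi_!$ and $\pi_*$ on $\mM$ away from $T$. The perversity statements in Proposition \ref{exactness} ensure all objects stay in the heart, so no derived-category subtleties about truncation arise.
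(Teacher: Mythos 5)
Your proposal follows essentially the same route as the paper: the paper also reduces to the projection-formula isomorphism $\Av_*^{G/B}(\mH*\on{oblv}^{G/B}\mM)\is\Av_*^{G/B}(\mH)*\mM$, uses the support hypothesis on $\Av_*^U(\mM)$ to identify $\mF_B*\on{oblv}^{G/B}\mM\is(\mF*\Res_{T\subset B}^G\mM)_B$, and then applies $\on{oblv}^{G/e}\circ\Av_*^{G/B}$ together with $\Av_*^{G/B}(\mF_B)\is\Ind_{T\subset B}^G(\mF)$, citing \cite[Proposition 2.9]{BK2} for this construction. The middle identification you flag as the main obstacle is exactly the step the paper likewise does not spell out beyond that citation, so your plan is correct and matches the paper's argument.
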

\begin{proof}
This is proved in \cite[Proposition 2.9]{BK2}. 
Let us recall the construction in \emph{loc.cit.}.
For any $\mH\in\sD(G/_\ad B)$ and $\mM\in\sD(G/_\ad G)^\heartsuit$, there is a natural isomorphism
\[\Av_*^{G/B}(\mH*\on{oblv}^{G/B}\mM)\is\Av_*^{G/B}(\mH)*\mM.\]
When $\mH=\mF_B=q_B^*\mF$ is the pull back of $\mF$ along $q_B:B\to T=B/U$,
the assumption that $\Av_*^U(\cM)$ is supported on $T$ implies that
\[\mF_B*\on{oblv}^{G/B}\cM\is(\mF*\Res_{T\subset B}^G(\cM))_B\in\sD(G/_\ad B)\] and
it follows that 
\[\Ind_{T\subset B}^G(\mF)*\cM\is
\on{oblv}^{G/e}\circ(\Av_*^{G/B}(\mF_B)*\cM)\is
\on{oblv}^{G/e}\circ(\Av_*^{G/B}(\mF_B*\on{oblv}^{G/B}\cM))\is\]
\[\is\on{oblv}^{G/e}\circ(\Av_*^{G/B}((\mF*\Res_{T\subset B}^G(\cM))_B)\is\Ind_{T\subset B}^G(\mF*\Res_{T\subset B}^G(\cM)).\]
\end{proof}

\section{Mellin transform and characterizations of central 
and $*$-central complexes}

\subsection{The scheme of tame characters}\label{C(T)}
Let $\pi^t_1(T)$ be the tame \'etale fundamental group of $T$ 
in the $\ell$-adic setting or the topological fundamental group
in the de Rham setting.
For any continuous character character $\chi:\pi^t_1(T)\to F^\times$, 
we denote by $\mL_\chi$  the corresponding rank one 
local system on $T$.  

We first 
consider the de Rham setting. Set $\calC(T)=\Spec(\bC[\pi_1^t(T)])$. 
Then the $\bC$-points of $\calC(T)$ are in bijection with 
characters of $\pi_1^t(T)$. Note that, under the isomorphism $\pi_1^t(T)=\mathbb X_\bullet(T)$, characters of $\pi_1^t(T)$ 
 correspond to elements in the dual torus $\check T$.

In the $\ell$-adic setting,
in \cite{GL}, a $\barQ$-scheme $\calC(T)$ is defined, whose 
$\barQ$-points are in bijection with 
continuous characters of $\pi^t_1(T)$. There is decomposition 
\[\calC(T)=\bigsqcup_{\chi\in\calC(T)_f}\{\chi\}\times\calC(T)_\ell\] into connected components, where 
$\calC(T)_f\subset\calC(T)$ is the subset consisting of tame characters of 
order prime to $\ell$ and 
$\calC(T)_\ell$ is the connected component of $\calC(T)$ containing 
the trivial character.
It is shown in \emph{loc. cit.} that $\calC(T)$ is Noetherian and regular and there is an isomorphism
\[\calC(T)_\ell\is\Spec(\barQ\otimes_{\bZ_\ell}\bZ_\ell[[x_1,...,x_r]]).\] 
In addition, the $\barQ$-points of $\calC(T)_\ell$ are in bijection 
with pro-$\ell$ characters of $\pi_1(T)$ (i.e. characters of 
the pro-$\ell$ quotient $\pi_1(T)_\ell$ of $\pi_1(T)^t$).

\subsection{Mellin transforms}\label{MT}
We give a review of Mellin transforms in both de Rham and $\ell$-adic settings
and establish some basic facts about them.

We first recall the Mellin transform of $D$-modules on 
$T$.
Let $x_i\in\Lambda=\bX_\bullet(\check T)\is\Hom(T,\bG_m)$ be a basis and consider the 
regular function $\mO(T)\is\bC[x_i^{\pm1}]$ and the algebra 
of differential operators 
$\Gamma(\mD_T)\is\bC[x_i^{\pm1}]\langle v_i\rangle/\{v_ix_j=x_j(\delta_{ij}+v_i)\}$
where $v_i=x_i\partial_{x_i}\in\ft$ are a basis for the $T$-invariant 
vector fields.
Recall that for any $D$-module $\mF$ on $T$, the tensor product
$\mF\otimes_{}\omega_T$ with the canonical line bundle $\omega_T$ on $T$, carries a natural right $D$-module structure.
Note also that, if   
we consider $\Gamma(\mD_T)$ as the algebra of difference 
operators $\bC[v_i]\langle x_i^{\pm1}\rangle/\{v_ix_j=x_j(\delta_{ij}+v_i)\}$, then
there is 
a canonical equivalence between 
the category $\on{QCoh}(\check\ft/\Lambda)$ of $\Lambda$-equivariant quasi-coherent
sheaves on $\check\ft$ and the category of right $\Gamma(\mD_T)$-modules\footnote{For any right $\Gamma(\mD_T)$-module $\calN$, the action of $\bC[v_i]=\mO(\check\ft)$ on 
$\calN$
gives a $\mO_{\check\ft}$-module structure on $\mO_{\check\ft}\otimes_{\mO(\check\ft)}\calN\in\on{QCoh}(\check\ft)$ and the action of $x_i\in\Lambda$ defines a $\Lambda$-equivariant structure.}.  
The Mellin transform functor is defined as 
\[\frak M:D\on{-mod}(T)\ra\on{QCoh}(\check\ft/\Lambda),\ \ \calN\to\Gamma(\cN\otimes\omega_T).\]
We have the following properties:
\begin{enumerate}
\item
The functor $\frak M$ is 
an equivalence.
\item 
Let $\chi\in\check T(\bC)\is\calC(T)(\bC)$ and let $\lambda\in\check\ft(\bC)$ be a lift of 
$\chi$ along the universal covering $\exp:\check\ft\to\check T$. 
Then for any $\mF\in D\on{-mod}(T)$ we have 
\beq\label{stalk}
\oH^*(T,\mF\otimes\mL_\chi)\is i_\lambda^*\mathfrak M(\mF)
\eeq
here $i_\lambda:\on{pt}\to\check\ft$ is the embedding given by $\lambda$.
\item
Consider the bounded derived category 
$D^b_{qcoh}(\check\ft/\Lambda)$
of $\Lambda$-equivariant 
quasi-coherent sheaves on $\check\ft$
with the monoidal 
structure given by the (derived) tensor product. 
We have 
\[\mathfrak M(\cF*\cF')\is\mathfrak M(\cF)\otimes\mathfrak M(\cF').\]  

\item
Let $\mF$ be a $\rW$-equivariant $D$-module $\mF$ on $T$. Then $\rW$-equivariant structure on
$\mF$ gives rise to a $\rW_a^{\on{ex}}=\rW\ltimes\Lambda$-equivariant structure on
$\frak M(\mF)$.
Let $\chi$ and $\lambda$ be as in $(1)$ and 
let $\rW_{a,\lambda}^{\on{ex}}$ and $\rW_{a,v}$ be the stabilizers of $\lambda$ in
$\rW_a^{\on{ex}}$ and $\rW_a$ respectively.
Then $\oH^*(T,\mF\otimes\mL)$ (resp. $i_\lambda^*\frak M(\mF)$)
carries a natural action of $\rW_\chi'$ (resp. $\rW_{a,\lambda}^{\on{ex}}$) such that,
under the isomorphism 
\beq\label{iso of stabilizers}
\rW_\chi'\is\rW_{a,\lambda}^{\on{ex}},\ \ 
w\to (w,w^{-1}\lambda-\lambda)
\eeq
the isomorphism~\eqref{stalk} intertwines those actions.

\end{enumerate}

We now consider the $\ell$-adic setting.
In \cite{GL}, the authors constructed the Mellin transform
\[\frak M:\sD(T)\to D^b_{coh}(\calC(T))\footnote{It is denoted by $\sM_*$ in \emph{loc.cit.}}\]
with the following properties:
\begin{enumerate}
\item
Let $\chi\in\calC(T)(\barQ)$ and $i_\chi:\on{pt}\to\calC(T)$ be the embedding given by 
$\chi$. We have 
\[\oH^*(T,\mF\otimes\mL_\chi)\is i^*_\chi\mathfrak M(\mF).\]

\item For any $\chi\in\calC(T)(\barQ)$ we have 
\[\frak M(\mF\otimes\mL_\chi)\is
m_\chi^*\frak M(\mF).\]
\item 
The functor $\frak M$ is t-exact with respect to the perverse $t$-structure on 
$\sD(T)$ and the standard $t$-structure on $D^b_{coh}(\calC(T))$.
Moreover, for any $\mF\in \sD(T)$, $\mF$ is perverse if and only if 
$\frak M(\mF)$ is a coherent complex in degree zero.

\item 
We have
\[\frakM(\mF* \mF')\is\frakM(\mF)\otimes\frakM(\mF')\]

\item The Mellin transforms restricts to an equivalence 
\beq\label{inverse Mellin}
\mathfrak M:\sD(T)_{\on{mon}}\is D^b_{coh}(\calC(T))_f
\eeq
between the full subcategory $\sD(T)_{\on{mon}}$ of monodromic $\ell$-adic complexes on $T$ and 
the full subcategory $D^b_{coh}(\calC(T))_f$ of coherent complexes on $\calC(T)$ with 
finite support.

\item
For $\mF\in\sD_\rW(T)$, 
the $\rW$-equivariant structure on $\mF$ gives rise to a 
$\rW$-equivariant structure on
$\mathfrak M(\mF)$ such that
for any $\chi\in\calC(T)(\barQ)$
the isomorphism 
in (1) above 
is compatible with the natural 
$\rW'_\chi$-actions. 
\end{enumerate}

\begin{remark}\label{global section}
In the de Rham setting,
a non-zero invariant section $\sigma$ of $\omega_T$ gives rise to 
an isomorphism 
$\Gamma(\mathfrak M(\mF))=\Gamma(\mF\otimes\omega_T)\stackrel{\sigma}\is\Gamma(\mF)$,
here $v_i$ acts on $\Gamma(\mF)$ by the formula 
$v_i\cdot m=-v_im$.
Since the $\rW$-action on invariant sections of of $\omega_T$ is given by the sign character, 
we obtain an isomorphism of $\mO(\check\ft)$-modules
\[\Gamma((-1)^{*}\mathfrak M(\mF\otimes\sign)))\is\Gamma(\mF)\]
compatible with the natural $\rW_a^{\text{ex}}$-actions. Here $-1:\check\ft\to\check\ft, x\to -x$.
\end{remark}

The properties of Mellin transforms above imply the following:
\begin{lemma}\label{chara of central}
Let $\mF\in\sD_\rW(T)$. 
(1) Assume $\on{char}k>0$. Then $\mF$ is $*$-central (resp. strongly $*$-central) if and only if 
, for any $\chi\in\calC(T)(\barQ)$, the action of 
$\rW_\chi$ (resp. $\rW_\chi'$) on $\sH^n(i_\chi^*\frak M(\mF\otimes\sign))$
for all $n\in\in\bZ$ is trivial.
(2) Assume $k=\bC$. Then $\mF$ is $*$-central (resp. strongly $*$-central) if and only if,
for any $\lambda\in\check\ft(\bC)$,
 the action of 
$\rW_{a,\lambda}$ (resp. $\rW_{a,\lambda}^{\on{ex}}$) on $\sH^n(i_\lambda^*\frak M(\mF\otimes\sign))$ 
for all $n\in\bZ$
is trivial.

\end{lemma}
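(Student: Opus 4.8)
The plan is to deduce both parts of the lemma from the dictionary between the cohomology of $\mF\otimes\mL_\chi$ and the fibres of its Mellin transform, the only point of substance being the bookkeeping of the $\rW$-equivariant structures and of the $\sign$-twist. First I would isolate the common mechanism: for any $\calG\in\sD_\rW(T)$, any $\chi\in\calC(T)(F)$, and (in the de Rham case) any lift $\lambda\in\check\ft(\bC)$ of $\chi$ along $\exp$, the stalk property of the Mellin transform together with its equivariance — the identity $\oH^*(T,-\otimes\mL_\chi)\is i_\chi^*\frak M(-)$ with its $\rW'_\chi$-compatibility in the $\ell$-adic setting, respectively \eqref{stalk} and \eqref{iso of stabilizers} in the de Rham setting — give, for every $n$, an isomorphism
\[\oH^n(T,\calG\otimes\mL_\chi)\ \is\ \sH^n(i_\chi^*\frak M(\calG))\qquad(\text{resp.}\ \sH^n(i_\lambda^*\frak M(\calG)))\]
intertwining the natural $\rW'_\chi$-action on the left with the $\rW'_\chi$-action on the right in the $\ell$-adic case, and with the $\rW_{a,\lambda}^{\on{ex}}$-action transported along $\rW'_\chi\is\rW_{a,\lambda}^{\on{ex}}$, $w\mapsto(w,w^{-1}\lambda-\lambda)$, in the de Rham case. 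In the de Rham case this may equivalently be run through global sections by Remark \ref{global section}; the sign flip $\lambda\mapsto-\lambda$ appearing there is harmless since we quantify over all $\lambda$.

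Next I would apply the displayed isomorphism to $\calG=\mF\otimes\sign$. Since $\sign$ is a one-dimensional $\rW$-representation, tensoring by it leaves the underlying complex unchanged and twists the $\rW'_\chi$-action on $\oH^n(T,\mF\otimes\mL_\chi)$ by $\sign|_{\rW'_\chi}$, and correspondingly twists the $\rW_a^{\on{ex}}$-equivariant structure of $\frak M(\mF)$ by $\sign$ inflated from $\rW$. Therefore $\rW'_\chi$ acts trivially on $\sH^n(i_\chi^*\frak M(\mF\otimes\sign))$ for all $n$ (resp.\ $\rW_{a,\lambda}^{\on{ex}}$ acts trivially on $\sH^n(i_\lambda^*\frak M(\mF\otimes\sign))$ for all $n$) if and only if $\rW'_\chi$ acts on $\oH^*(T,\mF\otimes\mL_\chi)$ through $\sign_\rW$. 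Letting $\chi$ vary — equivalently $\lambda$, since every $\chi$ lifts — this is exactly the definition of $\mF$ being strongly $*$-central, which settles the "strong" halves of (1) and (2).

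For the non-strong statements the only additional ingredient is the identification, under $\rW'_\chi\is\rW_{a,\lambda}^{\on{ex}}$, of the subgroup $\rW_\chi\subset\rW'_\chi$ with $\rW_{a,\lambda}=\rW'_\chi\cap\rW_a$. I would check this on generators: a reflection $s_\alpha\in\rW'_\chi$ is sent to $(s_\alpha,\,s_\alpha\lambda-\lambda)=(s_\alpha,\,-\langle\lambda,\check\alpha\rangle\alpha)$, and since $\alpha$ is a coroot of $\check G$ and hence lies in $\rm R$, this element lies in $\rW_a$ precisely when $\langle\lambda,\check\alpha\rangle\in\bZ$, i.e.\ precisely when $(\check\alpha)^*\mL_\chi$ is trivial, i.e.\ precisely when $s_\alpha$ is a declared generator of $\rW_\chi$; conversely $\rW_{a,\lambda}$ is generated by the affine reflections fixing $\lambda$, whose images in $\rW$ are exactly those $s_\alpha$. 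Rerunning the previous paragraph with $\rW_\chi$ in place of $\rW'_\chi$ (and $\rW_{a,\lambda}$ in place of $\rW_{a,\lambda}^{\on{ex}}$) then yields the $*$-central equivalences. I expect the genuine work to lie entirely in this bookkeeping — above all, confirming that the two a priori different $\rW'_\chi$-actions on $\oH^n(T,\mF\otimes\mL_\chi)$, one assembled from the equivariant structures of $\mF$ and $\mL_\chi$ and the other transported from $\frak M$, really coincide (this is exactly what the equivariance of the Mellin transform and \eqref{iso of stabilizers} assert), together with the $\omega_T$-trivialization, the sign in $v_i\cdot m=-v_i m$, and the sign action of $\rW$ on invariant sections of $\omega_T$ tracked in Remark \ref{global section}. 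The structure-theoretic fact $\rW_\chi\is\rW_{a,\lambda}$ is standard and enters only as a citation.
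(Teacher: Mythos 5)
Your proposal is correct and is exactly the argument the paper intends: the lemma is stated as an immediate consequence of the listed Mellin-transform properties (the stalk isomorphisms, their $\rW'_\chi$- resp. $\rW_{a,\lambda}^{\on{ex}}$-equivariance via \eqref{iso of stabilizers}, and Remark \ref{global section}), and your bookkeeping of the $\sign$-twist together with the standard identification of $\rW_\chi$ with $\rW_{a,\lambda}$ inside $\rW'_\chi\is\rW_{a,\lambda}^{\on{ex}}$ is precisely that unpacking. No gaps.
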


We finish this section with a lemma to be used in Section \ref{Proof}.
\begin{lemma}\label{finiteness}
Assume $k=\bC$. Let $\lambda\in\check\ft(\bC)$
and let $\check\ft_{\hat\lambda}$ be the completion of $\check\ft$ at $\lambda$.
For any holonomic complex $\mF\in\sD(T)$, the restriction 
$\frak M(\mF)|_{\check\ft_{\hat\lambda}}$ is a coherent complex on 
$\check\ft_{\hat\lambda}$\footnote{Since $\frak M(\mF)$ is in general not coherent, the claim is not automatic.}.
\end{lemma}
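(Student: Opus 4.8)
The plan is to reduce to a single holonomic $D$-module and then establish a local finiteness statement for its Mellin transform. Since $T$ is affine, the functor $\mM\mapsto\Gamma(\mM\otimes\omega_T)$ is exact on $D$-modules, so $\frak M$ is $t$-exact; and since $\widehat A_\lambda:=\mO(\check\ft_{\hat\lambda})$ is flat over $A:=\mO(\check\ft)$, restriction to $\check\ft_{\hat\lambda}$ commutes with taking cohomology sheaves. Hence it suffices to treat a single holonomic $D$-module $\mF=\mM$ and to prove that $\frak M(\mM)\otimes_A\widehat A_\lambda$ is finitely generated over $\widehat A_\lambda$; writing $\frak m_\lambda\subset A$ for the maximal ideal of $\lambda$ and using that $A_{\frak m_\lambda}\to\widehat A_\lambda$ is faithfully flat, this is equivalent to finite generation of the localization $\frak M(\mM)_{\frak m_\lambda}$ over $A_{\frak m_\lambda}$.

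Holonomicity is essential here. On the one hand, by property $(2)$ of the Mellin transform the fibre $\frak M(\mM)/\frak m_\lambda\frak M(\mM)$ is, up to a shift, a de Rham cohomology group $\oH^*(T,\mM\otimes\mL_{\exp\lambda})$ of the holonomic module $\mM\otimes\mL_{\exp\lambda}$, hence finite-dimensional. On the other hand coherence on a formal disk is a genuine further constraint — already $\frak M(\mD_T)$, with $\mD_T$ coherent but not holonomic, is nowhere coherent on a formal disk — so the Lagrangian condition on the characteristic variety must enter; equivalently, one has to show that the cohomological support locus $\Sigma(\mM)=\{\chi\in\check T:\oH^*(T,\mM\otimes\mL_\chi)\neq 0\}$ is, near $\exp\lambda$, a finite union of smooth branches along which $\frak M(\mM)$ has finite rank.

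I would prove this by dévissage. Exactness of $\frak M$ reduces to $\mM$ irreducible; the standard exact triangles relating $j_!\mN$, $j_{!*}\mN$ and $j_*\mN$ for the open inclusion $j$ of the smooth locus of $\on{supp}\mM$ into its closure have third terms of strictly smaller support dimension, so Noetherian induction reduces to $\mM=i_*\mN$ with $i\colon Z\hookrightarrow T$ a locally closed immersion of a smooth irreducible subvariety and $\mN$ an irreducible connection on $Z$. For such $\mM$ one has $\oH^*(T,\mM\otimes\mL_\chi)=\oH^*(Z,\mN\otimes\mL_\chi|_Z)$, and the way these groups vary with $\chi$ is governed by the relative de Rham cohomology of the tame family $\pr_Z^*\mN\otimes\mL^{\mathrm{univ}}$ over the space of tame characters; choosing a proper compactification of $Z$ to which this family extends as a relatively holonomic $D$-module makes the relevant push-forward proper, and the relative form of the finiteness of de Rham cohomology of holonomic $D$-modules on proper varieties then gives the coherence of $\frak M(\mM)$ along the formal disk. (Alternatively one may invoke the known structure of cohomological support loci of holonomic $D$-modules on a torus — a finite union of translated subtori carrying finite multiplicities — which yields the conclusion directly.) The hardest point is this last one: converting the fibrewise finiteness of de Rham cohomology into coherence over the formal neighbourhood, for which properness, obtained by compactifying the support, is the essential new input. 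No analogue of the lemma is needed in the $\ell$-adic setting, where by property $(3)$ the Mellin transform of a holonomic complex is already coherent on all of $\calC(T)$.
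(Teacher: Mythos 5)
Your opening reductions are fine (t-exactness of $\frak M$ on the affine torus, flatness of $\mO(\check\ft)\to\mO(\check\ft_{\hat\lambda})$, descent of finite generation along the faithfully flat map from the localization to the completion), but the heart of the argument is missing, and the tool you propose for it would not deliver the statement. The step you yourself flag as hardest --- converting fibrewise finiteness of $\oH^*(T,\mM\otimes\mL_\chi)$ into coherence of $\frak M(\mM)$ over the formal disk by compactifying the support and invoking a relative finiteness theorem for proper push-forwards --- is never carried out, and properness is not the essential input: already for $\mM=\mO_{\bG_m}$ the relative twisted de Rham cohomology of the universal family over the character space is the complex $\bC[x^{\pm1},\lambda]\xrightarrow{x\partial_x+\lambda}\bC[x^{\pm1},\lambda]$, whose $\oH^1$ is $\bigoplus_{n\in\bZ}\bC[\lambda]/(\lambda+n)$; this is not coherent over $\bC[\lambda]$ even though every fibre is finite dimensional and the support compactifies to $\bP^1$. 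So there is no relative finiteness theorem over the character space to quote (your own observation that $\frak M$ is globally non-coherent already shows this), and you give no mechanism by which restriction to $\check\ft_{\hat\lambda}$ repairs it; the d\'evissage to $i_*\mN$ and the alternative appeal to the structure of cohomological support loci likewise stay at the level of a plan. The reformulation in terms of the Zariski localization at $\lambda$ is also left unproved, and it is not obviously easier than the original statement.

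The paper's proof shows that none of this geometry is needed and that your diagnosis that ``the Lagrangian condition must enter'' through the local structure of the support locus overstates what is required. After reducing (by induction on the number of nonvanishing cohomology sheaves) to a single holonomic module, set $R=\mO(\check\ft)$, $M=\Gamma(\frak M(\mF))$ and let $I$ be the maximal ideal of $\lambda$. The only input from holonomicity is that the single fibre $M/IM\is\sH^0(i_\lambda^*\frak M(\mF))$ is finite dimensional. Choosing a finitely generated submodule $N\subset M$ whose image spans $M/IM$, the elementary iteration $M/I^nM=N'+I(M/I^nM)=N'+I^2(M/I^nM)=\cdots=N'+I^n(M/I^nM)=N'$ (with $N'$ the image of $N$) shows that $N$ surjects onto every Artinian quotient $M/I^nM$; hence the completion $M_{\hat\lambda}=\varprojlim M/I^nM$ is finitely generated over $R_{\hat\lambda}$ and coincides with $M\otimes_RR_{\hat\lambda}$. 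In other words, finiteness of the one fibre at $\lambda$, plus Nakayama-type bookkeeping over $R/I^n$, already yields the lemma; if you want to keep your framework, this completion argument is the piece you must supply in place of the appeal to properness.
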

\begin{proof}
By induction argument on the (finite) number of non vanishing cohomology sheaves of $\mF$, we can assume $\mF\in\sD(T)^{\heartsuit}$. 
Let $I$ be the maximal ideal corresponding to $
\lambda$ and let $R=\mO(\check\ft)$, $M=\Gamma(\frak M(\mF))$.
We shall show that the natural injection 
$M\otimes_R R_{\hat\lambda}=M\otimes_R\underleftarrow{\lim} R/I^kR\to M_{\hat\lambda}=\underleftarrow{\lim} M/I^kM $
is an isomorphism and $M_{\hat\lambda}$ is a finitely generated 
$R_{\hat\lambda}$-module.   
Note that, since $\mF$ is holonomic, the non-derived fiber 
$\sH^0(i_\lambda^*\frak M(\mF))\is M/IM$ is finite dimensional.
Choose a finitely generated submodule $N\subset M$ such that natural map
$N/I_\lambda N\to M/I_\lambda M$ is onto.
This implies $N/I_\lambda^nN\to M/I_\lambda^nM$ is onto for all $n>0$.
Indeed, let $N'$ be the image of the map above. 
Then we have $M/I^nM=N'+I M/I^nM$.
By iterating the equality above,
we obtain
$M/I^nM=N'+I (N'+I M/I^nM)=
N'+I^2 (M/I^nM)=\cdot\cdot\cdot =N'+I^k (M/I^nM)$ for any $k>0$, and take $k=n$, we get 
$M/I^nM=N'$.
We have shown that $N_{\hat\lambda}\to M_{\hat\lambda}$ is surjective and, as $N$ is finitely generated, it follows that
$M_{\hat\lambda}$ is finitely generated and
$M\otimes_RR_{\hat\lambda}\to M_{\hat\lambda}$ is an isomorphism.
 
 \end{proof}

\subsection{Characterization of $*$-central $D$-modules}\label{characterization}
In the de Rham setting,
we have the following characterization of $*$-central $D$-modules:
\begin{thm}\label{characterization of central}
Let $\mF$ be a $\rW$-equivariant holonomic $D$-module on $T$. 
The following are equivalent
\begin{enumerate}
\item $\mF$ is $*$-central.
\item  for any $\lambda\in\check\ft(\bC)$,
 the action of 
$\rW_{a,\lambda}$ on $\sH^n(i_\lambda^*\frak M(\mF\otimes\sign))$, $n=0,-1$, is 
trivial.
\item
for any $\lambda\in\check\ft(\bC)$, the Mellin transform $\frak M(\mF\otimes\sign)$, regarding as a $\rW_{a,\lambda}$-equivaraint quasi-coherent sheaf
on $\check\ft$,
descends to $\check\ft//\rW_{a,\lambda}$.
\item
for any $\lambda\in\check\ft(\bC)$, the natural map
$\mO(\check\ft)\otimes_{\mO(\check\ft)^{\rW_{a,\lambda}}}\Gamma(\mF)^{\rW_{a,\lambda}}\to\Gamma(\mF)$
is a bijection.

\end{enumerate}
\quash{
$\mF$ is $*$-central
\item  for any $\chi\in\calC(T)(\barQ)$, the action of 
$\rW_\chi$ on $\sH^{n}(i_\chi^*\frak M(\mF\otimes\sign))$, $n=0,-1$, is trivial.
\item for any $\chi\in\calC(T)(\barQ)$, the Mellin transform $\frak M(\mF\otimes\sign)$, regarding as a $\rW_\chi$-equivaraint coherent sheaf
on $\calC(T)$,
descends to $\calC(T)//\rW_\chi$.
\end{enumerate}}
\end{thm}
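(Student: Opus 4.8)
The plan is to run the cycle $(1)\Rightarrow(2)\Rightarrow(3)\Rightarrow(4)\Rightarrow(1)$ by reducing the essential content to one statement in equivariant local algebra at each point $\lambda\in\check\ft(\bC)$; several links are then formal. By Lemma~\ref{chara of central}, condition $(1)$ is equivalent to the a priori stronger assertion that $\rW_{a,\lambda}$ acts trivially on $\sH^n(i_\lambda^*\frakM(\mF\otimes\sign))$ for \emph{all} $n$, not merely $n=0,-1$; in particular $(1)\Rightarrow(2)$ is immediate, and to close the cycle it suffices to prove $(2)\Rightarrow(3)$, that $(3)$ implies this stronger form of $(2)$, and $(3)\Leftrightarrow(4)$. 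The last equivalence I would deduce from Remark~\ref{global section}: an invariant section of $\omega_T$ identifies $\Gamma(\mF)$ with $\Gamma((-1)^*\frakM(\mF\otimes\sign))$ compatibly with the $\rW_a^{\on{ex}}$-actions, and since $x\mapsto-x$ commutes with $\rW$ and carries the stabilizer of $\lambda$ to that of $-\lambda$, the family of descent conditions in $(3)$ over all $\lambda$ matches the family of bijectivity conditions in $(4)$; indeed ``$\frakM(\mF\otimes\sign)$ descends to $\check\ft//\rW_{a,\lambda}$'' is literally the assertion that $\mO(\check\ft)\otimes_{\mO(\check\ft)^{\rW_{a,\lambda}}}\frakM(\mF\otimes\sign)^{\rW_{a,\lambda}}\to\frakM(\mF\otimes\sign)$ is an isomorphism near $\lambda$, which after the $(-1)$-twist is $(4)$.

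Thus the real work sits in $(2)\Leftrightarrow(3)$ at a fixed $\lambda$. Write $W=\rW_{a,\lambda}$, let $\hat R$ be the completion of $\mO(\check\ft)$ at $\lambda$ and $\hat M$ the completion there of $\frakM(\mF\otimes\sign)$; by Lemma~\ref{finiteness}, $\hat M$ is a finitely generated $\hat R$-module carrying a $W$-equivariant structure that is semilinear over the $W$-action on $\hat R$, and $\sH^{-n}(i_\lambda^*\frakM(\mF\otimes\sign))=\on{Tor}^{\hat R}_n(\hat M,\bC)$ as $W$-representations. The crucial geometric input is that $W$, being the stabilizer of a point for the affine Weyl group $\rW_a$ acting on $\check\ft$, is generated by the affine reflections fixing $\lambda$, hence acts on $\check\ft$ as a finite real reflection group; by Chevalley--Shephard--Todd, $\mO(\check\ft)$ is free over $\mO(\check\ft)^{W}$, so $\hat R$ is finite free over $\hat R^{W}$. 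Granting this, I would prove the following local lemma, after which $(2)\Leftrightarrow(3)$ and the ``all $n$'' upgrade both follow: \emph{a finitely generated $W$-equivariant $\hat R$-module $\hat M$ satisfies $\hat M\is\hat R\otimes_{\hat R^{W}}\hat M^{W}$ if and only if $W$ acts trivially on $\on{Tor}^{\hat R}_n(\hat M,\bC)$ for $n=0,1$, and in that case $W$ acts trivially on all $\on{Tor}^{\hat R}_n(\hat M,\bC)$.}

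To prove the lemma: for ``only if'' (and the ``all $n$'' clause), base-change a minimal free resolution of $\hat M^{W}$ over $\hat R^{W}$ along the flat map $\hat R^{W}\to\hat R$; since $\hat R$ is local over $\hat R^{W}$ the result is again a minimal $\hat R$-free resolution of $\hat M$, with terms of the form $\hat R\otimes_\bC(\text{trivial }W\text{-module})$, so every $\on{Tor}^{\hat R}_n(\hat M,\bC)$ is a trivial representation. For ``if'', since $|W|$ is invertible and $\hat M/\mathfrak m\hat M$ is a trivial representation, a basis of $\hat M/\mathfrak m\hat M$ lifts to $W$-invariant generators of $\hat M$, yielding a $W$-equivariant minimal surjection $F_0=\hat R\otimes_{\hat R^{W}}P_0\twoheadrightarrow\hat M$ with $P_0$ free over $\hat R^{W}$; its kernel $K$ has $K/\mathfrak m K\is\on{Tor}^{\hat R}_1(\hat M,\bC)$, again trivial, so the same step produces a $W$-equivariant surjection $F_1=\hat R\otimes_{\hat R^{W}}P_1\twoheadrightarrow K$. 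Because $(\hat R\otimes_{\hat R^{W}}P_0)^{W}=P_0$, one has $\Hom_{\hat R,W}(F_1,F_0)=\Hom_{\hat R^{W}}(P_1,P_0)$, so $F_1\to F_0$ is induced by some $g\colon P_1\to P_0$, and flatness of $\hat R$ over $\hat R^{W}$ gives $\hat M=\on{coker}(F_1\to F_0)=\hat R\otimes_{\hat R^{W}}\on{coker}(g)$, the desired descent. Feeding this back through Lemma~\ref{chara of central} for the equivalence with centrality, and through a standard descent argument along the flat extension $\mO(\check\ft)_\lambda\to\hat R$ (invariants commute with this base change since $|W|$ is invertible) to match the formal statement with $(3)$ as literally written, completes the cycle.

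I expect the principal obstacle to be the implication $(2)\Rightarrow(3)$: it is not obvious a priori why triviality of the $W$-action on just the bottom two homological degrees of the derived fibre should force full equivariant descent — and hence triviality in every degree. The mechanism is precisely the two-step lifting of generators and of first syzygies to $W$-invariants, which in turn depends on $W$ being a reflection group so that $\hat R$ is free over $\hat R^{W}$; this is also why the statement needs no regularity or tameness hypothesis on $\mF$ beyond holonomicity. A secondary, purely bookkeeping, difficulty is verifying that the $\rW_{a,\lambda}$-action on $\sH^n(i_\lambda^*\frakM(\mF\otimes\sign))$ coming from the $\rW_a^{\on{ex}}$-equivariant structure on $\frakM(\mF)$ is the one appearing in Lemma~\ref{chara of central}, and that the $(-1)$-twist of Remark~\ref{global section} does not disturb the equivalence $(3)\Leftrightarrow(4)$.
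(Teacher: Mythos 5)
The genuine gap is in the step you call ``$(2)\Leftrightarrow(3)$ at a fixed $\lambda$''. For a fixed $\lambda$, condition (3) is a \emph{global} statement on $\check\ft$: the $\rW_{a,\lambda}$-equivariant quasi-coherent sheaf $\frakM(\mF\otimes\sign)$ must descend along $\check\ft\to\check\ft//\rW_{a,\lambda}$, and this requires controlling the action of the stabilizer $\on{Stab}_{\rW_{a,\lambda}}(\mu)$ on the derived fibres at \emph{every} point $\mu\in\check\ft$, not only at $\mu=\lambda$. Except in rank one, the reflection hyperplanes of $\rW_{a,\lambda}$ contain many points $\mu\neq\lambda$ with nontrivial stabilizer, and the completion $\hat M$ of $\frakM(\mF\otimes\sign)$ at $\lambda$ carries no information about them; a sheaf can satisfy your local lemma at $\lambda$ and still fail to descend because, say, of a skyscraper at such a $\mu$ on which a reflection acts by $-1$. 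Your concluding ``standard descent argument along $\mO(\check\ft)_\lambda\to\hat R$'' only compares the local ring at $\lambda$ with its completion and cannot produce (3) as stated. The missing inputs are exactly what the paper uses: hypothesis (2) applied at the \emph{other} points $\mu$, via the containment $\on{Stab}_{\rW_{a,\lambda}}(\mu)\subseteq\rW_{a,\mu}$ (so that these stabilizers also act trivially on $\sH^{0},\sH^{-1}$ of the fibres at $\mu$), together with a global descent criterion for $\rW_{a,\lambda}$-equivariant \emph{quasi-coherent} sheaves on $\check\ft$ in terms of such pointwise conditions (the paper's Lemma \ref{key descent}, proved by reduction to a single reflection via \cite[Theorem 1.3.2]{L} and Nevins' descent lemma). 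Without this globalization, the cycle $(2)\Rightarrow(3)$ is not closed.

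Your complete-local lemma itself is correct, and is in fact a clean formal-local analogue of Lemma \ref{key descent}: triviality of the $W$-action on $\on{Tor}_0$ and $\on{Tor}_1$ of a finitely generated equivariant module over the completion at a fixed point of a reflection group forces descent there (using Chevalley freeness of $\hat R$ over $\hat R^{W}$ and averaging), and conversely with triviality in all degrees; the identification of $\sH^{-n}(i_\lambda^*\frakM(\mF\otimes\sign))$ with $\on{Tor}_n$ over the completion is justified by Lemma \ref{finiteness}. So the repair is either to apply your lemma at every closed point $\mu$ with respect to $\on{Stab}_{\rW_{a,\lambda}}(\mu)$ (a reflection group by Steinberg's theorem), invoking (2) at $\mu$, and then prove that these pointwise/formal descents glue to descent over $\check\ft//\rW_{a,\lambda}$ for quasi-coherent sheaves --- which is essentially re-proving Lemma \ref{key descent} --- or to argue globally from the start as the paper does. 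Note also that the paper's route works for arbitrary quasi-coherent equivariant sheaves, whereas your completion argument genuinely needs the finite generation supplied by holonomicity, another sign that it is local in nature. The remaining links of your cycle ($(1)\Leftrightarrow$ the all-$n$ form of (2) via Lemma \ref{chara of central}, and $(3)\Leftrightarrow(4)$ via Remark \ref{global section} and the $(-1)$-twist exchanging $\rW_{a,\lambda}$ with $\rW_{a,-\lambda}$) are fine and agree with the paper.
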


A similar result in the $\ell$-adic setting was proved in \cite[Proposition 4.2]{C1}.

\begin{remark}
Note that in the de Rham setting the definition of 
$*$-central $D$-modules makes sense for arbitrary $\rW$-equivaraint $D$-modules on $T$,
and the proof of Theorem \ref{characterization of central} below shows that 
the above characterization 
remains true without the holonomicity assumption on
$\mF$.
Very similar results are proved in \cite{Gi2,L}.
 
\end{remark}

We begin will the following lemma.
\begin{lemma}\label{key descent}
Let $\Gamma$ be a finite reflection group with reflection representation 
$V$ over $\bC$. Let $\mF$ be a $\Gamma$-equivariant quasi-coherent sheaf on
$V$. Then $\mF$ descends to $V//\Gamma$ if and only if 
for any $\lambda\in V(\bC)$ the actions of the stabilizer $\Gamma_\lambda$ of 
$\lambda$ in $\Gamma$ on $\sH^0(i_\lambda^*\mF)$ and $\sH^{-1}(i_\lambda^*\mF)$ are trivial.
Here $i_\lambda:\on{pt}\to V$ is the embedding given by $\lambda$.

\end{lemma}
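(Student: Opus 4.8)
The plan is to reduce the statement to the case of a product of rank-one groups acting on lines, where it becomes an explicit computation with finitely generated graded modules. First I would observe that "$\mF$ descends to $V//\Gamma$" means precisely that the $\Gamma$-equivariant structure on $\mF$, viewed as a quasi-coherent sheaf on the stack $V/\Gamma$, is pulled back along $V/\Gamma \to V//\Gamma$; equivalently, writing $R = \mO(V) = \on{Sym}(V^*)$ and $M = \Gamma(\mF)$, the natural map $R \otimes_{R^\Gamma} M^\Gamma \to M$ is an isomorphism. This is a closed condition that can be checked after completion at every point of $V//\Gamma$, i.e.\ at every $\Gamma$-orbit $\Gamma\lambda$ in $V$. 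So I would fix $\lambda \in V(\bC)$ and analyze the completion of $M$ along $\Gamma\lambda$, which decomposes as a direct sum indexed by the orbit, with the summand at $\lambda$ being $M_{\hat\lambda}$, a module over $R_{\hat\lambda}$ carrying an action of the stabilizer $\Gamma_\lambda$. By Chevalley's theorem $\Gamma_\lambda$ is again a reflection group (acting on $V$ with fixed subspace containing $\lambda$), and after translating $\lambda$ to the origin and passing to completions, the descent condition at $\Gamma\lambda$ becomes: $R_{\hat\lambda}\otimes_{(R_{\hat\lambda})^{\Gamma_\lambda}} (M_{\hat\lambda})^{\Gamma_\lambda} \to M_{\hat\lambda}$ is an isomorphism.

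Next I would reduce to the case where $\Gamma_\lambda$ is a product of rank-one groups, using the decomposition of the reflection representation of $\Gamma_\lambda$ into irreducibles; the "extra" invariant directions (on which $\Gamma_\lambda$ acts trivially) contribute freely and harmlessly, so the content is concentrated on the part where $\Gamma_\lambda$ acts by reflections. For a single reflection $s$ acting on a line with coordinate $x$ (so $s\cdot x = -x$), the ring is $\bC[x]$ (or its completion $\bC[[x]]$), the invariants are $\bC[x^2]$, and any $\bZ/2$-equivariant module $M$ splits as $M^+ \oplus M^-$. The descent map $\bC[x]\otimes_{\bC[x^2]} M^+ \to M$ is an isomorphism precisely when $M^- = x\cdot M^+$, i.e.\ when multiplication by $x\colon M^+ \to M^-$ is an isomorphism. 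This in turn is controlled by the cohomology of the Koszul complex for multiplication by $x$: the failure of surjectivity is measured by $M^-/xM^+ = \sH^0(i_0^* M)^-$ and the failure of injectivity by $\{m \in M^+ : xm = 0\} = \sH^{-1}(i_0^* M)^-$ (here $i_0^* M = M \otimes_{\bC[x]}^{\mathbf L} \bC[x]/(x)$ computes Tor in degrees $0$ and $-1$). Since the $\bZ/2$-action on $\sH^0(i_0^*M)$ is trivial exactly when $\sH^0(i_0^*M)^- = 0$, and likewise in degree $-1$, the rank-one case is exactly the claimed equivalence. For a general product, one iterates this one variable at a time (or invokes Künneth for the Koszul complexes), noting that $i_\lambda^*\mF$ in degrees $0$ and $-1$ sees precisely the obstruction in each factor.

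The main obstacle I anticipate is the bookkeeping in globalizing from the completed local statement back to the statement over $V//\Gamma$: one must check that "isomorphism after completion at every closed point of $V//\Gamma$" is equivalent to "isomorphism", which uses that $M$ has locally reasonable behaviour (the map $R\otimes_{R^\Gamma}M^\Gamma \to M$ is a map of quasi-coherent sheaves and $V//\Gamma$ is Noetherian, so it suffices to check on completed stalks only if the modules involved are coherent — for general quasi-coherent $\mF$ one should instead check faithful flatness of completion or argue directly that the kernel and cokernel of the descent map vanish because they do so after completion at every point, which is legitimate since $\bigcap_{\fm} \fm^{\infty} = 0$ in a Noetherian ring and one can test vanishing of a module sheaf fiberwise using Nakayama after Noetherian reduction). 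A second, more technical point is verifying that $\Gamma_\lambda$ really is generated by the reflections of $\Gamma$ fixing $\lambda$ — this is the classical theorem of Steinberg, which I would simply cite. Everything else is the rank-one Koszul computation above, carried out equivariantly.
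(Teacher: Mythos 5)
Your reduction to rank one does not work as stated, and this is where the real content of the lemma lies. Decomposing the reflection representation of $\Gamma_\lambda$ into irreducibles expresses $\Gamma_\lambda$ as a product of \emph{irreducible} reflection groups, and these are in general of rank $\geq 2$ (at $\lambda=0$ the stabilizer is all of $\Gamma$; already $S_3$ on $\bC^2$ is irreducible and is not a product of rank-one groups acting on lines). So the only case your Koszul computation covers is type $A_1\times\cdots\times A_1$, and the main case is untouched. This is precisely the point where the paper needs a genuine external input: it invokes Lonergan's theorem \cite[Theorem 1.3.2]{L}, which says that an equivariant quasi-coherent sheaf descends to $V//\Gamma$ as soon as it descends to $V//\langle\sigma\rangle$ for every simple reflection $\sigma$; this reduces the problem to a single reflection acting on the whole of $V$ (not on a line), and nothing in your outline substitutes for it.

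The second gap is the completed-local/fiberwise detection step, which you flag but do not repair: for quasi-coherent, non-coherent modules completion at maximal ideals is neither exact nor faithful and Nakayama fails, and a nonzero module can have vanishing derived fibers at every closed point (e.g. $\bC(x)$ over $\bC[x]$), so "kernel and cokernel of $R\otimes_{R^\Gamma}M^\Gamma\to M$ vanish after completion at every point of $V//\Gamma$" does not yield vanishing. In particular even your correct rank-one criterion (descent iff $x\colon M^+\to M^-$ is bijective) cannot be deduced from the pointwise hypotheses by such an argument. The paper avoids this entirely: for a single reflection $\sigma$ with reflection coordinate $x_1$ on $V=\bC^n$ it proves directly, by an algebraic propagation argument (its Steps 1--2, using a cokernel trick), that triviality of the action on the single fiber $\mF/(x_1,\dots,x_n)\mF$ already forces surjectivity of $\mO(V)\otimes_\bC\mF^\sigma\to\mF$; it then applies the same claim to the kernel of this surjection (this is where the $\sH^{-1}$ hypothesis enters) to obtain an equivariant presentation of $\mF$ by free equivariant sheaves that visibly descend, and concludes by the descent criterion of Nevins \cite[Lemma 3.1]{Ne}. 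Your Koszul identification of the degree $0$ and $-1$ obstructions in the one-variable case is correct and consistent with this, and citing Steinberg for the structure of $\Gamma_\lambda$ is fine, but the two reductions above carry the actual weight of the proof and are respectively wrong and unsupported as proposed.
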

\begin{proof}
Assume $\mF$ descends to $V//\Gamma$.
Then we have 
$\mF\is \pi^*(\pi_*\mF)^\Gamma$ where $\pi:V\to V//\Gamma$ is the quotient map
and 
it implies 
$i_\lambda^*\mF\is i_{\bar\lambda}^*(\pi_*\mF)^\Gamma$, where 
$\bar\lambda=\pi(\lambda)$ and
$i_{\bar\lambda}:\on{pt}\to V//\Gamma$ is the inclusion.
As $\Gamma$ acts trivially on $\sH^n(i_{\bar\lambda}^*(\pi_*\mF)^\Gamma)$ for all $n\in\bZ$, 
it follows that $\Gamma$ acts trivially on $\sH^n(i_\lambda^*\mF)$ for all $n\in\bZ$, in particular, for 
$n=0,-1$.

Assume $\Gamma$ acts trivially on $\sH^n(i_\lambda^*\mF)$ for $n=0,-1$.
We would like to show that 
$\mF$ descends to $V//\Gamma$.
By \cite[Theorem 1.3.2]{L}, it suffices to 
show that $\mF$ descends to $V//\langle\sigma\rangle$ for any simple reflection $\sigma\in\Gamma$.
So we could assume $\Gamma=\langle\sigma\rangle$ is generated by a reflection $\sigma$.
Let $\mF^{\sigma}=\{h\in\mF|\sigma(h)=h\}$ and
$\mF^{\sigma=-1}=\{h\in\mF|\sigma(h)=-h\}$.
Choose a coordinate $(x_1,...,x_{n})$  of $V$ such that 
$\sigma(x_1)=-x_1$ and $\sigma(x_i)=x_i$ for $i\geq 2$.
Let $\lambda\in V(\bC)$ be the origin with coordinate 
$x_i=0$.

We claim that $\Gamma=\Gamma_\lambda$ acts trivially on $\sH^{0}(i_{\lambda}^*\mF)=\mF/(x_1,...,x_n)\mF$
implies the natural map 
\beq\label{map}
\mO(V)\otimes_\bC\mF^\sigma\to\mF
\eeq is surjective.

Step 1. We first show that 
$\Gamma$ acts trivially on $\mF/(x_1)\mF$ implies 
~\eqref{map} is surjective.
Indeed, the assumption implies that
the image of $f\in\mF^{\sigma=-1}$ in the quotient $\mF/(x_1)\mF$ is zero, that is, $f\in (x_1)\mF$.
Since $\sigma(x_1)=-x_1$ and
$\mF=\mF^\sigma\oplus\mF^{\sigma=-1}$, it follows that 
$f=x_1f'$ for some $f'\in\mF^\sigma$ and it implies~\eqref{map} is surjective.

Step 2.
We show that
$\Gamma$ acts trivially on
$\sH^{0}(i_{\lambda}^*\mF)$ implies 
$\Gamma$ acts trivially on $\mF/(x_1)\mF$.
The case $n=1$ is trivial as $\sH^{0}(i_{\lambda}^*\mF)=\mF/(x_1)\mF$.
Assume $n>1$.
Consider the exact sequence 
\beq\label{coker}
\mO(V)\otimes_\bC\mF^\sigma\to\mF\to\mM\to0
\eeq
where $\mM$ is the cokernel.
The quotient $\mF'=\mF/(x_2,...,x_n)\mF$ 
is a $\Gamma$-equivariant quasi-coherent sheaf on 
$V'=\on{Spec}(\bC[x_1])$ such that $\Gamma$ acts trivially on
$\mF'/(x_1)\mF'=\mF/(x_1,...,x_n)\mF$, thus Step 1 implies 
$\mO(V')\otimes(\mF')^\sigma\to\mF'$ is surjective. 
It follows that the first arrow in the exact sequence  
\[\mO(V)/(x_2,...,x_n)\mO(V)\otimes_\bC\mF^\sigma\to\mF/(x_2,...,x_n)\mF\to\mM/(x_2,...,x_n)\mM\to0\] 
 induced from~\eqref{coker} is surjective, and hence $\mM/(x_2,...,x_n)\mM=0$.
It implies $\mM/(x_1)\mM=\mM/(x_1,x_2,...,x_n)\mM$, which is an quotient of 
$\sH^{0}(i_{\lambda}^*\mF)=\mF/(x_1,...,x_n)\mF$, and thus $\Gamma$ acts trivially 
on $\mM/(x_1)\mM$. Consider the exact sequence 
\[\mO(V)/(x_1)\mO(V)\otimes_\bC\mF^\sigma\to\mF/(x_1)\mF\to\mM/(x_1)\mM\to0\]
induced from~\eqref{coker}.
As $\Gamma$ acts trivially on 
$\mO(V)/(x_1)\mO(V)\otimes_\bC\mF^\sigma$ and $\mM/(x_1)\mM$, it follows that
$\Gamma$ acts trivially on $\mF/(x_1)\mF$. 
This completes the proof of 
Step 2. The desired claim follows.

Consider the short exact sequence
\[0\to\mN\to\mO(V)\otimes_\bC\mF^\sigma\to\mF\to 0,\]
where $\mN$ is the kernel of~\eqref{map}.
It gives rise to an exact sequence
\beq\label{exact sequence}
\sH^{-1}(i_{\lambda}^*\mF)\to\mN/(x_1,...,x_n)\mN\to\mO(V)/(x_1,...,x_n)\mO(V)\otimes_\bC\mF^\sigma\to
\mF/(x_1,...,x_n)\mF\to 0.
\eeq
Since $\Gamma$ acts trivially on $\sH^{-1}(i_{\lambda}^*\mF)$ (by assumption) and $\mO(V)/(x_1,...,x_n)\mO(V)\otimes_\bC\mF^\sigma$, ~\eqref{exact sequence} implies that $\Gamma$ acts trivially on $\mN/(x_1,...,x_n)\mN$, and the claim above implies that the natural map $\mO(V)\otimes_\bC\mN^\sigma\to\mN$ is surjective.
All together, we obtain a $\Gamma$-equivariant presentation 
\[\mO(V)\otimes_\bC\mN^\sigma\to \mO(V)\otimes_\bC\mF^\sigma\to\mF\to0\]
where $\mO(V)\otimes_\bC\mN^\sigma$ and $\mO(V)\otimes_\bC\mF^\sigma$
are free $\Gamma$-equivaraint sheaves that descends to $V//\Gamma$ and,
by \cite[Lemma 3.1]{Ne}, it implies $\mF$ descends to $V//\Gamma$.

\end{proof}

\begin{remark}
The lemma above is inspired by \cite[Theorem 1.2]{Ne}.
In \emph{loc. cit.}, the author proved a similar descent criterion in the case when
$\Gamma$ is a reductive 
algebraic group and $\mF$ is a $\Gamma$-equivariant coherent sheaf.
The argument in \emph{loc. cit.} used the coherence assumption of $\mF$ hence can not be applied directly to 
the case of quasi-coherent sheaves. The proof above uses some special features of 
finite reflection groups.

\end{remark}

\subsubsection*{Proof of Theorem \ref{characterization of central}}
(1) implies (2) is clear. (2) implies (3) follows from the fact that 
$\Gamma=\rW_{a,v}$ is generated by 
affine reflections passing through $\lambda\in\check\ft(\bC)$ and, for any $\mu\in\check\ft(\bC)$, the stabilizer
$\Gamma_\mu=\rW_{a,\lambda}\cap\rW_{a,\mu}^{\on{ex}}$ is a subgroup of 
$\rW_{a,\mu}$, hence acts trivially on 
$\sH^j(i_\mu^*\frak M(\mF\otimes\sign))$, $j=0,-1$.
(3) implies (1) follows from the first paragraph of the proof of Lemma \ref{key descent}.
The equivalence between (3) and (4) follows from Remark \ref{global section}
and the fact $\mathscr M(\mF\otimes\sign)$ descends to $\check\ft//\rW_{a,v}$
if and only if $(-1)^*\mathscr M(\mF\otimes\sign)$ descends to $\check\ft//\rW_{a,-v}$.

\subsection{ 
Whittaker sheaves on $G$}\label{Whittaker}
Consider the de Rham setting.
Fix a generic character $f:\fn\to\bC$. 
A holonomic $D$-modules $\mF$ on $G$ is called a 
Whittaker $D$-module if for any vector $n\in\fn$, 
the actions of $a_l(n)-f(n)$ and $a_r(n)-f(n)$ on $\Gamma(\mF)$ are locally nilpotent.
Here $a_l$ (resp. $a_r$) is the embedding of $\fg$ in $\Gamma(\mD_G)$ as 
left-invariant (resp. right-invaraint) differential operators.
In \cite[Theorem 1.5.1]{Gi2} and \cite[Theorem 1.2.2]{L}, Ginzburg and Lonergan proved 
that the category of Whittaker $D$-modules on $G$
is equivalent to 
the category of $\rW$-equivaraint holonomic $D$-module on 
$T$ satisfying condition (3) in Theorem \ref{characterization}.
Since the Verdier duality induces 
an equivalence between the category of 
central $D$-modules and the category of $*$-central $D$-modules,
Theorem \ref{characterization} together with the results in \emph{loc. cit.} imply:
\begin{thm}\label{Whit}
There is an equivalence between  
the category of Whittaker $D$-modules on $G$
and the category of central (resp. $*$-central) $D$-modules on $T$.
\end{thm}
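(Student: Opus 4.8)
The plan is to assemble Theorem~\ref{Whit} purely from results already in place, so almost no new argument is required. The target is an equivalence between the category of Whittaker $D$-modules on $G$ and the category of central (resp. $*$-central) $D$-modules on $T$. First I would fix the Whittaker side: by \cite[Theorem 1.5.1]{Gi2} and \cite[Theorem 1.2.2]{L}, the category of Whittaker $D$-modules on $G$ is equivalent to the full subcategory of $\rW$-equivariant holonomic $D$-modules $\mF$ on $T$ satisfying condition (3) of Theorem~\ref{characterization of central}, namely that for every $\lambda\in\check\ft(\bC)$ the Mellin transform $\frak M(\mF\otimes\sign)$, viewed as a $\rW_{a,\lambda}$-equivariant quasi-coherent sheaf on $\check\ft$, descends to $\check\ft/\!/\rW_{a,\lambda}$. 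So it suffices to identify \emph{this} subcategory with the $*$-central $D$-modules, and then pass to central ones by Verdier duality.

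The heart of the proof is therefore just to invoke Theorem~\ref{characterization of central}: for a $\rW$-equivariant holonomic $D$-module $\mF$ on $T$, being $*$-central is equivalent to condition (3) there. Hence the Ginzburg--Lonergan subcategory is precisely the category of $*$-central holonomic $D$-modules on $T$. This gives the equivalence between Whittaker $D$-modules on $G$ and $*$-central $D$-modules on $T$ directly. To get the ``central'' version, I would note that Verdier duality $\mathbb D$ exchanges the conditions defining central and $*$-central complexes: for $\mF\in\sD_\rW(T)^\heartsuit$, one has $\oH^*(T,\mathbb D\mF\otimes\mL_\chi)\cong \oH_c^*(T,\mF\otimes\mL_{\chi^{-1}})^\vee$ compatibly with the $\rW_\chi$- (resp. $\rW_\chi'$-) actions up to the sign twist, so $\mathbb D$ carries central $D$-modules to $*$-central ones and back, an exact contravariant self-equivalence. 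Composing with the equivalence above yields the stated equivalence with central $D$-modules as well.

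A small point worth spelling out is that one should check the Ginzburg--Lonergan equivalence is compatible with the abelian structures, i.e.\ that condition (3) cuts out an abelian (Serre) subcategory of $\rW$-equivariant holonomic $D$-modules on $T$ --- but this is immediate from the characterization as $*$-central $D$-modules, since the defining vanishing/triviality conditions in Definition~\ref{def of central} are stable under subquotients and extensions (the functors $\mF\mapsto \oH^*(T,\mF\otimes\mL_\chi)$ being exact up to shift, with the $\rW_\chi$-action functorial). One should also remark, as the excerpt does, that the holonomicity hypothesis can be dropped on the $D$-module side since the proof of Theorem~\ref{characterization of central} does not use it; but for the statement of Theorem~\ref{Whit} as phrased, matching the hypotheses of \cite{Gi2,L}, it is cleanest to keep holonomic $D$-modules throughout.

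I do not expect a genuine obstacle here: the work has all been done in Theorem~\ref{characterization of central} (which itself rests on Lemma~\ref{key descent} and the Mellin transform properties) and in the cited theorems of Ginzburg and Lonergan. The only thing to be careful about is bookkeeping: making sure the precise subcategory appearing in \cite[Theorem 1.5.1]{Gi2} and \cite[Theorem 1.2.2]{L} really is the one described by condition (3) --- including the $\sign$-twist and the fact that one must quotient by the \emph{affine} stabilizers $\rW_{a,\lambda}$ rather than the finite ones $\rW_\chi$ --- and that Verdier duality matches the $\sign$-twisted actions correctly. Once those conventions are aligned, the theorem follows by concatenating equivalences.
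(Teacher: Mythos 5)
Your proposal is correct and follows essentially the same route as the paper: invoke the Ginzburg--Lonergan equivalence between Whittaker $D$-modules on $G$ and the subcategory of $\rW$-equivariant $D$-modules on $T$ cut out by the descent condition, identify that subcategory with $*$-central $D$-modules via Theorem \ref{characterization of central}, and pass to central $D$-modules by Verdier duality. The extra bookkeeping remarks (abelian structure, holonomicity) are fine but not needed beyond what the paper already records.
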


\begin{remark}
In \emph{loc.cit.} Ginzburg and Lonergan also proved that 
the category of Whittaker $D$-modules on $G$ is equivalent to the category of 
holonomic modules over the quantum Toda lattice of 
$G$, and is also equivalent to the category of holonomic modules 
over nil-Hecke algebra of $G$.
\end{remark}

Consider the $\ell$-adic setting.
Fix a non-degenerate homomorphism 
$\chi:U\to\bG_a$, that is, the restriction of 
$\chi$ to each root subgroup $U_\alpha\subset U$ is nontrivial for each 
simple root $\alpha$.
An $\ell$-adic Whittaker sheaf on $G$ is a perverse sheaf 
$\mF$ on $G$ together with an isomorphism
$a^\circ\mF\is\chi^\circ\mL_\psi\boxtimes\chi^\circ\mL_\psi\boxtimes\mF$
satisfying the usual cocycle condition.
Here $a:U\times U
\times G\to G,\ a(u_1,u_2,g)=u_1gu_2^{-1}$.  
We conjecture the following $\ell$-adic counterpart of Theorem \ref{Whit}:

\begin{conjecture}
There is an equivalence between the category of $\ell$-adic Whittaker sheaves on
$G$ and the category of central perverse sheaves on $T$.
\end{conjecture}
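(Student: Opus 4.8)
\emph{Overall strategy.} The de Rham statement Theorem~\ref{Whit} was obtained by combining the Ginzburg--Lonergan equivalence \cite[Theorem 1.5.1]{Gi2}, \cite[Theorem 1.2.2]{L} --- which identifies Whittaker $D$-modules on $G$ with $\rW$-equivariant $D$-modules on $T$ satisfying condition~(3) of Theorem~\ref{characterization of central} --- with Theorem~\ref{characterization of central} and Verdier duality. The plan is to run the same two-step argument in the $\ell$-adic setting: first prove an $\ell$-adic Ginzburg--Lonergan equivalence between $\ell$-adic Whittaker sheaves on $G$ and the $\rW$-equivariant perverse sheaves on $T$ cut out by the Mellin-transform condition of Lemma~\ref{chara of central}, and then invoke \cite[Proposition 4.2]{C1}, the $\ell$-adic counterpart of Theorem~\ref{characterization of central}, together with Verdier duality, to rewrite that condition as $*$-centrality, equivalently centrality. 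The genuinely new content is the $\ell$-adic Ginzburg--Lonergan equivalence, since the machinery of \emph{loc. cit.} --- the quantum Toda lattice, differential operators on $T$, the nil-Hecke algebra, microlocalization --- has no literal $\ell$-adic incarnation and must be replaced by the $\ell$-adic Mellin transform of \cite{GL} and the geometry of the Grothendieck--Springer resolution~\eqref{G-S resolution}.

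\emph{The comparison functor.} Fix the non-degenerate $\chi:U\to\bG_a$, giving the Whittaker sheaf $\chi^{\circ}\mL_\psi$ on $U$. I would define a functor $\Psi$ from $\ell$-adic Whittaker sheaves on $G$ to $\sD_\rW(T)$ as follows: restrict a bi-Whittaker perverse sheaf $\mF$ to the open Bruhat cell $\Omega=\oN T U\subset G$ (with $\oN=w_0Uw_0^{-1}$); the bi-$(U,\psi)$-equivariance makes $\mF|_\Omega$ clean along the $\oN\times U$-action, so it is recovered from its restriction to a transverse slice isomorphic to $T$, and the deck transformations of $\widetilde G\to G$ over $G^{\rs}$ equip this restriction with a $\rW$-equivariant structure. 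Using Proposition~\ref{exactness} and exactness of Whittaker averaging, $\Psi$ should be $t$-exact, hence land in $\sD_\rW(T)^\heartsuit$. For the quasi-inverse I would take the \emph{Whittaker induction} $\Xi(\mF):=\Av_{\psi\times\psi}(\Phi_\mF)$, the bi-Whittaker averaging of $\Phi_\mF=\Ind_{T\subset B}^G(\mF)^\rW$; here Theorem~\ref{main result} is essential, because for central $\mF$ it guarantees that $\pi_!\Phi_\mF$ is supported on $T=B/U$, which is precisely the cleanness ensuring that $\Xi(\mF)$ is a genuine (perverse) bi-Whittaker sheaf and that the composites $\Psi\circ\Xi$ and $\Xi\circ\Psi$ can be computed.

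\emph{Identifying the image and proving the equivalence.} By property~(6) of the $\ell$-adic Mellin transform, $\frak M$ intertwines the $\rW$-equivariant structure on $\Psi(\mF)$ with one on the coherent complex $\frak M(\Psi(\mF))$ on $\calC(T)$; unwinding the bi-Whittaker condition on $\mF$ through the construction of $\Psi$ should show that $\frak M(\Psi(\mF)\otimes\sign)$ has trivial $\rW_\chi$-action on the stalk cohomologies $\sH^n(i_\chi^*(-))$ for every $\chi$, i.e.\ the $\ell$-adic form of condition~(3). By \cite[Proposition 4.2]{C1} and Lemma~\ref{chara of central} this says $\Psi(\mF)$ is $*$-central, and Verdier duality converts this to centrality, matching the statement of the Conjecture. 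Conversely, for central $\mF$ one uses the Mackey formula $\Res_{T\subset B}^G\circ\Ind_{T\subset B}^G(-)\cong\bigoplus_{w\in\rW}w^*(-)$ of Proposition~\ref{Mackey formula} to compute $\Psi(\Xi(\mF))\cong\mF$; adjunction between $\Xi$ and $\Psi$ then yields full faithfulness, and essential surjectivity follows since every bi-Whittaker sheaf is clean on $\Omega$ and hence lies in the essential image of $\Xi$. An alternative to re-deriving Ginzburg--Lonergan by hand is the spreading-out mechanism of the paper: $\Psi$ is given by a geometric kernel that lifts to a strictly henselian ring $A$ of mixed characteristic, the universal local acyclicity established in Step~3 of Section~\ref{sketch} controls its specialisation, and the de Rham equivalence Theorem~\ref{Whit} on the generic fibre then forces the equivalence on the special fibre.

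\emph{Main obstacle.} The crux is the \emph{absence of differential operators}: one must produce a purely sheaf-theoretic proof of the Ginzburg--Lonergan equivalence, most plausibly by pushing the entire comparison through the Mellin transform~\eqref{inverse Mellin} and the regular geometry of~\eqref{G-S resolution}, while checking that $\Psi$ and $\Xi$ preserve the monodromic subcategory $\sD(T)_{\on{mon}}$ on which~\eqref{inverse Mellin} is an equivalence. A secondary difficulty is organizing the $\rW$-equivariant data and the cleanness arguments on $\Omega$ so that holonomicity is not imposed by hand, and --- as elsewhere in the paper --- the argument will likely inherit a mild restriction on $\on{char}k$ from whatever mixed-characteristic input is used.
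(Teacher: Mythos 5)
You should first be aware that the paper does not prove this statement: it is stated explicitly as a conjecture (offered as a possible answer to Drinfeld's question), and the only evidence given for it is the de Rham Theorem~\ref{Whit}, which rests on the Ginzburg--Lonergan equivalence. So there is no proof in the paper to measure your proposal against, and the question is whether your sketch actually closes the gap the author left open. It does not. The decisive step --- an $\ell$-adic analogue of the Ginzburg--Lonergan equivalence between bi-Whittaker sheaves on $G$ and the $\rW$-equivariant sheaves on $T$ cut out by the Mellin-transform descent condition --- is exactly the content of the conjecture, and your proposal only asserts it: the claims that $\mF|_\Omega$ is ``clean'' along the $\oN\times U$-action and recovered from a slice isomorphic to $T$, that the deck transformations over $G^{\rs}$ induce a $\rW$-equivariant structure on that restriction, that $\Psi$ is $t$-exact, that $\Xi$ is adjoint to $\Psi$, and above all that ``unwinding the bi-Whittaker condition should show'' triviality of the $\rW_\chi$-actions on the stalks of $\frak M(\Psi(\mF)\otimes\sign)$, are precisely the statements that would need proofs and for which the de Rham arguments (quantum Toda, nil-Hecke, $\Gamma(\mD_T)$, Kostant slice with its $D$-module descent) have no known sheaf-theoretic replacement. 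Note also that the paper's Theorem~\ref{main result} concerns strongly central complexes, so invoking it to get cleanness of $\Xi(\mF)$ for merely central $\mF$ (as the conjecture requires, e.g.\ the $\SL_2$ example of a central but not strongly central local system) is an additional unjustified step.

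Your fallback route via spreading out would fail for a structural reason. The mixed-characteristic mechanism of Sections~6--7 works because $\mE_\theta$ is a \emph{tame} local system: tame characters specialize along $\on{sp}:\pi_1^t(T_k)\is\pi_1^t(T_A)$, the lift $\cM_{\theta,A}$ exists, and ULA plus the comparison functor $\epsilon^*$ lets one import the de Rham computation from the generic fibre. Whittaker sheaves, by contrast, are defined through the Artin--Schreier sheaf $\mL_\psi$, which is wildly ramified, exists only in characteristic $p$, and has no $\ell$-adic incarnation on the characteristic-zero fibre (over $\bC$ the $\ell$-adic side has no analogue of $\mL_\psi$ at all --- only the exponential $D$-module on the de Rham side). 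Hence the ``geometric kernel over $A$'' you propose for $\Psi$ does not exist, and there is no generic fibre on which Theorem~\ref{Whit} could be invoked to force the equivalence in characteristic $p$. Any genuine proof will have to handle the wild, non-monodromic nature of the Whittaker condition directly (in particular your worry about preserving $\sD(T)_{\on{mon}}$ is misplaced in one direction: the source category is not monodromic, so~\eqref{inverse Mellin} cannot be applied to it as stated), and this is why the statement remains a conjecture in the paper.
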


\quash{
\begin{remark}
The definition of $*$-central $D$-modules (see Definition \ref{def of central}) makes senses 
without the holonomicity assumption, and Theorem \ref{characterization} in the de Rham setting holds for 
any $\rW$-equivariant $D$-module on $T$. 
Therefore, 
the results in \cite{G2,L} imply that Theorem \ref{Whit} holds without the holonomicity assumption.
\end{remark}
 }

\section{Local systems $\mE_\theta$}\label{tame central local systems}
 
 \subsection{Tame local systems $\mE_\theta$}\label{central Loc}
 In this subsection we attach each $\rW$-orbit $\theta=\rW\chi$ in $\calC(T)(F)$
 a $\rW$-equivariant tame local system $\mE_\theta$ on $T$.

We first consider the de Rham setting.
Define $S$ to be the completion of the group algebra 
$\bC[\pi_1^t(T)]$ with respect to the 
the kernel of the map $\bC[\pi_1^t(T)]\to\bC$ sending $\gamma\in\pi_1^t(T)$ to $1\in\bC$ and let $S_+$ be the argumentation ideal.
The Weyl group $\rW$ acts naturally on 
$S$ and we define 
$S_\chi=S/\langle S^{\rW_\chi}_+\rangle$, where $\langle S^{\rW_\chi}_+\rangle$
is the ideal generated by $S^{\rW_\chi}_+$. Since $\rW_\chi$ is normal in 
$\rW_\chi'$, $S_\chi$ carries an action of $\rW_\chi'$ and we define 
$\rho_\chi^{uni}$ to be the representation of 
$\rW_\chi'\ltimes\pi_1^t(T)$ in the space $S_\chi$ by setting 
$\rho_\chi^{uni}(w,\gamma)v=w(\gamma v)$, 
where $(w,\gamma)\in\rW_\chi'\ltimes\pi_1^t(T)$
and $v\in S_\chi$.
Since $\rW_\chi'$ is the stabilizer of $\chi$ in $\rW$, one can twist $\rho_\chi^{uni}$
by the character $\chi$ and obtain a representation 
$\rho_\chi:=\rho_\chi^{uni}\otimes\chi$
of $\rW_\chi'\ltimes\pi_1^t(T)$ in $S_\chi$ and its induced representation
$\rho_\theta:=\Ind_{\rW_\chi'\ltimes\pi_1^t(T)}^{\rW\ltimes\pi_1^t(T)}\rho_\chi$
of $\rW_{a}^{\on{ex}}=\rW\ltimes\pi_1^t(T)$.
We define
$\mE_\chi$ and $\mE_\theta$ to be the 
$\rW_\chi'$ and $\rW$-equivariant
local systems on $T$ corresponding to $\rho_\chi$ and 
$\rho_\theta$.

We now consider the $\ell$-adic setting.
Choose a finite extension  $K$ of $\barQ$ such that 
elements in $\theta$ are defined over the ring of integer of $O_K$.
Let $S_K=O_K[[\pi_1(T)_\ell]]$
be the completed group algebra of the pro-$\ell$ quotient $\pi_1(T)_\ell$
and let $S_{K,+}$ be the argumentation ideal.
We define $S_{\chi,K}=S_K/\langle S_{K,+}^{\rW_\chi}\rangle$.
Let $\rho_\chi^{uni}$ be the $\ell$-adic representation of
$\rW_\chi'\ltimes\pi_1^t(T)$ in $S_{\chi,K}\otimes_{R_K}\barQ$
given by
$\rho_\chi^{uni}(w,\gamma)(v)=w(\gamma_\ell v)$,
where $\gamma_\ell$ is the image of $\gamma$ under
the quotient  $\pi_1^t(T)\to\pi_1(T)_\ell$. 
Applying the same construction as in the de Rham setting, we obtain 
$\ell$-adic representations 
$\rho_\chi=\rho^{uni}_\chi\otimes\chi$ 
and $\rho_\theta=\Ind_{\rW_\chi'\ltimes\pi_1^t(T)}^{\rW\ltimes\pi_1^t(T)}\rho_\chi$
of 
$\rW_\chi'\ltimes\pi_1^t(T)$
and $\rW\ltimes\pi_1^t(T)$ in
$S_{\chi,K}\otimes_{S_K}\barQ$.
We define $\mE_\chi$ and $\mE_\theta$ to be the 
$\rW_\chi'$ and $\rW$-equivariant
$\ell$-adic local systems on $T$ corresponding to
$\rho_\chi$ and $\rho_\theta$.

\subsection{Mellin transform of $\mE_\theta$}\label{MT of E_xi}
In this subsection we study the Mellin transform of 
$\mE_\theta$.

We first consider the de Rham setting.
For any $\chi\in\calC(T)(\bC)$,
consider the projection map $\pi_\chi:\check\ft\ra\check\ft//\rW_\chi$.
To every $\mu\in\check\ft$, let $m_{\mu}:\check\ft\ra\check\ft, v\ra v+\mu$.
We define 
\[\calR_\chi=\pi_\chi^*\delta,\ \ \ \calR_\chi^\mu=m_{\mu}^*\calR_\chi\]
where $\delta=\delta_0$ the the skyscraper sheaf supported at 
$0\in\check\ft//\rW_\chi$. 
Let $\mu$ be a lift of $\chi$. Then 
we have the following cartiesian diagram 
\[\xymatrix{\check\ft\ar[r]^{m_{\mu}}\ar[d]^{\pi_{-\mu}}&\check\ft\ar[d]^{\pi_\chi}\\
\check\ft//\rW_{a,-\mu}\ar[r]^{\bar m_{\mu}}&\check\ft//\rW_\chi}\]
where $\pi_{\mu}$ is the quotient map and 
$\bar m_{\mu}$ is induced by 
the isomorphism $\rW_{a,-\mu}^{}\is\rW_{\chi^{-1}}=\rW_{\chi}$ in~\eqref{iso of stabilizers}.
It follows that 
$\calR_\chi^\mu\is\pi_{-\mu}^*\delta_{-\bar\mu}$, where 
$\delta_{-\bar\mu}$ is the skyscraper sheaf supported at $-\bar\mu=\pi_{-\mu}(-\mu)$.

The equality $ l_{w(\mu)}\circ w=w\circ l_\mu$
gives rise to an isomorphism 
$\calR_\chi^\mu\is w^*\calR_\chi^{w(\mu)}$ for $w\in\rW_\chi'$.
The isomorphisms 
$\calR_\chi^{\mu+\lambda}\is
l^*_\lambda\calR_\chi^\mu, \lambda\in\Lambda$
and $\calR_\chi^\mu\is w^*\calR_\chi^{w(\mu)},\ w\in\rW_\chi'$, define a $\rW'_\xi\ltimes\Lambda$-equivariant structure on 
\[\calS_\chi=\bigoplus_{\mu\in\lambda+\Lambda}\calR_\chi^\mu.\] 
The induced $\rW\ltimes\Lambda$-equivariant sheaf  
$\Ind_{\rW_\chi'\ltimes\Lambda}^{\rW\ltimes\Lambda}\cS_\chi$ on $\check\ft$ 
depends only on the $\rW$-orbit $\theta\subset\calC(T)(\bC)$ of $\chi$
and we denote it by $\calS_\theta$. 
Note that  
a choice of a lift $\mu$ of $\chi$ gives rise to isomorphisms 
\beq\label{descent of S_chi}
\calS_\chi\is\Ind_{\rW_{a,-\mu}^{\on{ex}}}^{\rW_\chi'\ltimes\Lambda}(\calR_\chi^\mu
)\ \ \ \ \ \ \ \ \ \calS_\theta\is\Ind_{\rW_\chi'\ltimes\Lambda}^{\rW_a^{\on{ex}}}\Ind_{\rW_{a,\mu}^{\on{ex}}}^{\rW_\chi'\ltimes\Lambda}(\calR_\chi^\mu)\is
\Ind_{\rW_{a,\mu}^{\on{ex}}}^{\rW_a^{\on{ex}}}(\calR_{\chi}^\mu).
\eeq

We now
consider the $\ell$-adic setting.
Consider the quotient map
$\pi_\chi:\calC(T)\to\calC(T)//\rW_\chi$.
Let $0\in\calC(T)(\barQ)$ be the trivial character and let 
$\pi_\chi(0)$ be its image in $\calC(T)//\rW_\chi$.
 Let $\mO_{\pi_\chi(0)}$ be the structure sheaf of the point $\pi_\chi(0)$
 and we define 
\beq\label{descent of R_chi}
\calR_\chi:=\pi_\chi^*\mO_{\pi_\chi(0)}
\eeq
which a $\rW_\chi'$-equivariant coherent sheaf on $\calC(T)$.
Define 
$\calS_\chi:=m_{\chi^{-1}}^*(\calR_\chi)$ where
$m_\chi:\calC(T)\to\calC(T)$ be the morphism of translation by $\chi$.
Since $m_\chi$ intertwines the $\rW_\chi'$-action on $\calC(T)$, 
$\calS_\chi$ is $\rW_\chi'$-equivariant,
moreover, there is natural isomorphism 
\beq\label{w action}
w^*\calS_\chi\is\calS_{w^{-1}\cdot\chi}
\eeq for any $w\in\rW$.
For any $\rW$-orbit $\theta$ in $\calC(T)$, we define 
the following $\rW$-equivariant coherent sheaf 
with finite support
\[\mS_\theta=\bigoplus_{\chi\in\theta}\calS_\chi\]
where the $\rW$-equivariant structure is given by the 
isomorphisms in~\eqref{w action}.

Note that 
$\calS_\theta$ is set theoretically supported on $\theta^{-1}=\{\chi^{-1}|\chi\in\theta\}\subset\calC(T)(\barQ)$ in the $\ell$-adic setting, and on
$\{-\mu|\exp(\mu)\in\theta\}\subset\check\ft$ in the de Rham setting.

\begin{lemma}\label{Mellin of E_theta}
There is an isomorphism
\[\mathfrak M(\mE_\theta\otimes\sign)\is\calS_{\theta}\]
compatible with the $\rW$-equivariant structures in the 
 in the $\ell$-adic setting, and the $\rW_a^{\text{ex}}$-equivariant structures in the 
 de Rham setting.

\end{lemma}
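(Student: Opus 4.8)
The plan is to compute $\mathfrak M(\mE_\theta\otimes\sign)$ directly from the definition of $\mE_\theta$ as an induced local system, reducing everything to the case of $\mE_\chi$ on the one hand and the behaviour of $\mathfrak M$ under induction/twisting on the other. I would work first in the de Rham setting (the $\ell$-adic case being formally the same once Mellin is replaced by the Gabber--Loeser version). Recall $\mE_\theta$ corresponds to the representation $\rho_\theta=\Ind_{\rW_\chi'\ltimes\pi_1^t(T)}^{\rW\ltimes\pi_1^t(T)}\rho_\chi$ with $\rho_\chi=\rho_\chi^{uni}\otimes\chi$ and $\rho_\chi^{uni}$ acting on $S_\chi=S/\langle S^{\rW_\chi}_+\rangle$. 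The key structural input is that, under the equivalence $\mathfrak M$, induction of local systems along the lattice/Weyl directions corresponds to the pushforward constructions building $\calS_\theta$ out of $\calS_\chi$, and tensoring by the rank-one $\mL_\chi$ corresponds to translation $m_\chi^*$ on $\check\ft$ (property (2) of Mellin, \eqref{stalk} and the surrounding list). So the whole statement will follow once I identify $\mathfrak M(\mE_\chi^{uni}\otimes\sign)$ with $\calR_\chi$ (the pullback of a skyscraper along $\pi_\chi:\check\ft\to\check\ft//\rW_\chi$), and then chase the two induction steps and the $\chi$-twist.

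The heart of the computation is therefore the \emph{unipotent} case: show $\mathfrak M(\mE_\chi^{uni}\otimes\sign)\is\calR_\chi$ as $\rW_\chi'$-equivariant sheaves, where I must be careful about which $\rW_{a}$-stabilizer plays the role of ``$\rW_\chi$'' after passing to $\check\ft$ — by \eqref{iso of stabilizers} the group $\rW'_\chi$ becomes $\rW_{a,\lambda}^{\on{ex}}$ and $\rW_\chi$ becomes $\rW_{a,\lambda}$ for a lift $\lambda$. Concretely: the local system on $T$ attached to the regular representation $S$ of $\pi_1^t(T)$ has Mellin transform $\mO_{\check\ft}$ (viewed $\Lambda$-equivariantly), because $S$ is the completed group algebra and its Mellin transform is the completed structure sheaf at the origin; then $S_\chi = S/\langle S_+^{\rW_\chi}\rangle$ is the quotient by the ideal generated by the $\rW_\chi$-invariant part of the augmentation ideal, which on the Mellin side is exactly imposing the equations cutting out the fixed locus, i.e. base change along $\check\ft\to\check\ft//\rW_{a,\lambda}$ of the skyscraper at the image of the origin — this is $\calR_\chi$ after the translation-by-$\lambda$ (or $-\lambda$) built into the passage from $S_\chi$ to $\calS_\chi$ via \eqref{descent of S_chi}. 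The $\sign$-twist is harmless: it shifts everything by $\sign_\rW$, which is precisely the twist absorbed into the definition of the central/$*$-central notion and, via Remark \ref{global section}, corresponds to the $(-1)^*$ normalization already present in $\calS_\chi$; I would simply track it through and confirm it matches the conventions in \S\ref{MT of E_xi}.

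With the unipotent identification in hand, the remaining steps are bookkeeping: (i) tensoring $\mE_\chi^{uni}$ by $\mL_\chi$ to get $\mE_\chi$ becomes $m_{\chi^{-1}}^*$ applied to $\calR_\chi$ (or $m_\mu^*$ in the de Rham notation), yielding $\calS_\chi$; (ii) inducing from $\rW'_\chi\ltimes\pi_1^t(T)$ up to $\rW_a^{\on{ex}}$ becomes the direct-sum/pushforward construction $\calS_\theta=\bigoplus_{\chi\in\theta}\calS_\chi$, using that $\mathfrak M$ intertwines the $\rW$-equivariant structures (property (6) in the $\ell$-adic list, and the $\rW_a^{\on{ex}}$-analogue in the de Rham list) and that induction of equivariant local systems corresponds termwise to the $w^*\calS_\chi\is\calS_{w^{-1}\chi}$ identifications \eqref{w action}; the compatibility of the two descriptions of $\calS_\theta$ in \eqref{descent of S_chi} handles the choice of lift. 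The main obstacle I anticipate is step (i)--(ii) of this last paragraph done \emph{equivariantly} — i.e. verifying that the isomorphism $\mathfrak M(\mE_\chi^{uni}\otimes\sign)\is\calR_\chi$ can be chosen $\rW'_\chi$-equivariantly and that this equivariance is compatible with the $\rW'_\chi\ltimes\Lambda$-structure used to form $\calS_\chi$ and then with induction; this requires unwinding the definition of the $\rW$-equivariant structure on a Mellin transform coming from a $\rW$-equivariant local system and matching it against the explicit formulas $\rho_\chi^{uni}(w,\gamma)v=w(\gamma v)$. Everything else is a formal consequence of the listed properties of $\mathfrak M$ (monoidality, t-exactness, the stalk formula \eqref{stalk}, and the equivalence \eqref{inverse Mellin} in the $\ell$-adic case), together with the fact that both sides are monodromic/finite-support so that $\mathfrak M$ is an equivalence and it suffices to compare.
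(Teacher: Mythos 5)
Your plan follows essentially the same route as the paper's proof: reduce to a single $\chi$ in the orbit, identify the module underlying the unipotent twist $\mE_\chi\otimes\mL_{\chi^{-1}}$ with $\calR_\chi$, convert the $\mL_\chi$-twist into the translation $m_{\chi^{-1}}^*$ (resp. $m_\mu^*$), handle the sign/inversion normalization via Remark \ref{global section} in the de Rham case and the Gabber--Loeser comparison in the $\ell$-adic case, and deal with equivariance and induction at the $\rW_\chi'$-level. One point to keep straight when you track conventions: in the $\ell$-adic setting the $\sign$-twist is not accounted for by the $(-1)^*$/support inversion (that only explains why $\calS_\theta$ sits over $\theta^{-1}$), but rather by the factor $\wedge^{\on{top}}_{\bZ_\ell}(\pi_1(T)_\ell)^\vee$ appearing in \cite[Corollary 4.2.2.4]{GL}, on which $\rW$ acts by the sign character --- the exact analogue of the role played by $\omega_T$ in the de Rham case.
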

\begin{proof}
The $\ell$-adic setting.
Pick a $\chi\in\theta$.
It suffices to show that there is an isomorphism 
$\mathfrak M(\mE_\chi\otimes\sign)\is\calS_\chi$ 
compatible with
the $\rW_\chi'$-equivariant structures.
The $\barQ[[\pi_1(T)_\ell]]$-module  
corresponding to $\mE_\chi\otimes\mL_{\chi^{-1}}$ is isomorphic to $\calR_\chi$.
Since $\calR_\chi\is\on{inv}^*\calR_{\chi}$ (here $\on{inv}$ the inverse map on $\calC(T)$), 
 by \cite[Corollary 4.2.2.4]{GL}, 
there is an isomorphism 
\[\mathfrak M(\mE_\chi)\is m_{\chi^{-1}}^*\mathfrak M(\mE_\chi\otimes\mL_{\chi^{-1}})\is
m_{\chi}^*(\calR_\chi\otimes_{\bZ_\ell}\wedge^{\on{top}}_{\bZ_\ell}(\pi_1(T)_\ell)^\vee)\is
\mS_\chi\otimes_{\bZ_\ell}\wedge^{\on{top}}_{\bZ_\ell}(\pi_1(T)_\ell)^\vee,\]
compatible with the $\rW_\chi'$-actions.
By choosing a generator of $\wedge^{\on{top}}_{\bZ_\ell}(\pi_1(T)_\ell)^\vee$,
we obtain a $\rW$-equivariant isomorphism 
$\wedge^{\on{top}}_{\bZ_\ell}(\pi_1(T)_\ell)^\vee\is\bZ_\ell\otimes\sign$.
The desired claim follows.

The de Rham setting. 
We have an isomorphism of $\mO(\check\ft)$-modules $\Gamma(\mE_\theta)\is\Gamma(\calS_{\theta^{-1}})$
compatible with the 
$\rW_a^{\text{ex}}$-actions.
Thus, by Remark \ref{global section},
there is an isomorphism
$\mathfrak M(\mE_\chi\otimes\sign)\is(-1)^*\calS_{\chi^{-1}}\is\calS_{\chi}$
intertwines the $\rW_a^{\text{ex}}$-equivariant structures.

\end{proof}

\begin{corollary}\label{E_theta is central}
$\mE_\theta$ is $*$-central.
\end{corollary}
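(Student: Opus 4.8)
The plan is to deduce Corollary~\ref{E_theta is central} directly from Lemma~\ref{Mellin of E_theta} together with the Mellin-transform characterization of $*$-central complexes (Lemma~\ref{chara of central}). By Lemma~\ref{chara of central}, it suffices to verify, for each $\lambda\in\check\ft(\bC)$ in the de Rham setting (resp.\ each $\chi\in\calC(T)(\barQ)$ in the $\ell$-adic setting), that $\rW_{a,\lambda}$ (resp.\ $\rW_\chi$) acts trivially on all cohomology sheaves $\sH^n(i_\lambda^*\frak M(\mE_\theta\otimes\sign))$. Since Lemma~\ref{Mellin of E_theta} identifies $\frak M(\mE_\theta\otimes\sign)$ with $\calS_\theta$ compatibly with the $\rW_a^{\text{ex}}$- (resp.\ $\rW$-) equivariant structures, the problem reduces to a concrete statement about the equivariant quasi-coherent sheaf $\calS_\theta$ on $\check\ft$ (resp.\ on $\calC(T)$).

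The key step is then to analyze $\calS_\theta$ near a point of its support. In the de Rham setting, recall from~\eqref{descent of S_chi} that, after choosing a lift $\mu$ of $\chi\in\theta$, one has $\calS_\theta\is\Ind_{\rW_{a,\mu}^{\on{ex}}}^{\rW_a^{\on{ex}}}(\calR_\chi^\mu)$, and that $\calR_\chi^\mu\is\pi_{-\mu}^*\delta_{-\bar\mu}$ is the pullback of a skyscraper sheaf along the quotient map $\pi_{-\mu}:\check\ft\to\check\ft//\rW_{a,-\mu}$. In particular, $\calR_\chi^\mu$ visibly descends to $\check\ft//\rW_{a,-\mu}$, and the same holds for each $\rW_a^{\on{ex}}$-translate of it occurring in the induced sheaf. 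By the first paragraph of the proof of Lemma~\ref{key descent} (applied with $\Gamma=\rW_{a,\nu}$ the stabilizer at an arbitrary point $\nu$, a finite reflection group), a sheaf that descends to $\check\ft//\Gamma$ has trivial $\Gamma$-action on all cohomology sheaves of its derived fiber at any point fixed by $\Gamma$. Running this over the pieces of the induced sheaf — noting that the stabilizer $\rW_{a,\lambda}$ at a point $\lambda$ permutes those summands of $\calS_\theta$ whose supports pass through $\lambda$, and fixes each summand supported at $\lambda$ — gives triviality of the $\rW_{a,\lambda}$-action on $\sH^n(i_\lambda^*\calS_\theta)$ for all $n$. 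The $\ell$-adic case is identical in structure, using instead~\eqref{descent of R_chi}: $\calR_\chi=\pi_\chi^*\mO_{\pi_\chi(0)}$ descends to $\calC(T)//\rW_\chi$, hence so does each translate, and one argues as above with $\rW_\chi$ in place of $\rW_{a,\lambda}$, invoking the isomorphism of stabilizers~\eqref{iso of stabilizers} to match the two group actions.

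I expect the main obstacle to be purely bookkeeping rather than conceptual: one must carefully track how the stabilizer of a point $\lambda$ (resp.\ $\chi$) interacts with the direct-sum decomposition $\calS_\theta=\bigoplus_{\mu}\calR_\chi^\mu$ (resp.\ $\bigoplus_{\chi\in\theta}\calS_\chi$) and the induction, to see that it suffices to check triviality summand-by-summand and that each relevant summand genuinely descends to a quotient by a group containing the stabilizer in question. A minor subtlety is that $\calS_\theta$ is only quasi-coherent, not coherent, so one cannot a priori appeal to coherent-sheaf descent criteria; but this is exactly why Lemma~\ref{key descent} was formulated for quasi-coherent sheaves, and only its easy direction (descent $\Rightarrow$ trivial action on fibers) is needed here. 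Assembling these observations yields the corollary.
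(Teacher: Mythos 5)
Your argument is correct and is essentially the paper's proof: the paper likewise deduces the corollary from Lemma \ref{Mellin of E_theta} together with the descent properties built into the construction of $\calS_\theta$ (i.e.\ condition (3) of Theorem \ref{characterization of central}, whose verification amounts to exactly your summand-by-summand fiber check using the easy direction of Lemma \ref{key descent} and the fact that each $\calR_\chi^\mu$, resp.\ $\calR_\chi$, is pulled back from the quotient by the relevant stabilizer). The only difference is that in the $\ell$-adic setting the paper simply cites \cite[Corollary 5.2]{C1} instead of rerunning the argument as you do.
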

\begin{proof}
It is proved in \cite[Corollary 5.2]{C1} in the $\ell$-adic setting.
In the de Rham setting,
Lemma \ref{Mellin of E_theta} 
and the construction of $\calS_\theta$ imply the
Mellin transform 
$\mathscr M(\mE_\theta\otimes\sign)$ satisfies condition (3) in
Theorem \ref{characterization of central}, hence is $*$-central.
\quash{
The case when $\on{char}k>0$ is proved in \cite[Lemma 4.4]{C1}.
Assume $k=\bC$. Choose $\chi\in\theta$ and let $\mu\in\check\ft$ be a lifting of 
$\chi$.
By Lemma \ref{chara of central}, 
it suffices to show that the action of 
$\rW_{\chi^{-1}}\is\rW_{a,-\mu}$ on 
\[\oH^*(T,\mE_\theta\otimes\mL_{\chi^{-1}})\is i_{-\mu}^*\frakM(\mE_\theta)\is i_{-\mu}^*\calS_{\chi}\is\calR_{\chi}^{\mu}\]
is given by the sign character.
This follow from Lemma \ref{Mellin of E_theta} and \ref{descent} and the fact that 
$\calR_{\chi}^{\mu}\is\pi_{-\mu}^*\delta_{-\bar\mu}$ descends to $\check\ft//\rW_{a,-\mu}$.}

\end{proof}

\subsection{Convolution with $\mE_\theta$}

Recall the following descent criterion for coherent complexes \cite{Ne}:
\begin{lemma}\label{descent}
Let $X=\Spec(A)$ be an affine Noetherian scheme over $F$. Let $H$ be a finite group acting on $X$.
Let $\mF\in D^b_{coh}(X/H)$ be a $H$-equivaraint complex of 
coherent sheaves on $X$. The following are equivalent
\begin{enumerate}
\item
For any closed point $x\in X$, the action of 
the stabilizer $G_x$ of $x$ in $G$ on the derived fiber 
$i_x^*\mF$ is trivial. Here $i_x:x\to X$ is the embedding.
\item
$\mF$ descends to $X//H=\Spec(A^H)$.
\end{enumerate}
\end{lemma}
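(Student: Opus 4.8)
The plan is to prove the two implications separately, using throughout that $F$ has characteristic zero (so $|H|$ is invertible and the functor $(-)^H$ of taking $H$-invariants is exact) and that $\pi\colon X\to Y:=X//H$ is finite and affine (by E.~Noether, $A$ is a finite $A^H$-module). The implication $(2)\Rightarrow(1)$ is a matter of unwinding equivariant structures: an isomorphism $\mF\is\pi^*\mathcal G$ with $\mathcal G\in D^b_{coh}(Y)$ matches the $H$-equivariant structure on $\mF$ with the tautological one on $\pi^*\mathcal G$ afforded by the relations $\pi\circ h=\pi$, $h\in H$; for a closed point $x$ with $\bar x=\pi(x)$ and $h\in G_x$, the factorization $\pi\circ i_x=i_{\bar x}\circ(\Spec k(x)\to\Spec k(\bar x))$ then shows that $h$ acts trivially on $i_x^*\mF\is i_{\bar x}^*\mathcal G\otimes_{k(\bar x)}k(x)$.

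For $(1)\Rightarrow(2)$ I would first reformulate descent as an adjunction statement. Since $\pi$ is affine, $\pi_*=R\pi_*$ is exact, and together with exactness of $(-)^H$ this gives a right adjoint $(\pi_*(-))^H$ to $\pi^*\colon D(Y)\to D(X/H)$. Using $A^H=B$ and that $(-)^H$ commutes with $-\otimes^L_B\mathcal G$ (it is exact and $B$-linear), the unit $\mathcal G\to(\pi_*\pi^*\mathcal G)^H$ is identified with the canonical isomorphism $\mathcal G\is(A\otimes^L_B\mathcal G)^H\is A^H\otimes^L_B\mathcal G=\mathcal G$, so $\pi^*$ is fully faithful. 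Hence $\mF$ descends if and only if the counit $c_\mF\colon\pi^*(\pi_*\mF)^H\to\mF$ is an isomorphism (and then $\mathcal G=(\pi_*\mF)^H$ automatically lies in $D^b_{coh}(Y)$, $\pi$ being finite). Being an isomorphism of complexes with coherent cohomology on the Noetherian scheme $X$, this can be tested after the faithfully flat base change $B\to\widehat B_{\bar x}$ at every closed point $\bar x\in Y$; after such a completion $A$ becomes a finite algebra over the complete local ring $\widehat B_{\bar x}$, and the idempotent decomposition $A\otimes_B\widehat B_{\bar x}=\prod A_x$ over the points $x$ above $\bar x$ (which $H$ permutes transitively with stabilizer $G_x$) reduces the problem to the following local statement: \emph{$A$ is a complete local Noetherian $F$-algebra, $G:=G_x$ is a finite group acting on $A$ with $A^G=B$, $\mF\in D^b_{coh}(X/G)$, and $G$ acts trivially on $i_x^*\mF$ at the closed point $x$; then $\mF$ descends to $\Spec B$.}

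To prove this local statement I would choose a minimal free resolution $\widehat F^{\,\bullet}\to\mF$ over $A$ (finitely generated free terms, differential valued in $\mathfrak m_A\widehat F^{\,\bullet}$), which in characteristic zero can be taken $G$-equivariant. Minimality identifies $\widehat F^{\,j}/\mathfrak m_A\widehat F^{\,j}$ with the cohomology of $i_x^*\mF$, so by hypothesis $G$ acts trivially there; since $H^1\big(G,\,\mathrm{id}+M_{r}(\mathfrak m_A)\big)$ vanishes ($G$ finite, characteristic zero, unipotent coefficients, $A$ complete) each $\widehat F^{\,j}$ is then isomorphic, $G$-equivariantly, to a free module $A^{r_j}$ on which $G$ acts only through its action on $A$. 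Consequently every differential $\widehat F^{\,j}\to\widehat F^{\,j+1}$, a $G$-equivariant $A$-linear map between such standard modules, is a matrix over $A^G=B$, and minimality forces its entries into $\mathfrak m_A\cap B=\mathfrak m_B$. Thus $\widehat F^{\,\bullet}=A\otimes_B\mathcal G^{\bullet}$ for the complex $\mathcal G^{\bullet}$ of free $B$-modules with the same differential matrices, and since $\mathcal G^{\bullet}$ is free, $\pi^*\mathcal G^{\bullet}=A\otimes^L_B\mathcal G^{\bullet}=\widehat F^{\,\bullet}\is\mF$ $G$-equivariantly; this proves $(1)\Rightarrow(2)$. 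Alternatively, one could invoke \cite{Ne} directly.

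The step I expect to be the main obstacle is arranging the reduction so that minimal resolutions are available and well behaved: $\pi$ need not be flat and its fibres need be neither reduced nor finite over the base, so one cannot argue naively with ``$H$-orbits of reduced points'', and it is the completion of the base together with the splitting off of the local factor $A_x$ that repairs this, after which the equivariant minimal-resolution argument is routine. That the hypothesis on the $G_x$-action cannot be weakened is already visible for $H=\bZ/2$ acting on $F[\epsilon]/(\epsilon^2)$ by $\epsilon\mapsto-\epsilon$: the rank-one equivariant module with the sign-twisted structure has nontrivial $G_x$-action on its closed fibre and does not descend. Everything else — full faithfulness of $\pi^*$, the faithfully flat and idempotent reductions, and the compatibility of all functors with $(-)^H$ — is formal.
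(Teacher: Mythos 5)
Your argument is genuinely different from the paper's proof, which is a two-line citation: by \cite[Theorem 1.1]{MFK} the map $X\to X//H$ is a universal geometric (hence good) quotient, and the statement is then \cite[Theorem 1.3]{Ne}, with a footnote that Nevins' argument, written for finite type schemes, goes through for Noetherian schemes over $F$. Your route — reformulating descent as the counit $\pi^*(\pi_*\mF)^H\to\mF$ being an isomorphism, checking this after completing the base at each closed point, splitting $A\otimes_B\widehat B_{\bar x}$ into local factors permuted transitively by $H$ with stabilizer $G_x$, and then trivializing a $G_x$-equivariant minimal free resolution via vanishing of the nonabelian $H^1$ with values in $\mathrm{id}+M_r(\mathfrak m)$ — is sound and has the merit of being self-contained and of making visible exactly where the hypothesis on stabilizer actions enters (your $F[\epsilon]/(\epsilon^2)$ example is the right sanity check). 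It is in effect a direct proof of the Noetherian extension that the paper only asserts in a footnote, so it buys something the citation does not.

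The one step that is not adequately justified in the stated generality is the finiteness of $\pi$, i.e. that $A$ is a finite $A^H$-module, which you attribute to E.~Noether. Noether's theorem concerns algebras of finite type over a field (or a Noetherian base), whereas the whole point of stating the lemma for Noetherian affine schemes is that the paper applies it to non-finite-type rings such as $\calC(T)$ and completed local rings. Your proof uses this finiteness essentially: for the coherence of $(\pi_*\mF)^H$, for the legitimacy of testing the counit after completion at closed points of $Y$ (the cone must have finitely generated cohomology over $B$), and above all for the identification $A\otimes_B\widehat B_{\bar x}\is\prod_x A_x$ with finitely many complete local factors. The fact you need is true for Noetherian rings in characteristic zero, but it requires an argument rather than a citation: for $A$ a domain one bounds $A$ inside $\frac{1}{\delta}\sum B e_i$ using the trace form of the separable extension $\mathrm{Frac}(A)/\mathrm{Frac}(A^H)$ (here Noetherianity of $A^H$ comes from the Reynolds operator); for reduced $A$ one embeds $A$ into $\prod A/\mathfrak p_i$, which is finite over $A$, and uses that invariants of a finitely generated $A$-module are finitely generated over $A^H$ (again by averaging); nilpotents are then handled by d\'evissage along the nilradical. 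Incidentally, the coherence of $(\pi_*\mF)^H$ needs only this last averaging remark, not finiteness of $\pi$. A minor further caveat: closed points of a Noetherian $F$-scheme need not have residue field $F$, so $G_x$ may act on $k(x)$ and the actions in your $H^1$ argument are only semilinear; this is harmless (the graded pieces are still $\mathbb Q$-vector spaces, so the cohomology still vanishes), but it should be said. With the finiteness step repaired as above, your proof is complete.
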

\begin{proof}
By \cite[Theorem 1.1]{MFK}, $X\to X//G$ is a universal geometric quotient, in particular, a 
good quotient in the sense of \cite{Ne}. Now the result follows from 
\cite[Theorem 1.3]{Ne}\footnote{In \emph{loc. cit.} the Theorem is proved for schemes of finite type over $F$, but the same argument works for Noetherian schemes over $F$.}.
\end{proof}

\begin{prop}\label{conv with E_theta sign}
Let $\mF$ be a strongly $*$-central complex and 
let $\theta=\rW\chi$ be a $\rW$-orbit of a tame character $\chi$.
There is an isomorphism 
\[
\mF*(\mE_\theta\otimes\sign)\is\oH^*(T,\mF\otimes\mL_{\chi}^{-1})\otimes\mE_\theta\in\sD_\rW(T)
\]

\end{prop}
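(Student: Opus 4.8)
The plan is to compute both sides under the Mellin transform $\mathfrak{M}$ and invoke its monoidality together with Lemma \ref{Mellin of E_theta}. By property (4) of the Mellin transform (in either setting), $\mathfrak{M}(\mF * (\mE_\theta \otimes \sign)) \is \mathfrak{M}(\mF) \otimes \mathfrak{M}(\mE_\theta \otimes \sign) \is \mathfrak{M}(\mF) \otimes \calS_\theta$, the last step by Lemma \ref{Mellin of E_theta}. So the heart of the matter is to identify the tensor product $\mathfrak{M}(\mF) \otimes \calS_\theta$. Recall from \S\ref{MT of E_xi} that $\calS_\theta = \bigoplus_{\chi' \in \theta} \calS_{\chi'}$ (in the $\ell$-adic setting) where $\calS_{\chi'} = \pi_{\chi'}^*(\text{skyscraper})$ is supported set-theoretically at a single point $(\chi')^{-1}$, and similarly in the de Rham setting $\calS_\theta \is \Ind_{\rW_{a,\mu}^{\mathrm{ex}}}^{\rW_a^{\mathrm{ex}}}(\calR_\chi^\mu)$ with $\calR_\chi^\mu = \pi_{-\mu}^*\delta_{-\bar\mu}$ a skyscraper-type sheaf supported at one point. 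Tensoring $\mathfrak{M}(\mF)$ with a sheaf supported at a point $\chi^{-1}$ (or $-\mu$) extracts the derived fiber $i_{\chi^{-1}}^*\mathfrak{M}(\mF)$, which by property (1)/(2) of the Mellin transform is exactly $\oH^*(T, \mF \otimes \mL_{\chi}^{-1})$, placed at that point.

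The main point to get right is the bookkeeping of the group actions, and this is where I expect the real work to be. First one shows the statement after restricting to a single $\chi \in \theta$ and tracking the $\rW'_\chi$-action: the claim is $\mathfrak{M}(\mF) \otimes \calS_\chi \is \oH^*(T, \mF \otimes \mL_\chi^{-1}) \otimes \calR_\chi$ (up to the twist by $\sign$), where on the left the $\rW'_\chi$-structure comes from the equivariant structures on $\mathfrak{M}(\mF)$ and $\calS_\chi$, and on the right $\rW'_\chi$ acts on the cohomology group and, critically, on $\calR_\chi$ via its standard structure. Here is where the hypothesis that $\mF$ is \emph{strongly} $*$-central enters: it forces $\rW'_\chi$ to act on $\oH^*(T, \mF \otimes \mL_\chi^{-1})$ through the sign character, so that after the $\sign$-twist the factor $\oH^*(T,\mF\otimes\mL_\chi^{-1})$ is $\rW'_\chi$-trivial and can be pulled out as a plain vector space, leaving $\calR_\chi$ to carry the entire $\rW'_\chi$-equivariant structure. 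Then one inducts up: $\bigoplus_{\chi'\in\theta}$ on the left matches $\Ind_{\rW'_\chi \ltimes \pi_1^t(T)}^{\rW \ltimes \pi_1^t(T)}$ on the right, and since induction commutes with tensoring by a fixed vector space, one obtains $\mathfrak{M}(\mF)\otimes\calS_\theta \is \oH^*(T,\mF\otimes\mL_\chi^{-1}) \otimes \calS_\theta$ compatibly with all equivariant structures.

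Finally, applying the inverse Mellin transform and Lemma \ref{Mellin of E_theta} once more, $\mathfrak{M}^{-1}\big(\oH^*(T,\mF\otimes\mL_\chi^{-1}) \otimes \calS_\theta\big) \is \oH^*(T,\mF\otimes\mL_\chi^{-1}) \otimes \mathfrak{M}^{-1}(\calS_\theta) \is \oH^*(T,\mF\otimes\mL_\chi^{-1}) \otimes (\mE_\theta \otimes \sign)$; absorbing the $\sign$ on each side (note $\mF * (\mE_\theta \otimes \sign) \otimes \sign$ versus the stated normalization — one must check the $\sign$ twists cancel as claimed, which follows from Remark \ref{global section} and the way $\sign$ was inserted in Lemma \ref{Mellin of E_theta}). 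In the $\ell$-adic setting one uses the equivalence \eqref{inverse Mellin} on the monodromic subcategory, noting $\mF * (\mE_\theta\otimes\sign)$ and $\mE_\theta$ are monodromic since $\mE_\theta$ is tame; in the de Rham setting one uses that $\mathfrak{M}$ is an outright equivalence. The anticipated obstacle is purely the equivariance bookkeeping — matching the $\rW$ (resp. $\rW_a^{\mathrm{ex}}$) structure coming from "$\Ind$ then tensor" with the structure coming from "tensor then $\Ind$", which requires care with the isomorphisms \eqref{w action} and \eqref{iso of stabilizers} but no new idea beyond the strong $*$-centrality input.
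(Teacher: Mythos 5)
Your overall skeleton (apply $\frakM$, use its monoidality and Lemma \ref{Mellin of E_theta}, identify $\frakM(\mF)\otimes\calS_\theta$, then invert the Mellin transform on monodromic objects) is the same as the paper's, but there is a genuine gap at the one step that carries the content. You treat $\calS_\chi$ as if it were the skyscraper of the reduced point $\chi^{-1}$, so that "tensoring extracts the derived fiber $i_{\chi^{-1}}^*\frakM(\mF)$." It is not: by construction $\calS_\chi$ is the structure sheaf $\mO_{D_\chi}$ of a \emph{non-reduced} fat point, namely the fiber $D_\chi$ of the quotient map $\pi_\chi:\calC(T)\to\calC(T)//\rW_\chi$ (resp.\ $\calR_\chi^\mu=\pi_{-\mu}^*\delta_{-\bar\mu}$ in the de Rham setting) -- the defining ideal is generated only by the $\rW_\chi$-invariant part of the augmentation ideal, so $D_\chi$ has nilpotents whenever $\rW_\chi\neq e$. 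For such a thickening, $\frakM(\mF)\otimes\mO_{D_\chi}=\frakM(\mF)|_{D_\chi}$ is \emph{not} in general isomorphic to $\bigl(i_{\chi^{-1}}^*\frakM(\mF)\bigr)\otimes\mO_{D_\chi}$: already for $\rW_\chi=\mathbb Z/2$ acting on one variable, take the module $\bC[x]/(x)$ against $D=\Spec\,\bC[x]/(x^2)$ and the two sides have different ranks. So the isomorphism you want is false for a general $\rW$-equivariant complex, and the hypothesis of (strong) $*$-centrality is exactly what rules out this failure -- it is not merely equivariance bookkeeping.

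Concretely, the paper's proof supplies the missing argument as follows: by Lemma \ref{chara of central}, strong $*$-centrality says $\rW_\chi'$ acts trivially on the derived fiber $i_{\chi^{-1}}^*\frakM(\mF\otimes\sign)$; since $\chi^{-1}$ is the \emph{unique closed point} of the thickening $D'_\chi$ (the fiber of $\pi'_\chi:\calC(T)\to\calC(T)//\rW_\chi'$), the descent criterion of Lemma \ref{descent} (Nevins) applies and shows that $\frakM(\mF\otimes\sign)|_{D'_\chi}$ descends to $D'_\chi//\rW_\chi'$; only then does the restriction to $D_\chi$ become "constant," i.e.
\[
\frakM(\mF\otimes\sign)|_{D_\chi}\is\oH^*(T,\mF\otimes\mL_{\chi^{-1}})\otimes\mO_{D_\chi},
\]
and this isomorphism is automatically compatible with the $\rW_\chi'$-equivariant structures, which is what makes the final statement hold in $\sD_\rW(T)$ (with mere $*$-centrality one would only get $\rW_\chi$-compatibility, hence lose the $\rW$-equivariance of the conclusion). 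In the de Rham setting the same descent argument is run with $D_\lambda$, $D'_\lambda$, $\rW_{a,-\lambda}$, $\rW_{a,-\lambda}^{\on{ex}}$ in place of $D_\chi$, $D'_\chi$, $\rW_\chi$, $\rW_\chi'$ (this is in effect Theorem \ref{characterization of central}/Lemma \ref{key descent}). So your plan needs to replace the "skyscraper extracts the fiber" step by this descent argument over the nilpotent thickening; the rest of your outline (induction from $\chi$ to the orbit $\theta$, inverse Mellin via \eqref{inverse Mellin}, the $\sign$ normalization from Lemma \ref{Mellin of E_theta}) matches the paper and is fine.
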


\begin{proof}
The $\ell$-adic setting. 
For any tame character $\chi$,
let $D_{\chi}$ (resp. $D'_{\chi}$) be the fiber of 
$\pi_{\chi}:\calC(T)\to\calC(T)//\rW_\chi$ (resp. $\pi_{\chi}':\calC(T)\to\calC(T)//\rW_\chi'$) over 
$\pi_{\chi}(\chi^{-1})$  (resp. $\pi_{\chi}'(\chi^{-1})$).
For any $\rW$-orbit $\theta=\rW\chi$,
let $D_{\theta}=\bigsqcup_{\chi\in\theta} D_{\chi}$
and $D'_{\theta}=\bigsqcup_{\chi\in\theta} D'_{\chi}$.
We have closed embeddings $D_{\chi}\to D_{\chi}'$
and $D_{\theta}\to D_{\theta}'$
, which are isomorphisms if 
$\rW_{\chi}=\rW_{\chi}'$.

By Lemma \ref{Mellin of E_theta}, we have 
\beq
\label{mellin of E}
\frak M(\mE_\theta\otimes\sign)\is\cS_\theta\is\mO_{D_{\theta}}\in\sD_\rW(T).
\eeq
Thus there is an isomorphism 
\beq\label{mellin of conv 1}
\frak M(\mF*\mE_\theta\otimes\sign)\is \frak M(\mF)\otimes\frak M(\mE_\theta\otimes\sign)\is
\frak M(\mF)|_{D_\theta}\is \frak M(\mF)|_{D_{\theta}}.
\eeq
Since $\chi^{-1}$ is the unique closed point of $D'_{\chi}$, by Lemma \ref{chara of central} and Lemma
\ref{descent}, the restriction $\frak M(\mF\otimes\sign)|_{D_{\chi}'}$ descends to 
$D_{\chi}'//\rW_{\chi}'$ and it implies there is an isomorphism
\beq\label{mellin of conv 2}
\frak M(\mF\otimes\sign)|_{D_{\chi}}\is\frak M(\mF)|_{\chi}
\otimes\mO_{D_{\chi}}
\is\oH^*(T,\mF\otimes\mL_{\chi^{-1}})\otimes\mO_{D_{\chi}}
\eeq
compatible with the $\rW_\chi'$-equivariant structure.
It follows that there is an isomorphism 
\beq\label{mellin of conv 3}
\frak M(\mF\otimes\sign)|_{D_{\theta}}
\is\oH^*(T,\mF\otimes\mL_{\chi^{-1}})\otimes\mO_{D_{\theta}}\in\sD_{\rW}(T).
\eeq
All together, we obtain
\[\frak M(\mF*\mE_\theta\otimes\sign)\stackrel{\eqref{mellin of conv 1}}\is (\frak M(\mF)|_{D_{\theta}})\stackrel{\eqref{mellin of conv 3}}\is
\oH^*(T,\mF\otimes\mL_{\chi^{-1}})\otimes\mO_{D_{\theta}}\otimes\sign\stackrel{\eqref{mellin of E}}\is
\oH^*(T,\mF\otimes\mL_{\chi^{-1}})\otimes\frak M(\mE_\theta).\]
Since the Mellin transform restricts to an equivalence on monodronic sheaves~\eqref{inverse Mellin}, the isomorphism above comes from an isomorphism
\beq\label{iso of W}
\mF*(\mE_\theta\otimes\sign)\is\oH^*(T,\mF\otimes\mL_{\chi^{-1}})\otimes\mE_\theta\in\sD_\rW(T).
\eeq
The de Rham setting. 
Choose a lifting $\lambda\in\check\ft$ of $\chi$ and 
write $D_{\lambda}$
and $D'_{\lambda}$ for the fiber of 
$\pi_{-\lambda}:\check\ft\to\check\ft//\rW_{a,-\lambda}$ and $\pi_{\lambda}':\check\ft\to\check\ft//\rW_{a,-\lambda}^{\on{ex}}$ over 
$\pi_{-\lambda}(-\lambda)$ and $\pi_{-\lambda}'(-\lambda)$,
and let $D_{\theta}=\bigsqcup_{\lambda,\exp\lambda\in\theta} D_\lambda$ and $D_{\theta'}=\bigsqcup_{\lambda,\exp\lambda\in\theta} D'_\lambda$.
Now the  
same argument as in the $\ell$-adic setting, replacing 
$D_\chi$, $D'_{\chi}$, $\rW_\chi$, and $\rW_\chi'$ by
$D_\lambda$, $D'_{\lambda}$, $\rW_{a,-\lambda}$, and $\rW_{a,-\lambda}^{\on{ex}}$, gives the desired isomorphism~\eqref{iso of W}.

\end{proof}

\begin{remark}
Note that
if we only assume $\mF$ is $*$-central, then the isomorphism in~\eqref{mellin of conv 2}
is only compatible with the $\rW_\chi$-equivariant structures.
As a result, the isomorphisms~\eqref{iso of W} 
is not compatible with the $\rW$-equivariant structure.
\end{remark}

The proposition above can be reformulated as follows:
\begin{prop}\label{conv with E_theta}
Let $\mF$ be a strongly $*$-central complex and 
let $\theta=\rW\chi$ be a $\rW$-orbit of a tame character $\chi$.
For each $w\in\rW$, there is a canonical isomorphism in $\sD(T)$
\beq\label{a_w}
a_w:w^*\mF*\mE_\theta\is\oH^*(T,\mF\otimes\mL_{\chi}^{-1})\otimes w^*\mE_\theta,
\eeq
such that the following diagram is commutative
\beq\label{compatibility}
\xymatrix{w^*\mF*\mE_\theta\ar[r]^{a_w\ \ \ \ \ \ \ \ \ }\ar[d]^{}&\oH^*(T,\mF\otimes\mL_{\chi^{-1}})\otimes w^*\mE_\theta\ar[d]\\
\mF*\mE_\theta\ar[r]^{a_e\ \ \ \ \ \ \ \ \ }&\oH^*(T,\mF\otimes\mL_{\chi^{-1}})\otimes\mE_\theta},
\eeq
where the vertical arrows are the isomorphism induced from the 
$\rW$-equivaraint structures on $\mF$ and $\mE_\theta$.

\end{prop}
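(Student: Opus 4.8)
The plan is to read off Proposition \ref{conv with E_theta} as a reformulation of Proposition \ref{conv with E_theta sign}, using two elementary facts: a morphism in $\sD_\rW(T)$ between $\rW$-equivariant objects is precisely a morphism of the underlying objects of $\sD(T)$ that is compatible, for every $w\in\rW$, with the two $\rW$-equivariant structures; and tensoring by $\sign$ does not change the underlying complex of a $\rW$-equivariant object, it only multiplies the $w$-component of its structure by $\sgn(w)$. So I would begin by letting $a_e$ be the underlying isomorphism in $\sD(T)$ of the one furnished by Proposition \ref{conv with E_theta sign}; since $\mF*(\mE_\theta\otimes\sign)$ and $\mF*\mE_\theta$ have the same underlying complex, $a_e$ is an isomorphism $\mF*\mE_\theta\is\oH^*(T,\mF\otimes\mL_{\chi^{-1}})\otimes\mE_\theta$, which is the case $w=e$ of \eqref{a_w}.

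For general $w$, the plan is to transport $a_e$ along $w$. Applying $w^*$ to $a_e$ and using the canonical identifications $w^*(\mF*\mE_\theta)\is w^*\mF*w^*\mE_\theta$ (valid because $w$ is a group automorphism of $T$, so $m_T\circ(w\times w)=w\circ m_T$) and $w^*(\oH^*(T,\mF\otimes\mL_{\chi^{-1}})\otimes\mE_\theta)\is\oH^*(T,\mF\otimes\mL_{\chi^{-1}})\otimes w^*\mE_\theta$, one gets an isomorphism $w^*\mF*w^*\mE_\theta\is\oH^*(T,\mF\otimes\mL_{\chi^{-1}})\otimes w^*\mE_\theta$; precomposing it with $\id_{w^*\mF}*(\phi^{\mE_\theta}_w)^{-1}$, where $\phi^{\mE_\theta}$ denotes the $\rW$-equivariant structure of $\mE_\theta$, produces the desired $a_w$ of \eqref{a_w}. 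With $a_w$ defined this way, the commutativity of \eqref{compatibility} reduces, once the $\sign$-twists discussed below are accounted for, to the single statement that $a_e$ intertwines the $\rW$-equivariant structures — i.e. to the fact, already contained in Proposition \ref{conv with E_theta sign}, that its isomorphism lives in $\sD_\rW(T)$ and not merely in $\sD(T)$. Canonicity of $a_w$ is then inherited from that of $a_e$, which in turn comes from the Mellin-transform construction in the proof of Proposition \ref{conv with E_theta sign} and the fact that the Mellin transform is an equivalence on monodromic objects \eqref{inverse Mellin}.

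The one point that needs genuine care — and the step I expect to be the main obstacle — is the bookkeeping of the various $\sign$-twists, which is precisely where strong $*$-centrality (and not just $*$-centrality) enters, as the remark after Proposition \ref{conv with E_theta sign} already signals. One must track simultaneously: the twist relating $\mE_\theta\otimes\sign$ to $\mE_\theta$; the $\sign$-twist built into the Mellin transform through the factor $\omega_T$ in its definition (Remark \ref{global section}); and the $\rW_\chi'$-action on $\oH^*(T,\mF\otimes\mL_{\chi^{-1}})$, which by strong $*$-centrality is the sign character and which is what lets the $\rW$-equivariant structure on the right-hand side of \eqref{a_w} be written with the cohomology factor carrying no residual $w$-dependence — the point being that the various $\oH^*(T,\mF\otimes\mL_{(w\chi)^{-1}})$, $w\in\rW$, are identified with $\oH^*(T,\mF\otimes\mL_{\chi^{-1}})$ through the $\rW$-equivariant structure of $\mF$. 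Once these twists are lined up correctly, with Lemma \ref{Mellin of E_theta} and Lemma \ref{descent} used exactly as in the proof of Proposition \ref{conv with E_theta sign}, the commutativity of \eqref{compatibility} is formal, and no geometric input beyond Proposition \ref{conv with E_theta sign} is required.
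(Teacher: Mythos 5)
Your proposal is essentially the paper's own treatment: the paper gives no separate argument for this proposition, presenting it exactly as you do, namely as the unwinding of the $\sD_\rW(T)$-isomorphism of Proposition \ref{conv with E_theta sign}, with $a_e$ its underlying isomorphism in $\sD(T)$ and with the equivariance encoded by the commutativity of \eqref{compatibility}. The one adjustment to your explicit recipe: since Proposition \ref{conv with E_theta sign} involves $\mE_\theta\otimes\sign$ while the vertical arrows of \eqref{compatibility} use the equivariant structure of $\mE_\theta$ itself, the transported isomorphism you propose for $a_w$ must be multiplied by $\sgn(w)$ (equivalently, the transport must use the structure of $\mE_\theta\otimes\sign$ on the $\mE_\theta$-slot), as otherwise \eqref{compatibility} commutes only up to $\sgn(w)$ --- this is precisely the sign bookkeeping you defer, with strong $*$-centrality serving only to make the cohomology factor carry the trivial action so that no further twist enters.
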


\section{Categorical center and the character sheaves $\calM_\theta$}

Let $\mE_\theta$ be the $*$-central local system in Section~\ref{tame central local systems}. 
We define $\cM_\theta:=\Phi_{\mE_\theta}=\Ind_{T\subset B}^G(\mE_\theta)^\rW$.
Recall the averaging functor 
$\Av_*^U:=\pi_*:\sD(G)\to\sD(G/U)$, where $\pi:G\to G/U$ is the quotient map.
The goal of this section is to poof the following theorem:

\begin{thm}\label{Key}
There exists a positive integer $N$ depending only on the type of the group $G$ such that 
the following holds. 
Assume $k=\bC$ or $\on{char}k=p$ is not dividing
$N\ell$.
We have 
\[\on{Av}_*^U(\cM_{\theta})\is\mE_\theta.\]
In particular, $\on{Av}_*^U(\cM_{\theta})$
is supported on $T=B/U\subset G/U$.
\end{thm}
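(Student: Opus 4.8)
The plan is to treat the de~Rham case first and then transfer to the $\ell$-adic setting by a spreading-out argument. In either setting the assertion $\Av_*^U(\cM_\theta)\is\mE_\theta$ splits into two parts: (i)~$\pi_*(\cM_\theta)$ is supported on the closed subscheme $B/U\subset G/U$, and (ii)~its value along $B/U\cong T$ is $\mE_\theta$. I expect (ii) to be cheap given (i): by the Mackey-type formula (Proposition~\ref{Mackey formula}) one has $\Res_{T\subset B}^G\cM_\theta\is\big(\bigoplus_{w\in\rW}w^*\mE_\theta\big)^\rW\is\mE_\theta$, and once $\pi_*(\cM_\theta)$ is known to live on $B/U$ its value there is computed by Harish--Chandra restriction; the precise identification with $\mE_\theta$ is pinned down using the Mellin-transform description of $\mE_\theta$ in Lemma~\ref{Mellin of E_theta}. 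So the real content is (i).

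For the de~Rham case the key structural observation is that $\mE_\theta$ is a \emph{tame} $\rW$-equivariant local system, hence $\cM_\theta=\Ind_{T\subset B}^G(\mE_\theta)^\rW$ is a unipotently monodromic character $D$-module on $G$ --- concretely the intermediate extension of a local system on $G^\rs$, by the description of $\Phi_\mF$ for perverse $\rW$-equivariant $\mF$ recalled in Section~3 --- and therefore lies within the scope of \cite{BFO}. I would first invoke Beilinson--Bernstein localization together with the main theorem of \emph{loc.\ cit.}, which identifies character $D$-modules (on the relevant block) with the Drinfeld center $Z(\HC)$ of the category $\HC$ of Harish--Chandra bimodules, in such a way that the averaging $\Av_*^U$ followed by restriction to $B/U$ corresponds, on the bimodule side, to ``forget the central structure, then Harish--Chandra restrict to $T$''. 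The decisive input is Proposition~\ref{E=Z}: the Harish--Chandra bimodule $M_\theta$ attached to $\mE_\theta$ carries a \emph{canonical central structure}, exhibiting $\cM_\theta$ as the summand cut out by the orbit $\theta$ of a distinguished central object --- essentially the freely generated central object on the unit, equivalently the image in $Z(\HC)$ of the regular Harish--Chandra bimodule built from $\U(\fg)$ itself. For such a central object the structure theory of \cite{BFO}, notably the freeness of the regular object over its center, forces its Harish--Chandra restriction to $T$ to be concentrated in a single cohomological degree; translating back through localization yields precisely (i), and the single-degree value is matched with $\mE_\theta$ via Lemma~\ref{Mellin of E_theta} and Proposition~\ref{Mackey formula}.

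For the $\ell$-adic case I would spread everything out: choose a strictly Henselian discrete valuation ring $A$ with residue field $k$ and fraction field of characteristic zero over which $G,B,U,T$ and the tame local system $\mE_\theta$ all admit models --- possible as long as $\cha k$ avoids a finite set of bad primes, which is the source of the integer $N$ --- and form $\cM_{\theta,A}=\Ind_{T_A\subset B_A}^{G_A}(\mE_{\theta,A})^\rW$ on $G_A$. The technical crux is to show that $\cM_{\theta,A}$ is universally locally acyclic relative to $\pi_A\colon G_A\to G_A/U_A$; this is a property of the Grothendieck--Springer construction applied to a tame local system and is checked directly on the model. Universal local acyclicity implies that $(\pi_A)_*\cM_{\theta,A}$ commutes with base change to each fibre, and since over the characteristic-zero generic fibre the pushforward is supported on $B_A/U_A$ (by the de~Rham case together with the Riemann--Hilbert comparison), the same holds over the special fibre, giving (i) in the $\ell$-adic setting; (ii) then follows as in the de~Rham case.

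The main obstacle, and the step that uses genuinely new ideas rather than formalism, is Proposition~\ref{E=Z} together with its geometric consequence: producing the canonical central structure on $M_\theta$ and extracting from the \cite{BFO} machinery the precise statement that the averaging transform of the associated central object is supported on the Cartan with value $\mE_\theta$. By contrast, the spreading-out and ULA argument, while technically delicate, is essentially formal once the de~Rham case and good integral models are in hand; it is also what forces the (inexplicit) integer $N$ and the hypothesis $\cha k\nmid N\ell$.
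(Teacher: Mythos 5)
Your overall strategy is the one the paper follows---in the de Rham case, pass through the identification of character $D$-modules with the Drinfeld center of Harish--Chandra bimodules from \cite{BFO}, with Proposition \ref{E=Z} as the pivot; in the $\ell$-adic case, spread out over a strictly Henselian base, prove a ULA statement for the averaged complex, and compare the characteristic-zero fibre with the de Rham case via Riemann--Hilbert (this is exactly Lemmas \ref{ULA for M_A}, \ref{generic vanishing} and \ref{generic zero}, with Proposition \ref{ULA for Av} doing the work you describe). But your de Rham argument has a genuine gap at its center. The mechanism you invoke---``freeness of the regular object over its center forces the Harish--Chandra restriction to $T$ to be concentrated in a single cohomological degree, and translating back yields the support statement''---is not a valid deduction: support on the closed subset $B/U\subset G/U$ is not a statement about cohomological amplitude, and no such freeness argument occurs in \cite{BFO} or in the paper. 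What actually yields the support statement is the chain: Theorem \ref{BFO} identifies $CS_\theta$ with $Z(\HC_{\hat\chi},\otimes_{\U})$ compatibly with $R\Gamma^{\hat\lambda,\widehat{-\lambda-2\rho}}\circ\pi^\circ\circ\on{HC}$; Proposition \ref{E=Z}(2) identifies the bimodule attached to $\mE_\chi$ with $\cZ_\lambda$; since these functors are equivalences on the relevant monodromic categories, the character $D$-module $\cM'_\theta$ corresponding to the central object satisfies $\on{HC}(\cM'_\theta)\is\tilde\mE_\theta$, whence $\Av_*^U(\cM'_\theta)\is\mE_\theta$ by Lemma \ref{M and H}.

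More seriously, even with that chain in place you have only produced \emph{some} character $D$-module $\cM'_\theta\in CS_\theta$ whose averaging is $\mE_\theta$, whereas Theorem \ref{Key} concerns the specific object $\cM_\theta=\Ind_{T\subset B}^G(\mE_\theta)^{\rW}$ formed with the $\rW$-equivariant structure of Section \ref{central Loc}. Your proposal silently identifies ``the summand cut out by $\theta$ of the distinguished central object'' with $\cM_\theta$; this identification is precisely the remaining hard step. One must show that the $\rW$-equivariant structure induced on $\mE_\theta\is\Res_{T\subset B}^G(\cM'_\theta)$ via Proposition \ref{CS}(3) coincides with the one used to define $\cM_\theta$, and in the paper this occupies Sections 6.5--6.6: the translation-functor identity $\theta_\lambda^\mu(\cZ_\lambda)\is\cZ_\mu$ (Lemma \ref{tech}), the explicit global-sections comparison~\eqref{equ 3}, and the descent characterization of Theorem \ref{characterization of central}(4). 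Without this step the de Rham case---and hence, through the spreading-out argument (whose outline you give correctly, though note that $N$ also encodes the lifting of the $\rW$-equivariant structure on $\IC(Z_A)$ in Lemma \ref{lifting of W action} and the ULA hypothesis on $j_{S,*}\barQ$ entering Proposition \ref{ULA for Av}, not merely the existence of integral models)---is not established for $\cM_\theta$ itself.
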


We will fist establish Theorem \ref{Key} in the case $k=\bC$ using the 
results in \cite{BFO}, \cite{BG}, and \cite{MS} on 
Harich-Chandra bimodules, categorical centers of Hecke categories, and
Character $D$-modules.  Then we construct a mixed characteristic lifting
$\cM_{\theta,A}$
of $\cM_\theta$ over a strictly Henselian ring $A$ with residue field $k$
of characteristic $p$ not dividing $N\ell$, which is
 universal local acyclic with respect to the quotient map $G_A\to G_A/U_A$.
This allows us to
use spreading out arguments to prove Theorem \ref{Key} in positive characteristic case.

We will assume $k=\bC$ until Section \ref{mixed char}.

\subsection{Hecke categories}
Consider the left $G$ and right $T\times T$ actions on $Y=G/U\times G/U$.
To every $\chi,\chi'\in\check T$ 
we denote by $M_{\chi,\chi'}$ the category of $G$-equivariant 
$D$-modules on
$G/U\times G/U$ which are $T\times T$-monodromic 
with 
generalized monodromy $(\chi,\chi')$, that is, 
$U(\ft)\otimes U(\ft)$ (acting as infinitesimal
translations along the right action of $T\times T$) acts 
locally finite with generalized eigenvalues in $(\chi,\chi')$.
Consider the quotient  $Y/T$ where $T$ acts diagonally from the right.
The group $T$ acts on $Y/T$ via the formula $t(xU,yU)\on{mod }T=
(xU,ytU)\on{mod} T$. To every $\chi\in\check T$ we denote by
$M_\chi$ the category of $G$-equivariant $T$-monodromic $D$-modules on 
$Y/T$ with generalized monodromy $\chi$.
We write $\sD(M_{\chi,\chi'})$ and $\sD(M_\chi)$ for the corresponding 
$G$-equivariant monodromic derived categories.

The groups $B$ and $T\times T$ act on $X=G/U$ by the 
formula $b(xU)=bxb^{-1}U$, $(t,t')(xU)=txt'U$.
For any $(\chi_1,\chi_2)\in\check T\times\check T$ we write 
$H_{\chi_1,\chi_2}$ for the category of $U$-equivariant $T\times T$-monodromic 
$D$-modules on $X$ with generalized monodromy $(\chi_1,\chi_2)$.
For any $\chi\in\check T$ we write 
$H_{\chi}$ 
for the category of $B$-equivariant $T$-monodromic
$D$-modules on $X$ 
with generalized monodromy $\chi$, where $B$ acts 
on $X$ by the same formula as before and  
$T$ acts on $X$ by the formula $t(xU)=txU$.
We denote by $\sD(H_\chi)$ (resp.
$\sD(H_{\chi_1,\chi_2})$)  the corresponding $B$-equivaraint (resp. $U$-equivaraint)
monodromic derived category.

\subsection{The Harish-Chandra functor}
Consider the following correspondence \[G\stackrel{p}\la G\times G/B\stackrel{q}\ra Y/T=(G/U\times G/U)/T\]
where $p(g,xB)=g$ and $q(g,xB)=(gxU,xU)\on{mod} T$. 
The group $G$ acts on $G$, $G\times \mB$ and $Y/T$ by the formulas 
$a\cdot g=aga^{-1}$, $a\cdot(g,xU)=(aga^{-1},axB)$, $a(xU,yU)=(axU,ayU)$.
One can check that $p$ and $q$ are compatible with those $G$-actions.

Following \cite{G1,MV}, we consider the functor 
\beq\label{HC}
\mathrm{HC}=q_*p^\circ:
\sD(G)\ra
\sD(Y/T).
\eeq The functor above admits a right adjoint 
$\mathrm{CH}=p_*q^\circ:\sD(Y/T)\ra\sD(G)$. 
We use the same notations for the corresponding functors between
$G$-equivariant derived categories
$\sD(G/_\ad G)$ and $\sD(G\backslash Y/T)$.

\quash{
Recall the following well-known fact:
\begin{lemma}[Theorem 3.6 \cite{MV}]\label{CHHC}
\begin{enumerate}
\item
Let $\sigma:\widetilde\cN\ra\cN$ be the Springer resolution of the nilpotent cone $\cN$
and let $Sp:=\sigma_!F_{\widetilde\cN}$ be the Springer $D$-module. 
For any $\mF\in D(G)$
there is canonical isomorphism 
\[\mathrm{CH}\circ\mathrm{HC}(\mF)\is\mF*Sp.\]

\item
We have a canonical isomorphism 
$\mathrm{CH}\circ\mathrm{HC}\is\on{Av}^G_*\circ\on{Av}^U_*[-]$.

\item
The identity functor is a direct summand of 
$\mathrm{CH}\circ\mathrm{HC}\is\on{Av}^G_*\circ\on{Av}^U_*[-]$.

\end{enumerate}

\end{lemma}
}

Consider the embedding $i:X\ra Y, gU\ra (eU,gU)$ and the projection map
$\pi:G\to X=G/U$.
\begin{lemma}\cite{MV}\label{M and H}
(1) The functor $i^0=i^![\dim X]:\sD(G\backslash Y)\ra \sD(U\backslash X)$ is an equivalence of categories with inverse 
givne by $(i^0)^{-1}:=\on{Ind}_{T\subset B}^G\circ i_*[\on{dim}G-\on{dim}B]$.
(2) We have $i^0\circ\on{HC}\is\pi_*$.

\end{lemma}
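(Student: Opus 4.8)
I would deduce both parts from the geometry of the horocycle correspondence, following \cite{MV}.

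For (1), the plan is to exhibit $Y$ as an associated bundle. The first projection $\pr_1\colon Y=G/U\times G/U\to G/U$ is a $G$-equivariant smooth fibration on which $G$ acts transitively on the base, with stabiliser exactly $U$ at the point $eU$, and whose fibre over $eU$ is $i(X)\is X$, carrying the residual left $U$-action. Hence $Y\is G\times_U X$ as a $G$-variety, and the standard descent equivalence for associated bundles gives $\sD(G\backslash Y)\is\sD\big(G\backslash(G\times_U X)\big)\is\sD(U\backslash X)$, realised concretely by $\ast$-restriction to the fibre; the shift in $i^0=i^![\dim X]$ is the relative dimension of $\pr_1$, inserted so that $i^0$ is $t$-exact. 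First I would check this $t$-exactness by a smoothness computation, and then verify that the displayed reconstruction functor $\Ind_{T\subset B}^G\circ\,i_\ast[\dim G-\dim B]$ --- pushforward from the fibre followed by induction over $G$ --- inverts $i^0$, which is formal once the descent picture is set up.

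For (2), the plan is to base change $\on{HC}=q_\ast p^\circ$ along the defining correspondence $G\xleftarrow{\,p\,}G\times G/B\xrightarrow{\,q\,}Y/T$. I would form the Cartesian square obtained by pulling $q$ back along $i$ and compute the fibre product: the equation $[(gxU,xU)]=[(eU,g_0U)]$ in $Y/T$ forces $g_0U=g^{-1}U$ and $xB=g^{-1}B$, so the fibre product is isomorphic to $G$, its map $\tilde p$ to $G\times G/B$ being the section $g\mapsto(g,g^{-1}B)$ of $p$ and its map $\tilde q$ to $X$ being $g\mapsto g^{-1}U$ --- that is, $\pi$ composed with inversion, the inversion being a harmless automorphism coming from the choice of factor in $i(gU)=(eU,gU)$. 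Since $\tilde p$ is a section of the smooth proper map $p$, one has $\tilde p^{\,!}p^\circ\is\on{id}$ up to shift, and the base change isomorphism $i^{\,!}q_\ast\is\tilde q_\ast\tilde p^{\,!}$ then yields $i^0\circ\on{HC}\is\pi_\ast$ once the cohomological shifts built into $p^\circ$, $i^0$, and (implicitly) $\pi_\ast$ are accounted for.

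The step I expect to be the real work is bookkeeping rather than anything conceptual: pinning down all the cohomological shifts (and, in the $\ell$-adic setting, Tate twists), and checking that the equivalence of (1) and the base change of (2) respect the several equivariant and monodromic structures in play --- conjugation-equivariance on $G$, the left $U$-action on $X$, and the residual $T$-monodromy. I would carry out the shift computation first in the de Rham setting, where there are no Tate twists to track, and then transport it to the $\ell$-adic case.
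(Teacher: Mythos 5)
The paper gives no argument for this lemma at all --- it is simply quoted from \cite{MV} --- so there is no internal proof to compare against; your outline is the standard one and is correct: part (1) is the induction equivalence for the associated bundle $Y\is G\times_U X$ determined by $\pr_1$, and part (2) is base change along the horocycle correspondence. Two of the items you defer to ``bookkeeping'' are worth pinning down, because they interact with the paper's (slightly inconsistent) conventions. First, as literally defined $\on{HC}=q_*p^\circ$ lands in $\sD(Y/T)$, while $i^0$ is a functor on $\sD(G\backslash Y)$; the composite in (2) only typechecks if one uses the $Y$-level correspondence $G\stackrel{p'}\la G\times G/U\stackrel{q'}\ra Y$, $q'(g,xU)=(gxU,xU)$, and with that choice the shifts cancel on the nose: the fiber product is $G$, the map $\tilde p(g)=(g,g^{-1}U)$ is a section of the smooth map $p'$ of relative dimension $\dim X$, so $\tilde p^{\,!}p'^{\circ}\is[-\dim X]$, which is exactly absorbed by the $[\dim X]$ in $i^0$. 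If instead you base change the $Y/T$-version of $q$ along $X\to Y/T$, as in your computation, the section is of a map of relative dimension $\dim G/B$ and a residual shift $[\dim T]$ survives, so which incarnation of $\on{HC}$ is meant is not a cosmetic point. Second, the inversion you observe ($\tilde q(g)=g^{-1}U$) is indeed forced by the stated conventions $i(gU)=(eU,gU)$ and $q(g,xB)=(gxU,xU)$, and it disappears if one embeds via the other factor, $gU\mapsto(gU,eU)$ (equivalently swaps the factors in $q$); since the lemma is later invoked to identify $\Av_*^U=\pi_*$ itself, this should be resolved by fixing the conventions consistently rather than declared harmless --- though you are right that it is purely a convention matter. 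Your reading of the inverse in (1) as fiber-pushforward followed by induction to $G$-equivariance is the intended meaning of $\Ind_{T\subset B}^G\circ i_*[\dim G-\dim B]$, and once the associated-bundle picture is in place its invertibility is indeed formal.
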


We have the convolution product
$\sD(G\backslash Y)\times \sD(G\backslash Y)\ra \sD(G\backslash Y)$ given by $(\mF,\mF')\ra (p_{13})_*(p_{12}^*\mF\otimes p_{23}^*\mF')$.
Here $p_{ij}:G\backslash (G/U\times  G/U\times  G/U)\ra G\backslash Y=G\backslash (G/U\times G/U)$
is the projection on the $(i,j)$-factors. 
The convolution product on $\sD(G\backslash Y)$ restricts to a convolution product on 
$\sD(M_{\chi,\chi^{-1}})$. 
The equivalence $i^0:\sD(G\backslash Y)\is\sD(U\backslash X)$
above induces convolution products on $\sD(U\backslash X)$ and $\sD(H_{\chi,\chi})$. 
In addition, there is an action of $\sD(U\backslash X)$ on $\sD(X)$ by right convolution. 
The convolution operation will be denoted by $*$.

\quash{
\begin{lemma}[Proposition 9.2.1 \cite{G}]\label{central}
For any $\cM\in D_G(G)$, $\cF\in D_G(Y)$, and $\cF'\in D_U(X)$ we have 
\[\on{HC}(\cM)*\cF\is\cF*\on{HC}(\cM),\ \  \on{Av}_U(\cM)*\cF'\is\cF'*\on{Av}_U(\cM).\]
\end{lemma}
}

\quash{
\subsection{(Pro) local system $\hat\mL_\xi$}
Let $\hat\mL$ be the \emph{pro-unitpotnet} local system on $T$, that is, 
$\hat\mL=\underleftarrow{\on{lim}}\ \mL_n$ where $\mL_n$ is the local system whose fiber at the unit
element $e\in T$ is identified with 
$\on{Sym}(\ft)/\on{Sym}(\ft)_+^n$
, where the log monodromy action of $\pi_1(T)$
coincides with the 
restriction of 
natural $\on{Sym}(\ft)$-module structure to $\pi_1(T)\subset
\pi_1(T)\otimes\bC\is
\ft$.
Let $\xi\in\breve T$ and $\mL_\xi$ be the corresponding Kummer local system on $T$. We define 
the following pro-local system
\[\hat\mL_\xi=\underleftarrow{\on{lim}}(\mL_n\otimes\mL_\xi).\]

}

We will need the following lemma. 
Let $X$ be an algebraic variety with an action 
of an affine algebraic group $G$. Denote the 
action map by $a:G\times X\ra X$ .
\begin{lemma}[Lemma 2.1 \cite{BFO}]\label{action}
For any 
$\cA\in\sD(G)$, $\cF\in\sD(X)$ 
We have a canonical isomorphism 
\[R\Gamma(a_*(\mA\boxtimes\mF))\is R\Gamma(\mA)\otimes^L_{U(\fg)}R\Gamma(\cF).\]
\end{lemma}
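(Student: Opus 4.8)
The plan is to compute the left–hand side by unwinding the $D$-module direct image $a_*$ along the action map into a relative de Rham (Koszul) complex, and then to recognize the result as the complex computing $\otimes^{L}_{\ug}$. First I would record the geometry: the action map $a\colon G\times X\to X$ is smooth of relative dimension $\dim G$, and it is affine, being the composite of the automorphism $\phi\colon (g,x)\mapsto (g,g\cdot x)$ of $G\times X$ with the projection $G\times X\to X$, which is affine because $G$ is an affine scheme. Consequently the $D$-module pushforward $a_*(\cA\boxtimes\cF)$ is represented, on the level of $\mathcal O_X$-complexes, by $Ra_*$ applied to the relative de Rham complex $\Omega^\bullet_{G\times X/X}\otimes_{\mathcal O}(\cA\boxtimes\cF)$ equipped with its relative connection, and therefore $R\Gamma\big(a_*(\cA\boxtimes\cF)\big)\is R\Gamma\big(G\times X,\ \Omega^\bullet_{G\times X/X}\otimes_{\mathcal O}(\cA\boxtimes\cF)\big)$.

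The crucial step is the identification of the relative tangent sheaf $\Theta_{G\times X/X}=\ker(da)$. A direct computation shows that at a point $(g,x)$ this kernel is spanned by the vectors $X^L_\xi\oplus(-\xi_X)$ for $\xi\in\fg$, where $X^L_\xi$ is the left-invariant vector field on $G$ through $\xi$ and $\xi_X$ the vector field on $X$ generated by the infinitesimal $G$-action (indeed the curve $t\mapsto (g\exp(t\xi),\ \exp(-t\xi)\cdot x)$ is $a$-constant). Hence $\xi\mapsto X^L_\xi\oplus(-\xi_X)$ trivializes $\Theta_{G\times X/X}$ as $\mathcal O_{G\times X}\otimes_\bC\fg$. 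Since $\cA\boxtimes\cF$ is an external product, $X^L_\xi$ acts only on the $G$-factor and $\xi_X$ only on the $X$-factor, so — up to the usual shift and a twist by $\det\fg$, which is trivial whenever $\fg$ is unimodular and in any case is absorbed into the choice of $\ug$-module structures — the relative de Rham complex is identified with the Chevalley--Eilenberg complex $\bigwedge^\bullet\fg\otimes_\bC(\cA\boxtimes\cF)$ for the diagonal $\fg$-action. As this is a finite complex whose terms are finite direct sums of copies of $\cA\boxtimes\cF$, the functor $R\Gamma(G\times X,-)$ passes through it, and the K\"unneth formula over the field $\bC$ gives $R\Gamma(G\times X,\cA\boxtimes\cF)\is R\Gamma(\cA)\otimes^{L}_\bC R\Gamma(\cF)$. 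Altogether $R\Gamma\big(a_*(\cA\boxtimes\cF)\big)$ is computed by the Chevalley--Eilenberg complex $\bigwedge^\bullet\fg\otimes_\bC\big(R\Gamma(\cA)\otimes_\bC R\Gamma(\cF)\big)$ with the diagonal differential.

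It then remains to invoke the classical comparison: for a right $\ug$-module $P$ and a left $\ug$-module $Q$, the Chevalley--Eilenberg complex $\bigwedge^\bullet\fg\otimes_\bC(P\otimes_\bC Q)$ with the diagonal action computes $P\otimes^{L}_{\ug}Q$ — equivalently $H_\bullet(\fg,\,P\otimes_\bC Q)\is\on{Tor}^{\ug}_\bullet(P,Q)$ — which one sees by comparing the Koszul resolution $\ug\otimes_\bC\bigwedge^\bullet\fg\twoheadrightarrow\bC$ with the bar resolution of $\bC$. Here $R\Gamma(\cA)$ carries the right $\ug$-module structure coming from the left-invariant vector fields on $G$ and $R\Gamma(\cF)$ the left $\ug$-module structure coming from the infinitesimal $G$-action on $X$, which are exactly the structures produced above; canonicity of the resulting isomorphism follows from the functoriality of the relative de Rham complex, of K\"unneth, and of the Koszul--bar comparison. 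I expect the main obstacle to be bookkeeping rather than conceptual difficulty: pinning down the left/right $\ug$-module conventions and verifying that the shifts and the $\det\fg$-twist coming from $\Omega^{\mathrm{top}}_{G\times X/X}$ cancel exactly, so that one lands on the normalization in the statement. The one genuinely substantive (and short) input is the identification $\ker(da)\is\mathcal O_{G\times X}\otimes_\bC\fg$ via the antidiagonal embedding $\xi\mapsto X^L_\xi\oplus(-\xi_X)$ — this is precisely what turns the external product (a tensor over $\bC$) into a tensor over $\ug$.
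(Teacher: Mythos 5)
The paper offers no proof of this statement: it is quoted as Lemma 2.1 of \cite{BFO}, so the only thing to compare against is the standard argument given there. Your reconstruction is correct and is essentially that argument: factoring $a$ through the automorphism $(g,x)\mapsto (g,gx)$ to see it is affine, computing $a_*$ by the relative de Rham complex, trivializing $\ker(da)$ by the antidiagonal vector fields $\xi\mapsto X^L_\xi\oplus(-\xi_X)$ (so that, $G$ being affine, global sections turn the external product into the Chevalley--Eilenberg complex of $\Gamma(\mA)\otimes_{\bC}R\Gamma(\mF)$ for the diagonal $\fg$-action, which computes $\otimes^L_{U(\fg)}$); the shift and $\det\fg$-twist you flag are indeed absorbed by the normalization of the $D$-module pushforward, and the left/right module structures you specify (left-invariant vector fields on $G$, infinitesimal action on $X$) are the ones used in the paper, e.g.\ in Example \ref{formula for Av}.
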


\begin{example}\label{formula for Av}
Consider the action map 
$a:G\times G/U\to G/U, a(x,gU)=xgU$.
Let $\delta\in\sD(G/U)$ be the delta $D$-module supported at the base point $eU\in G/U$.
For any $D$-module $\mF$ on $G$, there is a canonical isomorphism 
$\Av_*^U(\mF)\is a_*(\mF\boxtimes\delta)$ and lemma above implies that 
\[
R\Gamma(\Av_*^U(\mF))\is R\Gamma(a_*(\mF\boxtimes\delta))\is R\Gamma(\mF)\otimes^L_{U(\fg)}R\Gamma(\delta)
.\]
Note that $R\Gamma(\mF)=\Gamma(\mF)$ (since $G$ is affine) and $R\Gamma(\delta)\is U(\fg)/U(\fg)\fn$, and it follows that 
\[
R\Gamma(\Av_*^U(\mF))\is\Gamma(\mF)\otimes_{U(\fg)}^LU(\fg)/U(\fg)\fn
\]
\end{example}
\subsection{Character $D$-modules} 
We denote by $CS(G)$ the 
category of finitely generated $G$-equivaraint $D$-modules on $G$ such that 
the action of the center $Z\subset U(\fg)$, embedding as left invariant 
differential operators, is locally finite.
To every $\rW$-orbit $\theta\subset\check T$,
we denote by $CS_\theta(G)$ the 
category of finitely generated $G$-equivaraint $D$-modules on $G$ such that 
the action of the center $Z\subset U(\fg)$ is locally finite and has generalized eigenvalues in $\{\lambda\in\check\ft|\exp(\lambda)\in\theta\}$.
We denote by 
$\sD(CS(G))$ (resp. $\sD(CS_\theta)$)
 the minimal triangulated full subcategory of 
$\sD(G/_\ad G)$ containing all objects $\mM\in\sD(G/_\ad G)$ such that $\sH^i(\mM)\in CS(G) $
(resp. $\sH^i(\mM)\in CS_\theta(G)$ ).
We call 
$CS(G)$ and $CS(G)_\theta$ (resp. $\sD(CS(G))$ and $\sD(CS_\theta)$) 
the category (resp. derived category)
of 
character $D$-modules on $G$ and 
character $D$-modules on $G$ 
with generalized central character $\theta$.

\begin{proposition}\label{CS}
We have the following:
\begin{enumerate}
\item 
Let $\mG\in CS(G)_\theta$. Then 
\[\on{HC}(\mG)\in\bigoplus_{\chi\in\theta}\sD(M_{\chi}),\ \ (resp.\ \on{Av}_*^U(\mG)\in
\bigoplus_{\chi\in\theta}\sD(H_\chi).
)\]

\item 
The functors $\Ind_{T\subset B}^G$ and $\Res_{T\subset B}^G$ 
preserve the derived categories of character $D$-modules. 
The resulting functors $\Ind_{T\subset B}^G:\sD(CS(T))\ra \sD(CS(G))$,
$\Res_{T\subset B}^G:\sD(CS(G))\ra \sD(CS(T))$  
are independent of the choice of the Borel subgroup $B$ and 
t-exact with respect to the natural $t$-structures on $\sD(CS(G))$
and $\sD(CS(T))$. 
Moreover, for any $\mG\in CS(G)$ we have 
$\on{Res}_{T\subset B}^G(\mG)\is (j_{T})_{!*}(\mG|_{T^{\rs}})$, here
$j_T:T^{\rs}\to T$ is the embedding.

\item
Let $\mG\in CS(G)$. 
There is a canonical $\rW$-equivariant structure on $\on{Res}_{T\subset B}^G(\mG)$.
Let $j:G^{\rs}\to G$ be the open embedding.
If $\mG=j_{!*}j^*\mG$, then we have 
\[\mG\is\Ind_{T\subset B}^G(\on{Res}_{T\subset B}^G(\mG))^{\rW}.\]
\quash{
\item
Let $CS_{[T]}(G)\subset CS(G)$ be the full subcategory generated by 
the image of $\Ind_{T\subset B}^G:CS(T)\ra CS(G)$.
For any $\mG\in CS_{[T]}(G)$, the local system
$\mF=\Res_{T\subset B}^G(\mG)\in CS(T)$ carries a canonical 
$\rW$-equivariant structure, moreover, there is a canonical isomorphism 
\[
\Ind_{T\subset B}^G(\mF)^{\rW}\is\mG.
\]
Here $\Ind_{T\subset B}^G(\mF)^{\rW}$ is the $\rW$-invaraint factor of
$\Ind_{T\subset B}^G(\mF)$
for the $\rW$-action constructed in Proposition \ref{properties of ind}.
}

\end{enumerate}

\end{proposition}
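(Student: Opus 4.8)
\emph{Proof strategy.} All three assertions concern the interaction of the Grothendieck--Springer correspondence with the categories of character $D$-modules, and each is essentially known (Mirkovi\'c--Vilonen \cite{MV}, Ginzburg \cite{G1,Gi2}, Bezrukavnikov--Finkelberg--Ostrik \cite{BFO}); the plan is to assemble and adapt these. For (1) the key point is the Harish-Chandra homomorphism. Working on the Grothendieck--Springer variety $\widetilde G=\{(g,B')\mid g\in B'\}$ with its maps $\tilde c$ to $G$ and $\tilde q$ to the universal Cartan $T$, the center $Z\subset U(\fg)$ (acting by left-invariant differential operators) acts through $U(\ft)$ via $\tilde q$ and the Harish-Chandra isomorphism $Z\simeq U(\ft)^{\rW}$, up to the $\rho$-shift. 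Since $\on{HC}=q_*p^\circ$ is built from such a correspondence, for $\mG\in CS(G)_\theta$ the infinitesimal translations along the residual $T$-action on $Y/T$ act on $\on{HC}(\mG)$ with generalized eigenvalues in $\rW\cdot\{\lambda\mid\exp\lambda\in\theta\}$; as $\theta=\rW\chi$ this set is $\rW$-stable, so the monodromy is valued in $\theta$ and $\on{HC}(\mG)$ splits into the summands indexed by $\chi\in\theta$. The $\on{Av}_*^U$ statement then follows from Lemma~\ref{M and H}, using $i^0\circ\on{HC}\simeq\pi_*$ and that $i^0$ matches $\sD(M_\chi)$ with $\sD(H_\chi)$.

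For (2): if $\mF\in CS(T)$ is $T$-monodromic, then $\tilde q^\circ\mF$ on $\widetilde G$ carries a locally finite $U(\ft)$-action, hence a locally finite $Z$-action by Harish-Chandra; the proper pushforward $\tilde c_*$ preserves $G$-equivariance and this property, so $\Ind_{T\subset B}^G$ preserves character $D$-modules, and dually for $\Res_{T\subset B}^G=(q_B)_*i_B^!$. t-exactness follows by restricting Proposition~\ref{exactness} to the hearts $CS(T)\subset\sD(T)^\heartsuit$ and $CS(G)\subset\sD(G/_\ad G)^\heartsuit$. For the identification $\Res_{T\subset B}^G(\mG)\simeq (j_T)_{!*}(\mG|_{T^\rs})$ with $\mG\in CS(G)$ one invokes the structure theory of character sheaves on the regular semisimple locus: $\mG|_{G^\rs}$ is a shift of a local system pulled back along the Steinberg map $c^\rs:G^\rs\to T^\rs//\rW$, so $\Res(\mG)|_{T^\rs}=\mG|_{T^\rs}$, and $\Res(\mG)$ has no subquotient supported on $T\setminus T^\rs$ (\cite{MV}; cf.\ \cite{Gi2,Lu}). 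Independence of the choice of $B$ then follows, since $\Res$ is presented $B$-freely by this formula and $\Ind$ is its left adjoint.

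For (3): pulling $\mG|_{G^\rs}$ back along the $\rW$-torsor $\tilde c^\rs:\widetilde G^\rs\to G^\rs$ produces a sheaf that is itself pulled back along $\tilde q^\rs:\widetilde G^\rs\to T^\rs$ (the structure theory on $G^\rs$ again), hence of the form $(\tilde q^\rs)^\circ\mathcal N$ for a local system $\mathcal N$ on $T^\rs$; since $\rW$ acts on $\widetilde G^\rs$ and on $T^\rs$ compatibly with $\tilde q^\rs$, this equips $\mathcal N$, and therefore $\Res(\mG)=(j_T)_{!*}\mathcal N$, with a canonical $\rW$-equivariant structure. If moreover $\mG=j_{!*}j^*\mG$, then $\mathcal N$ descends (freely, as $\rW$ acts freely on $T^\rs$) to a local system $\mathcal N'$ on $T^\rs//\rW$ with $(c^\rs)^\circ\mathcal N'\simeq\mG|_{G^\rs}$, and applying the description of $\Ind_{T\subset B}^G(-)^\rW$ recalled in Section~3 to $\mF=\Res(\mG)$ gives $\Ind_{T\subset B}^G(\Res\mG)^{\rW}\simeq j_{!*}(c^\rs)^*\mathcal N'[\dim G-\dim T]\simeq j_{!*}j^*\mG=\mG$.

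The main obstacle is the structural input about character sheaves on $G^\rs$ used in (2) and (3) — that $\mG|_{G^\rs}$ is pulled back from $T^\rs//\rW$ along the Steinberg map, and that $\Res_{T\subset B}^G(\mG)$ is exactly the intermediate extension of its restriction to $T^\rs$ — together with getting the Harish-Chandra bookkeeping, including the $\rho$-shift, in (1) right; these are imported from \cite{MV} (see also \cite{Gi2,Lu,BFO}), and everything else is a formal manipulation of the Grothendieck--Springer correspondence and of the $\rW$-equivariant structures.
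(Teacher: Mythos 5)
For parts (1) and (2) you and the paper are doing the same thing: the paper simply cites \cite{Gi1,Lu}, and your Harish--Chandra/monodromy bookkeeping together with the facts you import from \cite{MV,Gi2,Lu} (in particular $\Res^G_{T\subset B}(\mG)\is (j_T)_{!*}(\mG|_{T^{\rs}})$ and the absence of subquotients supported off $T^{\rs}$) is the same material in sketch form. The genuine difference is in part (3). The paper builds the $\rW$-equivariant structure on $\mF=\Res^G_{T\subset B}(\mG)$ algebraically from the conjugation equivariance: for $x\in N(T)$ lifting $w$ it combines the canonical identification $\Res^G_{T\subset B}\is\Res^G_{T\subset B_x}$ (part (2)) with $c_x\colon\mG\is\Ad_x^*\mG$ to obtain $c_w\colon\mF\is w^*\mF$, checks independence of the lift $x$ by restricting to $T^{\rs}$ and using that any $T$-equivariant structure on a local system on $T$ is trivial, and gets the cocycle condition from the $G$-equivariant structure. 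You instead construct the structure geometrically on the regular semisimple locus: pull $\mG|_{G^{\rs}}$ back along $\widetilde G^{\rs}\to G^{\rs}$, identify the result (using $G$-equivariance and the fact that the fibres of $\tilde q^{\rs}$ are $G$-orbits with connected stabilizer $T$) with $(\tilde q^{\rs})^\circ\mathcal N$, descend the $\rW$-action to $\mathcal N$ on $T^{\rs}$, and extend to $\Res(\mG)$ via the formula of part (2). Both proofs then finish identically: reduce to $G^{\rs}$ and compare with the description $\Ind_{T\subset B}^G(-)^{\rW}\is j_{!*}(c^{\rs})^*(-)$ recalled in Section~3. Your construction is more direct but only sees $\mG$ through the rs locus; the paper's construction through $N(T)$-conjugation is the one that is actually exploited later (Section~6), where this equivariant structure must be matched against the one on $\mE_\theta$.

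One assertion in your argument is wrong as stated: $\rW$ does not in general act freely on $T^{\rs}$, so $T^{\rs}\to T^{\rs}//\rW$ need not be an \'etale $\rW$-covering. For $G=\on{PGL}_2$ the image of $\on{diag}(-1,1)$ is regular semisimple and fixed by the nontrivial element of $\rW$; freeness holds when the derived group of $G$ is simply connected, equivalently when centralizers of regular semisimple elements are connected. Consequently neither the descent of $\mathcal N$ to $T^{\rs}//\rW$ nor the blanket claim that $\mG|_{G^{\rs}}$ is pulled back along the Steinberg map $c^{\rs}$ can be justified by ``$\rW$ acts freely''; these points should instead be argued through the $G$-equivariance of $\mG$, i.e.\ through the identification of $G$-equivariant objects on $G^{\rs}$ with $\rW$-equivariant objects on $T^{\rs}$ (the equivalence $\sD(T^{\rs}/\rW)\is\sD(G^{\rs}/G)$ the paper uses), with extra care in the disconnected-centralizer case. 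Since this is exactly the background underlying the Section~3 description of $\Ind(-)^{\rW}$ that both you and the paper invoke, the slip is local and repairable, but the parenthetical about freeness should be removed and the descent step justified properly.
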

\begin{proof}
Part (1) and (2) are proved in \cite{Gi1,Lu}.
We now prove part (3). We first show that $\mF=\Res^G_{T\subset B}(\mG)$ is 
canonically $\rW$-equivariant.
Let $x\in N(T)$ and $w\in N(T)/T=\rW$ be its image in the Weyl group.
Denote $B_x:=\Ad_xB$.
Consider the following commutative diagram 
\[\xymatrix{T\ar[d]^{w}&B\ar[l]\ar[r]\ar[d]^{\Ad_x}&G\ar[d]^{\Ad_x}\\
T&B_x\ar[l]\ar[r]&G}
\]
where 
$w:T\ra T$ the natural action of $w\in\rW$ on $T$ and 
the horizontal arrows are the natural inclusion and projection maps.
The base change theorems and the fact that the functors 
$\Res^G_{T\subset B}$ and $\Res^G_{T\subset B_x}$ are canonical isomorphic (see part (2)) imply 
\beq\label{base change}
\Res^G_{T\subset B}(\Ad_x^*\mG)\is w^*\Res^G_{T\subset B_x}(\mG)\is w^*
\Res^G_{T\subset B}(\mG).
\eeq
Since $\mG$ is $G$-conjugation equivariant, we have a 
canonical isomorphism 
$c_x:\mG\is\Ad^*_x\mG$.
Applying $\Res^G_{T\subset B}$ to $c_x$  
and using  (\ref{base change}) we get 
\beq\label{c_w'}
\mF=
\Res^G_{T\subset B}(\mG)\is\Res^G_{T\subset B}(\Ad_x^*\mG)\is w^*\Res^G_{T\subset B}(\mG)
=w^*\mF.
\eeq
We claim that the isomorphism above depends only the image $w$
and we denote it by 
\beq\label{c_w}
c_w:\mF\is w^*\mF.
\eeq
To prove the claim it is enough to check that for $x\in T$ the 
restriction of the isomorphism (\ref{c_w'}) to $T^{\rs}$ 
is equal to the identity map. By \cite{Gi1}, the restriction 
$\mF|_{T^{\rs}}$ is canonically isomorphic 
to $\mG|_{T^{\rs}}$ and the map in (\ref{c_w'}) is equal to the 
restriction of $c_x$ to $T^{\rs}$. 
Since 
the adjoint action $\Ad_x:G\ra G$ is trivial on $T$, 
the claim follows from 
the fact that 
any $T$-equivariant structure of a local system on $T$
is trivial.
The $G$-conjugation equivariant structure on $\mG$ implies 
$\{c_w\}_{w\in\rW}$ satisfies the required cocycle condition, hence,
the data $(\mF,\{c_w\}_{w\in\rW})$ defines a $\rW$-equivariant structure on 
$\mF=\Res^G_{T\subset B}(\mG)$.
We shall prove
$\Ind_{T\subset B}^G(\cF)^\rW\is\cG$.
Let $c_{\rs}:G^{\rs}\ra T^{\rs}//\rW$ 
the restriction of the Chevalley map $c:G\ra T//\rW$ to $G^{\rs}$.
Since
$\mG\is j_{!*}(\mG|_{G^{\rs}})$ and $\Ind^G_{T\subset B}(\mF)^\rW\is
j_{!*}(c_{\rs}^*(\tilde\mF))$, where 
$\tilde\mF\in\sD(T^{\rs}//\rW)$ is the descent of 
$\mF|_{T^{\rs}}$ along the map $q_{\rs}:T^{\rs}\ra T^{\rs}//\rW$, it suffices to show $\mG|_{G^{\rs}}\is c_{\rs}^*(\tilde\mF)\in\sD(G^{\rs}/G)$.
This follows again from the fact that 
$\mG|_{T^{\rs}}\is\mF|_{T^{\rs}}\in\sD(T^{\rs}/\rW)\is\sD(G^{\rs}/G)$.

\end{proof}



\quash{
\subsection{Monodromy action}\label{central objects}
Since every object $\mF\in D(H_{\xi,\xi})$ 
is monodromic with respect to $T\times T$
with generalized monodromy $(\xi,\xi)$, taking logarithm of the unipotent part 
of monodromy, we get an action 
of $S\otimes S\is\on{Sym}(\ft\oplus\ft)$ on $D(H_{\xi,\xi})$
by endomorphism of identity functor, i.e. we have a map 
\[m_{\xi}:S\otimes S\ra\on{End}(\on{id}_{D(H_{\xi,\xi})}).\]

\begin{lemma}
\begin{enumerate}
\item The map $m_\xi$ factors through 
\[m_{\xi}:S\otimes S\ra S\otimes_{S^{\rW_\xi}} S\ra
\on{End}(\on{id}_{D(H_{\xi,\xi})}).\]
\item 
\quash{The action $m_\xi$ extends to an action on the pro-object $\hat\mL_\xi$ and the coinvaraint of 
$\hat\mL_\xi$ with respect to the left action of the 
ideal $\mO(\breve\ft)_0^{\rW_\xi}$ is isomorphic to $\mE_\xi$.
In other worlds, }We have 
\[\mE_\xi
\is S_\xi\otimes_{S}\hat\mL_\xi
\]
where $S$ acts on $\hat\mL_\xi$ via the log monodromy action of 
$S$ induced by the left action of $T$ on $G/U$.
That is $\mE_\xi$ is isomorphic to the coinvariant of $\hat\mL_\xi$ 
with respect to the left $S^{\rW_\xi}$-action. 
\end{enumerate}
\end{lemma}

\quash{
Define $S_\xi:=\breve\ft\times_{\breve\ft/\rW_\xi}\breve\ft$. Then above Lemma 
shows that the category $D(M_{\xi,\xi})$ is canonically an $\mO(S_\xi)$-linear 
category, i.e. $\ft\times\ft$}

\quash{
\subsection{Construction of $\mM_{\xi}$}
Consider the inductive systems $F_{\lL}$ of ideals of $Z(U)$ of finite codimension supported at $\lL$. 
For each $I\in F_{\lL}$ let $U_I=U/UI$ be the quotient of $U$ by the ideal generated by $I$. 
We define 
\[U_{\hlL}=\prolim_{I\in F_{\lL}} U_I\]
\begin{lemma}
There is a natural $D_G=U\rtimes\mO(G)$-module structure on $U_{\hlL}$.

\end{lemma}
\begin{proof}
\end{proof}

\begin{definition}
We denote by $\mH_{\hlL}$,
the resulting $D_G$-module in the proposition above.  We called $\mH_{\hlL}$ the 
generalized Harish-Chandra systems with generalized central character $\lL$.
\end{definition}
}

}

\subsection{Drinfeld center of Harish-Chandra bimodules
and character $D$-modules}\label{center}
We give a review of the work
\cite{BG,BFO} on  Drinfeld center of Harish-Chandra bimodules
and character $D$-modules.
 
Write $\on{U}=U(\fg)$ for the universal enveloping algebra of $\fg$. 
Let $Z=Z(\U)$ be the center of $\U$. 
Consider the dot action $w\cdot\lambda=w(\lambda+\rho)-\rho$ on $\check\ft$.
Denote by $\dot{\rW}$ the group $\rW$ acting via the dot action $\check\ft$.
We have the Harish-Chandra isomorphism 
$hc:Z\is \mO(\check\ft)^{\dot{\rW}}$ such that for any $\lambda\in\check\ft$
the center $Z$ acts on the Verma module associated to $\lambda$ via 
$z\ra hc(z)(\lambda)$.
For any $\lambda\in\check\ft$  we write 
$m_\lambda$ for the corresponding maximal ideal 
and denote by $I_\lambda$ the maximal ideal  of $Z$ corresponding to 
$m_\lambda$ under the Harish-Chandra isomorphism.
Consider the extended universal enveloping algebra $\tU=\U\otimes_{Z}\mO(\check\ft)$, where $Z$ acts on $\mO(\check\ft)$ via the Harish-Chandra isomorphism.
We denote by $\U_\lambda=\U/\U I_\lambda$, $U_{\hat\lambda}=\underleftarrow{\on{lim}}(\U/\U I_\lambda^n)$,
$\tU_\lambda=\tU/\tU m_\lambda$, and $\tU_{\hat\lambda}=\underleftarrow{\on{lim}} (\tU/\tU m_\lambda^n)$.
The action of $\dot{\rW}$ on $\mO(\check\ft)$ gives rise to an action of 
$\dot{\rW}$ on $\tU$ such that $\tU^{\dot{\rW}}=\U$. In addition,
the stabilizer $\dot{\rW}_\lambda$ of $\lambda\in\check\ft$ in $\dot{\rW}$ acts naturally on 
$\tU_{\hat\lambda}$ and 
the natural inclusion $\U\to\tU$ induces an isomorphism 
$\U_{\hat\lambda}\is\tU_{\hat\lambda}^{\dot{\rW}_{\lambda}}$ (see, e.g., \cite[Section 1]{BG}).

We denote by $\mathcal{HC}_{\hat\lambda}$ the category of 
finitely generated Harish-Chandra bimodules over 
$\U_{\hat\lambda}$, that is, finitely generated continuous $\U_{\hat\lambda}$-bimodules such that the 
diagonal action of $\fg$ is locally finite.
We denote by $\sD(\mathcal{HC}_{\hat\lambda})$ the corresponding derived category.
The tensor product 
$\mM\otimes_
{\U}\mM'$, 
$\mM,\mM'\in\mathcal{HC}_{\hat\lambda}$
(resp. $\mM\otimes^L_{\U}\mM'$, $\mM,\mM'\in\sD(\mathcal{HC}_{\hat\lambda})$) defines a monoidal structure on $\mathcal{HC}_{\hat\lambda}$
(resp. $\sD(\mathcal{HC}_{\hat\lambda})$).

Recall that 
$\lambda\in\check\ft$ is called regular 
if $\dot{\rW}_{\lambda}=0$, that is, $\lambda$ does not lie on any coroot
hyperplane shifted by $-\rho$, and it is called dominant if the value of 
$\lambda$ at any positive coroot is not a negative integer.

\begin{proposition}\cite[Proposition 3.1]{BFO}\label{monoidal}
Let $\chi\in\check T$ and $\lambda\in\check\ft$ be a dominate regular lifting of $\chi$. 
The functor 
\[R\Gamma^{\hat\lambda,\widehat{-\lambda-2\rho}}:(\sD(M_{\chi,\chi^{-1}}),*)\is (\sD(\mathcal{HC}_{\hat\lambda}),\otimes^L_{\U})\] is an 
equivalence of monoidal categories.

\end{proposition}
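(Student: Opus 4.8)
The statement is \cite[Proposition 3.1]{BFO}, so I will only indicate the structure of the argument I would follow. The plan is to build the equivalence one $G/U$-factor at a time using Beilinson--Bernstein localization at the dominant regular parameter $\lambda$, and then to match convolution with derived tensor product of bimodules using the projection formula recorded as Lemma \ref{action}.

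First I would set up the underlying (non-monoidal) equivalence. Forgetting the $G$-equivariance, an object of $M_{\chi,\chi^{-1}}$ is a $D$-module on $G/U\times G/U$ whose right $\ft\oplus\ft$-monodromy is generalized at $(\lambda,-\lambda-2\rho)$. Since $\Gamma(G/U,\mD_{G/U})=\tU$, completing the $\ft$-direction at $\lambda$ identifies the $\hat\lambda$-monodromic global sections functor $R\Gamma^{\hat\lambda}$ on $G/U$ with $\tU_{\hat\lambda}\text{-mod}$; dominance of $\lambda$ makes it exact and regularity makes it faithful, so it is an equivalence, and regularity also gives $\tU_{\hat\lambda}=\tU_{\hat\lambda}^{\dot{\rW}_\lambda}=\U_{\hat\lambda}$ because $\dot{\rW}_\lambda=\{e\}$. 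For the second $G/U$ I would use the canonical-bundle twist $\omega_{G/B}=\mathcal O(-2\rho)$: the sheaf of $(-\lambda-2\rho)$-twisted differential operators is the opposite of the sheaf of $\lambda$-twisted ones, so $R\Gamma^{\widehat{-\lambda-2\rho}}$ on the second factor produces a \emph{right} $\U_{\hat\lambda}$-module structure. Taking the external product over the two factors yields $\U_{\hat\lambda}$-bimodules, and imposing $G$-equivariance for the diagonal left action becomes precisely the Harish-Chandra condition that the diagonal copy of $\fg$ acts locally finitely; coherence/holonomicity of $\mF$ corresponds to finite generation. The quasi-inverse is the bi-localization functor, and exactness on both sides (again because $\lambda$ is dominant regular and we stay on a single $\dot{\rW}$-orbit) upgrades this to an equivalence of derived categories $\sD(M_{\chi,\chi^{-1}})\simeq\sD(\mathcal{HC}_{\hat\lambda})$.

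Next I would verify monoidality. Convolution on $\sD(M_{\chi,\chi^{-1}})$ is $(p_{13})_*(p_{12}^*\mF\otimes p_{23}^*\mF')$ over $G\backslash(G/U)^3$; the only nontrivial operation is the integration along the middle $G/U$. The $(-\lambda-2\rho)$-monodromy appearing from $p_{12}^*\mF$ and the $\lambda$-monodromy entering $p_{23}^*\mF'$ match on this middle copy, so the $G/U$-version of Lemma \ref{action}, in the form used in Example \ref{formula for Av}, computes $R\Gamma$ of the contraction as $R\Gamma$ of the first factor $\otimes^L_{\U}$ $R\Gamma$ of the second. Tracking the residual left and right module structures through this identification gives a canonical isomorphism $R\Gamma^{\hat\lambda,\widehat{-\lambda-2\rho}}(\mF*\mF')\cong R\Gamma^{\hat\lambda,\widehat{-\lambda-2\rho}}(\mF)\otimes^L_{\U}R\Gamma^{\hat\lambda,\widehat{-\lambda-2\rho}}(\mF')$, and I would then check it is compatible with the associativity and unit constraints, the unit on the bimodule side being $\U_{\hat\lambda}$ itself, which corresponds to the clean (intermediate-extension) monodromic $D$-module supported on the open $G$-orbit in $G/U\times G/U$.

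I expect the main obstacle to be bookkeeping rather than a conceptual one: keeping the two monodromy parameters and the $\omega_{G/B}$-twist straight so that one genuinely lands in bimodules over the \emph{single} ring $\U_{\hat\lambda}$ rather than bimodules over two distinct central quotients, and checking that the pro-completions interact correctly with both the localization equivalences and the convolution/tensor identification. The second delicate point is confirming that $G$-equivariance on the geometric side is detected exactly by the locally-finite diagonal $\fg$-action, including for the completed structures and at the level of derived categories; this is precisely where dominance and regularity of $\lambda$ are used, since they make localization an exact equivalence under which this condition is preserved in both directions.
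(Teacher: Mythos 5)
The paper does not prove this statement at all: it is quoted verbatim as \cite[Proposition 3.1]{BFO}, so the only "proof" in the paper is the citation, and your sketch is in effect a reconstruction of the cited BFO argument. Your outline does follow the same route as that argument: Beilinson--Bernstein localization for monodromic $D$-modules on $G/U$ at the dominant regular parameter (using $\Gamma(G/U,\mD_{G/U})\cong\tU$, exactness from dominance, faithfulness from regularity, and $\tU_{\hat\lambda}^{\dot{\rW}_\lambda}=\U_{\hat\lambda}$ since $\dot{\rW}_\lambda=\{e\}$), the $\omega_{G/B}=\mO(-2\rho)$ twist to turn the second factor into right modules over the same ring, identification of $G$-equivariance with the locally finite diagonal $\fg$-action, and monoidality via the projection formula that the paper records as Lemma \ref{action} (BFO's Lemma 2.1), applied to the contraction along the middle copy of $G/U$. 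So in structure your proposal is the intended proof.

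One concrete slip: your identification of the monoidal unit is wrong. You place it as the clean extension on the \emph{open} $G$-orbit of $G/U\times G/U$, but a $G$-equivariant monodromic object supported on the open orbit has right $T\times T$-monodromy of the form $(\chi, w_0(\chi^{-1}))$, so for general $\chi$ no object of $M_{\chi,\chi^{-1}}$ lives there at all. The object corresponding to the unit $\U_{\hat\lambda}$ is (a completed version of) the sheaf supported on the closed diagonal orbit; this is exactly how the paper uses the equivalence in the proof of Proposition \ref{E=Z}, where $R\Gamma(\Delta_*\mO_{G/B}\otimes p_2^*\omega_{G/B})\cong\tU$ with $\Delta:G/B\to (G/U\times G/U)/T$ the diagonal embedding. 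As stated, your check of the unit constraint would be carried out against the wrong object; with the unit relocated to the closed orbit (and treated as a pro-/completed monodromic object), the rest of your bookkeeping plan goes through as in \cite{BFO}.
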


Let $\chi\in\check T$ and $\lambda\in\check\ft$ be as in Proposition \ref{monoidal} 
and consider the
equivalence of monoidal categories
\beq\label{equ M}
\mathbf M:(\sD(H_{\chi,\chi}),*)\is (\sD(M_{\chi,\chi^{-1}}),*)\stackrel{R\Gamma^{\hat\lambda,\widehat{-\lambda-2\rho}}}\is (\sD(\mathcal{HC}_{\hat\lambda}),\otimes^L_{\U}).
\eeq

We denote by 
$\mathcal{HC}$ the category of 
finitely generated Harish-Chandra bimodules over 
$\U$ (no restriction on the action of the center $Z$).
We denote by
$\mathcal{HC}_{\hat\chi}$ the category of 
finitely generated Harish-Chandra bimodules over
the product $\prod_{\mu\in\lambda+\Lambda/\rW_\chi'}\U_{\hat\mu}$ (here $\rW_\chi'$ acts on
$\mu\in\lambda+\Lambda$ via the dot action).
We denote by  
$Z(\mathcal{HC}_{\hat\lambda},\otimes_{\U})$ (resp. $Z(\mathcal{HC},\otimes_{\U})$, 
$Z(\mathcal{HC}_{\hat\chi},\otimes_{\U})$, $Z(H^t_{\xi,\xi},*^t)$) the Drinfeld center of the monoidal category $(\mathcal{HC}_{\hat\lambda},\otimes_{\U})$ (resp. $(\mathcal{HC},\otimes_{\U})$, $(\mathcal{HC}_{\hat\chi},\otimes_{\U})$, 
$(H^t_{\chi,\chi},*^t)$). Recall an element in $Z(\mathcal{HC}_{\hat\lambda},\otimes_{\U})$ consists of an element 
$\mM\in \mathcal{HC}_{\hat\lambda}$ together with family of compatible isomorphisms $b_\mF:\mM\otimes_{\U}\mF\is\mM\otimes_{\U}\mF$
for $\mF\in \mathcal{HC}_{\hat\lambda}$. 
Recall the notion of translation functor $\theta_\lambda^\mu:\mathcal{HC}_{\hat\lambda}\ra\mathcal{HC}_{\hat\mu}$
where $\mu\in\lambda+\Lambda$ (see, e.g., \cite{BG}).

\begin{thm}\label{BFO}
(1) \cite[Lemma 3.7]{BFO} There is a lifting  $\theta_\lambda^\mu:Z(\mathcal{HC}_{\hat\lambda},\otimes_{\U})\ra Z(\mathcal{HC}_{\hat\mu},\otimes_{\U})$ such that 
the functor
$\mathbf F:Z(\mathcal{HC}_{\hat\lambda},\otimes_{\U})\ra Z(\mathcal{HC}_{\hat\chi},\otimes_{\U}),\ 
L\ra\bigoplus_{\mu\in\lambda+\Lambda/\rW'_\chi}\theta_\lambda^\mu(L)$
define an equivalence of braided monoidal categories.

(2) \cite[Theorem 3.6, Lemma 3.8]{BFO} For any  $\mM\in\sD_G(G)^\heartsuit$, the global section  $\Gamma(\mM)$
is naturally a 
Harish-Chandra bimodule, with a canonical central structure and 
the resulting functor  $\Gamma:\sD_G(G)^\heartsuit\ra Z(\mathcal{HC},\otimes_{\U})$
is 
an equivalence of abelian categories. 
Moreover, the equivalence above 
restricts to an equivalance $CS_\theta\is Z(\mathcal{HC}_{\hat\chi},\otimes_{\U})$
and the composed equivalence 
\[CS_\theta\is Z(\mathcal{HC}_{\hat\chi},\otimes_{\U})
\stackrel{\mathbf F^{-1}}\is Z(\mathcal{HC}_{\hat\lambda},\otimes_{\U})\]
is isomorphic to $R\Gamma^{\hat\lambda,\widehat{-\lambda-2\rho}}\circ\pi^\circ\circ\on{HC}$.
Here $\pi:Y\ra Y/T$ is the projection map.
\end{thm}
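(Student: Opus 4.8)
Both parts are established in \cite{BFO}; the plan is to recall the argument and check that the slightly different normalization of \emph{loc.\ cit.} changes nothing. For part (2) the input is Beilinson--Bernstein localization: since $\lambda$ is chosen dominant and regular, Proposition \ref{monoidal} identifies the monodromic Hecke category $(\sD(M_{\chi,\chi^{-1}}),*)$ with $(\sD(\mathcal{HC}_{\hat\lambda}),\otimes^L_\U)$ via $R\Gamma^{\hat\lambda,\widehat{-\lambda-2\rho}}$. The composite $\pi^\circ\circ\on{HC}:\sD_G(G)\to\sD(G\backslash Y/T)$ is, after the equivalence $i^0$ of Lemma \ref{M and H}, essentially $\Av_*^U$; on abelian hearts it sends a $G$-equivariant $D$-module $\mM$ to the $\U$-bimodule $\Gamma(\mM)$ equipped with the left and right actions of $\fg$, and the $G$-conjugation equivariance of $\mM$ is precisely the datum producing compatible isomorphisms $\Gamma(\mM)\otimes_\U\mF\is\mF\otimes_\U\Gamma(\mM)$ for $\mF\in\mathcal{HC}$, i.e.\ a half-braiding. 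That the resulting functor $\Gamma:\sD_G(G)^\heartsuit\to Z(\mathcal{HC},\otimes_\U)$ is an equivalence of abelian categories is \cite[Theorem 3.6]{BFO}; passing to a fixed generalized central character $\theta$ and using Proposition \ref{CS}(1) (so that $\on{HC}$ of an object of $CS_\theta$ lands in $\bigoplus_{\chi\in\theta}\sD(M_\chi)$) yields the refinement $CS_\theta\is Z(\mathcal{HC}_{\hat\chi},\otimes_\U)$ and the asserted identification of the composite with $R\Gamma^{\hat\lambda,\widehat{-\lambda-2\rho}}\circ\pi^\circ\circ\on{HC}$.

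For part (1) the point is that moving from a single dominant regular $\lambda$ to the $\rW'_\chi$-orbit of $\lambda$ inside $\lambda+\Lambda$ corresponds, on the level of Drinfeld centers, to a direct sum of translation functors. First I would lift each translation functor $\theta_\lambda^\mu:\mathcal{HC}_{\hat\lambda}\to\mathcal{HC}_{\hat\mu}$ to the centers: such a functor is tensoring (on one side) with a fixed bimodule, and for a central object the half-braiding transports through this tensoring, giving $\theta_\lambda^\mu:Z(\mathcal{HC}_{\hat\lambda},\otimes_\U)\to Z(\mathcal{HC}_{\hat\mu},\otimes_\U)$ as in \cite[Lemma 3.7]{BFO}. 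Then $\mathbf F(L)=\bigoplus_{\mu\in\lambda+\Lambda/\rW'_\chi}\theta_\lambda^\mu(L)$ is checked to be braided monoidal, and an equivalence by producing a quasi-inverse from the wall-crossing/translation bimodules in the opposite direction; here the decomposition of $\mathcal{HC}_{\hat\chi}$ as bimodules over $\prod_{\mu\in\lambda+\Lambda/\rW'_\chi}\U_{\hat\mu}$ and the identification $\U_{\hat\lambda}\is\tU_{\hat\lambda}^{\dot{\rW}_\lambda}$ are what make the bookkeeping close up.

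The main obstacle is not any single deep step but keeping all the twists and shifts consistent: the $\widehat{-\lambda-2\rho}$ on the second factor, the $\rho$-shift built into the dot action, and the $\sign$ (equivalently $\omega_T$) twist that surfaces once this is fed into the Mellin picture of Section \ref{C(T)}; and, more delicately, one must check that the half-braidings match \emph{on the nose}, not merely that the underlying bimodules are isomorphic, since it is exactly the compatibility $\Gamma\circ(\pi^\circ\circ\on{HC})\is R\Gamma^{\hat\lambda,\widehat{-\lambda-2\rho}}\circ(\text{forget central structure})$ that will be used in Section \ref{center} to equip the Harish-Chandra bimodule attached to $\mE_\theta$ with its canonical central structure. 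Since all of this is carried out in \cite{BFO} in a compatible setting, the proof consists in transcribing it into the present normalization.
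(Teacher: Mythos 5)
The paper offers no proof of this statement—it is quoted directly from \cite{BFO} (Lemma 3.7, Theorem 3.6, Lemma 3.8)—and your proposal likewise defers to \emph{loc.\ cit.}, so the two approaches coincide. Your sketch of the arguments there (Beilinson--Bernstein localization at a dominant regular $\lambda$ via Proposition \ref{monoidal}, the half-braiding on $\Gamma(\mM)$ arising from $G$-conjugation equivariance, and the lifting of translation functors to Drinfeld centers to build $\mathbf F$) is an accurate summary of how those results are established in \cite{BFO}, and the normalization caveats you flag are exactly the points the present paper takes on trust from that reference.
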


\subsection{Harish-Chandra bimodules $\cZ_\lambda$}
In this subsection we attach to each dominant regular lift $\lambda\in\check\ft$ of $\chi\in\check T$
an element 
$\cZ_\lambda\in Z(\mathcal{HC}_{\hat\lambda},\otimes_{\U})$  
in the Drinfeld center of Harish-Chandra bimodules, and identify it with 
the local system $\mE_\chi$ under the equivalence in~\eqref{equ M}.

Let $R=\mO(\check\ft)$ and 
let
$\lambda\in\check\ft$ be a lift of $\chi$.
We have a canonical identification 
$\rW_{\lambda+\rho}\is\dot{\rW}_\lambda, w\to \dot{w}=w$ such that 
the translation map
$m_{-\lambda}:\check\ft\to\check\ft,\  m_{-\lambda}(\mu)=\mu-\lambda$
satisfies $m_{-\lambda}(wv)=\dot{w}(m_{-\lambda}(v))$
for $w\in\rW_{\lambda+\rho}$ and $v\in\check\ft$.
It
indues an isomorphism 
$m_{-\lambda}^*:R_{\hat{0}}^{\rW_{\lambda+\rho}}\is R_{\hat{\lambda}}^{\dot{\rW}_\lambda},\ f\ra (x\ra f(x-\lambda))$. 
Here $R_{\hat{0}}^{\rW_{\lambda+\rho}}$ (resp. $R_{\hat{\lambda}}^{\dot{\rW}_\lambda}$)
is the $\rW_{\lambda+\rho}$-invariant of the completion $R_{\hat 0}$ of $R$ at $0$ (resp.
$\dot{\rW}_{\lambda}$-invariant of the completion $R_{\hat{\lambda}}$ of $R$ at $\lambda$
).
Consider the following  
quotient
$R_\chi^{\lambda+\rho}=R_{\hat 0}^{\rW_{\lambda+\rho}}/\langle R^{\rW_\chi}_{\hat 0,+}\rangle$
(here
$\langle R^{\rW_\chi}_{\hat0,+}\rangle$
is the ideal generated by the augmentation ideal $R^{\rW_\chi}_{\hat 0,+}$ of $R^{\rW_\chi}_{\hat 0}$).
We introduce the following Harish-Chandra bimodule
\beq\label{central element}
\cZ_\lambda=\U_{\hat\lambda}\otimes_{R_{\hat0}^{{\rW}_{\lambda+\rho}}}R_\chi^{\lambda+\rho}\in\HC_{\hat\lambda}.
\eeq
Here $R_{\hat0}^{\rW_{\lambda+\rho}}$ acts on $\U_{\hat\lambda}$ via the 
the map 
\beq\label{map b}
b_\lambda:R_{\hat 0}^{\rW_{\lambda+\rho}}\stackrel{m^*_{-\lambda}}\is R_{\hat\lambda}^{\dot{\rW}_\lambda}\ra Z(\tU_{\hat\lambda}^{\dot{\rW}_\lambda})\to
Z(\U_{\hat\lambda}),
\eeq
where the last map is induced from the isomorphism 
$\tU_{\hat\lambda}^{\dot{\rW}_\lambda}\is\U_{\hat\lambda}$.
Equivalently, consider the $R_{\hat\lambda}^{\dot{\rW}_\lambda}$-module
$\dot{R}^{\lambda+\rho}_\chi$ given by the base change of the 
$R_{\hat0}^{\rW_{\lambda+\rho}}$-module
$R^{\lambda+\rho}_\chi$
along the isomorphism $R_{\hat\lambda}^{\dot{\rW}_\lambda}\stackrel{m_{-\lambda}^*}\is R_{\hat0}^{\rW_{\lambda+\rho}}$.
Then we have 
\[\calZ_{\hat\lambda}\is\U_{}\otimes_{Z}\dot{R}_\chi^{\lambda+\rho}\]
where $Z$ acts on $\dot{R}_\chi^{\lambda+\rho}$
via the Harish-Chandra isomorphism $hc:Z\is R^{\dot\rW}$.
\quash{
let $R_{\hat\lambda}$ be the completion of $R$ at $\lambda$
and let $m_{\hat\lambda}$ be its maximal ideal. 
We have $R_\chi^{\lambda}\is R_{\hat\lambda}^{\dot{\rW}_\lambda}/
\langle R_{\hat\lambda,+}^{\rW_{a,\lambda}}\rangle$
where $\rW_{a,\lambda}$ is the stabilizer of $\lambda$ in 
$\rW_a$ and $R_{\hat\lambda,+}^{\rW_{a,\lambda}}=R_{\hat\lambda}^{\rW_{a,\lambda}}\cap m_{\hat\lambda}$.
We introduce
\beq\label{central element}
\calZ_\lambda:=\U\otimes_ZR^{\lambda}_{\chi}\in\HC_{\hat\lambda}
\eeq
where 
$Z$ acts on $R_\chi^{\rW_\lambda}$ via the Harish-Chandra isomorphism.
}

\quash{
We have a natural forgetful map 
$For:Z_\xi\ra M_{\xi,\xi}^t$ sending $(\mM,b_\mF)_{\mF\in H_{\xi,\xi}^t}\ra\mM$.}

\begin{prop}\label{E=Z}
Let 
$\lambda\in\check\ft$ be a dominant regular lift of
$\chi\in\check T$.
\begin{enumerate}
\item To every $\mM\in \mathcal{HC}_{\hat\lambda}$ there is a canonical isomorphism 
\[b_\mM:\mathcal Z_\lambda\otimes_{\U}\mM\is\mM\otimes_{\U}\cZ_\lambda\] such that 
the data $(\cZ_\lambda,b_\mM)_{\mM\in \mathcal{HC}_{\hat\lambda}}$ defines 
an element in the Drinfeld center $Z(\mathcal{HC}_{\hat\lambda},\otimes)$.

\item  
We have $\mathbf M(\cE_\chi)\is\cZ_\lambda$.
\end{enumerate}
\end{prop}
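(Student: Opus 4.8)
The plan is to establish (1) and (2) together: I will produce the braiding isomorphisms $b_\mM$ by transporting the tautological central structure of the monoidal unit of $(\HC_{\hat\lambda},\otimes_\U)$ along a base change, and then identify the resulting central object with $\cE_\chi$ by a computation that reduces to the Mellin-transform calculation of Lemma \ref{Mellin of E_theta}.

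For (1): since $\lambda$ is dominant regular we have $\dot{\rW}_\lambda=0$, hence $\rW_{\lambda+\rho}=0$, the map $b_\lambda$ of \eqref{map b} identifies $R_{\hat 0}^{\rW_{\lambda+\rho}}=R_{\hat 0}$ with the central subalgebra $R_{\hat\lambda}\subset Z(\U_{\hat\lambda})$, and $\cZ_\lambda=\U_{\hat\lambda}\otimes_{R_{\hat 0}}R_\chi^{\lambda+\rho}$ is the quotient \emph{algebra} of the unit $\U_{\hat\lambda}$ by the two-sided ideal generated by $b_\lambda\big((R_{\hat 0}^{\rW_\chi})_+\big)$. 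I would first recall that the $T\times T$-monodromic structure on objects of $M_{\chi,\chi^{-1}}\simeq\HC_{\hat\lambda}$ (Proposition \ref{monoidal}) furnishes an action of $R_{\hat 0}$ on the identity functor of $(\HC_{\hat\lambda},\otimes_\U)$ by monoidal natural transformations, i.e. a braided monoidal functor from finitely generated $R_{\hat 0}^{\rW_\chi}$-modules supported at the origin into $Z(\HC_{\hat\lambda},\otimes_\U)$; applying it to the $\rW_\chi$-coinvariant algebra $R_\chi^{\lambda+\rho}$ yields $\cZ_\lambda$ together with the family $b_\mM\colon\cZ_\lambda\otimes_\U\mM\xrightarrow{\ \sim\ }\mM\otimes_\U\cZ_\lambda$, the hexagon axioms being inherited from the coherence of the monoidal action. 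The point that must be checked carefully is that, although the left and right actions of $R_{\hat\lambda}$ on a general Harish-Chandra bimodule do \emph{not} agree — as one sees from the Soergel description, in which the ``big projective'' is $R_{\hat\lambda}\otimes_{R_{\hat\lambda}^{\dot{\rW}}}R_{\hat\lambda}$ — they become identified after base change to $\cZ_\lambda$: both $\cZ_\lambda\otimes_\U\mM$ and $\mM\otimes_\U\cZ_\lambda$ are identified with the same reduction of the $R_{\hat\lambda}\otimes_{R_{\hat\lambda}^{\dot{\rW}}}R_{\hat\lambda}$-module attached to $\mM$ in \cite{BG,BFO}, and $b_\mM$ is induced by the flip of the two tensor factors; naturality and the hexagons are then verified on a set of generators.

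For (2): I would unwind the equivalence $\mathbf M$ of \eqref{equ M}. Under $i^0\colon\sD(G\backslash Y)\simeq\sD(U\backslash X)$ (Lemma \ref{M and H}) and its monodromic refinement, $\cE_\chi$ corresponds to an explicit $G$-equivariant object of $M_{\chi,\chi^{-1}}$, and I would compute $R\Gamma^{\hat\lambda,\widehat{-\lambda-2\rho}}$ of it. Using Lemma \ref{action} together with Example \ref{formula for Av} and the behaviour of $R\Gamma$ on monodromic $D$-modules, this computation reduces to the Mellin-transform statement of Lemma \ref{Mellin of E_theta}, where $\mathfrak M(\cE_\chi\otimes\sign)\simeq\cS_\chi$ and $\cS_\chi$ is, up to translation by a lift of $\chi$, the pullback of a skyscraper along $\check\ft\to\check\ft//\rW_\chi$ — precisely the commutative shadow of the module $\dot{R}_\chi^{\lambda+\rho}$ over $Z\simeq R^{\dot{\rW}}$ appearing in the presentation $\cZ_\lambda\simeq\U\otimes_Z\dot{R}_\chi^{\lambda+\rho}$. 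Matching the two sides then amounts to matching this commutative data — the content of Lemma \ref{Mellin of E_theta} — together with the bookkeeping of the $\rho$-shift in the Harish-Chandra isomorphism, the $\widehat{-\lambda-2\rho}$ twist, the sign character of Remark \ref{global section}, and the comparison of the $\rW_\chi'$-equivariant structure on $\cE_\chi$ with the $\rW_{a,\lambda}^{\on{ex}}$-equivariant structure via \eqref{iso of stabilizers}. Since these identifications are compatible with the braidings from (1), they upgrade to $\mathbf M(\cE_\chi)\simeq\cZ_\lambda$ in $Z(\HC_{\hat\lambda},\otimes_\U)$.

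The main obstacle, I expect, is (1): the naive ``swap'' fails to define $b_\mM$ because the left and right central actions on a Harish-Chandra bimodule genuinely differ, so the braiding must be extracted from the monodromic/Soergel-theoretic structure of $\HC_{\hat\lambda}$ developed in \cite{BG,BFO}, and the Drinfeld-center axioms must be verified in that language. Once $\cZ_\lambda$ is known to be a central object, (2) is essentially a careful bookkeeping computation built on Lemma \ref{Mellin of E_theta}.
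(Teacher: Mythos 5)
Your part (2) is essentially the paper's computation (unwind $\mathbf M$ via Lemma \ref{M and H}, apply Lemma \ref{action} to write $R\Gamma^{\hat\lambda,\widehat{-\lambda-2\rho}}(\pi^\circ\tilde\mE_\chi)$ as $\tU\otimes_R\Gamma^{\hat\lambda}(\mE_\chi)$, and identify $\Gamma^{\hat\lambda}(\mE_\chi)\is\dot R_\chi^{\lambda+\rho}$, which is the Mellin-transform data of Lemma \ref{Mellin of E_theta}), so apart from being schematic that half is fine. The problem is part (1), where your argument has a genuine gap at exactly the step you flag as the delicate one. The object $\cZ_\lambda$ is the quotient of the unit by the ideal generated by $b_\lambda\big(R^{\rW_\chi}_{\hat 0,+}\big)$, where $\rW_\chi$ is the \emph{integral} (stabilizer) Weyl group of $\chi$, which is in general strictly smaller than $\rW$. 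To identify $\cZ_\lambda\otimes_\U\mM$ with $\mM\otimes_\U\cZ_\lambda$ canonically you therefore need that the \emph{left and right actions of $R_{\hat 0}^{\rW_\chi}$ agree on every object of $\HC_{\hat\lambda}$}. The only input you invoke is the Soergel-type fact that the two-sided action factors through $R_{\hat\lambda}\otimes_{R_{\hat\lambda}^{\dot\rW}}R_{\hat\lambda}$, i.e. agreement of the actions of the full Harish-Chandra center; that is strictly weaker and does not suffice: on a bimodule such as $R_{\hat 0}\otimes_{R_{\hat 0}^{\rW}}R_{\hat 0}$ the left and right actions of $R_{\hat 0}^{\rW_\chi}$ genuinely differ, so "both sides are the same reduction" does not follow from the coarse statement, and your claim that an $R_{\hat 0}$-action on the identity functor by monoidal transformations restricts to $R_{\hat 0}^{\rW_\chi}$-modules is asserting precisely the point at issue. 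The paper closes this gap by citing Mili\v{c}i\'c--Soergel \cite[Theorem 4.1]{MS}: for fixed $\lambda$, the $R_{\hat 0}\otimes R_{\hat 0}$-action on the projective generators $\on{pr}_{\hat\lambda}(V\otimes\U/\U I_\lambda^n)$ factors through $R_{\hat 0}\otimes_{R_{\hat 0}^{\rW_\chi}}R_{\hat 0}$ (with $\rW_\chi$, not $\rW$), and every object of $\HC_{\hat\lambda}$ is a quotient of such; without this finer input your construction of $b_\mM$ does not get off the ground.

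A secondary point: once the $R_{\hat 0}^{\rW_\chi}$-actions are known to agree, the isomorphism $b_\mM$ is \emph{not} "the flip of the two tensor factors" — the two coinvariant quotients $R_\chi^{\lambda+\rho}\otimes_{R_{\hat 0}}\mM$ and $\mM\otimes_{R_{\hat 0}}R_\chi^{\lambda+\rho}$ literally coincide as quotients of $\mM$, and $b_\mM$ is the tautological identification (a flip of factors would not even be $\U$-bilinear in general). With that mechanism the naturality and the hexagon axioms follow from the construction, as in the paper; as stated, your verification "on a set of generators" is resting on an isomorphism that has not been defined correctly.
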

\begin{proof}
Proof of (1).
We have $\rW_{\lambda+\rho}=\dot{\rW}_\lambda=e$ since $\lambda$ is regular.
To every $\mM\in \mathcal{HC}_{\hat\lambda}$, the map $b_\lambda$ in~\eqref{map b} 
gives rise to an action of $R_{\hat 0}\otimes_\bC R_{\hat 0}=R_{\hat0}^{\rW_{\lambda+\rho}}\otimes_\bC R^{\rW_{\lambda+\rho}}_{\hat0}$ on $\mM$ and it follows from \cite[Theorem 4.1]{MS} that
this action factors through $R_{\hat 0}\otimes_\bC R_{\hat0}\ra R_{\hat0}\otimes_{R_{\hat0}^{\rW_\chi}}R_{\hat0}$.
Indeed, it is shown in \emph{loc. cit.} that for every finite dimensional representation $V$ of 
$\fg$, the action of $R_{\hat0}\otimes_\bC R_{\hat0}$ on
the bimodule $\on{pr}_{\hat\lambda}(V\otimes_\bC\U/\U I_\lambda^n)\in\HC_{\hat\lambda},\ n\in\bZ_{\geq0}$ factors through 
$R_{\hat0}\otimes_{R_{\hat0}^{\rW_\chi}}R_{\hat0}$.
Here $\on{pr}_{\hat\lambda}(-)$ is the projection of the summand on which 
the action of $I_\lambda$ is locally nilpotent\footnote{In fact, it was shown  \cite[Theorem 4.1]{MS} that,
for any $\lambda\in\check\ft(\bC)$ and any finite dimensional representation $V$,
the action of $R_{\hat0}\otimes_{}R_{\hat0}$ on $\pr_{\hat\lambda}(V\otimes\U/\U I_\lambda^n)$
factors through 
$R_{\hat0}\otimes_{R_{\hat 0}^\rW} R_{\hat0}$. But the proof in \emph{loc.cit.} actually shows that,
if we fix $\lambda\in\check\ft(\bC)$, 
the action factors through $R_{\hat0}\otimes_{R_{\hat 0}^{\rW_\chi}}R_{\hat0}$.}.
Since every object in $\HC_{\hat\lambda}$ is isomorphic to a quotient of 
$\on{pr}_{\hat\lambda}(V\otimes_\bC\U/\U I_\lambda^n)$ for some $V$ and $n$, the claim follows.
Therefore, for every $\mM\in \mathcal{HC}_{\hat\lambda}$, we have a 
canonical
isomorphism \[b_\mM:\cZ_\lambda\otimes_{\U}\mM
\is R_{\hat0}^{}/\langle R^{\rW_\chi}_{\hat0,+}\rangle\otimes_{R_{\hat0}}\mM\is\mM\otimes_{R_{\hat0}}R_{\hat0}^{}/\langle R^{\rW_\chi}_{\hat0,+}\rangle\is\mM\otimes_{\U}\cZ_\lambda.\] 
It follows from the construction that those isomorphisms satisfy the required compatibility conditions and 
the data $(\cZ_\lambda,b_\mM)$
defines an element in $Z(\mathcal{HC}_{\hat\lambda},\otimes_{\U})$.

Proof of (2).
Let $\tilde\mE_\chi\in M_{\chi}$
be the image of $\mE_\chi$ under the equivalence 
$(i^{0})^{-1}:H_{\chi}\is M_{\chi}$ in Lemma \ref{M and H}.
Then by definition we have $\mathbf M(\mE_\chi)\is R\Gamma^{\hat\lambda,\widehat{-\lambda-2\rho}}(\pi^\circ\tilde\mE_\chi)$, 
where $\pi:Y\ra Y/T$.
Consider the map \[a:T\times (G/U\times G/U)/T\ra (G/U\times G/U)/T,\ (t,gU,g'U)\ra 
(gt^{-1}U,g'U).\] 
Then it follows from the definition of $(i^0)^{-1}$ that $\tilde\mE_\chi=a_*(\mE_\theta\boxtimes\Delta_*\mO_{G/B})$, here $\Delta:G/B\ra (G/U\times G/U)/T$ is the embedding $gB\ra (gU,gU)\on{mod}\ T$.
It is shown in \cite[Proposition 3.1]{BFO} that $R\Gamma(\Delta_*\mO_{G/B}\otimes p_2^*\omega_{G/B})\is\tU$
($p_2$ is the right projection map $(G/U\times G/U)/T\ra G/B$), hence by Lemma \ref{action} we get
\[
R\Gamma^{\hat\lambda,\widehat{-\lambda-2\rho}}(\pi^\circ\tilde\mE_\chi)\is
R\Gamma^{\hat\lambda}(\tilde\mE_\chi\otimes p_2^*\omega_{G/B})=R\Gamma^{\hat\lambda}(a_*(\mE_\chi\boxtimes(\Delta_*\mO_{G/B}\otimes p_2^*\omega_{G/B}))\is\]
\[\is R\Gamma(\Delta_*\mO_{G/B}\otimes p_2^*\omega_{G/B})\otimes_R^LR\Gamma^{\hat\lambda}(\mE_\chi)\is
\tU\otimes_{R} \Gamma^{\hat\lambda}(\mE_\chi).\]
Since $\Gamma^{\hat\lambda}(\mE_\chi)\is\dot{R}_\chi^{\lambda+\rho}$ for regular $\lambda$,
part (2) follows.

\end{proof}

\quash{
\subsubsection{}
Consider the following 
full 
abelian subcategory 
$H_{\chi,\chi}^t:=\mathbf M^{-1}(\mathcal{HC}_{\hat\lambda})\subset\sD(H_{\chi,\chi})$.
For any $\mM,\mM'\in H_{\chi,\chi}^t$
we define 
\[\mM*^t\mM':=\mathbf M^{-1}(\mathbf M(\mM)\otimes\mathbf M(\mM'))\in H_{\chi,\chi}^t.\]
One can check that $*^t$ defines a monoidal structure on $M_{\chi,\chi}^t$ and the functor $\mathrm M$ induces an
equivalence of abelian monoidal categories 
\beq\label{mon equ ab setting}
\mathbf M^t:(H_{\chi,\chi}^t,*^t)\is(\mathcal{HC}_{\hat\lambda},\otimes_{\U})
\eeq

\begin{corollary}\label{central structure}
We have $\mE_\chi\in H^t_{\chi,\chi}$. 
For every $\mM\in H^t_{\chi,\chi}$ there is a canonical isomorphism 
\[b_\mM:\mE_\chi*^t\mM\is\mM*^t\mE_\chi\] such that 
the data $(\mE_\chi,b_\mM)_{\mM\in H^t_{\chi,\chi}}$ defines 
an element in the Drinfeld center $Z(H^t_{\chi,\chi},*^t)$.

\end{corollary}
\begin{proof}
This follows immediately from above proposition and (\ref{mon equ ab setting}).
\end{proof}
}
We finish this section with a lemma to be used in the next section.
\begin{lemma}\label{tech}
Let $\lambda\in\check\ft$ be regular dominant weight and let $\mu$ be a dominant weight in 
$\lambda+\Lambda$.
We have $\theta_\lambda^\mu(\cZ_\lambda)\is\cZ_\mu$.
\end{lemma}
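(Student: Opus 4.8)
The plan is to reduce the lemma to standard properties of translation functors together with the same Mili\v{c}i\'{c}--Soergel input used in the proof of Proposition~\ref{E=Z}. First I would note that a dominant weight $\mu$ is automatically regular: if $\mu$ lay on a wall $\langle\mu+\rho,\check\alpha\rangle=0$ for a positive root $\alpha$, then $\langle\mu,\check\alpha\rangle=-\langle\rho,\check\alpha\rangle$ would be a strictly negative integer (since $\langle\rho,\check\alpha\rangle$ is a positive integer for every positive root), contradicting dominance. Hence $\lambda$ and $\mu$ are both regular dominant lifts of the same tame character $\chi=\exp(\lambda)=\exp(\mu)$; in particular $\rW_{\lambda+\rho}=\rW_{\mu+\rho}=e$, and the integral Weyl group $\rW_\chi$ is the same for $\lambda$ and $\mu$. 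By the theory of translation functors (see e.g.\ \cite{BG}), $\theta_\lambda^\mu\colon(\HC_{\hat\lambda},\otimes_{\U})\to(\HC_{\hat\mu},\otimes_{\U})$ is then an exact monoidal equivalence; in particular it sends the monoidal unit $\U_{\hat\lambda}$ to the monoidal unit $\U_{\hat\mu}$.

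The second ingredient is the $R^{\rW_\chi}_{\hat0}$-linearity of $\theta_\lambda^\mu$. The algebra $R^{\rW_\chi}_{\hat0}$ maps to the centres of $\U_{\hat\lambda}$ and of $\U_{\hat\mu}$ via $b_\lambda$ and $b_\mu$ (restricted from $R^{\rW_{\lambda+\rho}}_{\hat0}=R_{\hat0}$, resp.\ $R^{\rW_{\mu+\rho}}_{\hat0}=R_{\hat0}$), and for every $\mM\in\HC_{\hat\lambda}$ the resulting action of $R^{\rW_\chi}_{\hat0}$ on $\mM$ (on the left, equivalently on the right, these agreeing by Proposition~\ref{E=Z}(1)) is intertwined by $\theta_\lambda^\mu$ with the action of $R^{\rW_\chi}_{\hat0}$ on $\theta_\lambda^\mu(\mM)$ through $b_\mu$. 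This is exactly \cite[Theorem~4.1]{MS}, now read for the bimodules $\on{pr}_{\hat\mu}(V\otimes_\bC\U/\U I_\lambda^n)$ rather than $\on{pr}_{\hat\lambda}(V\otimes_\bC\U/\U I_\lambda^n)$: the action of $R_{\hat0}\otimes_\bC R_{\hat0}$ on such a bimodule factors through $R_{\hat0}\otimes_{R^{\rW_\chi}_{\hat0}}R_{\hat0}$, and every object of $\HC_{\hat\lambda}$ is a quotient of bimodules of this form. This is the same computation already carried out in the proof of Proposition~\ref{E=Z}(1), so it should require no new input.

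Granting these, I would conclude as follows. Since $\lambda$ is regular, $R_\chi^{\lambda+\rho}=R_{\hat0}/\langle R^{\rW_\chi}_{\hat0,+}\rangle=R_{\hat0}\otimes_{R^{\rW_\chi}_{\hat0}}\bC$, and therefore
\[\cZ_\lambda=\U_{\hat\lambda}\otimes_{R_{\hat0}}\bigl(R_{\hat0}\otimes_{R^{\rW_\chi}_{\hat0}}\bC\bigr)\cong\U_{\hat\lambda}\otimes_{R^{\rW_\chi}_{\hat0}}\bC,\]
the $R^{\rW_\chi}_{\hat0}$-module structure on $\U_{\hat\lambda}$ being via $b_\lambda$; likewise $\cZ_\mu\cong\U_{\hat\mu}\otimes_{R^{\rW_\chi}_{\hat0}}\bC$ via $b_\mu$. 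Now $\U_{\hat\lambda}\otimes_{R^{\rW_\chi}_{\hat0}}\bC$ is the cokernel of the $b_\lambda$-action map $\U_{\hat\lambda}\otimes_\bC R^{\rW_\chi}_{\hat0,+}\to\U_{\hat\lambda}$, hence a colimit in $\HC_{\hat\lambda}$. Applying the exact functor $\theta_\lambda^\mu$ and invoking the two ingredients above,
\[\theta_\lambda^\mu(\cZ_\lambda)\cong\theta_\lambda^\mu(\U_{\hat\lambda})\otimes_{R^{\rW_\chi}_{\hat0}}\bC\cong\U_{\hat\mu}\otimes_{R^{\rW_\chi}_{\hat0}}\bC\cong\cZ_\mu,\]
where in the middle term the residual $R^{\rW_\chi}_{\hat0}$-action has been transported, by $R^{\rW_\chi}_{\hat0}$-linearity, into the $b_\mu$-action. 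Naturality of this isomorphism, together with the construction of the central lift $\theta_\lambda^\mu\colon Z(\HC_{\hat\lambda},\otimes_{\U})\to Z(\HC_{\hat\mu},\otimes_{\U})$ in \cite[Lemma~3.7]{BFO}, makes it compatible with the central structures of Proposition~\ref{E=Z}(1), which gives the lemma.

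The main obstacle is the precise form of the second ingredient; but since it is the very computation underlying the central structure on $\cZ_\lambda$, this is essentially bookkeeping. As a cross-check — and an argument that would also cover singular $\mu$, should one wish to drop dominance — one can proceed geometrically: on the monodromic side of Proposition~\ref{monoidal}, $\theta_\lambda^\mu$ is realised by passing from the $\hat\lambda$- to the $\hat\mu$-generalised eigenspace of global sections, so $\theta_\lambda^\mu(\cZ_\lambda)\cong R\Gamma^{\hat\mu,\widehat{-\mu-2\rho}}(\pi^\circ\tilde\mE_\chi)$, and the computation in the proof of Proposition~\ref{E=Z}(2) — using that $\tU$ is free over $R$ (Kostant), so the tensor product there is underived, and that $\Gamma^{\hat\mu}(\mE_\chi)\cong\dot R_\chi^{\mu+\rho}$ as in Lemma~\ref{Mellin of E_theta} — identifies this with $\tU\otimes_R\dot R_\chi^{\mu+\rho}=\cZ_\mu$.
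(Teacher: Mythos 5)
Your argument is, in its core, the same as the paper's: identify $\cZ_\lambda$ (resp.\ $\cZ_\mu$) with the coinvariants of the unit $\U_{\hat\lambda}$ (resp.\ $\U_{\hat\mu}$) for the $R^{\rW_\chi}_{\hat 0}$-action through $b_\lambda$ (resp.\ $b_\mu$), use monoidality of $\theta_\lambda^\mu$ to send unit to unit, and use the $R^{\rW_\chi}_{\hat 0}$-linearity $\theta_\lambda^\mu(b_\lambda(r)\cdot m)=b_\mu(r)\cdot\theta_\lambda^\mu(m)$ to transport the coinvariant presentation. The one place where you diverge is exactly the step you flag as the ``main obstacle'': the paper does not re-derive this linearity from \cite[Theorem 4.1]{MS}, it simply invokes \cite[Proposition 1.4]{BG}, which states the intertwining property in precisely the form needed. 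Your proposed reduction to \cite{MS} is plausible but not the statement quoted in the proof of Proposition \ref{E=Z}(1) (that concerns the same block $\on{pr}_{\hat\lambda}$ on both sides, whereas you need the mixed block $\on{pr}_{\hat\mu}(V\otimes\U/\U I_\lambda^n)$, i.e.\ the translation bimodule itself), so as written it is an unverified extension rather than bookkeeping; citing \cite[Proposition 1.4]{BG} closes it immediately. Your preliminary observation that dominance forces regularity under the paper's conventions is correct and is indeed what makes the coinvariant description of $\cZ_\mu$ available; and your concluding geometric cross-check via $R\Gamma^{\hat\mu,\widehat{-\mu-2\rho}}(\pi^\circ\tilde\mE_\chi)$ is a genuinely independent route, essentially rerunning the computation of Proposition \ref{E=Z}(2) at $\mu$, which the paper does not use here but which would also suffice.
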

\begin{proof}
Note that $\theta_\lambda^\mu:\HC_{\hat\lambda}\to\HC_{\hat\mu}$ is monoidal, so we have 
$\theta_\lambda^\mu(\U_{\hat\lambda})\is\U_{\hat\mu}$. 
Note also that, by \cite[Proposition 1.4]{BG}, for any
$r\in R^{\rW_{\chi}}_{\hat0}$ and $\cM\in\HC_{\hat\lambda}$, we have 
$\theta_\lambda^\mu(b_\lambda(r)\cdot m)=b_\mu(r)\cdot\theta_{\lambda}^\mu(m)$,
where $m\in\cM$ and $b_\nu:R_{\hat0}^{\rW_{\mu+\rho}}\to Z(\U_{\hat\nu}),\ \nu\in\check\ft$ is the map in~\eqref{map b}\footnote{The group 
$\rW_\lambda$ in \cite{BG} is the group $\dot{\rW}_\lambda$ in the present paper}.
Since $\calZ_\lambda$ (resp. $\calZ_\mu$) is isomorphic to the coinvariant algebra of 
$\U_{\hat\lambda}$ (resp. $\U_{\hat\mu}$) with respect to the action of 
$R^{\rW_\chi}_{\hat0}$ via the map $b_\lambda$ (resp. $b_\mu$),
it follows that 
$\theta_{\lambda}^\mu(\calZ_\lambda)\is\calZ_\mu $.
 \end{proof}

\quash{
1)
The functor $\on{Av}_U^\xi$ maps $CS_\theta$ to $M_{\xi,\xi}^t$. 
We denote the resulting functor by $\on{Av}$
\\
2) 
There is a canonical equivalence 
\[{\on{av}}_U^\xi:CS_{\theta}\is Z(H^t_{\xi,\xi},*^t)\]
such that the composition 
$CS_{\theta}\is Z(H^t_{\xi,\xi},*^t)\ra M_{\xi,\xi}^t$
is isomorphic to $\on{Av}_U^\xi$.
}

\subsection{Proof of Theorem \ref{Key} in the de Rham setting}\label{proof in the case k=C}

Let $\cZ_\lambda\in Z(\mathcal{HC}_{\hat\lambda},\otimes_{\U})$, $\tilde\mE_\chi\in M_{\chi}$ 
be as 
in Proposition \ref{E=Z}.  
Define 
$\tilde\mE_\theta\is\bigoplus_{\chi\in\theta}\tilde\mE_\chi\in\sD(G\backslash Y)^\heartsuit$.
By the discussion above there exists a character $D$-module $\cM_\theta'\in CS_\theta$ such that 
\[R\Gamma^{\hat\lambda,\widehat{-\lambda-2\rho}}\circ\pi^\circ\circ\on{HC}(\cM_\theta')=\cZ_\lambda.\]
Hence by Proposition \ref{E=Z}, we have   
\[R\Gamma^{\hat\lambda,\widehat{-\lambda-2\rho}}\circ\pi^\circ\circ\mathrm{HC}(\mM_\theta')\is
R\Gamma^{\hat\lambda,\widehat{-\lambda-2\rho}}(\pi^\circ\tilde\mE_\chi)\]
for any regular dominant $\lambda\in\breve\ft$
mapping to $\xi$.
Since $\pi^\circ\circ\on{HC}:\sD(CS_\theta)\ra\bigoplus_{\chi\in\theta}\sD(M_{\chi,\chi^{-1}})$
and 
$R\Gamma^{\hat\lambda,\widehat{-\lambda-2\rho}}:
\sD(M_{\chi,\chi^{-1}})\is \mathcal{HC}_{\hat\lambda}$ is an equivalence of category for 
any regular dominant $\lambda$,
it follows that  \[\mathrm{HC}(\mM_\theta')\is\tilde\mE_\theta.\]
Applying the equivalence $i^\circ:\sD(M_\xi)\is\sD(H_\xi)$ and using 
Lemma \ref{M and H}, we get 
\beq\label{vanishing}
\on{Av}_*^U(\mM_\theta')\is i^\circ\circ\mathrm{HC}(\mM_\theta')\is i^\circ\tilde\mE_\theta\is\mE_\theta.
\eeq
The isomorphism above implies 
$\mE_\theta\is\on{Av}_*^U(\mM_\theta')\is\Res_{T\subset B}^G(\mM_\theta')$, therefore, 
by part $(3)$ of Proposition \ref{CS}, there is
a canonical $\rW$-equivaraint structure on $\mE_\theta$
such that $\cM'_\theta\is\Ind_{T\subset B}^G(\mE_\theta)^\rW$.
We claim that the above $\rW$-equivaraint structure on $\mE_\theta$ is isomorphic to the one
constructed in Section \ref{central Loc}.
Thus we have 
$\cM_\theta'\is\cM_\theta$. 
The theorem follows.

\subsection{Proof of the claim}
\subsubsection{}
We first give a description of~\eqref{vanishing} at the level of global sections (see~\eqref{equ 3} below).
 By Lemma \ref{action}, Example \ref{formula for Av}, and Theorem \ref{BFO}, 
the global sections of $\cM_\theta'$, $\on{Av}_*^U(\mM_\theta')$, and $\iota_*\mE_\theta$ (here $\iota:T=B/U\to G/U$) are given by
\[\Gamma^{}(\on{Av}_*^U(\mM_\theta'))\is (\U/\U\fn)\otimes_{\U}\Gamma(\cM'_\theta),\ \ \ \Gamma(\cM_\theta')\is\bigoplus_{\mu\in\lambda+\Lambda/\rW_\chi'}\theta_\lambda^\mu(\calZ_\lambda),\]
\[\Gamma(\iota_*\mE_\theta)=(\U/\U\fn)\otimes_{R}\Gamma(\mE_\theta).\]
By Lemma \ref{tech},
$\theta_\lambda^\mu(\cZ_\lambda)\is\cZ_\mu$, 
for any dominant  $\mu\in\lambda+\Lambda$.
Since every $\rW_\chi'$-orbit in $\lambda+\Lambda$ (under the dot action) contains a unique 
dominant weight, 
we have 
\beq\label{equ 0}
\Gamma(\cM_\theta')\is\bigoplus_{\mu\in\lambda+\Lambda/\rW_\chi'}\theta_\lambda^\mu(\calZ_\lambda)\is
\bigoplus_{\mu\in\lambda+\Lambda/\rW_\chi'}\calZ_\mu\is
\bigoplus_{\mu\in\lambda+\Lambda/\rW_\chi'}\U\otimes_Z\dot{R}_\chi^{\mu+\rho},
\eeq
which gives rise to
\beq\label{equ 1}
\Gamma(\Av_*^U(\cM_\theta'))\is(\U/\U\fn)\otimes_{\U}\Gamma(\cM_\theta')\is
(\U/\U\fn)\otimes_{\U}\bigoplus_{\mu\in\lambda+\Lambda/\rW_\chi'}\U\otimes_Z\dot{R}_\chi^{\mu+\rho}\is
\eeq
\[\is(\U/\U\fn)\otimes_R\bigoplus_{\mu\in\lambda+\Lambda/\rW_\chi'}R\otimes_Z\dot{R}_\chi^{\mu+\rho}.
\]
Since the map $\check\ft//\dot{\rW_\mu}\to\check\ft//\dot{\rW}$ is \'etale at 
the image of the $\dot\rW$-orbit $\dot{\rW}\mu\subset\check\ft$ along the projection $\check\ft\to\check\ft//\dot{\rW_\mu}$, it follows that
\[R\otimes_Z\dot{R}_\chi^{\mu+\rho}\is R\otimes_{R^{\dot\rW}}\dot{R}_\chi^{\mu+\rho}\is
\bigoplus_{\nu\in\dot{\rW}\mu,
\chi_\nu=\exp(\nu)} R\otimes_{R^{\dot{\rW}_\nu}}\dot{R}_{\chi_\nu}^{\nu+\rho}.\]
Since $R\otimes_{R^{\dot{\rW}_\nu}}\dot{R}_{\chi_\nu}^{\nu+\rho}\is
m_{-\nu}^*(R/\langle R_+^{\rW_{\chi_\nu}}\rangle)\is
\Gamma^{\hat\nu}(\mE_\theta)$, it implies
\beq\label{equ 2}
\bigoplus_{\mu\in\lambda+\Lambda/\rW_\chi'}
R\otimes_Z\dot{R}_\chi^{\mu+\rho}\is \bigoplus_{\mu\in\lambda+\Lambda/\rW_\chi'}\bigoplus_{\nu\in\rW\cdot\mu, \chi_\nu=\exp(\nu)}R\otimes_{R^{\dot{\rW}_\nu}}\dot{R}_{\chi_\nu}^{\nu+\rho}
\eeq
\[\is
\bigoplus_{\nu\in\dot{\rW}(\lambda+\Lambda)}\Gamma^{\hat\nu}(\mE_\theta)\is\Gamma(\mE_\theta).\]
Combining~\eqref{equ 1} and~\eqref{equ 2}, we obtain the 
description of~\eqref{vanishing} on global sections:
\beq\label{equ 3}
\Gamma(\Av_*^U(\cM_\theta'))\is(\U/\U\fn)\otimes_R\bigoplus_{\mu\in\lambda+\Lambda/\rW_\chi'}R\otimes_Z\dot{R}_\chi^{\mu+\rho}\is
(\U/\U\fn)\otimes_R\Gamma(\mE_\theta)\is\Gamma(\iota_*\mE_\theta).
\eeq
\begin{remark}\label{descent to t/W}
Note that if we identify $\dot\rW=\rW_{a,-\rho}$ as subgroup in $\rW_a$, then
$Z=R^{\dot\rW}=R^{\rW_{a,-\rho}}$ and the 
isomorphism~\eqref{equ 2} 
restricts to an isomorphism 
\beq\label{W_a-inv}
\bigoplus_{\mu\in\lambda+\Lambda/\rW_\chi'}\dot{R}_{\chi}^{\mu+\rho}\is
(R^{\rW_{a,-\rho}}\otimes_{R^{\rW_{a,-\rho}}}\bigoplus_{\mu\in\lambda+\Lambda/\rW_\chi'}\dot{R}_{\chi}^{\mu+\rho})\is
(R\otimes_{R^{\rW_{a,-\rho}}}\bigoplus_{\mu\in\lambda+\Lambda/\rW_\chi'}\dot{R}_{\chi}^{\mu+\rho})^{\rW_{a,-\rho}}\is
\Gamma(\mE)^{\rW_{a,-\rho}}
\eeq
such that the map
\beq\label{descent map}
R\otimes_{R^{\rW_{a,-\rho}}}\Gamma(\mE)^{\rW_{a,-\rho}}\stackrel{~\eqref{W_a-inv}}\is
R\otimes_{R^{\rW_{a,-\rho}}}\bigoplus_{\mu\in\lambda+\Lambda/\rW_\chi'}\dot{R}_{\chi}^{\mu+\rho}\stackrel{~\eqref{equ 2}}\is\Gamma(\mE)
\eeq
is the isomorphism
in Theorem \ref{characterization} (4).
\end{remark}
\subsubsection{}

Let $x\in N(T)$ and $w\in\rW$ its image in the Weyl group.
Let $U_w=xUx^{-1}$ with Lie algebra $\fn_w$.
Consider the following commutative diagram 
\beq\label{Ad_x}
\xymatrix{G\ar[r]^\pi\ar[d]^{\Ad_x}&G/U\ar[d]^{\Ad_x}&T\ar[l]_{\ \ \iota}\ar[d]^{w}\\
G\ar[r]^{\pi_w}&G/U_w&T\ar[l]_{\ \ \iota_w}}
\eeq
where the horizontal maps are the natural quotient maps or embeddings.
Let 
\[\phi_w:\mE_\theta\is w^*\mE_\theta\]
\[\phi_x:\mM'_\theta\is\Ad_x^*\mM'_\theta\] 
be the isomorphisms coming from 
the $G$-conjugation equivariant structures on $\mM'_\theta$
and the $\rW$-equivariant structure on $\mE_\theta$, and let 
\[\Gamma(\phi_w):\Gamma(\mE)\to\Gamma(w^*\mE)\is\Gamma(\mE)\]
\[\Gamma(\phi_x):\Gamma(\mM_\theta')\to\Gamma(\Ad_x^*\mM_\theta')\is\Gamma(\mM_\theta')\]
be the induced maps on global sections\footnote{Recall that there are canonical identifications 
$\Gamma(\Ad_x^*\mM_\theta')\is\Gamma(\mM_\theta')$,
$\Gamma(w^*\mE_\theta)\is\Gamma(\mE_\theta)$}.
Then the diagram~\eqref{Ad_x} gives rise to natural isomorphisms
\[\iota_*\phi_w:\iota_*\mE\is\iota_*w^*\mE\is\Ad_x^*(\iota_{w})_*\mE,\]
\[\Av_*^U(\phi_x):\Av_*^U\mM'_\theta\is\Av_*^U\Ad_x^*\mM_\theta'\is\Ad_x^*\Av^{U_w}_*\mM_\theta',\]
such that the induced maps on global sections are given by 
\beq\label{iota_w}
\Ad_x(-)\otimes\Gamma(\phi_w):\Gamma(\iota_*\mE)=\U/\U\fn\otimes_S\Gamma(\mE)\lra
\Gamma(\Ad_x^*(\iota_{w})_*\mE)=\U/\U\fn_w\otimes_R\Gamma(\mE)
\eeq
\beq\label{iota_x}
\Ad_x(-)\otimes\Gamma(\phi_x):\Gamma(\Av_*^U\mM_\theta')=\U/\U\fn\otimes_{\U}\Gamma(\mM_\theta')\to\Gamma(\Ad_x^*\Av^{U_w}_*\mM_\theta')=\U/\U\fn_w\otimes_{\U}\Gamma(\mM_\theta').
\eeq
To prove the claim,  we need to show that the following diagram commute
\beq\label{key diagram}
\xymatrix{
\Gamma(\Av_*^U\mM_\theta')\ar[r]^{\eqref{iota_x}}\ar[d]_{\eqref{equ 3}}&\Gamma(\Av_*^{U_w}\mM_\theta')\ar[d]^{\eqref{equ 3}}\\
\Gamma(\iota_*\mE)\ar[r]^{\eqref{iota_w}}&\Gamma((\iota_w)_*\mE).
}
\eeq
where the right vertical arrow 
is the 
map~\eqref{equ 3} in the case when the unipotent radical is $U_w$.
\subsubsection{}
By Theorem \ref{BFO},
isomorphism~\eqref{equ 0} intertwines the map
$\Gamma(\phi_x):\Gamma(\cM_\theta')\to\Gamma(\cM_\theta')$ with
\beq\label{Av 2}
\Ad_x(-)\otimes\on{id}:\U\otimes_Z\bigoplus_{\mu\in\lambda+\Lambda/\rW_\chi'}\dot{R}_\chi^{\mu+\rho}\stackrel{}\lra\U\otimes_Z\bigoplus_{\mu\in\lambda+\Lambda/\rW_\chi'}\dot{R}_\chi^{\mu+\rho}.
\eeq
Using Remark \ref{descent to t/W}, it follows that we have the following 
commutative diagram
\beq\label{key diagram 2}
\xymatrix{
\Gamma(\Av_*^U\mM_\theta')\ar[r]^{\eqref{iota_x}}\ar[d]_{\eqref{equ 3}}&\Gamma(\Av_*^{U_w}\mM_\theta')\ar[d]^{\eqref{equ 3}}\\
\Gamma(\iota_*\mE)\ar[r]^{\eqref{Av formula}}&\Gamma((\iota_w)_*\mE).
}
\eeq
where the map \eqref{Av formula} is given by
\beq\label{Av formula}
\xymatrix{\Gamma(\iota_*\mE)=(\U/\U\fn)\otimes_R\Gamma(\mE)\ar[r]^{~\eqref{Av formula}}\ar[d]^{\eqref{descent map}}&\Gamma((\iota_w)_*\mE)=(\U/\U\fn_w)\otimes_R\Gamma(\mE)\\
(\U/\U\fn)\otimes_RR\otimes_{R^{\rW_{a,-\rho}}}\Gamma(\mE_\theta)^{\rW_{a,-\rho}}\ar[r]&
(\U/\U\fn_w)\otimes_RR\otimes_{R^{\rW_{a,-w(\rho)}}}\Gamma(\mE_\theta)^{\rW_{a,-w(\rho)}}\ar[u]^{\eqref{descent map}}}
\eeq
where the bottom arrow is the map 
$\Ad_x(-)\otimes w\otimes\Gamma(\phi_w)$
\footnote{Note that the map is well-defined since 
$w\rW_{a,-\rho}w^{-1}=\rW_{a,-w(\rho)}$}.

On the other hand, by Theorem \ref{characterization}, we have the following commutative diagram
\[\xymatrix{\Gamma(\mE)\ar[r]^{\Gamma(\phi_w)}\ar[d]&\Gamma(\mE)\ar[d]\\
R\otimes_{R^{\rW_{a,-\rho}}}\Gamma(\mE_\theta)^{\rW_{a,-\rho}}\ar[r]^{w\otimes\Gamma(\phi_w)}&R\otimes_{R^{\rW_{a,-w(\rho)}}}\Gamma(\mE_\theta)^{\rW_{a,-w(\rho)}}},\]
were the vertical maps are the isomorphisms in Theorem \ref{characterization} (4).
Thus the map~\eqref{Av formula} is equal to~\eqref{iota_w}, and hence 
\eqref{key diagram 2} is equal to~\eqref{key diagram}.
The commutativity of~\eqref{key diagram} follows. 
This finishes the proof of the claim.

\subsection{Mixed characteristic lifting of $\mE_\theta$ and $\cM_\theta$ }\label{mixed char}
Let $G_\bZ$ be a split reductive group over $\bZ$.
Let $T_\bZ$ be a maximal torus of $G_\bZ$
and let $B_\bZ$ be a Borel subgroup containing $T_\bZ$
with unipotent radical $U_\bZ$.
For any ring $R$ (resp. any scheme $S$), we denote by $G_R$, $B_R$, etc,
(resp. $G_S$, $B_S$, etc,) the base change of 
$G_\bZ$, $B_\bZ$, etc, along $\Spec(R)\to\Spec(\bZ)$ (resp. $S\to\Spec(\bZ)$).

Let $A\subset\bC$ be a strict Henselization
of a closed point of $\bZ[1/\ell]$ with
residue field $k$.
Let $\chi\in\calC(T_k)(\barQ)$ and $\theta=\rW\chi$ be the $\rW$-orbit of 
$\chi$. 
Let $\mE_\theta$ be the $\rW$-equivaraint $\ell$-adic local system on $T_k$
in Section \ref{central Loc}.

\begin{lemma}\label{lifting of E_theta}
There exists a $\rW$-equivaraint $\ell$-adic local system $\mE_{\theta,A}$ on $T_A$
which, after base change $A\to k$, becomes 
$\mE_\theta$.
\end{lemma}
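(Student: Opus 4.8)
The construction of $\mE_\theta$ in Section~\ref{central Loc} depends only on the finite group $\rW$, the $\rW$-module $\pi_1^t(T_k)$ together with its pro-$\ell$ quotient, the subgroups $\rW_\chi\subset\rW'_\chi$, and the character $\chi$; the plan is to observe that this entire package is canonically the same for $T_A$ and for $T_k$, and then to repeat the construction over $A$. First I would compute $\pi_1^t(T_A)$ explicitly. Since $G_\bZ$ is split, $T_A\is\bG_{m,A}^r$ with factors indexed by a basis of $\bX^\bullet(T)$, and the connected tame \'etale covers of $\bG_{m,A}=\Spec A[t^{\pm1}]$ are exactly the Kummer covers $t\mapsto t^n$ with $n$ prime to $p$: because $A$ is strictly Henselian with residue field of characteristic $p$, it contains $\mu_n$ for every $n$ prime to $p$, so no constant field extension occurs. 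Hence $\pi_1^t(\bG_{m,A},1)\is\varprojlim_{(n,p)=1}\mu_n(A)$, and taking products $\pi_1^t(T_A,e)\is\bX_\bullet(T)\otimes_\bZ\widehat{\bZ}^{(p')}(1)$, where $e\in T_A$ is the identity section. The same computation over $k$ gives $\pi_1^t(T_k,e)\is\bX_\bullet(T)\otimes_\bZ\widehat{\bZ}^{(p')}(1)$, and the specialization map $\pi_1^t(T_A,e)\to\pi_1^t(T_k,e)$ (restriction of covers along $T_k\hookrightarrow T_A$) is an isomorphism, since $\mu_n(A)\to\mu_n(k)$ is bijective for $(n,p)=1$. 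Both isomorphisms are $\rW$-equivariant, as $\rW$ acts on $\bX_\bullet(T)$ over $\bZ$ and fixes $e$.

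Next, because $e$ is $\rW$-fixed, the $\rW$-action on $T_A$ (resp.\ $T_k$) lifts to $e$, so the \'etale fundamental group of the quotient stack is a split extension $\pi_1^t(T_A/\rW,e)\is\rW\ltimes\pi_1^t(T_A,e)$, and likewise over $k$; by the previous step, restriction along $T_k/\rW\hookrightarrow T_A/\rW$ induces an isomorphism $\rW\ltimes\pi_1^t(T_A,e)\is\rW\ltimes\pi_1^t(T_k,e)$ compatible with the projections to $\rW$ and with passage to pro-$\ell$ quotients. Consequently pullback along $T_k\hookrightarrow T_A$ is an equivalence between the category of $\rW$-equivariant tame $\ell$-adic local systems on $T_A$ and that on $T_k$: both are, via a base point at $e$, the categories of continuous finite-dimensional $\barQ$-representations of one and the same profinite group.

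Finally I would run the construction of Section~\ref{central Loc} verbatim over $A$: form $S_K=O_K[[\pi_1(T_A)_\ell]]$ and its completion, the quotient $S_{\chi,K}$, the representations $\rho_\chi^{uni}$, $\rho_\chi=\rho_\chi^{uni}\otimes\chi$, and $\rho_\theta=\Ind_{\rW_\chi'\ltimes\pi_1^t(T_A)}^{\rW\ltimes\pi_1^t(T_A)}\rho_\chi$, where $\chi$, $\rW_\chi$, $\rW'_\chi$ are transported via the specialization isomorphism. Let $\mE_{\theta,A}$ be the $\rW$-equivariant tame $\ell$-adic local system on $T_A$ attached to $\rho_\theta$. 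Since the construction is natural in the pair (group $\rW\ltimes\pi_1^t$, character $\chi$), and the specialization isomorphism identifies these data for $A$ and for $k$, the pullback of $\mE_{\theta,A}$ to $T_k$ is the local system attached to the same representation $\rho_\theta$ of $\rW\ltimes\pi_1^t(T_k)$, namely $\mE_\theta$. The only genuinely substantive point is the explicit $\rW$-equivariant identification of $\pi_1^t(T_A)$ with $\pi_1^t(T_k)$; everything else is formal. (Alternatively one may invoke Grothendieck's specialization theory, SGA~1, Exp.~XIII, applied to $T_A$ together with a toric compactification, to obtain directly that restriction of tame covers from $T_A$ to $T_k$ is an equivalence compatible with the $\rW$-action.)
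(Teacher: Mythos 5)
Your proposal is correct and follows essentially the same route as the paper: the paper's proof also identifies $\rW\ltimes\pi_1^t(T_k)\is\rW\ltimes\pi_1^t(T_A)$ via the specialization isomorphism of tame fundamental groups (citing SGA~1) and then transports the representation $\rho_\theta$ defining $\mE_\theta$ to obtain $\mE_{\theta,A}$. Your explicit Kummer-theoretic computation of $\pi_1^t(T_A)$ and the check of $\rW$-equivariance simply make precise the step the paper delegates to the citation, so the two arguments coincide in substance.
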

\begin{proof}
Let $\rho_\theta$ be the $\ell$-adic representaiton of 
$\rW\ltimes\pi_1^t(T_k)$ associated to $\mE_\theta$.
The specialization isomorphism $\on{sp}:\pi_1^t(T_k)\to\pi_1^t(T_A)$
(see \cite{Gr})
induces an isomorphism $\rW\ltimes\pi_1^t(T_k)\is\rW\ltimes\pi_1^t(T_A)$, thus we 
can indentify $\rho_\theta$ as a $\ell$-adic representaiton of $\rW\ltimes\pi_1^t(T_A)$
and we denote by $\mE_{\theta,A}$ the corresponding local sysetm on $T_A$.
It is obvious that $\mE_{\theta,A}$ satisfies the desired property.
\end{proof}

Let $\mM_\theta$ be the character sheaf associated to $\theta$. Our next goal is construct
a mixed characteristic version of $\mM_\theta$.
For this we observe 
that for any flat group scheme $\mG$ finite type over $\bZ$ and a 
closed subgroup scheme $\mH\subset\mG$ flat over $\bZ$, the universal
geometric quotient $\mG/\mH$ exists \cite{A}.
Note also that the Chevalley  isomorphism holds 
$G_R//G_R\is T_R//W_R$ 
for any ring $R$ and the formation commutes with arbitrary base change 
$R\to R'$ \cite{Le}.
It follows that the quotient map
$\pi:G\to G/U$ and
the Grothendieck-Springer simultaneous resolution in~\eqref{G-S resolution}
makes sense over any ring $R$. 
Moreover, the formation commutes with 
arbitrary base change $R\to R'$. 
We denote them by
$\pi_R:G_R\to G_R/U_R$ and
\beq\label{G-S resolution over R}
\xymatrix{\widetilde G_R\ar[r]^{\tilde q_R}\ar[d]^{\tilde c_R\ \ }&T_R\ar[d]^{q_R}\\
G_R\ar[r]^{c_R}&T_R//\rW_R}
\eeq

Denote by $h_R:\widetilde G_R\to Z_R:=G_R\times_{T_R//\rW}T_R$ the induced map.
We define 
\beq\label{IC}
\IC(Z_R):=(h_Z)_!\barQ[\dim G_\bC]\in\sD(Z_R).
\eeq
When $R=k$ is an algebraically closed field of characteristic not equal to $\ell$, 
$\IC(Z_k)$ is the IC-complex of 
$Z_k$ and there is a canonical $\rW$-equivaraint structure
$(\IC(Z_k),\phi_k)\in\sD_\rW(Z_k)$ (see Section \ref{W-action}).

\begin{lemma}\label{lifting of W action}
There exists a positive integer $N$, depends on the group $G_\bZ$,
satisfies the following.
Let $A$ be the strict Henselization of $\bZ[1/N\ell]$ at a closed point with residue field $k$.
One can endow
the $\ell$-adic complex
$\IC(S_A)$ in~\eqref{IC} with
a $\rW$-equivariant structure $(\IC(Z_A),\phi_A)\in\sD_\rW(Z_A)$ which, under the base change 
$A\to k$, becomes $(\IC(Z_k),\phi_k)\in\sD_\rW(S_k)$.

\end{lemma}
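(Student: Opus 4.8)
The plan is to descend the $\rW$-equivariant structure on $\IC(Z_k)$ to the generic characteristic and then to the Henselian base $A$, exploiting the fact that the $\rW$-action on $\IC(Z)$ is pinned down by geometry over the regular semisimple locus, where the Grothendieck--Springer map is finite \'etale. First I would recall that over $\Spec(\bZ[1/\ell])$, the simultaneous resolution $\tilde q_\bZ,\tilde c_\bZ$ in~\eqref{G-S resolution over R} and hence $h_\bZ:\widetilde G_\bZ\to Z_\bZ$ are defined, the formation commuting with base change, and that over the regular semisimple locus $Z_\bZ^{\rs}:=G_\bZ^{\rs}\times_{T_\bZ//\rW}T_\bZ^{\rs}$ the map $h_\bZ$ restricts to an isomorphism (the Grothendieck--Springer resolution is an iso there). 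Thus on the regular semisimple locus we obtain a tautological $\rW$-action: $\rW$ acts on $T_\bZ^{\rs}$ hence on $Z_\bZ^{\rs}$ over $G_\bZ^{\rs}$, and $h_\bZ^{\rs,*}(\IC(Z_\bZ)|_{Z^{\rs}})\is \barQ[\dim G]$ carries the evident $\rW$-equivariant structure $\phi_\bZ^{\rs}$.

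Next I would propagate this to all of $Z$ by a purity/IC argument. The subtlety is that $\IC(Z_R)$ as defined in~\eqref{IC} is the \emph{geometric} pushforward $h_{R,!}\barQ[\dim G]$, which is the genuine intersection complex only when $h_R$ is small over the corresponding field — and smallness of $h$ can fail in small residual characteristic. This is precisely where the integer $N$ enters: there is a finite set of "bad" primes (depending only on the root datum of $G_\bZ$) outside of which $h_k$ is small and semismall estimates for $h_\bZ[1/N]$ hold uniformly; set $N$ to be the product of those primes. Over $\Spec(\bZ[1/N\ell])$ the complex $\IC(Z_{\bZ[1/N\ell]})$ is then the middle extension of $\barQ[\dim G]|_{Z^{\rs}}$ along the open immersion $j:Z^{\rs}\hookrightarrow Z$: one checks the support and cosupport perverse inequalities fibrewise, using that they can be verified on geometric fibers over $\Spec(\bZ[1/N\ell])$ and that the stratification of $Z$ by type of semisimple part is defined over $\bZ[1/N\ell]$ with relative dimensions independent of the residue characteristic. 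Since $j$ is $\rW$-stable and $\phi^{\rs}_{\bZ[1/N\ell]}$ is a $\rW$-equivariant structure on $\barQ[\dim G]|_{Z^{\rs}}$, functoriality of the intermediate extension $j_{!*}$ produces a unique $\rW$-equivariant structure $\phi_{\bZ[1/N\ell]}$ on $\IC(Z_{\bZ[1/N\ell]})$ extending it.

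Finally I would base change along $\Spec(A)\to\Spec(\bZ[1/N\ell])$ and along $\Spec(k)\to\Spec(A)$. Since $A$ is the strict Henselization of $\bZ[1/N\ell]$ at a closed point, the open immersion $j_A:Z_A^{\rs}\hookrightarrow Z_A$ is again $\rW$-stable with $Z_A^{\rs}$ the regular semisimple locus, and the formation of $j_{!*}$ commutes with the smooth base change $\Spec A\to\Spec\bZ[1/N\ell]$; so $(\IC(Z_A),\phi_A):=(j_A{}_{!*}(\barQ[\dim G]|_{Z^{\rs}_A}),\,j_A{}_{!*}\phi^{\rs}_A)$ is the desired equivariant lift, and it agrees with~\eqref{IC} because $h_A$ is still small (being a base change of $h_{\bZ[1/N\ell]}$, and smallness is preserved). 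Base changing further to the closed point $\Spec(k)\hookrightarrow\Spec(A)$, compatibility of $j_{!*}$ with this base change (which holds here because $h_k$ is small over $k$, so $\IC(Z_k)$ really is the middle extension, and the nearby/vanishing cycles obstruction vanishes by constancy of the cohomology of the fibers of $h$ along $\Spec A$ — again guaranteed by the uniform semismallness away from $N\ell$) shows $(\IC(Z_A),\phi_A)\otimes_A k\is(\IC(Z_k),\phi_k)$. The main obstacle is the middle point: establishing uniform smallness of $h_{\bZ[1/N\ell]}$ and the fibrewise perverse inequalities for a suitable explicit $N$, equivalently controlling the dimensions of the strata of the Grothendieck--Springer map independently of the residue characteristic; once that is in place, everything else is formal base-change bookkeeping for $j_{!*}$.
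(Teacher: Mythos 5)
Your strategy is genuinely different from the paper's, and it has a real gap at its central step. You construct the equivariant structure over the base $\bZ[1/N\ell]$ (and over $A$) as a relative intermediate extension from the regular semisimple locus, but over a base that is not a field there is no standard perverse $t$-structure and hence no $j_{!*}$ with the characterizing properties you invoke; checking "support and cosupport inequalities fibrewise" does not by itself give an object of $\sD(Z_A)$ with a universal property, nor the uniqueness/functoriality needed to propagate the tautological $\rW$-structure from $Z^{\rs}$ to all of $Z_A$. Even if one imports some relative-perversity formalism, the two statements your sketch relies on --- that $h_!\barQ[\dim G]$ over the base coincides with such a relative middle extension, and that this middle extension (with its equivariant structure) commutes with the non-smooth base change $\Spec k\hookrightarrow\Spec A$ --- are precisely the hard points, and the appeals to "uniform semismallness" and "constancy of the cohomology of the fibers of $h$" do not establish them. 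What is actually needed is that isomorphisms $\IC(Z_A)\is w^*\IC(Z_A)$ specialize bijectively to isomorphisms over $Z_k$ (so that $\phi_A$ exists, satisfies the cocycle condition, and restricts to $\phi_k$); this is left unproved in your argument. A secondary inaccuracy: the paper asserts smallness of $h$ over any algebraically closed $k$ (Section 3, with no restriction on the characteristic), so the integer $N$ is not there to salvage smallness in small characteristic, and identifying $N$ with "bad primes for smallness" mislocates where the constant comes from.

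The paper's proof avoids all of this by never constructing anything intrinsically over the base: it uses the spreading-out technique of \cite[Section 6.1]{BBD} (see also \cite[Section 4]{Dr}) to choose $N$, a stratification $\calT$ of $Z_{\bZ[1/N\ell]}$ and finite collections $\calL(T)$ of local systems such that each $w^*\IC(Z_{\bZ[1/N\ell]})$ lies in $\sD_{\calT,\calL}$ and the restriction functor $i^*:\sD_{\calT,\calL}(Z_A)\to\sD_{\calT,\calL}(Z_k)$ is an equivalence. The isomorphisms $\phi_{k,w}:\IC(Z_k)\is w^*\IC(Z_k)$ are then lifted through this equivalence to $\phi_{A,w}$, and both the cocycle relations and the compatibility with the base change $A\to k$ are automatic because one is lifting along an equivalence of categories --- exactly the specialization statement your approach would still have to prove. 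If you want to pursue your route, the missing ingredient is a ULA/relative-perversity argument (or the BBD-type equivalence itself) showing that the relevant Hom groups between $\IC(Z_A)$ and its $\rW$-translates restrict isomorphically to the closed fiber.
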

\begin{proof}
According to \cite[Section 6.1]{BBD} (or the discussion in \cite[Section 4]{Dr}), one can choose a (large enough) positive integer 
$N$, a stratification $\calT$ of $S_{\bZ[1/N\ell]}$, and for each $T\in\calT$
a (finite) collection $\calL(T)$ of 
$\ell$-adic local systems on $T$, satisfy the following:
(1) We have $w^*\IC(Z_{\bZ[1/N\ell]})\in\sD_{\mT,\mL}(Z_{\bZ[1/N\ell]})$ for $w\in\rW$,
(2) Let $A,k$ be as above. Let $i:Z_k\to Z_A$ be the imbedding.
The functor $i^*:\sD_{\calT,\calL}(Z_A)\to\sD_{\calT,\calL}(Z_k)$
 is an equivalence. Here $\sD_{\calT,\calL}(Z_A)$ (resp. $\sD_{\calT,\calL}(Z_k)$)
 is the full subcategory of $\sD(Z_A)$ (resp. $\sD(Z_k)$) generated by 
 the $*$-restriction of $\calL(T), T\in\calT$ to $Z_A$ (resp. $Z_k$).

Note that, by (1) above, we have 
$w^*\IC(Z_A)\in\sD_{\calT,\calL}(Z_A)$ and $w^*\IC(Z_k)\in\sD_{\calT,\calL}(Z_k)$.
Let $\phi_{k,w}:\IC(S_k)\is w^*\IC(Z_k)$ be the isomorphism coming 
from the $\rW$-equivariant structure $
\phi_k$ on $\IC(Z_k)$.
Since $i^*\IC(Z_A)\is\IC(Z_k)$, it follows from (2) above that 
there exists an isomorphism 
$\phi_{A,w}:\IC(Z_A)\is w^*\IC(Z_A)$ which, under the base change $A\to k$,
becomes $\phi_{k,w}$. Now the collection $\{\phi_{A,w}|w\in\rW\}$
defines a $\rW$-equivariant structure $\phi_A$ on $\IC(Z_A)$ satisfying the 
required property.
\end{proof}

\begin{prop}\label{lifting of M_theta}
There exists a positive integer $N$, depends on the group $G_\bZ$,
satisfies the following:
Let $A$ be the strict Henselization of $\bZ[1/N\ell]$ at a closed point with residue field $k$.
There is a $\ell$-adic complex 
$\cM_{\theta,A}$ on $G_A$ which, under the base change $A\to k$, becomes 
$\cM_\theta$.

\end{prop}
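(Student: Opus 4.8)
The plan is to build $\cM_{\theta,A}$ by carrying out the construction of $\cM_\theta$ over the base $A$, using the kernel presentation~\eqref{Ind IC} of the induction functor. Let $N$ be a positive integer large enough that Lemma~\ref{lifting of W action} applies and, after possibly enlarging $N$, that $\chi$ (hence $\mE_\theta$) is defined over $A$ in the sense of Lemma~\ref{lifting of E_theta}; let $A$ be the strict Henselization of $\bZ[1/N\ell]$ at a closed point with residue field $k$. Since $T_A\to T_A//\rW_A$ is finite, the projection $p_{G_A}\colon Z_A=G_A\times_{T_A//\rW_A}T_A\to G_A$ is finite, in particular proper, so $p_{T_A}^*$ and $(p_{G_A})_*=(p_{G_A})_!$ commute with arbitrary base change. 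I would then set, for $\mF\in\sD(T_A)$,
\[
\Ind_{T_A\subset B_A}^{G_A}(\mF):=(p_{G_A})_*\bigl(p_{T_A}^*(\mF)\otimes\IC(Z_A)\bigr)[\dim G-\dim T],
\]
where $\IC(Z_A)$ is the complex from~\eqref{IC}; since $h$ is small (hence proper), proper base change along $A\to k$ identifies $i^*\IC(Z_A)\is\IC(Z_k)$, where $i\colon Z_k\hookrightarrow Z_A$.

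Next I would assemble the $\rW$-action. By Lemma~\ref{lifting of W action} we may equip $\IC(Z_A)$ with a $\rW$-equivariant structure $\phi_A$ restricting to $\phi_k$, and by Lemma~\ref{lifting of E_theta} we have a $\rW$-equivariant $\ell$-adic local system $\mE_{\theta,A}$ on $T_A$ restricting to $\mE_\theta$. Since $p_{T_A}\colon Z_A\to T_A$ is $\rW$-equivariant, the $\rW$-equivariant structures on $\mE_{\theta,A}$ and on $\IC(Z_A)$ combine to produce a $\rW$-action on $\Ind_{T_A\subset B_A}^{G_A}(\mE_{\theta,A})$, exactly as over $k$ (cf.~\eqref{Ind IC} and~\eqref{inv factor}). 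Because we work with $\barQ$-coefficients, $|\rW|$ is invertible and the idempotent $\tfrac1{|\rW|}\sum_{w\in\rW}w$ makes sense, so I define
\[
\cM_{\theta,A}:=\Ind_{T_A\subset B_A}^{G_A}(\mE_{\theta,A})^\rW
\]
to be the image of that idempotent.

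Finally I would verify the base change property. Proper base change along $i\colon G_k\hookrightarrow G_A$ gives a canonical isomorphism $i^*\Ind_{T_A\subset B_A}^{G_A}(\mE_{\theta,A})\is\Ind_{T_k\subset B_k}^{G_k}(\mE_\theta)$, and by the compatibility of $\phi_A$ with $\phi_k$ (Lemma~\ref{lifting of W action}) together with the compatibility of the $\rW$-equivariant structure on $\mE_{\theta,A}$ with the one on $\mE_\theta$ (Lemma~\ref{lifting of E_theta}), this isomorphism intertwines the two $\rW$-actions. Since taking $\rW$-invariants is given by an idempotent, it commutes with the exact functor $i^*$, and hence $i^*\cM_{\theta,A}\is\cM_\theta$, as required.

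The substantive work has, by design, been pushed into the two preceding lemmas: the arithmetic existence of $\mE_{\theta,A}$ (via the specialization isomorphism of tame fundamental groups) and the descent of the $\rW$-equivariant structure on the Grothendieck--Springer kernel to mixed characteristic (via the single-stratification gluing argument over $\bZ[1/N\ell]$ in the style of~\cite{BBD}). Granting those, the present statement is essentially formal; the one point that needs genuine care is that the $\rW$-action constructed on $\Ind_{T_A\subset B_A}^{G_A}(\mE_{\theta,A})$ really does base change to the $\rW$-action over $k$, which is exactly what the compatibilities recorded in Lemmas~\ref{lifting of E_theta} and~\ref{lifting of W action} guarantee. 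I expect no serious obstacle beyond bookkeeping these compatibilities through the kernel formula.
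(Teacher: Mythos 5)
Your proposal is correct and follows essentially the same route as the paper: define $\Ind_{T_A\subset B_A}^{G_A}(\mE_{\theta,A})$ via the kernel $\IC(Z_A)$ (equivalently via $(\tilde c_A)_!(\tilde q_A)^*$), use the $\rW$-equivariant structures supplied by Lemmas \ref{lifting of E_theta} and \ref{lifting of W action} to define the $\rW$-action, take the invariant direct factor, and conclude by proper base change along $A\to k$. The only cosmetic differences (using the idempotent $\tfrac1{|\rW|}\sum_w w$ explicitly, and remarking that $p_{G_A}$ is finite so $*$- and $!$-pushforward agree) do not change the argument.
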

\begin{proof}
Let $N,A,k$ be as in Lemma \ref{lifting of W action}.
Let $\mE_{\theta,A}\in\sD_\rW(T_A)$ be the lift of $\mE_\theta$ in Lemma \ref{lifting of E_theta}. Define 
\[\Ind_{T_A\subset B_A}^{G_A}(\mE_{\theta,A}):=(\tilde c_A)_!(\tilde q_A)^*(\mE_{\theta,A})[\dim G_\bC-\dim T_\bC]\is\]
\[\is p_{G_A,!}(p_{T_A}^*(\mE_{\theta,A})\otimes\IC(Z_A))[\dim G_\bC-\dim T_\bC].\]
Here $p_{T,A}$ and $p_{G,A}$ are the natural projections from
$Z_A$ to $T_A$ and $G_A$ respectively.
The $\rW$-equivaraint structures on $\mE_{\theta,A}$ and $\IC(Z_A)$
give rise to a $\rW$-action on $\Ind_{T_A\subset B_A}^{G_A}(\mE_{\theta,A})$
and we define 
$\cM_{\theta,A}=\Ind_{T_A\subset B_A}^{G_A}(\mE_{\theta,A})^{\rW}$ to be the $\rW$-invariant factor.
Since the base change of $\mE_{\theta,A}$ along $A\to k$
is isomorphic to $\mE_{\theta}$
, it follows that the
base change of  
$\cM_{\theta,A}$ along $A\to k$ is isomorphic to $\cM_{\theta}=\Ind_{T_k\subset B_k}^{G_k}(\mE_\theta)^\rW$.

\end{proof}

\quash{
Let $k$ be an algebraically closure of a prime field of characteristic $p>0$
Fix a positive integer $N$ invertible in $k$
and a primitive $N$-th root of unity $\xi\in k$. 
Let $\ell$ be a prime invertible in $k$.
Denote by $R_{N,\ell}=\bZ[\xi_N,1/N\ell]$, where $\xi_N\subset\bC$ is a primitive $N$-th root of unity.
We denote by $\phi:R_{N,\ell}\to k$ the unique map sending 
$\xi_N$ to $\xi$.
We fix an embedding $R_{N,\ell}\to\barQ$ sending $\xi_N$ to a primitive 
$N$-th root of unity in $\barQ$. 


\begin{lemma}
Let $\chi\in\calC(T_k)(\barQ)$ be a 
$\ell$-adic character of order dividing $N$
and let $\mL_\chi$ be the corresponding Kummer local system.
There exists a $\ell$-adic local system 
$\mL_{\chi,R_{N,\ell}}$ on $T_{R_{N,\ell}}$ which, under the base change 
$R_{N,\ell}\to k$, becomes $\mL_\chi$.
\end{lemma}
\begin{proof}
The $\chi$ gives rise to a character of $\mu_N(k)$. We may identify 
the groups $\mu_N(R_{N,\ell})=\mu_N(k)$ using the choices of the primitive $N$-th roots
$\xi_N\subset R_{N,\ell}$ and $\xi\subset k$. This allow us to view 
$\chi$ as a character of $\mu_N(R_{N,\ell})$ and 
we define 
$\mL_{\chi,R_{N,\ell}}$ to be the Kummer local system on 
$T_{R_{N,\ell}}$ associated to the character $\chi$. 
It is obvious that $\mL_{\chi,R_{N,\ell}}$ satisfies the required property.

\end{proof}

Let $S$ be the strict Henselization of the localization $(R_{N,\ell})_{(p)}$ 
with respect to the local homomorphism $\phi:(R_{N,\ell})_{(p)}\to k$.

\begin{prop}
Let $\theta=\rW\cdot\chi$ be the $\rW$-orbit of a 
a tame $\ell$-adic character $\chi\in\calC(T_k)(\barQ)$ of order dividing $N$.
There exists a $\rW$-equivariant $\ell$-local system 
$\mE_{\theta,S}$ on $T_{S}$ which, under the base change 
along $S\to k$, becomes $\mE_\theta$.
\end{prop}
\begin{proof}
Let $\rho_\chi^{uni}$ be the $\ell$-adic representation of 
$\rW\pi$
The specialization map $\on{sp}:\pi_1(T_k)\to\pi_1(T_S)$ induces an 
isomorphism $\on{sp}^t:\pi^t_1(T_k)\is\pi^t_1(T_S)$ compatible with the 
$\rW$-action. Let 

\end{proof}
}


\subsection{ULA property of the averaging functor}
We first review the notion of universal local acyclicity (ULA) 
following \cite{De2,Z}.

Let $S$ be a Noetherian scheme.
Let $f:X\to S$ be a morphism of finite type
and let $\mF\in\sD(X)$. 
Let $s$ be a geometric point of $S$ and let
$S_{(s)}$ be the strict Henselisation at $s$.
We recall the following definition in \cite{De2}:
\begin{definition}
A $\ell$-adic complex $\mF\in\sD(X)$ is called locally acyclic with respect to
$f:X\to S$ if for every geometric point
$x\in X$ and every geometric point $t\in S_{(f(x))}$, the natural map
$\oH^*(X_{(x)},\mF)\to \oH^*((X_{(x)})_t,\mF)$ is an isomorphism,
where $(X_{(x)})_t=(X_{(x)})\times_{S_{(f(x))}}t$.
It is called universally locally acyclic (ULA) if 
it it locally acyclic after arbitrary base change $S'\to S$.
\end{definition}

One can reformulate local acyclicity as follows.
Let $t$ be a geometric point of 
$S_{(s)}$. Denote by $j_t:X\times_St\to X$ and $i_s:X\times_Ss\to X$
the natural maps.
We write 
\beq\label{Psi}
\Psi_{t\to s}(\mF):=i_s^*(j_t)_*j_t^*(\mF).
\eeq
It is shown in \cite[Lemma A 2.2]{Z} that $\mF$ is locally acyclic with respect to
$f$
if and only if the natural map
\[i_s^*\mF\to \Psi_{t\to s}(\mF)\] 
is an isomorphism.

Here are some properties of ULA complexes that we need.

\begin{thm}\label{ULA}
\begin{enumerate}
\item Let $f:X\to Y$ be a proper morphism over $S$, and let 
$\mF$ be a $\ell$-adic complex on $X$ ULA with respect to $X\to S$.
Then $f_!\mF$ is ULA with respect to $Y\to S$.
\item
Let $f:X\to Y$ be a smooth morphism over $S$, and let 
$\mF$ be a $\ell$-adic complex on $Y$. Then
$\mF$ is
 ULA with respect to $Y\to S$ if and only if 
$f^*\mF$ is ULA with respect to $X\to S$.
\item
Let $f_i:X\to S, i=1,2$ and let $\mF_i$ be a $\ell$-adic complex on $X_i$ ULA with respect to
$X_i\to S$. Then $\mF_1\boxtimes_S\mF_2$ is ULA with respect to
$X_1\times_SX_2\to S$.
\item 
Let $f:X\to S$ be a morphism of finite type and let 
$\mF$ be a $\ell$-adic complex on $X$.
Then there is an open dense subset $U$ of $S$ such that 
the restriction of $\mF$ to $X_U=X\times_SU$ is ULA with respect to
$X_U\to U$.
\item
Let $f:X\to S$ be a smooth morphism.
Then any $\ell$-adic local system $\mF$ on 
$X$ is ULA with respect to $f:X\to S$.

\end{enumerate}
\end{thm}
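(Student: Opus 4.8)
The plan is to deduce all five assertions from the reformulation of local acyclicity recalled just above: a complex $\mF$ is locally acyclic with respect to $f\colon X\to S$ exactly when the canonical arrow $i_s^*\mF\to\Psi_{t\to s}(\mF)$ of~\eqref{Psi} is an isomorphism for every geometric point $s$ of $S$ and every geometric point $t$ of $S_{(s)}$, and it is ULA when this persists after arbitrary base change $S'\to S$. Since $\Psi_{t\to s}$ is built from a $*$-pullback, a $*$-pushforward along an open immersion, and a $*$-pullback along a closed immersion, it is compatible with the six operations in the expected ways, and in each item below the defining arrow attached to the output will be identified with a functor applied to the defining arrow(s) attached to the input(s). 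All of this is classical -- it is Deligne's finitude theorem \cite{De2}, together with the formalism recorded in \cite{Z} -- and I would recall the arguments as follows.

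For (1), proper base change supplies, for $f\colon X\to Y$ proper over $S$, isomorphisms $i_s^*f_!\cong f_!\,i_s^*$ and $(j_t)_*f_!\cong f_!\,(j_t)_*$, hence a natural isomorphism $\Psi_{t\to s}(f_!\mF)\cong f_!\,\Psi_{t\to s}(\mF)$ under which $i_s^*f_!\mF\to\Psi_{t\to s}(f_!\mF)$ is $f_!$ applied to $i_s^*\mF\to\Psi_{t\to s}(\mF)$; the latter is an isomorphism by hypothesis, and since properness and the hypothesis are stable under base change, so is the conclusion. For (2), the smooth base change theorem gives $f^*(j_t)_*\cong(j_t)_*f^*$ for $f$ smooth (while $f^*i_s^*\cong i_s^*f^*$ always), whence $f^*\Psi_{t\to s}(\mF)\cong\Psi_{t\to s}(f^*\mF)$ compatibly with the defining arrows; thus $\mF$ ULA forces $f^*\mF$ ULA, and conversely -- for $f$ smooth and surjective, $f^*$ is conservative -- the arrow for $\mF$ is an isomorphism as soon as its $f^*$ is. For (5), the constant sheaf $\mathrm{1}_X$ is ULA relative to a smooth $f$, this being the universal local acyclicity of smooth morphisms with trivial coefficients; for a general lisse $\mF$ the projection formula $(j_t)_*(j_t^*\mG\otimes j_t^*\mF)\cong(j_t)_*j_t^*\mG\otimes\mF$ -- valid because $\mF$ is dualizable -- gives $\Psi_{t\to s}(\mG\otimes\mF)\cong\Psi_{t\to s}(\mG)\otimes i_s^*\mF$, so taking $\mG=\mathrm{1}_X$ exhibits $\mF=\mathrm{1}_X\otimes\mF$ as ULA.

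For (3), factor $r\colon W:=X_1\times_SX_2\to S$ as $W\xrightarrow{\,p\,}X_1\xrightarrow{\,f_1\,}S$, let $q\colon W\to X_2$ be the other projection, and note $\mF_1\boxtimes_S\mF_2=p^*\mF_1\otimes q^*\mF_2$. Since $p$ is the base change of $f_2$ along $f_1\colon X_1\to S$, the complex $q^*\mF_2$ is ULA with respect to $p$; and $\mF_1$ is ULA with respect to $f_1$ by hypothesis. The transitivity property of local acyclicity -- if $\mG$ on $W$ is ULA relative to $p$ and $\mH$ on $X_1$ is ULA relative to $f_1$, then $\mG\otimes p^*\mH$ is ULA relative to $f_1p$ -- then yields that $\mF_1\boxtimes_S\mF_2$ is ULA relative to $r$, as desired, and this is preserved under base change by definition. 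Finally (4) is exactly the generic part of Deligne's finitude theorem \cite{De2}: by Noetherian induction on $S$, together with constructibility of the cohomology sheaves of the pushforwards that enter and generic base change, one finds a dense open $U\subseteq S$ over which $\mF|_{X_U}$ is universally locally acyclic relative to $X_U\to U$.

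The only genuinely substantive ingredients are the transitivity property invoked in (3) and, above all, Deligne's finitude theorem used in (4); items (1), (2) and (5) are formal consequences of the behaviour of $\Psi_{t\to s}$ under proper base change, smooth base change, and the projection formula, so the main work is in organizing the reductions rather than in producing a new argument. I expect step (4) -- producing the dense open of ULA-ness -- to be the one that genuinely requires the heavy machinery.
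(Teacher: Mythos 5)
Your sketch is correct, but note that the paper does not actually prove Theorem \ref{ULA}: it is stated as a review of standard facts following \cite{De2} and \cite{Z}, so your arguments are essentially the ones found in those references (base change theorems applied to $\Psi_{t\to s}$ for (1) and (2), the projection formula for lisse coefficients in (5), and Deligne's generic local acyclicity for (4)). Two small remarks: in (2) the direction ``$f^*\mF$ ULA $\Rightarrow$ $\mF$ ULA'' needs $f$ surjective, as you implicitly use via conservativity of $f^*$ — the paper's statement omits this hypothesis but only applies it to the smooth surjective map $\pi_S$; and in (3) you route the K\"unneth property through the transitivity of ULA under composition, which is itself a nontrivial theorem (and in full generality a heavier one than the K\"unneth statement), so it would be cleaner to cite the K\"unneth property directly from the standard lists in \cite{Z} or the appendix to \cite{De2} rather than derive it from transitivity.
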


Consider the open embedding $j_S:U_S\times_S\bar B_S\to G_S$,
where $\bar B_S$ is the opposite Borel. 
Recall the quotient map
$\pi_S:G_S\to G_S/U_S$.
We have the following ULA property of the averaging functor.

\begin{prop}\label{ULA for Av}
Assume $j_{S,*}(\barQ)$ is ULA with respect to $G_S\to S$, of
formation compatible with arbitrary change of base on $S$.
Let
$\mF$ be a $\ell$-adic complex on $G_S$ ULA with respect to 
$G_S\to S$. Then $\pi_{S,*}(\mF)$ is ULA with respect to 
$G_S/U_S\to S$, of
formation compatible with arbitrary change of base on $S$.
\end{prop}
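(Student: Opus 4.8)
The plan is to prove Proposition \ref{ULA for Av} by reducing the averaging functor $\pi_{S,*}$ to a proper pushforward after pulling back along a smooth surjection, so that parts (1), (2), and (5) of Theorem \ref{ULA} apply. First I would exploit the open Bruhat cell: the multiplication map gives an open immersion $j_S:U_S\times_S\bar B_S\hookrightarrow G_S$, and on $G/U$ the composite $U_S\times_S\bar B_S\to G_S\to G_S/U_S$ identifies the big cell with an open subscheme of $G_S/U_S$ which, after quotienting $\bar B_S$ by $U_S$ on the right, is isomorphic to $U_S\times_S T_S$ — a smooth affine scheme over $S$. More precisely, $\pi_S$ restricted to the big cell is the projection $U_S\times_S\bar B_S\to U_S\times_S T_S$, a smooth (indeed affine-space-bundle) morphism. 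The point of the hypothesis ``$j_{S,*}(\barQ)$ is ULA'' is exactly to control what happens away from the big cell on the target.

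Next I would make the reduction to properness. Since ULA-ness can be checked smooth-locally on the target by Theorem \ref{ULA}(2), and since $G_S/U_S$ is covered by translates of the big-cell open set under the left $G$-action (the $G$-orbit of the base point is all of $G/U$ when we use the action $g\cdot xU = gxU$; translates of the open cell by a finite set of Weyl/representative elements cover $G/U$), it suffices to show ULA-ness of $\pi_{S,*}(\mF)$ after restricting to each such translate. On such a translate the relevant piece of $\pi_S$ factors through a proper map: the standard device is to write $\pi_S$ (over the relevant open chart) as the composition of a closed immersion followed by a projection from a product with a proper $S$-scheme — concretely, one uses that $\Ind_{T\subset B}^G$ is computed via the proper map $\tilde c:\widetilde G\to G$ and the projective variety $G/B$, as recalled in Section 2. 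So I would realize $\pi_{S,*}\mF$ (on each chart) as $(\text{proper pushforward})$ applied to $\mF$ boxed with the ULA complex $j_{S,*}\barQ$ or with $\barQ_{G/B}$; then Theorem \ref{ULA}(3) (external products of ULA complexes are ULA) and Theorem \ref{ULA}(1) (proper pushforward preserves ULA) finish it. Compatibility with base change is automatic from compatibility of proper pushforward with base change (proper base change) together with the assumed base-change compatibility of $j_{S,*}\barQ$.

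The main obstacle I expect is bookkeeping rather than conceptual: one must set up the factorization of $\pi_S$ over each chart so that everything in sight is either smooth or proper over $S$, and so that the complex being pushed is genuinely an external product over $S$ of $\mF$ (which is ULA by hypothesis) with something ULA (either $j_{S,*}\barQ$ or the constant sheaf on the proper fiber $G/B$). In particular the role of the ``big cell'' hypothesis is to handle the non-properness of $\pi_S$ itself: $G\to G/U$ is not proper, but restricted to the preimage of the big cell it is an affine-space bundle (hence smooth with ULA pushforward of the constant sheaf by (5)), and the closure of the big cell in $G/U$ is handled by the ULA-ness of $j_{S,*}\barQ$. The remaining work is to check the two descriptions of $\pi_{S,*}$ agree on overlaps — but this is exactly the content of Lemma \ref{M and H} and the discussion of $\on{Av}^U_*$ in Section 3, applied over the base $S$ — and that all constructions there are compatible with base change $S'\to S$, which holds because the Grothendieck–Springer diagram \eqref{G-S resolution over R} and the Chevalley isomorphism are compatible with arbitrary base change.
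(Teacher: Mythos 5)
Your high-level slogan (smooth pullback plus a relative compactification, with the hypothesis on $j_{S,*}\barQ$ controlling the non-proper part) is in the right spirit, but the concrete plan has a genuine gap at exactly the crucial point: how $\pi_{S,*}$, which is a pushforward along a non-proper (affine) morphism, gets converted into something covered by Theorem \ref{ULA}. Your chart-wise reduction does not do this. Over the big cell the map $\pi_S$ is a trivial $U$-torsor, but pushforward of an \emph{arbitrary} ULA complex along a (trivial) affine-space bundle does not preserve ULA-ness, and Theorem \ref{ULA}(5) says nothing of the sort --- it only says that local systems on a smooth $S$-scheme are ULA. A small example: take $S=\bbA^1_t$, $X=\bbA^1_t\times\bbA^1_x\to S$, and $\mF=i_*\barQ$ for the closed hyperbola $\{tx=1\}$; this $\mF$ is ULA over $S$, yet its pushforward along the projection to $\bbA^1_t$ is $j_*\barQ$ for $j:\bG_m\hookrightarrow\bbA^1$, which is not ULA over $\on{id}_S$. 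Likewise, the proposed factorization of $\pi_S$ over a chart as ``closed immersion followed by projection from a product with a proper $S$-scheme'' computing $\pi_{S,*}$ as a proper pushforward cannot exist ($\pi_S$ is affine and not proper), and the appeal to $\Ind_{T\subset B}^G$ and $\tilde c:\widetilde G\to G$ is beside the point here, since $\mF$ is an arbitrary ULA complex on $G_S$, not an induced one. Finally, you never actually exhibit the mechanism by which the hypothesis on $j_{S,*}\barQ$ enters; saying it ``handles the closure of the big cell'' is not an argument.

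The paper's proof supplies precisely the missing mechanism, and none of it happens chart-by-chart on $G_S/U_S$. First, since $\pi_S$ is smooth surjective, one may check ULA-ness after applying $\pi_S^*$, and the Cartesian square for the $U$-action identifies $\pi_S^*\pi_{S,*}\mF$ with $a_*(\barQ\boxtimes_S\mF)$, where $a:U_S\times_S G_S\to G_S$ is the action map. Second, $a$ is factored as the open immersion $h:U_S\times_S G_S\cong(U_S\times_S\bar B_S)\times_S^{\bar B_S}G_S\hookrightarrow G_S\times_S^{\bar B_S}G_S$ followed by the proper map $\bar a$, so by Theorem \ref{ULA}(1) it suffices to treat $h_*(\barQ\boxtimes_S\mF)$. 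Third --- and this is where the hypothesis is used --- pulling back along the smooth surjection $q:G_S\times_S G_S\to G_S\times_S^{\bar B_S}G_S$ turns $h$ into $j_S\times\on{id}$, so $q^*h_*(\barQ\boxtimes_S\mF)\cong j_{S,*}(\barQ)\boxtimes_S\mF$, which is ULA by the hypothesis together with Theorem \ref{ULA}(3), and one concludes by Theorem \ref{ULA}(2); base-change compatibility rides along through each step. If you want to salvage your write-up, you should replace the chart-wise discussion by this three-step reduction (or an equivalent one), since as written the key identifications are absent and the statements substituted for them are false.
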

\begin{proof}
Consider the following Cartesian diagram
\[\xymatrix{U_S\times_S G_S\ar[r]^{a}\ar[d]^\pr&G_S\ar[d]^{\pi_S}\\
G_S\ar[r]^{\pi_S}&G_S/U_S}\]
where $\pr$ is the projection map and $a$ is the left action map.
As $\pi_S$ is smooth surjective morphism, it suffices to show that
\[(\pi_S)^*\pi_{S_*}(\mF)\is a_*\pr^*(\mF)\is a_*(\bar\bQ\boxtimes_S\mF)\]
is ULA w.r.t $G_S\to S$, of
formation compatible with arbitrary change of base on $S$.
Note that $a$ admits the following factorization
\[U_S\times_S G_S\stackrel{h}\to G_S\times^{\bar B_S}_SG_S\stackrel{\bar a}\to G_S\]
where $h$ is the open embedding 
$U_S\times_S G_S\is (U_S\times_S \bar B_S)\times_S^{\bar B_S} G_S\subset
G_S\times^{\bar B_S}_SG_S
$ and $\bar a$ is the left action map.
As $\bar a$ is proper, to show that 
\[a_*(\bar\bQ\boxtimes_S\mF)\is
\bar a_*h_*(\bar\bQ\boxtimes_S\mF)\]
is ULA w.r.t $G_S\to S$, of
formation compatible with arbitrary change of base on $S$, it suffices to show that 
\[h_*(\bar\bQ_{}\boxtimes_S\mF)\]
is ULA w.r.t $G_S\times^{\bar B_S} G_S\to S$, of
formation compatible with arbitrary change of base on $S$.
Consider the following Cartesian diagram
\[\xymatrix{(U_S\times_S\bar B_S)\times_S G_S\ar[r]^{\ \ \ j_S\times\id}\ar[d]&G_S\times_S G_S\ar[d]^{q}\\
U_S\times_S G_S\ar[r]^h&G_S\times^{\bar B_S}_S G_S}.\]
where $j_S:U_S\times_S\bar B_S\to G_S$ is the open embedding.
Since $q$ is a smooth surjective morphism and
$j_{S,!}(\bar\bQ_\ell)$ and $\mF$ are ULA w.r.t $G_S\to S$ (by assumption),  we have 
\[q^*h_*(\bar\bQ\boxtimes_S\mF)\is j_{S,*}(\bar\bQ_\ell)\boxtimes_S\mF,\]
which is ULA w.r.t $G_S\times_SG_S\to S$, of
formation compatible with arbitrary change of base on $S$.
The lemma follows

\end{proof}

\begin{remark}\label{generic base change}
By
Theorem \ref{ULA} (4) and Deligne's generic base change theorem \cite[Corollary 2.9]{De2}, 
the assumptions on $j_{S,*}(\barQ)$ holds 
after a base change to 
an open dense subset $U\subset S$.

\end{remark}

Let $A$ be a 
strictly Henselian local ring and
let $S=\Spec A$. Let $s$ be the closed point of $S$ and let $t$ be a geometric point of 
$S$.
Let $f:X\to S$ be a scheme over $S$ and let $Y\subset X$ be an open subscheme 
over $S$.

\begin{lemma}\label{generic zero}
Let $\mF$ be a $\ell$-adic complex on $X$ ULA with respect to $X\to S$.
Let $\mF_s$ and $\mF_t$ be the restriction of $\mF$ to the 
fiber $X_s$ and $X_t$ respectively.
Then $\mF_{t}|_{Y_t}\is 0$
implies $\mF_s|_{Y_s}\is 0$.
 \end{lemma}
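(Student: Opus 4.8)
The plan is to exploit the defining property of universal local acyclicity via the nearby-cycles comparison map. By the reformulation of local acyclicity recalled just before the statement (following \cite[Lemma A 2.2]{Z}), since $\mF$ is ULA with respect to $f:X\to S$, the natural map
\[
i_s^*\mF\lra \Psi_{t\to s}(\mF)=i_s^*(j_t)_*j_t^*\mF
\]
is an isomorphism, where $j_t:X_t\to X$ and $i_s:X_s\to X$ are the inclusions of the generic and special fibers. The same holds after restriction to the open subscheme $Y\subset X$: since $Y\to S$ is an open subscheme of $X\to S$ and ULA is local on $X$ (indeed the condition is phrased pointwise over geometric points of $X$), the restriction $\mF|_Y$ is ULA with respect to $Y\to S$, so $i_{s,Y}^*(\mF|_Y)\is \Psi_{t\to s}(\mF|_Y)$ where now $i_{s,Y}:Y_s\to Y$ and $j_{t,Y}:Y_t\to Y$ are the inclusions into $Y$.

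First I would observe that the formation of $\Psi_{t\to s}$ on $Y$ only involves the restriction $j_{t,Y}^*\mF=\mF_t|_{Y_t}$, since $\Psi_{t\to s}(\mF|_Y)=i_{s,Y}^*(j_{t,Y})_*(j_{t,Y})^*(\mF|_Y)=i_{s,Y}^*(j_{t,Y})_*(\mF_t|_{Y_t})$. Hence the hypothesis $\mF_t|_{Y_t}\is 0$ forces $\Psi_{t\to s}(\mF|_Y)\is 0$. Combining with the ULA isomorphism on $Y$ gives $\mF_s|_{Y_s}=i_{s,Y}^*(\mF|_Y)\is 0$, which is exactly the conclusion. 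There is a small compatibility point to address: one must make sure the restriction of $\mF$ to $Y$ (an open subscheme over $S$) is again ULA over $S$; this follows from Theorem \ref{ULA}(2) applied to the open immersion $Y\hookrightarrow X$, which is smooth, so $\mF|_Y=(\iota)^*\mF$ is ULA with respect to $Y\to S$ because $\mF$ is ULA with respect to $X\to S$ (here $\iota:Y\to X$).

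I do not expect a serious obstacle: the whole argument is a direct unwinding of the characterization of (universal) local acyclicity in terms of the specialization map $i_s^*\mF\to\Psi_{t\to s}(\mF)$ being an isomorphism, together with the fact that $\Psi_{t\to s}$ is computed from the generic-fiber restriction alone. The only mildly delicate step is the bookkeeping with strict Henselizations at geometric points of $Y$ versus of $S$ and the verification that the ULA property transfers to the open subscheme $Y$; both are handled by the stability properties collected in Theorem \ref{ULA} and by the locality (over $X$) of the ULA condition. I would write the proof in essentially three lines: restrict to $Y$, note $\Psi_{t\to s}(\mF|_Y)$ depends only on $\mF_t|_{Y_t}=0$ hence vanishes, and invoke ULA on $Y$ to conclude $\mF_s|_{Y_s}\is i_{s,Y}^*(\mF|_Y)\is\Psi_{t\to s}(\mF|_Y)\is 0$.
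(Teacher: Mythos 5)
Your proposal is correct and follows essentially the same route as the paper: both arguments unwind the characterization of local acyclicity via the specialization map $i_s^*(-)\to\Psi_{t\to s}(-)$ and use that $\Psi_{t\to s}$ is computed from the generic-fiber restriction alone, which vanishes by hypothesis. The only cosmetic difference is that the paper restricts the isomorphism $\mF_s\is\Psi_{t\to s}(\mF)$ to $Y_s$ using that $\Psi_{t\to s}$ commutes with smooth pull back along the open immersion $Y\to X$, whereas you first transfer the ULA property to $\mF|_Y$ (via Theorem \ref{ULA}(2)) and then apply the same isomorphism on $Y$; these are equivalent bookkeeping choices.
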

 \begin{proof}
 Indeed, since $Y\to X$ is smooth
 and the functor $\Psi_{t\to s}$ in~\eqref{Psi} commutes with smooth pull back,
 the isomorphism $\mF_s\is\Psi_{t\to s}(\mF)$ (coming from the ULA property) implies 
$\mF_s|_{Y_s}\is \Psi_{t\to s}(\mF)|_{Y_s}\is 
 \Psi_{t\to s}(\mF|_Y)\is i_s^*(j_t)_*(\mF_t|_{Y_t})\is 0
 $.
 
\end{proof}

\subsection{Proof of Theorem \ref{Key} in the $\ell$-adic setting}
We shall show that there exists a positive integer $N$, depending only on $G_\bZ$, such
that for any
algebraically closed field $k$ of positive characteristic not dividing $N\ell$
and a $\rW$-orbit $\theta=\rW\chi$ of a tame character $\chi\in\calC(T_k)(\barQ)$, 
the
averaging $\Av_*^{U_k}(\cM_{\theta})$
is supported on 
$T_k=B_k/U_k\subset G_k/U_k$. Equivalently, 
the restriction of $\Av_*^{U_k}(\cM_{\theta})$ to the open complement 
$Y_k=(G_k/U_k)\setminus(B_k/U_k)$ is zero.

Let $N,A,\cM_{\theta,A},\cE_{\theta,A}$ be as in Proposition \ref{lifting of M_theta} such that 
$k$ is the residue field of $A$.
According to Remark \ref{generic base change}, by replacing $N$ with a larger positive integer,
we can assume 
$(j_{\Spec(\bZ[1/N])})_*\barQ$ is ULA w.r.t $G_{\bZ[1/N]}\to 
\Spec(\bZ[1/N])$, of formation compatible with arbitrary base change on $\Spec(\bZ[1/N])$.

We will write $\mE_{\theta,A'}$, $\calM_{\theta,A'}$ 
for the base change of $\mE_{\theta,A}$, $\calM_{\theta,A}$
along $A\to A'$.
Note that, by Lemma \ref{lifting of E_theta} and Proposition \ref{lifting of M_theta}, we have 
$\mE_{\theta,k}\is\mE_{\theta}$ and $\cM_{\theta,k}\is\cM_{\theta}$.

\begin{lemma}\label{ULA for M_A}
$(\pi_A)_*\cM_{\theta,A}$ is ULA with respect to $G_A/U_A\to\Spec(A)$,
of formation compatible with arbitrary base change on $\Spec(A)$.
\end{lemma}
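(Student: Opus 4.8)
The plan is to reduce the ULA statement for $(\pi_A)_*\cM_{\theta,A}$ to two inputs: first, that $\cM_{\theta,A}$ itself is ULA over $\Spec(A)$; and second, the ULA property of the averaging functor $\pi_{A,*}$ established in Proposition \ref{ULA for Av}. For the second input, recall we have already arranged (by enlarging $N$ if necessary, see Remark \ref{generic base change}) that $(j_{\Spec(\bZ[1/N])})_*\barQ$ is ULA with respect to $G_{\bZ[1/N]}\to\Spec(\bZ[1/N])$ with formation compatible with arbitrary base change; pulling this back along $\Spec(A)\to\Spec(\bZ[1/N])$ gives the hypothesis of Proposition \ref{ULA for Av} over $S=\Spec(A)$. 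So everything comes down to showing $\cM_{\theta,A}$ is ULA with respect to $G_A\to\Spec(A)$, with formation compatible with base change.

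To see the latter, I would unwind the definition of $\cM_{\theta,A}$ from the proof of Proposition \ref{lifting of M_theta}: it is the $\rW$-invariant factor of
\[
\Ind_{T_A\subset B_A}^{G_A}(\mE_{\theta,A})\is p_{G_A,!}\bigl(p_{T_A}^*(\mE_{\theta,A})\otimes\IC(Z_A)\bigr)[\dim G_\bC-\dim T_\bC],
\]
and taking a direct summand (the $\rW$-invariants, using that $|\rW|$ is invertible in $A$ after possibly enlarging $N$) preserves the ULA property. So it suffices to prove that $p_{G_A,!}\bigl(p_{T_A}^*(\mE_{\theta,A})\otimes\IC(Z_A)\bigr)$ is ULA over $\Spec(A)$ with compatible formation. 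Now $p_{G_A}:Z_A\to G_A$ is proper (it is a base change of the projection of $G\times_{T//\rW}T\to G$, which is proper since $T\to T//\rW$ is finite), so by Theorem \ref{ULA}(1) it is enough to show $p_{T_A}^*(\mE_{\theta,A})\otimes\IC(Z_A)$ is ULA with respect to $Z_A\to\Spec(A)$. For the tensor product, I would instead use the presentation $\Ind_{T_A\subset B_A}^{G_A}(\mE_{\theta,A})=(\tilde c_A)_!(\tilde q_A)^*(\mE_{\theta,A})[\cdots]$ over $\widetilde G_A$: the map $\tilde c_A:\widetilde G_A\to G_A$ is proper (it is a fibration with fiber the flag variety, being the base change of $\widetilde G\to G$), so again by Theorem \ref{ULA}(1) it reduces to showing $(\tilde q_A)^*(\mE_{\theta,A})$ is ULA with respect to $\widetilde G_A\to\Spec(A)$. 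But $\widetilde G_A\to T_A$ via $\tilde q_A$ is smooth (the composite $\widetilde G_A\to T_A\to\Spec(A)$ being smooth with smooth fibers, and in fact $\tilde q_A$ itself is a fibration in flag varieties), so by Theorem \ref{ULA}(2) it suffices that $\mE_{\theta,A}$ is ULA with respect to $T_A\to\Spec(A)$; and since $\mE_{\theta,A}$ is a local system on the smooth $A$-scheme $T_A$, this is exactly Theorem \ref{ULA}(5). Compatibility of formation with base change is tracked through each step: proper pushforward and smooth pullback both commute with base change on $S$, and the local system case is trivially compatible.

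The main obstacle I anticipate is bookkeeping rather than conceptual: one must verify that each of the auxiliary morphisms ($p_{G_A}$, $\tilde c_A$, $\tilde q_A$) genuinely has the claimed properness or smoothness \emph{over} the base $\Spec(A)$ (not merely fiberwise), and that the $\rW$-equivariant structure on $\IC(Z_A)$ from Lemma \ref{lifting of W action} interacts correctly with taking $\rW$-invariants — in particular that $\cM_{\theta,A}$ really is a direct summand of the induced complex so that ULA descends to it, which requires $|\rW|$ invertible in $A$ and hence possibly a further enlargement of $N$. A secondary subtlety is that $\IC(Z_A)$ as defined in \eqref{IC} is $(h_A)_!\barQ[\dim G_\bC]$, so one should present the induction via $\widetilde G_A$ (where only smooth pullback and proper pushforward of the local system $\mE_{\theta,A}$ appear) rather than via $Z_A$ (where one would need ULA of $\IC(Z_A)$ as an input); this is why I route the argument through $\tilde c_A$ and $\tilde q_A$ above. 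Once these points are checked, the ULA statement for $(\pi_A)_*\cM_{\theta,A}$ follows immediately from Proposition \ref{ULA for Av}.
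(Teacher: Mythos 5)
Your argument is correct and follows essentially the same route as the paper: the induction $(\tilde c_A)_!(\tilde q_A)^*\mE_{\theta,A}$ is ULA over $\Spec(A)$ by properness of $\tilde c_A$, smoothness of $\tilde q_A$, and the local-system case of Theorem \ref{ULA}, then $\cM_{\theta,A}$ inherits ULA as the $\rW$-invariant direct summand, and Proposition \ref{ULA for Av} (with its hypothesis arranged via Remark \ref{generic base change}) concludes. Only two cosmetic slips: passing to the $\rW$-invariant summand needs $|\rW|$ invertible in the coefficient field $\barQ$ (automatic), not in $A$, and the fibers of $\tilde q_A$ are affine bundles over the flag variety rather than flag varieties, though only smoothness of $\tilde q_A$ is used.
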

\begin{proof}
Since the map $\tilde c_A$ (resp. $\tilde q_A$) in~\eqref{G-S resolution over R} \
is proper (resp. smooth) and $\mE_{\theta,A}$ is a $\ell$-local system, 
by Theorem \ref{ULA}, the induction 
$\Ind_{T_A\subset B_A}^{G_A}(\mE_{\theta,A}):=(\tilde c_A)_!(\tilde q_A)^*(\mE_{\theta,A})[\dim G_\bC-\dim T_\bC]$
is ULA w.r.t $G_A\to\Spec(A)$.
As $\cM_{\theta,A}$ is the $\rW$-invariant direct factor of $\Ind_{T_A\subset B_A}^{G_A}(\mE_{\theta,A})$
it implies $\cM_{\theta,A}$ is ULA w.r.t $G_A\to\Spec(A)$, of formation compatible with arbitrary base change on $\Spec(A)$.
By 
Proposition \ref{ULA for Av}, we conclude that $(\pi_A)_*\cM_{\theta,A}$ is ULA with respect to $G_A/U_A\to\Spec(A)$,
of formation compatible with arbitrary base change on $\Spec(A)$.
\end{proof}

\begin{lemma}\label{generic vanishing}
$(\pi_\bC)_*\cM_{\theta,\bC}$ is supported on $T_\bC=B_\bC/U_\bC\subset G_\bC/U_\bC$.
\end{lemma}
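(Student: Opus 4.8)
The plan is to reduce Lemma \ref{generic vanishing} to the already-established de Rham incarnation of Theorem \ref{Key}, transporting through the equivalence between $\ell$-adic sheaves and $D$-modules on complex varieties provided by the Riemann--Hilbert correspondence. First I would note that $\cM_{\theta,\bC}=\Ind_{T_\bC\subset B_\bC}^{G_\bC}(\mE_{\theta,\bC})^\rW$, and that $\mE_{\theta,\bC}$ is, by construction in Section \ref{central Loc}, a tame $\ell$-adic local system on $T_\bC$ attached to a representation of $\rW\ltimes\pi_1^t(T_\bC)$. Via the comparison of the tame fundamental group with the topological fundamental group over $\bC$, this representation matches the one defining the de Rham local system $\mE_\theta^{\mathrm{dR}}$ on $T_\bC$, and the Riemann--Hilbert functor $\mathrm{RH}$ carries $\mE_{\theta,\bC}$ (with its $\rW$-equivariant structure) to $\mE_\theta^{\mathrm{dR}}$. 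Since the Grothendieck--Springer resolution \eqref{G-S resolution over R} over $\bC$ is a morphism of complex algebraic varieties, $\mathrm{RH}$ intertwines the $\ell$-adic induction functor $\Ind_{T_\bC\subset B_\bC}^{G_\bC}$ with its de Rham analogue and commutes with passing to $\rW$-invariants; hence $\mathrm{RH}(\cM_{\theta,\bC})\is\cM_\theta^{\mathrm{dR}}$, the object shown to satisfy $\Av_*^U(\cM_\theta^{\mathrm{dR}})\is\mE_\theta^{\mathrm{dR}}$ in Section \ref{proof in the case k=C}.

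Next I would transport the vanishing statement. The averaging functor $\Av_*^{U_\bC}=(\pi_\bC)_*$ is again built from the algebraic morphism $\pi_\bC:G_\bC\to G_\bC/U_\bC$, so it commutes with $\mathrm{RH}$; therefore $\mathrm{RH}\big((\pi_\bC)_*\cM_{\theta,\bC}\big)\is(\pi_\bC)_*\cM_\theta^{\mathrm{dR}}\is\iota_*\mE_\theta^{\mathrm{dR}}$ by \eqref{vanishing}, where $\iota:T_\bC=B_\bC/U_\bC\hookrightarrow G_\bC/U_\bC$. Since Riemann--Hilbert is an equivalence that reflects supports (the support of a holonomic $D$-module is the support of the corresponding perverse sheaf), it follows that $(\pi_\bC)_*\cM_{\theta,\bC}$ is supported on $T_\bC\subset G_\bC/U_\bC$, which is exactly the assertion of Lemma \ref{generic vanishing}. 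Equivalently, the $*$-restriction of $(\pi_\bC)_*\cM_{\theta,\bC}$ to the open complement $Y_\bC=(G_\bC/U_\bC)\setminus(B_\bC/U_\bC)$ vanishes.

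The one point requiring care — and the step I expect to be the main obstacle — is the compatibility of $\mathrm{RH}$ with the $\rW$-equivariant structures and with the functor $\Ind_{T\subset B}^G$ together with the operation $(-)^\rW$ of taking the $\rW$-invariant direct summand. Concretely, one must check that the isomorphism $h:\widetilde G_\bC\to Z_\bC$ and the IC-sheaf $\IC(Z_\bC)$ used in \eqref{Ind IC} correspond under $\mathrm{RH}$ to their de Rham counterparts compatibly with the $\rW$-action of Section \ref{W-action}, so that the decomposition $\Ind_{T_\bC\subset B_\bC}^{G_\bC}(\mE_{\theta,\bC})=\bigoplus_{\text{irreps of }\rW}(\cdots)$ matches on both sides; this is where one invokes that $\mathrm{RH}$ is a tensor-compatible equivalence respecting the six functors for the algebraic morphisms in play. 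Alternatively, one can bypass Riemann--Hilbert entirely: run the argument of Section \ref{proof in the case k=C} verbatim in the $\ell$-adic setting over $\bC$, using that the Beilinson--Bernstein localization, the results of \cite{BFO} on the Drinfeld center of Harish-Chandra bimodules, and Proposition \ref{E=Z} all have $\ell$-adic analogues over $\bC$ by the same Riemann--Hilbert dictionary; in either formulation the essential content has already been proved, so Lemma \ref{generic vanishing} is a formal consequence of the $k=\bC$ case of Theorem \ref{Key}.
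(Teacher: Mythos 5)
Your proposal is correct and follows essentially the same route as the paper: the paper also passes through the comparison functor $\epsilon^*$ of \cite[Section 6.1]{BBD} together with the Riemann--Hilbert correspondence, identifies $\epsilon^*\mE_{\theta,\bC}$ with a de Rham local system of the form $\mE_{\theta_\bC}$ by comparing the topological fundamental group with the pro-$\ell$ \'etale fundamental group of $T_\bC$, and then invokes the $k=\bC$ case of Theorem \ref{Key}. The one refinement the paper makes explicit, and which you should too, is that the resulting topological character $\chi_\bC$ (obtained via a fixed isomorphism $\barQ\is\bC$ and pullback along $\pi_1(T(\bC))\to\pi_1(T_\bC)_\ell$) need not be ``the same'' as $\chi$, so one only identifies $\epsilon^*\cM_{\theta,\bC}$ with $\cM_{\theta_\bC}$ for some $\rW$-orbit $\theta_\bC$ --- which is harmless since the de Rham vanishing holds for every orbit.
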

\begin{proof}
Let $D_c^b(G_\bC,\barQ)$ be the bounded derived category of constructible $\ell$-adic complexes on $G_\bC$ and let 
$D^b_c(G_\bC(\bC),\bC)$ be the usual bounded derived category of 
$\bC$-constructuble complexes on the complex Lie group $G_\bC(\bC)$.
We fix an isomorphism $\iota:\barQ\is\bC$. Then according to \cite[Section 6.1]{BBD}, there 
is a 
comparison functor
\[\epsilon^*:D_c^b(G_\bC,\barQ)\to D^b_c(G_\bC(\bC),\bC)\] which is
fully-faithful and commutes with six functor formalism.
Let $\mE_{\theta,\bC}$ be the base change of $\mE_{\theta,A}$ along $A\to\bC$.
We claim that there exists a 
character $\chi_\bC$ of the topological fundamental group 
$\pi_1(T(\bC))$ such that, under the Riemann-Hilbert correspondence (R.H. for short), 
$\epsilon^*\mE_{\theta,\bC}$ corresponds to the de Rham local system 
$\mE_{\theta_\bC}$ in Section \ref{central Loc}.
Here $\theta_\bC=\rW\chi_\bC$ is the $\rW$-orbit of $\chi_\bC$.
This will imply 
\[\epsilon^*\cM_{\theta,\bC}\is
\epsilon^*((\Ind_{T_\bC\subset B_\bC}^{G_\bC}(\mE_{\theta,\bC}))^{\rW})\is
(\Ind_{T_\bC\subset B_\bC}^{G_\bC}(\epsilon^*\mE_{\theta,\bC}))^\rW\stackrel{\on{R.H.}}\is
\Ind_{T_\bC\subset B_\bC}^{G_\bC}(\mE_{\theta_\bC})^\rW=\cM_{\theta_\bC}
\] 
and the lemma follows from Theorem \ref{Key} in the de Rham setting.

To prove the claim,
we observe that the constructions of $\mE_\theta$ and 
$\mE_{\theta,A}$ in Section \ref{central Loc} and
Lemma \ref{lifting of E_theta} imply that 
$\mE_{\theta,\bC}$ corresponds to a  $\ell$-adic representation 
$\Ind_{\rW_{\chi_{\bC,\ell}}}^\rW(\rho_{\bC,\ell}^{un}\otimes\chi_{\bC,\ell})$ where 
$\chi_{\bC,\ell}$ is a $\ell$-adic character of $\pi_1(T_\bC)$ with
$\rW_{\chi_{\bC,\ell}}=\rW_\chi$, and $\rho^{uni}_{\bC,\ell}$ is the representation 
of $\rW'_{\chi_{\bC,\ell}}\ltimes\pi_1(T_\bC)$ in 
$\barQ[[\pi_1(T_\bC)_\ell]]/\langle\barQ[[\pi_1(T_\bC)_\ell]]^{\rW_{\chi_{\bC,\ell}}}_+\rangle$
given by the $\barQ[[\pi_1(T_\bC)_\ell]]$-module structure. 
Here $\barQ[[\pi_1(T_\bC)_\ell]]$ is the completed group
algebra of the pro-$\ell$ part of the \'etale fundamental group $\pi_1(T_\bC)$.
Note that the restriction of the functor $\epsilon^*$ to the subcategory of $\ell$-adic local systems on $T_\bC$ is induced by 
the natural embedding 
\beq\label{completion}
\pi_1(T(\bC))\to\pi_1(T(\bC))_\ell\stackrel{}\is\pi_1(T_\bC)_\ell.
\eeq
Note also that~\eqref{completion}
induces an
isomorphism \[\on{S}/\langle\on{S}_+^{\rW_{\chi_\bC}}\rangle
\is\barQ[[\pi_1(T_\bC)_\ell]]/\langle\barQ[[\pi_1(T_\bC)_\ell]]^{\rW_{\chi_{\bC,\ell}}}_+\rangle\] 
compatible with the $\rW_{\chi_\bC}'=\rW'_{\chi_{\bC,\ell}}$-action, 
here $\on{S}=\bC[[\pi_1(T(\bC))]]$ is
the completion of the group algebra 
 $\bC[\pi_1(T(\bC))]$ at $1\in\pi_1(T(\bC))$.
Let $\rho^{uni}_\bC$ and $\chi_\bC$ be the representations of the topological fundamental group 
$\pi_1(T(\bC))$ given by pull back of  
$\rho^{uni}_{\bC,\ell}$ and $\chi_{\bC,\ell}$ along~\eqref{completion}.
It follows that
 $\rho_\bC^{uni}$ is isomorphic to the representation
 $\rho_{\chi_\bC}^{uni}$ in Section \ref{central Loc} (in the de Rham setting)
and hence the pull back of the representation
$\Ind_{\rW_{\chi_{\bC,\ell}}}^\rW(\rho_{\bC,\ell}^{uni}\otimes\chi_{\bC,\ell}))$
along~\eqref{completion} is isomorphic to
$
\Ind_{\rW_{\chi_\bC}}^\rW(\rho_{\chi_\bC}^{uni}\otimes\chi_{\bC})$.
Since, by construction, $\mE_{\theta_\bC}$ corresponds to 
$
\Ind_{\rW_{\chi_\bC}}^\rW(\rho_{\chi_\bC}^{uni}\otimes\chi_{\bC})$
under the Reimann-Hilbert correspondence, we conclude that  
$\epsilon^*\mE_{\theta,\bC}\stackrel{\on{R.H.}}\is\mE_{\theta_\bC}.$
The claim follows.

\end{proof}

Applying Lemma \ref{generic zero} to the case $\mF=(\pi_A)_*\cM_{\theta,A}$,
$X=G_A/U_A$,
$Y=Y_S=(G_A/U_A)\setminus (B_A/U_A)$, and using Lemma \ref{ULA for M_A} and Lemma \ref{generic vanishing},
we 
conclude that $\Av_*^{U_k}(\cM_\theta)\is(\pi_k)_*\cM_{\theta,k}$ is supported on $T_k$.
This finished the proof of Theorem \ref{Key} in the $\ell$-adic setting.

\section{Proof of Theorem \ref{main result}}\label{Proof}
In this section we prove the vanishing conjecture (Conjecture \ref{vanishing conj}) for strongly central complex (resp. strongly $*$-central complex):

\begin{thm}\label{main}
There exists a positive integer $N$ depending only on the type of the group $G$ such that 
the following holds. 
Assume $k=\bC$ or $\on{char}k=p$ is not dividing
$N\ell$. Let $\mF\in\sD_\rW(T)$ be a strongly central complex (resp. strongly $*$-central complex) on $T$
and let $\Phi_\mF=\Ind_{T\subset B}^G(\mF)^\rW\in\sD(G)$.
For any $x\in G\setminus B$, 
we have the following cohomology vanishing
\[
\ \ \ \ \ \ \ \ \ \oH_c^*(xU,i^*\Phi_{\mF})=0\ \ \ \ \ \ (\text{resp.}\ \ \oH^*(xU,i^!\Phi_{\mF})=0)
\]
where $i:xU\to G$ is the natural inclusion map.
Equivalently, 
$\Av^U_!(
\Phi_\mF)$ (resp. $\Av_*^U(\Phi_\mF)$) is supported 
on the closed subset $T=B/U\subset G/U$.

In particular, the vanishing conjecture holds for reductive groups with connected center.

\end{thm}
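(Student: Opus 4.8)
The plan is to reduce Theorem \ref{main} to Theorem \ref{Key} — the acyclicity of the character sheaves $\cM_\theta=\Phi_{\mE_\theta}$ — which is the substantive input and has been established in the previous section in both the $\ell$-adic and de Rham settings. First, Verdier duality exchanges $\Av_!^U$ with $\Av_*^U$, commutes with $\Ind_{T\subset B}^G$ up to a shift, and interchanges central complexes with $*$-central complexes, so it suffices to treat a strongly $*$-central $\mF$ and prove that $\Av_*^U(\Phi_\mF)=\pi_*(\Phi_\mF)$ is supported on $T=B/U\subset G/U$, which is precisely the vanishing \eqref{desired vanishing}. The statement for reductive groups with connected center then follows from Remark \ref{connected center}, where $\rW_\chi=\rW'_\chi$ forces ``$*$-central $=$ strongly $*$-central''.

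The first main step is a convolution formula (this is the content of Proposition \ref{conv with Phi}). Fix a $\rW$-orbit $\theta=\rW\chi$ in $\calC(T)(F)$. By Theorem \ref{Key} the averaging $\Av_*^U(\cM_\theta)$ is supported on $T$, so Proposition \ref{prop of ind} applies with $\mM=\cM_\theta$ and gives, functorially in $\mF$, $\Phi_\mF*\cM_\theta\cong\Ind_{T\subset B}^G\big(\mF*\Res_{T\subset B}^G\cM_\theta\big)^{\rW}$. Proposition \ref{Mackey formula} identifies $\Res_{T\subset B}^G\cM_\theta\cong\big(\bigoplus_{w\in\rW}w^*\mE_\theta\big)^{\rW}\cong\mE_\theta$, and Propositions \ref{conv with E_theta sign}--\ref{conv with E_theta} show that $\mF*\mE_\theta$ is, $\rW$-equivariantly, a direct sum of copies of $\mE_\theta$ with multiplicity space $\oH^*(T,\mF\otimes\mL_\chi^{-1})$ — and it is exactly here, through the compatibility diagram \eqref{compatibility}, that the \emph{strong} $*$-centrality of $\mF$ (rather than mere $*$-centrality) is used, so that this isomorphism is $\rW'_\chi$-equivariant and survives passage to $\rW$-invariants. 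Combining the three, $\Phi_\mF*\cM_\theta\cong\oH^*(T,\mF\otimes\mL_\chi^{-1})\otimes\cM_\theta$ in $\sD(G)$, and applying $\Av_*^U$ together with Theorem \ref{Key} once more shows that $\Av_*^U(\Phi_\mF*\cM_\theta)$ is a direct sum of shifts of $\mE_\theta$, hence supported on $T$.

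The second step rewrites the same object as a torus convolution. The right $T$-action on $G/U$ (coming from $T$ normalising $U$) is free, with quotient the $T$-torsor $p\colon G/U\to G/B$, and $T=B/U=p^{-1}(eB)$ is $T$-stable and closed. Since $\pi\circ m_G\colon G\times G\to G/U$, $(g,h)\mapsto ghU$, factors through $\mathrm{id}_G\times\pi$, and $\Av_*^U(\cM_\theta)$ is the extension by zero of $\mE_\theta$ from $T$, a base-change computation identifies $\Av_*^U(\Phi_\mF*\cM_\theta)\cong\Av_*^U(\Phi_\mF)\,*_T\,\mE_\theta$, the right $T$-convolution of $\Av_*^U(\Phi_\mF)\in\sD(G/U)$ with $\mE_\theta$. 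Thus $\Av_*^U(\Phi_\mF)*_T\mE_\theta$ is supported on $T$ for \emph{every} orbit $\theta$. Passing to the relative Mellin transform along $p$, convolution with $\mE_\theta$ becomes restriction to the support of $\mathfrak M(\mE_\theta)$, which by Lemma \ref{Mellin of E_theta} is a single $\rW$-orbit in $\calC(T)$; as $\theta$ ranges over all orbits these exhaust $\calC(T)(F)$, so the conservativity of the Mellin transform (Lemma \ref{vanishing 1}), in its relative form over $G/B$, forces $\Av_*^U(\Phi_\mF)$ to be supported over $eB$, i.e.\ on $T$. This is the asserted vanishing; and since Propositions \ref{prop of ind}, \ref{Mackey formula}, \ref{conv with E_theta}, Lemma \ref{vanishing 1}, and Theorem \ref{Key} are all available for holonomic $D$-modules over $\bC$ and for $\ell$-adic sheaves when $\on{char} k\nmid N\ell$, no separate treatment of the two settings is needed.

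The real difficulty is concentrated in Theorem \ref{Key}: its de Rham case proceeds through the Bezrukavnikov--Finkelberg--Ostrik description \cite{BFO} of the Drinfeld center of Harish-Chandra bimodules — identifying $\cM_\theta$, via Beilinson--Bernstein localization, with the central bimodule $\cZ_\lambda$ — and its $\ell$-adic case is obtained from it by the mixed-characteristic lifting and universal-local-acyclicity arguments of Sections \ref{mixed char}--\ref{Proof}. Within the reduction itself the one genuinely delicate point is the bookkeeping of $\rW$-equivariant structures in the convolution formula of the first step; it is precisely the fact that the relevant isomorphism is only $\rW_\chi$-equivariant, not $\rW'_\chi$-equivariant, when $\mF$ is $*$-central but not strongly so, that keeps the present method from reaching reductive groups with disconnected center, a case to be settled separately in \cite{C2}.
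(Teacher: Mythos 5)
Your overall route is the paper's route: reduce by Verdier duality to strongly $*$-central $\mF$, use Theorem \ref{Key} together with the convolution identity $\Phi_\mF*\cM_\theta\cong\oH^*(T,\mF\otimes\mL_{\chi^{-1}})\otimes\cM_\theta$ (Proposition \ref{conv with Phi}) to see that $\Av_*^U(\Phi_\mF)*\mE_\theta$ is supported on $T$ for every orbit $\theta$, and then invoke conservativity of the Mellin transform to conclude; your variant of working with the restriction to the open complement of $T$ rather than with the cone of $\Res_{T\subset B}^G(\Phi_\mF)\to\Av_*^U(\Phi_\mF)$ is an equivalent repackaging, and you correctly locate where strong $*$-centrality (as opposed to $*$-centrality) enters, namely the $\rW$-equivariance of the isomorphism in Proposition \ref{conv with E_theta}.

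There is, however, a genuine gap: you apply the convolution identity ``functorially in $\mF$'' to an arbitrary strongly $*$-central complex, but the ingredients that make the $\rW$-equivariance bookkeeping work are only available in the heart. Proposition \ref{conv with Phi} is stated for perverse $\mF$, and its proof of the equivariance claim rests on the Mackey decomposition $\Res_{T\subset B}^G\circ\Ind_{T\subset B}^G(\mF)\cong\bigoplus_{w}w^*\mF$ of Proposition \ref{Mackey formula}, which is proved only for $\mF\in\sD(T)^\heartsuit$ (for a general complex the Mackey filtration need not split). The paper therefore first proves Lemma \ref{reduction}: perverse truncations and perverse cohomologies of a (strongly) $*$-central complex are again (strongly) $*$-central, which is itself a nontrivial Mellin-transform/descent argument (Lemma \ref{descent}, and Lemma \ref{finiteness} in the de Rham setting, where $\frak M(\mF)$ is only quasi-coherent), and only then runs the dévissage to perverse sheaves. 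Your proposal never addresses non-perverse $\mF$, and without something like Lemma \ref{reduction} the argument as you give it covers only the heart. A second, smaller, glossed point: ``convolution with $\mE_\theta$ becomes restriction to the support of $\frak M(\mE_\theta)$'' is not literally restriction — on the Mellin side it is tensoring with the thickened sheaf $\calS_\theta$ — so to pass from $\on{(-)}*\mE_\chi=0$ for all $\chi$ to the hypotheses of Laumon's conservativity (Lemma \ref{vanishing 1}) you still need the unipotent filtration argument of Lemma \ref{vanishing 2}, reducing $\mE_\chi$ to $\mL_\chi$; this step is easy but should be said.
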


\subsection{Reduction to perverse sheaves}
\begin{lemma}\label{reduction}
If Conjecture \ref{vanishing conj} holds for central perverse sheaves (resp. $*$-central perverse sheaves), then 
it holds for arbitrary central complexes (resp. $*$-central complex).
The same is true for strongly central complexes (resp. strongly $*$-central complexes).
\end{lemma}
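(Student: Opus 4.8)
The plan is to deduce the general statement from the perverse case by dévissage along the perverse $t$-structure, the only substantial point being that centrality is inherited by perverse cohomology sheaves.

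First I would set up the formal skeleton. Since $|\rW|$ is invertible in $F$, the assignment $\mF\mapsto\Phi_\mF=\Ind_{T\subset B}^G(\mF)^\rW$ is a triangulated functor $\sD_\rW(T)\to\sD(G)$ — it is built from $\tilde q^{\circ}$, $\tilde c_*=\tilde c_!$, and the exact operation of extracting the $\rW$-isotypic summand, the $\rW$-action being visible from~\eqref{Ind IC} — and likewise $\Av_!^U=\pi_!$ and $\Av_*^U=\pi_*$ are triangulated. The complexes on $G/U$ supported on the closed subset $T=B/U$ form a thick triangulated subcategory, so $\mathcal V_!:=\{\mF:\pi_!\Phi_\mF\text{ is supported on }T\}$ and $\mathcal V_*:=\{\mF:\pi_*\Phi_\mF\text{ is supported on }T\}$ are triangulated subcategories of $\sD_\rW(T)$, and by hypothesis they contain all central (resp. $*$-central), and all strongly central (resp. strongly $*$-central), perverse sheaves. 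Since any bounded $\mF\in\sD_\rW(T)$ is an iterated extension, through the Postnikov tower of the perverse $t$-structure, of the complexes ${}^p\mathscr H^n(\mF)[-n]$, it then suffices to show that each ${}^p\mathscr H^n(\mF)$ inherits from $\mF$ the relevant centrality property; then $\mF\in\mathcal V_!$ (resp. $\mathcal V_*$), which is precisely Conjecture~\ref{vanishing conj} for $\mF$. For the unstarred cases one may moreover pass to the starred ones by Poincaré–Verdier duality, using ${}^p\mathscr H^n(\mathbb D\mF)=\mathbb D({}^p\mathscr H^{-n}(\mF))$ together with the fact that $\mF$ is central iff $\mathbb D\mF$ is $*$-central (this uses $\rW_\chi=\rW_{\chi^{-1}}$ and $\mathrm{sgn}^{-1}=\mathrm{sgn}$, and the analogous statement for stabilizers in the strong case). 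So I would concentrate on proving: if $\mF$ is $*$-central then so is ${}^p\mathscr H^n(\mF)$, the strongly $*$-central case being identical and coinciding with this one when the center of $G$ is connected (the case ultimately of interest).

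To prove this I would use the Mellin transform. As $\frak M$ is $t$-exact (perverse $t$-structure on $\sD(T)$, standard one on the target) and $\rW$-equivariant, and since twisting by $\mathrm{sgn}$ leaves underlying complexes unchanged, one has $\frak M({}^p\mathscr H^n(\mF)\otimes\mathrm{sgn})=\mathscr H^n(\frak M(\mF\otimes\mathrm{sgn}))$ as $\rW$-equivariant coherent sheaves (in the de Rham setting staying coherent after completion at each point by Lemma~\ref{finiteness}). Writing $\mathcal G=\frak M(\mF\otimes\mathrm{sgn})$ and invoking the characterizations of $*$-centrality in Lemma~\ref{chara of central} and Theorem~\ref{characterization of central}, the claim reduces to a statement in commutative algebra: fix a character $\chi$ (resp. $\lambda\in\check\ft$), let $X$ be the completion of $\calC(T)$ (resp. $\check\ft$) at that point, and let $\Gamma$ be the attached finite group $\rW_\chi$ (resp. $\rW_{a,\lambda}$), which is generated by reflections fixing the point; then triviality of the $\Gamma$-action on $\sH^{\bullet}(i^*_\chi\mathcal G)$ implies triviality of the $\Gamma$-action on $\sH^{\bullet}(i^*_\chi\mathscr H^n(\mathcal G))$ for all $n$. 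By Lemma~\ref{descent} — whose hypothesis, on the local scheme $X$ with its single closed point, is exactly the triviality assumption — this says $\mathcal G|_X\cong\pi^*\mathcal G_0$ descends along the quotient $\pi\colon X\to X/\!/\Gamma$. Here the Chevalley–Shephard–Todd theorem is the key input: $\Gamma$ being a reflection group, $\mathcal O_X$ is free over $\mathcal O_{X/\!/\Gamma}$, so $\pi$ is flat and $\pi^*$ exact; hence $\mathscr H^n(\mathcal G|_X)=\pi^*\mathscr H^n(\mathcal G_0)$ also descends along $\pi$, and a second application of Lemma~\ref{descent} yields the triviality of the $\Gamma$-action on $\sH^{\bullet}(i^*_\chi\mathscr H^n(\mathcal G))$. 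Unwinding through $\frak M$ shows ${}^p\mathscr H^n(\mF)$ is $*$-central, completing the dévissage.

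I expect the main obstacle to be exactly this last step: recasting the centrality condition as descent along the quotient by a finite reflection group and then using flatness of that quotient (Chevalley–Shephard–Todd) to propagate descent to the cohomology sheaves; once this is in place the remainder is formal bookkeeping with triangulated subcategories and the $t$-exactness of $\frak M$. A secondary point needing care is the passage to completed local rings, which is what makes ``descends'' checkable one character at a time via Lemma~\ref{descent}, and which in the de Rham setting relies on Lemma~\ref{finiteness}.
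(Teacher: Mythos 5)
Your proposal is correct and follows essentially the same route as the paper's proof: reduce by Verdier duality to the $*$-central case, use $t$-exactness of the Mellin transform and of completion at each character together with the descent criterion (Lemma \ref{descent}, applied at the unique closed point of the completed local scheme) to show that each perverse cohomology sheaf of a $*$-central complex is again $*$-central, and then conclude by d\'evissage along the perverse filtration. Your explicit appeal to Chevalley--Shephard--Todd flatness of the quotient by the reflection group is precisely what underlies the paper's assertion that $q_{\hat\chi}^*$ is $t$-exact, so the two arguments coincide in substance.
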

\begin{proof}
It is enough to verify the lemma for $*$-central complexes and strongly $*$-central complexes.
Let $\mF$ be a $*$-central complex.

The $\ell$-adic setting.
We shall show that 
$^p\tau_{\leq b}(\mF)$ and $^p\sH^b(\mF)$, $b\in\bZ$, are $*$-central. 
Let $\chi\in\calC(T)(\barQ)$ and let $I_\chi$ be the maximal ideal corresponding to $\chi$.
Write $\calC(T)_{\hat\chi}$
be the completion of at $\chi$.
Let $q_{\hat\chi}:\calC(T)_{\hat\chi}\to\calC(T)_{\hat\chi}//\rW_\chi$ be the quotient map.
Since 
$\chi$ is the unique closed point of $\calC(T)_{\hat\chi}$ and the action of 
$\rW_\chi$ on the fiber $i_\chi^*(\frak M(\mF\otimes\sign)|_{\calC(T)_{\hat\chi}})\is
i_\chi^*\frak M(\mF\otimes\sign)$ is trivial, by Lemma \ref{descent},
there exists $\calG\in D_{coh}^b(\calC(T)_{\hat\chi}//\rW_\chi)$ such that there 
is an isomorphism
\[\frak M(\mF\otimes\sign)|_{\calC(T)_{\hat\chi}}=q_{\hat\chi}^*\mG\in D^b_{coh}(\calC(T)_{\hat\chi}/\rW_\chi).\]
Since $\frak M$, $q_\chi^*$ and taking completions $(-)|_{\calC(T)_{\hat\chi}}$ are t-exact functors, we have 
\[\frak M(^p\tau_{\leq b}(\mF)\otimes\sign)|_{\calC(T)_{\hat\chi}}\is\tau_{\leq b}(\frak M(\mF\otimes\sign)|_{\calC(T)_{\hat\chi}})\is\tau_{\leq b}(q_\chi^*\mG)\is q_\chi^*(\tau_{\leq b}(\mG)).\]
Similarly, we have 
\[\frak M(^p\sH^b(\mF)\otimes\sign)|_{\calC(T)_{\hat\chi}}\is q_\chi^*\sH^b(\mG).\]
It follows that $\rW_\chi$ acts trivially on the 
fibers $i_\chi^*\frak M(^p\tau_{\leq b}(\mF)\otimes\sign)$ and 
$i_\chi^*\frak M(^p\sH^b(\mF)\otimes\sign)$ and, by Lemma \ref{chara of central},
we conclude that $^p\tau_{\leq b}(\mF)$ and $^p\sH^b(\mF)$ are $*$-central.
Now an induction argument on the (finite) number of non vanishing perverse cohomology sheaves of $\mF$
in \cite[Lemma 6.6]{C1} implies the Lemma.

The de Rham setting.
Let $\lambda\in\frak\ft(\bC)$ with maximal ideal 
$I_\lambda$ and let $\check\ft_\lambda$
be the completion at $\lambda$.
By Lemma \ref{finiteness}, the restriction 
$\frak M(\mF)|_{\check\ft_{\hat\lambda}}$ is a coherent complex on 
$\check\ft_{\hat\lambda}$.
Since 
$\mF$ is $*$-central and 
$\lambda$ is the unique closed point of $\check\ft_\lambda$, Lemma \ref{descent} implies 
\[\frak M(\mF\otimes\sign)|_{\check\ft_{\hat\lambda}}\is q_{\hat\lambda}^*\mH\in D_{coh}^b(\check\ft/\rW_{a,\lambda}),\]
where $q_{\hat\lambda}:\check\ft\to\check\ft//\rW_{a,\lambda}$
and $\mH$ is a coherent complex on $\check\ft//\rW_{a,\lambda}$. 
Now we can conclude by applying the 
same argument as in the $\ell$-adic setting where $\calC(T)_\chi$ and $\rW_\chi$
are replaced by
$\check\ft_\lambda$ and $\rW_{a,\lambda}$.

The case of strongly $*$-central complexes can be proved in the 
same way: applying the argument above where
$\rW_\chi$ and
$\rW_{a,\lambda}$
are replaced by $\rW_\chi'$ and $\rW_{a,\lambda}^{\on{ex}}$.
This completes the proof.

\end{proof}

\subsection{Convolution with $\cM_{\theta}$}\label{cons of gamma d mod}

\begin{proposition}\label{conv with Phi}
Let $\mF\in\sD_\rW(T)^\heartsuit$ be a strongly $*$-central perverse sheaf 
and let $\theta$ be a $\rW$-orbit through a tame character $\chi\in\calC(T)(F)$.
There is an isomorphism 
\[\Phi_{\mF}*\cM_{\theta}\is\oH^*(T,\mF\otimes\mL_{\chi^{-1}})\otimes\cM_{\theta}.\]

\end{proposition}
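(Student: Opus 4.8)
The statement follows by combining Proposition~\ref{prop of ind} with the acyclicity of $\cM_\theta$ (Theorem~\ref{Key}) and the convolution identity for $\mE_\theta$ (Proposition~\ref{conv with E_theta}). First I would invoke Theorem~\ref{Key}: since $\cM_\theta\in\sD(CS(G))^\heartsuit$ (it is a character sheaf) and $\on{Av}_*^U(\cM_\theta)\is\mE_\theta$ is supported on $T=B/U\subset G/U$, the hypothesis of Proposition~\ref{prop of ind} is satisfied with $\mM=\cM_\theta$. Moreover, by part (2) and (3) of Proposition~\ref{CS}, the restriction $\Res_{T\subset B}^G(\cM_\theta)$ is identified (with its canonical $\rW$-equivariant structure) with $\mE_\theta$. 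Hence Proposition~\ref{prop of ind} gives a functorial isomorphism in $\sD(G)$
\[
\Phi_\mF*\cM_\theta=\Ind_{T\subset B}^G(\mF)^\rW*\cM_\theta\is\bigl(\Ind_{T\subset B}^G(\mF)*\cM_\theta\bigr)^\rW\is\Ind_{T\subset B}^G\bigl(\mF*\Res_{T\subset B}^G(\cM_\theta)\bigr)^\rW\is\Ind_{T\subset B}^G(\mF*\mE_\theta)^\rW.
\]
Here one must be slightly careful that taking the $\rW$-invariant factor commutes with convolving by $\cM_\theta$; this is where the functoriality clause in Proposition~\ref{prop of ind} is used, together with the fact that the $\rW$-action on $\Ind_{T\subset B}^G(\mF)*\cM_\theta$ induced from the one on $\mF$ matches the one obtained through the isomorphism.

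Next I would apply Proposition~\ref{conv with E_theta} (equivalently Proposition~\ref{conv with E_theta sign}) to the strongly $*$-central perverse sheaf $\mF$. Writing $\mF*\mE_\theta$ via $\mF*(\mE_\theta\otimes\sign)\otimes\sign$ and using that Proposition~\ref{conv with E_theta sign} gives
\[
\mF*(\mE_\theta\otimes\sign)\is\oH^*(T,\mF\otimes\mL_{\chi^{-1}})\otimes\mE_\theta\quad\text{in }\sD_\rW(T),
\]
one obtains a $\rW$-equivariant isomorphism $\mF*\mE_\theta\is\oH^*(T,\mF\otimes\mL_{\chi^{-1}})\otimes\mE_\theta$. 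Substituting this into the displayed chain above and using that $\Ind_{T\subset B}^G$ and the $\rW$-invariants functor are additive (and commute with tensoring by the graded vector space $\oH^*(T,\mF\otimes\mL_{\chi^{-1}})$), I get
\[
\Phi_\mF*\cM_\theta\is\Ind_{T\subset B}^G\bigl(\oH^*(T,\mF\otimes\mL_{\chi^{-1}})\otimes\mE_\theta\bigr)^\rW\is\oH^*(T,\mF\otimes\mL_{\chi^{-1}})\otimes\Ind_{T\subset B}^G(\mE_\theta)^\rW=\oH^*(T,\mF\otimes\mL_{\chi^{-1}})\otimes\cM_\theta,
\]
which is the claim.

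\textbf{Main obstacle.} The routine parts are the additivity manipulations; the genuinely delicate point is the compatibility of the various $\rW$-equivariant structures when passing from $\Ind_{T\subset B}^G(\mF)*\cM_\theta$ to $\Ind_{T\subset B}^G(\mF*\mE_\theta)$. Concretely, one needs that the isomorphism of Proposition~\ref{prop of ind}, built from the projection formula for $\on{Av}_*^{G/B}$, intertwines (i) the $\rW$-action on $\Ind_{T\subset B}^G(\mF)$ coming from the $\rW$-equivariant structure of $\mF$, convolved with the identity on $\cM_\theta$, and (ii) the $\rW$-action on $\Ind_{T\subset B}^G(\mF*\mE_\theta)$ coming from the diagonal $\rW$-equivariant structure on $\mF*\Res_{T\subset B}^G(\cM_\theta)=\mF*\mE_\theta$. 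This is exactly the commutativity of diagram~\eqref{compatibility} in Proposition~\ref{conv with E_theta}: the maps $a_w$ there are what guarantee that after taking $\rW$-invariants one gets $\cM_\theta$ (rather than some other summand of $\Ind_{T\subset B}^G(\mE_\theta)$) tensored with $\oH^*(T,\mF\otimes\mL_{\chi^{-1}})$. I would spell this out by checking the identification on the open regular semisimple locus $G^{\rs}$, where all the sheaves in sight are (shifted) local systems pulled back via the Steinberg map and the $\rW$-actions are transparent, and then extending by the fact that $\Phi_\mF$, $\cM_\theta$ and their convolution are intermediate extensions from $G^{\rs}$ (using Proposition~\ref{exactness} and the perversity of $\mF$).
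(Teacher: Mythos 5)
Your overall chain is the same as the paper's: Theorem \ref{Key} gives $\Av_*^U(\cM_\theta)\is\mE_\theta$ supported on $T$, Proposition \ref{prop of ind} then yields $\Ind_{T\subset B}^G(\mF)*\cM_\theta\is\Ind_{T\subset B}^G(\mF*\mE_\theta)$, Proposition \ref{conv with E_theta sign}/\ref{conv with E_theta} identifies $\mF*\mE_\theta$ with $\oH^*(T,\mF\otimes\mL_{\chi^{-1}})\otimes\mE_\theta$, and one takes $\rW$-invariants. You also correctly single out the genuinely delicate point, namely that the isomorphism $\Ind_{T\subset B}^G(\mF)*\cM_\theta\is\oH^*(T,\mF\otimes\mL_{\chi^{-1}})\otimes\Ind_{T\subset B}^G(\mE_\theta)$ must intertwine the two $\rW$-actions before invariants are taken.

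However, your proposed verification of that compatibility has a genuine gap. You want to check it over $G^{\rs}$ and extend using that ``$\Phi_\mF$, $\cM_\theta$ and their convolution are intermediate extensions from $G^{\rs}$,'' but this is only true when $\mF$ is a perverse \emph{local system} on all of $T$; the proposition is stated (and is needed, via the reduction in Lemma \ref{reduction}) for an arbitrary strongly $*$-central perverse sheaf $\mF\in\sD_\rW(T)^\heartsuit$. For such $\mF$ the sheaf $\Ind_{T\subset B}^G(\mF)$ need not be determined by its restriction to $G^{\rs}$ --- if $\mF$ is supported on a proper $\rW$-stable closed subset, $\Ind_{T\subset B}^G(\mF)|_{G^{\rs}}$ can even vanish while $\Ind_{T\subset B}^G(\mF)\neq 0$ --- and there is no a priori reason that $\Phi_\mF*\cM_\theta$ is perverse or an intermediate extension before the proposition itself is proved. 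Moreover the compatibility is not ``exactly'' diagram~\eqref{compatibility}: the $\rW$-action on $\Ind_{T\subset B}^G(\mF)*\cM_\theta$ is defined through the action on $\Ind_{T\subset B}^G(\mF)$, and to compare it with the action on $\oH^*(T,\mF\otimes\mL_{\chi^{-1}})\otimes\Ind_{T\subset B}^G(\mE_\theta)$ one must transport it through the adjunction $(\Ind,\Res)$ and the Mackey-type isomorphism $\Res_{T\subset B}^G\circ\Ind_{T\subset B}^G(\mF)\is\bigoplus_{w\in\rW}w^*\mF$ of Proposition \ref{Mackey formula}; this is what the commutative diagrams \eqref{first diagram} and \eqref{second diagram} in the paper accomplish, with \eqref{compatibility} entering only as one ingredient. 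So the missing step is this purely formal adjunction/Mackey argument (valid for arbitrary perverse $\mF$), rather than a geometric restriction-to-$G^{\rs}$ argument, which cannot work in the stated generality.
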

\begin{proof}
By Proposition \ref{conv with E_theta}, we have 
\beq\label{conv E_theta}
\mF*\mE_\theta\is\oH^*(T,\mF\otimes\mL_{\chi^{-1}})\otimes\mE_\theta\in\sD(T).
\eeq
Since $\on{Av}_*^U(\cM_{\theta})\is\mE_\theta$ is supported on 
$T=B/U\subset G/U$, 
Proposition \ref{prop of ind} implies
\beq\label{key iso}
\Ind_{T\subset B}^G(\mF)*\cM_{\theta}\is\Ind_{T\subset B}^G(\mF*\mE_{\theta})
\stackrel{}\is\oH^*(T,\mF\otimes\mL_{\chi^{-1}})\otimes\Ind_{T\subset B}^G(\mE_\theta).
\eeq
We claim that the isomorphism above is compatible with 
the natural $\rW$-actions.
Taking $\rW$-invariant on both sides of~\eqref{key iso}, 
we get
\[\Phi_{\mF}*\cM_{\theta}\is
\Ind_{T\subset B}^G(\mF)^\rW*\cM_{\theta}\stackrel{\eqref{key iso}}\is\oH^*(T,\mF\otimes\mL_{\chi^{-1}})\otimes\Ind_{T\subset B}^G(\mE_\theta)^\rW\is\oH^*(T,\mF\otimes\mL_{\chi^{-1}})\otimes\cM_{\theta}.\]
The proposition follows. 
\end{proof}
\subsubsection{
Proof of the claim}
Let us write $\Res=\Res_{T\subset B}^G$, $\Ind=\Ind_{T\subset B}^G$, 
$\cM=\cM_\theta$, $\mE=\mE_\theta$, 
and $V=\oH^*(T,\mF\otimes\mL_{\chi^{-1}})$.
Using Proposition \ref{prop of ind}
and the adjunction between $\Res$ and $\Ind$, 
it is straightforward to check that the following diagram commute
\beq\label{first diagram}
\xymatrix{F[\rW]\ar[r]^{(1)\ \ \ \ \ \ \ \ }\ar[dd]^{\Id}&\Hom(\Ind(\mF),\Ind(\mF))\ar[dd]^{(2)}\ar[r]^{*\cM\ \ \ \ \ \ }&\Hom(\Ind(\mF)*\mM,\Ind(\mF)*\mM)\ar[d]^{(4)}\\
&&\Hom(\Ind(\mF*\mE),\Ind(\mF)*\mM)\ar[d]^{(5)}\\
F[\rW]\ar[r]^{(3)\ \ \ \ \ \ \ \ \ \ }&\Hom(\mF,\Res\circ\Ind(\mF))\ar[r]^{*\mE\ \ \ \ }&\on{End}(\mF*\mE,\Res\circ\Ind(\mF)*\mE)}
\eeq
Here $(1)$ is given by the $\rW$-action on $\Ind(\mF)$,
$(2)$ is the adjunction map, $(3)$ is the composition $(2)\circ(1)$,
$(4)$ is induced by the isomorphism 
$\Ind(\mF)*\cM\is\Ind(\mF*\mE)$ in Proposition \ref{prop of ind}, and $(5)$
is induced by the canonical 
isomorphism 
\beq\label{mult of res}
\Res(\Ind(\mF)*\cM)\is\Res\circ\Ind(\mF)*\Res(\cM)\is\Res\circ\Ind(\mF)*\mE\footnote{The first isomorphism 
follows from the fact that, for any $\mF_1,\mF_2\in\sD(G/_\ad G)$ such that 
$\Av_*^U\mF_2$ is supported on $T$, we have canonical isomorphism 
$\Res(\mF_1*\mF_2)\is\Res(\mF_1)*\Res(\mF_2)$.}
\eeq
and the adjunction map.
On the other hand, the canonical map
\beq\label{res of key}
\Res\circ\Ind(\mF)*\mE\stackrel{\eqref{mult of res}}\is\Res(\Ind(\mF)*\cM)\stackrel{~\eqref{key iso}}\is\Res(V\otimes\Ind(\mE))\is V\otimes\Res\circ\Ind(\mE)
\eeq
is compatible with the natural $\rW$-actions. Indeed, Proposition \ref{Mackey formula}
and Proposition \ref{conv with E_theta} imply that there is a commutative diagram 
\[
\xymatrix{(F[\rW]\otimes\mF)*\mE\ar[r]\ar[d]^{\eqref{W-action 2}}&V\otimes(F[\rW]\otimes\mE)\ar[d]^{\eqref{W-action 2}}\\
\Res\circ\Ind(\mF)*\mE\ar[r]^{\eqref{res of key}}
&V\otimes(\Res\circ\Ind(\mE))}
\]
where the vertical arrows and the upper horizontal arrow
are compatible with the 
natural $\rW$-actions. It follows that the following diagram commute
\beq\label{second diagram}
\xymatrix{
F[\rW]\ar[dd]^{\Id}\ar[r]^{(3)\ \ \ \ \ \ \ }&\Hom(\mF,\Res\circ\Ind(\mF))\ar[r]^{*\mE\ \ \ \ \ }&\Hom(\mF*\mE,\Res\circ\Ind(\mF)*\mE))\ar[d]^{(6)}\\
&&\Hom(V\otimes\mE,V\otimes\Res\circ\Ind(\mE))\ar[d]^{(7)}\\
F[\rW]\ar[r]^{(8)\ \ \ \ \ \ \ }&\Hom(\Ind(\mE),\Ind(\mE))\ar[r]^{\Id_V\otimes\ \ \ \ \ }&\Hom(V\otimes\Ind(\mE),V\otimes\Ind(\mE))
}
\eeq
where $(6)$ is induced by~\eqref{res of key}, $(7)$ is the adjunction map, and $(8)$ is the $\rW$-action map.
Note that the composition of $(4),(5),(6),(7)$ gives rise to a map 
\beq\label{comp}
\End(\Ind(\mF)*\cM)\to\End(V\otimes\Ind(\mE))
\eeq 
which is 
equal to the map induced by the isomorphism~\eqref{key iso}, thus the commutativity of
\eqref{first diagram} and~\eqref{second diagram} implies that the following diagram commute 
\[\xymatrix{F[\rW]\ar[r]\ar[d]^{\Id}&\End(\Ind(\mF)*\cM)\ar[d]^{\eqref{comp}}\\
F[\rW]\ar[r]&\End(V\otimes\Ind(\mE))}\]
where the horizontal arrows are the $\rW$-action maps.
The claim follows.

\quash{
On the other hand, Proposition \ref{Mackey formula} and Proposition \ref{conv with E_theta} imply that the following diagram commute
\beq\label{second diagram}
\xymatrix{F[\rW]\ar[r]^{(3)\ \ \ \ \ \ \ \ }\ar[dddd]^{\Id}&\Hom(\mF,\Res\circ\Ind(\mF))\ar[r]^{*\cE\ \ \ \ \ \ }&\on{End}(\mF*\mE,\Res\circ\Ind(\mF)*\mE)\ar[d]^{(6)}\\
&&\Hom(\mF*\mE,(\bigoplus_{w\in\rW} w^*\mF)*\mE)\ar[d]^{(7)}\\
&&\Hom(V\otimes\mE,V\otimes(\bigoplus_{w\in\rW} w^*\mE))\ar[d]^{(8)}\\
&&\Hom(V\otimes\mE,V\otimes\Res\circ\Ind(\mE))\ar[d]^{(9)}\\
F[\rW]\ar[r]^{(10)\ \ \ \ \ \ \ \ }&\Hom(\Ind(\mE),\Ind(\mE))\ar[r]^{\Id_V\otimes(-)\ \ \ \ \ }&\Hom(V\otimes\Ind(\mE),V\otimes\Ind(\mE))}
\eeq
Here $(6)$ and $(8)$ are induced by the canonical isomorphisms 
in Proposition \ref{Mackey formula} (1),  
$(7)$ is induced by the isomorphisms in Proposition \ref{conv with E_theta},
$(9)$ is the adjunction map, and $(10)$ is induced by the $\rW$-action 
on $\Ind(\mE)$.
Note that the composition of maps from
$(4)$ to $(9)$ is equal 
}

\subsection{Proof of Theorem \ref{main}}
Since the Verdier duality interchanges strongly central complexes
with strongly $*$-central complexes, it suffices to verify Theorem \ref{main}
for strongly $*$-central complexes.
Let $\mF\in\sD_\rW(T)$ be a strongly $*$-central complex.
We need to show that the natural map 
\beq\label{restriction}
r:\on{Res}_{T\subset B}^G(\Phi_\mF)\ra\on{Av}_*^U(\Phi_\mF)
\eeq
is an isomorphism.
By Lemma \ref{reduction}, we can assume 
$\mF$ is a perverse sheaf.
We claim that, for any $\rW$-orbit $\theta=\rW\chi\subset\calC(T)(F)$, the convolution of $r$ with $\mE_\theta$ is an isomorphism
\beq\label{convolution}
\on{Res}_{T\subset B}^G(\Phi_\mF)*\mE_\theta\stackrel{\sim}\longrightarrow\on{Av}_*^U(\Phi_\mF)*\mE_\theta.
\eeq
For this, it is enough to show that 
$\on{Av}_*^U(\Phi_{\mF})*\mE_\theta$ is supported on $T$ and this follows from 
Theorem \ref{Key} and Proposition \ref{conv with Phi}. Indeed, we have  
\[\on{Av}^U_*(\Phi_\mF)*\mE_\theta\stackrel{\on{Thm\ \ref{Key}}}\is\on{Av}_*^U(\Phi_\mF)*\on{Av}^U_*(\cM_{\theta})\is
\on{Av}^U_*(\Phi_{\mF}*\cM_{\theta})\stackrel{\on{Prop} \ref{conv with Phi}}\is\oH^*(T,\mF\otimes\mL_{\chi^{-1}})\otimes\on{Av}_*^U(\cM_{\theta})\stackrel{\on{Thm\ \ref{Key}}}\is\]
\[\is\oH^*(T,\mF\otimes\mL_{\chi^{-1}})\otimes\mE_\theta.\]
Since
$\mE_\theta\is\bigoplus_{\chi\in\theta}\mE_\chi$, the isomorphism 
(\ref{convolution}) implies that 
the cone of the map in~\eqref{restriction}, denoted by $\on{cone}(r)$, satisfies 
$\on{cone}(r)*\mE_{\chi}=0$ for all $\chi\in\calC(T)(F)$. 
As $\mE_{\chi}$ is a local system on $T$ with generalized monodromy $\chi$,
that is, $\mE_{\chi}\otimes\mL_\xi$ is an unipotent local system, 
Lemma \ref{vanishing 2} and Lemma \ref{vanishing 1} below imply $\on{cone}(r)=0$. The theorem follows.

\subsection{Vanishing lemmas}\label{vanishing lemmas}
Let $X$ be a smooth variety with a free $T$ action $a:T\times X\ra X$.
For $\mL\in\sD(T)$ and $\mF\in\sD(X)$ we define 
$\mL*\mF:=a_*(\mL\boxtimes\mF)\in\sD(X)$.

\begin{lemma}\label{vanishing 2}
Let $\mL$ be a local system on $T$ with generalized monodromy $\chi\in\calC(T)(F)$, that is, 
$\mL\otimes\mL_\chi$ is an unipotent local system.  
Let $\mF\in\sD(X)$ and 
assume $\mL*\mF=0$. Then we have $\mL_\chi*\mF=0$.
\end{lemma}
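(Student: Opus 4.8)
The plan is to reduce the statement to two formal facts: that convolution against a fixed $\mF$ is a triangulated functor, and that the trivial local system is a direct summand of $\calU\otimes\calU^\vee$ for any local system $\calU$. First I would rewrite the hypothesis on $\mL$ as $\mL\is\calU\otimes\mL_\chi$, where $\calU:=\mL\otimes\mL_\chi^{-1}$ is a unipotent local system on $T$ (this is what ``generalized monodromy $\chi$'' means). A unipotent local system carries a finite filtration all of whose successive subquotients are isomorphic to the trivial rank-one local system $\mathbf{1}_T$; tensoring such a filtration with an arbitrary local system $\calV$ yields a finite filtration of $\calV\otimes\calU$ whose subquotients are all isomorphic to $\calV$. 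Since $\calU^\vee$ is again unipotent, the local system $\mL\otimes\calU^\vee\is(\mL_\chi\otimes\calU^\vee)\otimes\calU$ then carries a finite filtration $0=\calK_0\subset\calK_1\subset\cdots\subset\calK_m=\mL\otimes\calU^\vee$ with $\calK_i/\calK_{i-1}\is\mL$ for every $i$.

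Next I would exploit that, with $\mF$ fixed, the functor $\calG\mapsto\calG*\mF=a_*(\calG\boxtimes\mF)$ from $\sD(T)$ to $\sD(X)$ is triangulated (it is the composite of $p_T^*$, tensoring with a fixed object, and $a_*$). Each short exact sequence $0\to\calK_{i-1}\to\calK_i\to\mL\to 0$ is a distinguished triangle in $\sD(T)$, so applying $-*\mF$ and using the hypothesis $\mL*\mF=0$ shows that every map $\calK_{i-1}*\mF\to\calK_i*\mF$ is an isomorphism. Composing these, $(\mL\otimes\calU^\vee)*\mF=\calK_m*\mF\is\calK_0*\mF=0$.

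Finally, the evaluation and coevaluation maps for the dualizable object $\calU$ exhibit $\mathbf{1}_T$ as a direct summand of $\calU\otimes\calU^\vee$: the composite $\mathbf{1}_T\to\calU\otimes\calU^\vee\to\mathbf{1}_T$ is multiplication by $\dim\calU$, which is invertible because $F$ (equal to $\barQ$ or $\bC$) has characteristic zero. Tensoring with $\mL_\chi$ then exhibits $\mL_\chi$ as a direct summand of $\mL_\chi\otimes\calU\otimes\calU^\vee\is\mL\otimes\calU^\vee$, so $\mL_\chi*\mF$ is a direct summand of $(\mL\otimes\calU^\vee)*\mF=0$, whence $\mL_\chi*\mF=0$. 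The argument is entirely formal; there is no serious obstacle, the only points needing minor care being the identification of the monodromy hypothesis with $\mL\is\calU\otimes\mL_\chi$ for unipotent $\calU$ (and the harmless bookkeeping of $\chi$ versus $\chi^{-1}$ there) and the observation that $\dim\calU$ is invertible in the coefficient field in both the $\ell$-adic and de Rham settings.
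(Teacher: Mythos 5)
Your argument is correct, but it closes the argument by a different mechanism than the paper. Both proofs start from the same observation that the hypothesis produces a finite filtration whose graded pieces are a single rank-one twist (you filter $\mL\otimes\calU^\vee$ with subquotients $\mL$; the paper filters $\mL$ itself with subquotients $\mL_\chi$), and both use that $(-)*\mF$ is triangulated. The difference is the endgame: the paper argues by contradiction, taking the top nonvanishing cohomological degree of $\mL_\chi*\mF$ and showing by induction along the filtration that every $\mL^{(i)}*\mF$ vanishes in degrees $\geq m$, so that $\mL*\mF=0$ forces $\mL_\chi*\mF\is\mL^{(k-1)}*\mF[1]$ and kills the degree-$(m-1)$ cohomology, contradicting minimality; you instead untwist by the dual unipotent part and split $\mL_\chi$ off $\mL\otimes\calU^\vee\is\mL_\chi\otimes\calU\otimes\calU^\vee$ as a direct summand via evaluation/coevaluation, using that $\on{rank}(\calU)$ is invertible in $F$. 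Your route is more formal and avoids the degree bookkeeping, and it proves the stronger statement $(\mL\otimes\calU^\vee)*\mF=0$ along the way; the paper's route uses only boundedness of the complexes and no dualizability or characteristic-zero input, so it would survive with coefficient fields of positive characteristic (irrelevant here, since $F=\barQ$ or $\bC$). Two minor points to keep in mind: both arguments implicitly assume $\mL\neq0$ (your splitting needs $\on{rank}(\calU)\neq0$, the paper needs its filtration to have length $k\geq1$), which is harmless in the intended application to $\mE_\chi$; and your normalization $\calU=\mL\otimes\mL_\chi^{-1}$ matches the paper's proof (whose filtration has subquotients $\mL_\chi$) rather than the literal wording of the hypothesis, i.e.\ the same $\chi$ versus $\chi^{-1}$ convention issue you flagged.
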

\begin{proof}
There is a filtration $0=\mL^{(0)}\subset\mL^{(1)}\subset\cdot\cdot\cdot\subset\mL^{(k)}=\mL$
such that \[0\ra\mL^{(i-1)}\ra\mL^{(i)}\ra\mL^{(i)}/\mL^{(i-1)}\is\mL_\chi\ra 0.\]
Assume $\mL_\chi*\mF\neq 0$ and let $m$ be the smallest number such that $\sH^{\geq m}(\mL_\chi*\mF)=0$.
We claim that $\sH^{\geq m}(\mL^{(i)}*\mF)=0$
for $i=1,...,k$. The case $i=1$ is automatic since $\mL^{(1)}=\mL_\chi$.
For $i\leq k$, consider the 
distinguished triangle 
\[\mL^{(i-1)}*\mF\ra\mL^{(i)}*\mF\ra\mL_\chi*\mF\ra\mL^{(i-1)}*\mF[1]\]
induced from above short exact sequence.
Then for any $n\geq m$ we obtain an exact sequence
\[\sH^{n}(\mL^{(i-1)}*\mF)\to\sH^{n}(\mL^{(i)}*\mF)\to\sH^{n}(\mL_\chi*\mF)\to\]
By induction, the first and third terms are zero, and hence 
$\sH^{n}(\mL^{(i)}*\mF)=0$.
The claim follows.

Now since $\mL*\mF=0$, the 
distinguished triangle 
\[\mL^{(k-1)}*\mF\ra\mL*\mF\ra\mL_\chi*\mF\ra\mL^{(k-1)}*\mF[1]\]
implies 
\[\mL_\chi*\mF\is\mL^{(k-1)}*\mF[1].\]
Therefore we have $\sH^{m-1}(\mL_\chi*\mF)\is\sH^{m-1}(\mL^{(k-1)}*\mF[1])=\sH^{m}(\mL^{(k-1)}*\mF)=0$
which contradicts to the fact that $m$ is the smallest number such that $\sH^{\geq m}(\mL_\chi*\mF)=0$.
We are done.
\end{proof}

\begin{lemma}\label{vanishing 1}
Let $\mF\in\sD(X)$. If  
$\mL_\chi*\mF=0$ for all $\chi\in\calC(T)(F)$, then $\mF=0$.
\end{lemma}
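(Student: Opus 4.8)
The plan is to pass to the Mellin transform and exploit its conservativity. Recall that $X$ carries a free $T$-action $a : T \times X \to X$, and for $\mL \in \sD(T)$, $\mF \in \sD(X)$ we have $\mL * \mF = a_*(\mL \boxtimes \mF)$. The first step is to record the compatibility of the convolution $\mL_\chi * \mF$ with the Mellin transform on the $T$-direction: more precisely, one builds a ``partial Mellin transform'' $\mathfrak M_X : \sD(X) \to D^b_{\mathrm{coh}}(\calC(T) \times' X)$ (or, in the de Rham setting, a quasi-coherent sheaf on $\check\ft \times X$ suitably interpreted), obtained by applying the Mellin machinery of \cite{GL} fiberwise along the $T$-torsor structure. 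The key formula is that, for $\chi \in \calC(T)(F)$ with corresponding point $i_\chi : \on{pt} \to \calC(T)$, one has $i_\chi^* \mathfrak M_X(\mF) \cong \mL_\chi * \mF$, exactly parallel to property (1) of the Mellin transform on $T$ recalled in Section \ref{MT} (the stalk $\oH^*(T, \mF \otimes \mL_\chi) \cong i_\chi^* \mathfrak M(\mF)$). Equivalently, and more elementarily, one can choose a $T$-equivariant trivialization locally and reduce to the fact that $\oH^*_c(T, \mL_\chi \otimes \mL_{\chi'}) = 0$ unless $\chi' = \chi^{-1}$, together with the statement that a complex on $T$ whose $\otimes\mL_\chi$-cohomology vanishes for all $\chi$ is zero — which is itself conservativity of $\mathfrak M$ on $\sD(T)$.

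The second step is the reduction to the conservativity statement on $T$ itself. Since the $T$-action on $X$ is free, $X \to X/T$ is a $T$-torsor; choosing any point $x \in X$, the orbit map $T \xrightarrow{\sim} Tx \hookrightarrow X$ identifies a neighborhood of the orbit (in the smooth topology) with $T \times (X/T)$. Under such a local identification, $\mL_\chi * \mF$ restricted to the slice through $x$ becomes $\mathfrak M(-)$-type data on the $T$-factor. Thus if $\mL_\chi * \mF = 0$ for all $\chi$, then for every $x$ the complex $\mF$, pulled back to the orbit $Tx$ and convolved with all Kummer/unipotent-monodromy local systems, vanishes. By the conservativity of the Mellin transform on $T$ (property (5) of Section \ref{MT} in the $\ell$-adic case, i.e. $\mathfrak M : \sD(T)_{\on{mon}} \xrightarrow{\sim} D^b_{\mathrm{coh}}(\calC(T))_f$, and the analogous equivalence in the de Rham case), a complex all of whose twisted cohomologies vanish must be zero; hence $\mF$ vanishes on each orbit, and therefore $\mF = 0$.

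Alternatively — and this is probably the cleanest route to write out — one argues directly. Suppose $\mF \ne 0$; pick the top degree $n$ with $\sH^n(\mF) \ne 0$ and a point $x$ in the support of $\sH^n(\mF)$. Working in the strictly Henselian local ring at $x$ along the orbit direction, the constant sheaf summand $\mL_{\chi_0}$ for the trivial-type character adapted to the monodromy of $\sH^n(\mF)$ near $x$ detects $\sH^n(\mF)_x$ via $i_{\chi_0}^*$ of the partial Mellin transform, which is nonzero. This contradicts $\mL_{\chi_0} * \mF = 0$. The main obstacle I anticipate is purely bookkeeping: making the ``partial Mellin transform along a free $T$-action'' precise and checking the stalk formula $i_\chi^* \mathfrak M_X(\mF) \cong \mL_\chi * \mF$ compatibly in both the $\ell$-adic and de Rham settings — but this is a routine globalization of the already-cited results of \cite{GL} and Section \ref{MT}, since the $T$-torsor $X \to X/T$ is locally (in the smooth topology) trivial and all the functors involved satisfy smooth base change. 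Once that formula is in hand, the lemma is immediate from conservativity of $\mathfrak M$.
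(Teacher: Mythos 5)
Your proposal is correct and takes essentially the same route as the paper: the paper restricts along the orbit embeddings $o_x:T\to X$, uses base change to identify the $!$-fiber of $\mL_\chi*\mF$ at $x$ with $\oH^*(T,\mL_{\chi^{-1}}\otimes^! o_x^!\mF)$, and concludes from the conservativity result of Laumon \cite[Proposition 3.4.5]{GL}. The only refinements worth making are to cite that result (which applies to arbitrary, not just monodromic, complexes on $T$) instead of the monodromic equivalence (5) of Section \ref{MT}, and to work with $!$-fibers so that base change against $a_*$ is automatic; the ``partial Mellin transform'' scaffolding is unnecessary.
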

\begin{proof}
Since $T$ acts freely on $X$ we have an
embedding $o_x:T\ra X, t\ra t\cdot x$. Moreover, by base change formulas, we have
\[\oH^*(T,\mL_{\chi^{-1}}\otimes^! o_x^!\mF)\is i_x^!(\mL_\chi*\mF)=0\]
for all $\chi\in\calC(T)(F)$. Here $i_x:x\ra X$ is the natural inclusion map.
By a result of Laumon
\cite[Proposition 3.4.5]{GL}, it implies $o_x^!\mF=0$ for all $x$.
The lemma follows.


\end{proof}

\quash{

\section{Non-linear Fourier transforms}\label{NL FT}
In this section we fix a $c\in\bC^\times$ and write 
$\Psi_{G,\ul}=\Psi_{G,\ul,c},\Psi_{\ul}=\Psi_{\ul,c}$.
Following 
Braverman-Kazhdan, we consider the functor of convolution with 
gamma $D$-module:
\[\mathrm F_{G,\ul}:=(-)*\Psi_{G,\ul}:D(G)_{hol}\ra D(G)_{hol},\ \mF\ra\mF*\Psi_{G,\ul}.\]
The result in \cite{BK1} (see, for example, \cite[Theorem 5.1]{BK1})
suggests that the functor $\mathrm F_{G,\ul}$ can be thought as a version of \emph{non-linear Fourier transform} on the 
derived category of holonomic $D$-modules. 

The following property of 
$\mathrm F_{G,\ul}$
 follows from Proposition \ref{properties of ind}, Proposition \ref{Psi and L}, 
 and Theorem \ref{main thm}:

\begin{thm}($\mathrm F_{G,\ul}$ commutes with induction 
functors)\label{commutes with ind}
For every $\mF\in D(T)_{hol}$ we have 
\[\mathrm F_{G,\ul}(\Ind_{T\subset B}^G(\mF))\is\Ind_{T\subset B}^G(\mathrm F_{T,\ul}(\mF)).\] 
Here $\mathrm F_{T,\ul}(\mF):=\mF*\Psi_\ul$.
In particular, for any Kummer local system
$\mL_\xi$ on $T$ we have 
\[\mathrm F_{G,\ul}(\Ind_{T\subset B}^G(\mL_\xi))\is V_{\ul,\xi}\otimes\Ind_{T\subset B}^G(\mL_\xi).\]
Here $V_{\ul,\xi}:=H^0_{\text{dR}}(\Psi_{\ul}\otimes\mL^{-1}_\xi)$.
\end{thm}

We have the following conjecture:
\begin{conjecture}[see Conjecture 6.8 in 
\cite{BK1}]
$\mathrm F_{G,\ul}$ is an exact functor.

\end{conjecture}

We shall prove 
a weaker statement which says that 
$\mathrm F_{G,\ul}$ is 
exact on the category of admissible 
$D$-modules.
We first recall the definition of admissible modules following 
\cite{G}.

\begin{definition}\label{admissible}
A holonomic $D$-module $\mF$ on $G$ is called admissible if 
the action of the center $Z(\rU(\fg))$ of $\rU(\fg)$, viewing as invariant differential operators, is locally finite.
We denote by $\mA(G)$ the abelian category of admissible $D$-modules on $G$ and $D(\mA(G))$
be the corresponding derived category.
\end{definition}

\begin{remark}
We do not require 
admissible $D$-modules to be $G$-equivariant with respect to the 
conjugation action. So the definition of admissible modules here is more general than the one in \cite{G}.
\end{remark}

We have the following characterization of admissible modules:
a $\cF\in\cM(G)_{hol}$ is admissible 
if and only if  $\on{HC}(\cF)\in D(Y/T)$ is monodromic 
with respect to the right $T$-action, or equivalently,
$\on{Av}_U(\cF)\in D(X)$ is monodromic with respect to the right 
$T$-action.

To every $
\theta\in\breve T/\rW$,
let $\mA(G)_{\theta}$ be the full subcategory of 
$\mA(G)$ consisting of holonomic $D$-modules on $G$ such that  
$Z(\rU(\fg))$ acts locally finitely 
with generalized eigenvalues in $\theta$.
The category $\mA(G)$ decomposes as 
\[\mA(G)=\bigoplus_{\theta\in\breve T/\rW}\mA(G)_{\theta}.\]
\quash{
Moreover, according to \cite{G}, we have $\cM\in D(\mA(G))_{\theta}$
if and only if $\on{HC}(\cM)\in\bigoplus_{\xi\in\breve T,[\xi]=\theta} D(Y/T)_{\xi}$
, or equivalently, 
$\on{Av}_U(\cM)\in\bigoplus_{\xi\in\breve T,[\xi]=\theta} D(X)_{\xi}$.}

\begin{thm}\label{exact of FT}
The functor $\mathrm F_{G,\ul}:D(G)_{hol}\ra D(G)_{hol},\ \mF\ra\mF*\Psi_{G,\ul}$ preserves the subcategory 
$D(\mA(G))$ and the resulting functor 
\[\mathrm F_{G,\ul}:D(\mA(G))\ra D(\mA(G))\]
is exact with respect to the natural $t$-structure.
That is, we have $\mathrm F_{G,\ul}(\mF)\in\mA(G)$ for $\mF\in\mA(G)$.

\end{thm}
\begin{proof}
We show that $\mathrm F_{G,\ul}$ preserves 
$D(\mA(G))$. Using the characterization of admissible modules above 
we have to show that $\on{Av}_U(\mathrm F_{G,\ul}(\mM))$
is monodromic for $\cM\in D(\cA(G))$.
Since $\on{Av}_U(\Psi_{G,\ul})\is\Psi_\ul$ by Corollary \ref{Av of Psi}, we have 
\[\on{Av}_U(\mathrm F_{G,\ul}(\mM))\is\on{Av}_U(\cM)*\on{Av}_U(\Psi_{G,\ul})
\is\on{Av}_U(\cM)*\Psi_\ul\]
which is 
$T$-monodromic by Proposition \ref{Psi and L}. The claim follows.

We show that $\mathrm F_{G,\ul}$ is exact on $\cA(G)$.
Let $\cO_Y$ (resp. $\cO_X$) be 
pre-image of the open $G$-orbit (resp. $B$-orbit)
in $\mB\times\mB$ (resp. $\mB$) under the projection map 
$Y\ra\mB\times\mB$ (resp. $X\ra\mB$).
The quotient $G\backslash\cO_Y$ (resp. $U\backslash\cO_X$) is 
a torsor over $T$, choosing a trivialization of the torsor, we get a map
$p_Y:\cO_{Y}\ra T$ (resp. $p_X:\cO_{X}\ra T$).
We denote by 
$j_Y:\cO_{Y}\ra Y$
(resp. $j_X:\cO_{X}\ra X$) the natural embedding. 
Consider the following
pro-object in $M_{\xi,w_0(\xi^{-1})}$
(resp. $H_{\xi^{-1},w_0(\xi^{-1})}$):
\[\mathrm I_{Y}:=j_{Y!}(p_Y^0\hat\mL_{w_0(\xi^{-1})}):=\underleftarrow{\on{lim}}\ j_{Y!}
(p_Y^0\hat\mL_{w_0(\xi^{-1})}^n)
\ \ (\text{resp.}\ \mathrm I_{X}:=j_{X!}(p_X^0\hat\mL_{w_0(\xi^{-1})}):=\underleftarrow{\on{lim}}
\ j_{X!}
(p_X^0\hat\mL_{w_0(\xi^{-1})}^n)).\]
Recall the notion of intertwining functor (see \cite{BB,BG})
\[(-)*\mathrm I_Y:D(Y)_{\xi,\xi^{-1}}\ra D(Y)_{\xi,w_0(\xi^{-1})}\ \ (\text{resp.}\ 
(-)*\mathrm I_X:D(X)_{\xi^{-1},\xi^{-1}}\ra D(X)_{\xi^{-1},w_0(\xi^{-1})}).\]

According to \cite[Corollary 3.4]{BFO}, the assignment $\cM\ra\on{HC}(\cM)*\mathrm I_{Y}:=
\underleftarrow{\on{lim}}\on{HC}(\cM)* j_{Y!}(p_Y^0\mL_{\xi}^n),\  \cM\in D(G)_{hol}$
restricts to a functor 
 \[\on{HC}(-)*\mathrm I_{Y}:D(\mA(G)_{[\xi]})\ra D(Y)_{\xi,w_0(\xi^{-1})}\]
which is $t$-exact and conservative\footnote{The definition of intertwining functor here is different from that of \cite{BFO}, though one can show that 
the two definition are equivalent. In \cite{BFO}, the intertwining functor is described 
as a shriek convolution with certain $G$-equivariant $D$-module 
on $Y$.}.
So to prove the exactness of $\mathrm F_{G,\ul}$ it suffices to show that
\[\on{HC}(
\mathrm F_{G,\ul}(\cM))*\mathrm I_{Y}
\in\cM(Y)_{\xi,w_0(\xi^{-1})}\] for all 
$\mM\in\mA(G)_{[\xi]}$.  
We claim that there is an isomorphism of pro-objects 
\beq\label{claim}
\on{HC}(\Psi_{G,\ul})*\mathrm I_Y\is\mathrm I_Y.
\eeq
Thus
\[\on{HC}(
\mathrm F_{G,\ul}(\cM))*\mathrm I_{Y}
\is\on{HC}(\cM*\Psi_{G,\ul})*\mathrm I_Y\is\on{HC}(\cM)*\on{HC}(\Psi_{G,\ul})*\mathrm I_Y\is\on{HC}(\cM)*\mathrm I_Y\]
which is in $\cM(Y)_{\xi,w_0(\xi^{-1})}$ by the exactness of the 
functor $\on{HC}(-)*\mathrm I_{Y}$. We are done.

Proof of the claim. Applying the equivalence 
$i^0:D_G(Y)\is D_U(X)$ to (\ref{claim}) and using 
$i^0(\on{HC}(\Psi_{G,\ul}))\is\on{Av}_U(\Psi_{G,\ul})\is\Psi_\ul$
, $i^0(\mathrm I_Y)\is\mathrm I_X$, we reduce to show that there is an isomorphism of 
pro-objects
$\Psi_\ul*\mathrm I_X\is\mathrm I_X$. 
Note that we have $\hat\mL_{\xi^{-1}}*\mathrm I_X\is\mathrm I_X$\footnote{Indeed, 
it follows from the fact that the functor 
$\hat\mL_{\xi}*(-):D_U(X)\ra \text{pro}(D_U(X))$ (here $\text{pro}(D_U(X))$
is the category of pro-objects in $D_U(X)$), when restricts to 
the subcategory $D(H_{\xi,\xi'})$ consisting of $T\times T$-monodromic 
complexes with generalized monodromy $(\xi,\xi')$, is isomorphic to the identity functor.
 },
hence by
Lemma \ref{Psi_c} 
\[\Psi_\ul*\mathrm I_X\is\Psi_\ul*\hat\mL_{\xi^{-1}}*\mathrm I_X
\is\hat\mL_{\xi^{-1}}*\mathrm I_X\is\mathrm I_X.\]
The claim follows.


\end{proof}

}


\begin{thebibliography}{99999}

\bibitem[A]{A}
S. Anantharaman,
Schemas en groupes, espaces homogenes et espaces algebriques sur une base de dimension 1,
Sur les groupes algébriques, Mémoires de la Société Mathématique de France, no. 33 (1973), p. 5-79 




\bibitem[BB]{BB} A. Beilinson, J. Bernstein, A proof of 
Jantzen conjectures, Advances in Soviet Mathematics 16, Gelfand Seminar, 1983, 1-59.


\bibitem[BBD]{BBD} A. Beilinson, J. Bernstein, P. Deligne, Faisceaux pervers, 
Asterisque 100, 1982.

\bibitem[BFO]{BFO} 
R. Bezrukavnikov, M. Finkelberg, V. Ostrik, Character $D$-modules via 
Drinfeld center of Harish-Chandra bimodules
, Invent. math. Volume 188, Issue 3,  (2012) 589-620.
\bibitem[BG]{BG} A. Beilinson, V. Ginzburg, 
Wall-crossing functors and $D$-modules, 
Representation Theory,
Volume 3, 1999, 1-31.



\bibitem[BK1]{BK1}A. Braverman and D. Kazhdan, $\gamma$-functions of representations
and lifting (with an appendix by V. Vologodsky), Geom. Funct.
Anal., Special Volume (2002), Part I, 237-278.

\bibitem[BK2]{BK2} A. Breverman, D.Kazhdan, $\gamma$-sheaves on reductive groups, Studies in memory of Issai Schur (Chevaleret/Rehovot 2000),
Progr. Math., 210 (2003), 27-47.

\bibitem[BY]{BY} 
R. Bezrukavnikov, A. Yom Din, On parabolic restriction of perverse sheaves,
arXiv:1810.03297.
  
  
\bibitem[BZG]{BZG}
D. Ben-Zvi, S. Gunningham: Symmetries of categorical representations 
and the quantum Ng\^o action, arXiv:1712.01963.


\bibitem[BZN]{BZN} 
D. Ben-Zvi, D. Nadler:
The Character Theory of a Complex Group, 
arXiv:0904.1247.

\bibitem[C1]{C1} T.H. Chen, A vanishing conjecture: the $\GL_n$ case, arXiv:1902.11190.

\bibitem[C2]{C2} T.H. Chen, in preparation.

\bibitem[CN]{CN} S. Cheng, B.C. Ng\^o, On a conjecture of Braverman and Kazhdan, 
IMRN, 20
(2018), 6177-6200.



\bibitem[De1]{De1} 
P. Deligne: Applications de la formule des traces aux sommes trigonom\'etriqes, 
Cohomologies Etale (SGA $4\frac{1}{2}$). Lecture Notes in Mathematics 569, 168-232.



\bibitem[De2]{De2}
P. Deligne,  
Theoremes de finitude en cohomologie
$\ell$-adique, Cohomologie \'etale, Lecture Notes
in Mathematics, Vol. 569, Springer-Verlag, Berlin-New York, 1977. 
Seminarire de Geometrie Algebrique du Bois-Marie SGA 4$\frac{1}{2}$.




\bibitem[DL]{DL} 
P. Deligne, G. Lusztig, Representations of reductive groups over finite fields. Annals of Math. 103 (1976), 103-161.

\bibitem[Dr]{Dr} 
V. Drinfeld, On a conjecture of Kashiwara, 
Mathematical
Research Letters, 8 (2001), 713-728.

\bibitem[Gr]{Gr} 
A. Grothendieck, Rev\^etements \'etales et groupe fondamental (SGA 1).


\bibitem[Gi1]{Gi1} V. Ginzburg, Admissible modules on symmetric spaces, 
Ast\'erisque 173-174 (1989), 199-256.

\bibitem[Gi2]{Gi2} 
V. Ginzburg: Nil Hecke algebra and Whittaker $D$-modules,
arXiv:1706.06751.

\bibitem[Gu]{Gu} S. Gunningham, Generalized Springer Theory for D-modules on a reductive Lie Algebra,  Selecta Mathematica New Series (2018). 

\bibitem[GJ]{GJ} 
R. Godement. H. Jacquet,
Zeta functions of simple algebras
. Lecture Notes
in Mathematics, Vol. 260. Springer-Verlag, Berlin-New York, 1972.

\bibitem[GL]{GL} O. Gabber, F. Loeser, Faisceaux pervers $\ell$-adiques sur un tore, 
Duke Math. J. 83 (1996), no. 3, 501-606.



\bibitem[L]{L} 
G. Lonergan: A Fourier transform for the quantum Toda lattice,
Selecta Mathematica, November 2018, Volume 24, Issue 5, 4577-4615.

\bibitem[Le]{Le} Ting-Yu Lee, 
Adjoint quotients of reductive groups,
Autour des schémas en groupes. Vol. III, Panor.
Synthèses, vol. 47, Soc. Math. France, Paris, 2015, pp. 131-145.


\bibitem[Lu]{Lu} G. Lusztig, Character sheaves, 
I, Adv. Math. 56 (1985), 193-237; II, Adv. Math. 57 (1985), 226-
265; III, Adv. Math. 57 (1985), 266-315; IV, Adv. Math. 59 (1986), 1-63; V, Adv. Math. 61
(1986), 103-155.

\bibitem[LL]{LL} G. Laumon, E. Letellier, Note on a conjecture of Braverman-Kazhdan,
arXiv:1906.07476

\bibitem[MFK]{MFK} D. Mumford, J. Fogarty, F. Kirwan, Geometric invariant theory,
Ergebnisse der Mathematik und ihrer Grenzgebiete. 2. Folge.


\bibitem[MV]{MV} I. Mirkovic, K. Vilonen, Characteristic varieties of character sheaves, 
Invent. math. 93, 405-418 (1988).

\bibitem[Ne]{Ne} 
T. Nevins: Descent of coherent sheaves and complexes to geometric invariant theory quotients,
Journal of Algebra Volume 320, Issue 6, 15 September 2008, 2481-2495.

\bibitem[Ng]{Ng} 
B.C. Ng\^o: Hankel transform, Langlands functoriality and functional equation of automorphic L-functions, 
Japanese Journal of Mathematics (2020).

\bibitem[MS]{MS} D. Milicic, W. Soergel, The composition series of modules induced from Whittaker modules, 
Commentarii Mathematici Helvetici, 1997, Volume 72, Issue 4, 503-520.


\bibitem[G]{G} 
A. Grothendieck, Rev\^etements \'etales et groupe fondamental (SGA 1).


\bibitem[Z]{Z} X. Zhu, 
An introduction to affine Grassmannians and the geometric Satake equivalence,
Geometry of Moduli Spaces and Representation Theory. IAS/Park City Mathematics Series. No.24. American Mathematical Society , Providence, RI, pp. 59-154.

\quash{

\bibitem[BKV]{BKV} 
R. Bezrukavnikov, D. Kazhdan, Y. Varshavsky, Character sheaves and center of the 
categorical Hecke algebra, in preparation.

\bibitem[A]{A}A'Campo, Norbert. Tresses, monodromie et le groupe symplectique, Comment. Math. Helv. 54 (1979), no. 2, 318--327.
\bibitem[An1]{And}  Andrews, G. On the difference of successive Gaussian polynomials,
 J. Stat. Plan. Inf. 34 (1993), no. 1, 19--22.

\bibitem[An2]{And2} Andrews, G., The Theory of Partitions. Encyclopedia of Mathematics and its Applications, Vol. 2. Addison-Wesley Publishing Co., Reading, Mass.-London-Amsterdam, 1976.

\bibitem[B]{Bai} W. Bailey, Products of generalized hypergeometric series, Proc. London Math. Soc. (1928), 242--254. 


\bibitem[CVX]{CVX}Chen, T.H., Vilonen, K., Xue, T. Springer correspondence for symmetric spaces. Arxiv.1510.05986.

\bibitem[DR]{DR}Desale, U. V.; Ramanan, S. Classification of vector bundles of rank 2 on hyperelliptic curves. Invent. Math. 38 (1976/77), no. 2, 161--185.
\bibitem[Do]{Do}Donagi, Ron. Group law on the intersection of two quadrics. Ann. Scuola Norm. Sup. Pisa Cl. Sci. (4) 7 (1980), no. 2, 217--239.


\bibitem[G]{G} M. Grinberg, A generalization of Springer theory using nearby cycles, 

\bibitem[GR]{GR} Gasper, G. and  Rahman, M.
Basic hypergeometric series. With a foreword by Richard Askey. Encyclopedia of Mathematics and its Applications, Vol. 35. Cambridge University Press, Cambridge, 1990.

\bibitem[GS]{GS} Gessel, I. and Stanton, D., Applications of q-Lagrange inversion to basic hypergeometric series, Trans. Amer. Math. Soc. 277 (1983), no. 1, 173--201.


\bibitem[KS]{KS}Katz, Nicholas M.; Sarnak, Peter. Random matrices, Frobenius eigenvalues, and monodromy. American Mathematical Society Colloquium Publications, 45. American Mathematical Society, Providence, RI, 1999. 

\bibitem[N]{N}Nelson, Graham.
The homology of moduli spaces on a Riemann surface as a representation of the mapping class group. 
Proc. London Math. Soc. (3) 79 (1999), no. 2, 260--282.
\bibitem[Ra]{Ra}Ramanan, S. Orthogonal and spin bundles over hyperelliptic curves. Proc. Indian Acad. Sci. Math. Sci. 90 (1981), no. 2, 151--166.

\bibitem[R]{R}Reeder, Mark. Desingularizations of some unstable orbit closures. Pacific J. Math. 167 (1995), no. 2, 327--343.

\bibitem[Re]{Re}Reid, Miles. The complete intersection of two or more quadrics. PhD Thesis. Trinity College, Cambridge. 1972.

\bibitem[S]{S}Sekiguchi, Jir\=o. The nilpotent subvariety of the vector space associated to a symmetric pair. Publ. Res. Inst. Math. Sci. 20 (1984), no. 1, 155--212.

\bibitem[St]{St} Stanley, R. Enumerative Combinatorics. Vol. 2. With a foreword by 
Gian-Carlo Rota and appendix 1 by Sergey Fomin. 
Cambridge Studies in Advanced Mathematics, 62. 
Cambridge University Press, Cambridge, 1999.

\bibitem[W]{W}Wang, Xiaoheng. Maximal linear spaces contained in the base loci of pencils
of quadrics. ArXiv:1302.2385.
\bibitem[Z]{Z}Zagier, Don.
On the cohomology of moduli spaces of rank two vector bundles over curves. The moduli space of curves (Texel Island, 1994), 533--563, 
Progr. Math., 129, BirkhŠuser Boston, Boston, MA, 1995. 
}
\end{thebibliography}
\end{document}